\numberwithin{equation}{subsection}
\newtheorem{theorem}{Theorem}[subsection]
\newtheorem{proposition}[theorem]{Proposition}
\newtheorem{corollary}[theorem]{Corollary}
\newtheorem{conjecture}[theorem]{Conjecture}
\newtheorem{lemma}[theorem]{Lemma}
\theoremstyle{remark}
\newtheorem{remark}[theorem]{Remark}
\newtheorem{example}[theorem]{Example}
\theoremstyle{definition}
\newtheorem{nothing}[theorem]{}
\def\beq{\begin{eqnarray}}
\def\eeq{\end{eqnarray}}
\def\bes{\begin{eqnarray*}}
\def\ees{\end{eqnarray*}}
\def\omhat{{\bm\omega}}
\def\muhat{{\bm \mu}}
\def\lambdahat{{\bm \lambda}}
\def\alphahat{{\bm \alpha}}
\def\nuhat{{\bm \nu}}
\def\oP{\overline{\calP}}
\def\frakp{{\frak p}}
\def\frakl{{\frak l}}
\def\fraku{{\frak u}}
\DeclareMathOperator{\Tr}{Tr}
\def\A{{\bf A}}
\def\bfB{{\bf B}}
\def\C{\mathbb{C}}
\def\M{{\mathcal{M}}}
\def\calN{{\mathcal{N}}}
\def\bbX{\mathbb{X}}
\def\calA{{\mathcal{A}}}
\def\calI{{\mathcal{I}}}
\def\calX{{\mathcal{X}}}
\def\calU{{\mathcal{U}}}
\def\calE{{\mathcal{E}}}
\def\calF{{\mathcal{F}}}
\def\calS{{\mathfrak{S}}}
\def\calP{\mathcal{P}}
\def\calH{\mathcal{H}}
\def\x{\mathbf{x}}
\def\s{\mathbf{s}}
\def\m{\mathbf{m}}
\def\v{\mathbf{v}}
\def\bbP{\mathbb{P}}
\def\nuhat{{\bf \nu}}
\def\N{\mathbb{Z}_{\geq 0}}
\def\F{\mathbb{F}}
\def\Q{\mathbb{Q}}
\def\T{\mathbb{T}}
\def\Tr{\rm Tr}
\def\t{\mathfrak{t}}
\def\calC{{\mathcal C}}
\def\calM{{\mathcal M}}
\def\calO{{\mathcal O}}
\def\bfC{\mathbf{C}}
\def\Z{\mathbb{Z}}
\def\gl{{\mathfrak g\mathfrak l}}
\def\a{\mathfrak{a}}
\def\b{\mathfrak{b}}
\def\l{\mathfrak{l}}
\def\bor{{\rm bor}}
\newcommand{\nc}{\newcommand}
\def\bfS{{\bf S}}
\nc{\op}[1]{\mathop{\mathchoice{\mbox{\rm #1}}{\mbox{\rm #1}}
{\mbox{\rm \scriptsize #1}}{\mbox{\rm \tiny #1}}}\nolimits}
\nc{\al}{\alpha}
\nc{\ep}{\varepsilon} \nc{\ga}{\gamma} \nc{\Ga}{\Gamma}
\nc{\la}{\lambda} \nc{\La}{\Lambda} \nc{\si}{\sigma}
\nc{\Sig}{{\Gamma}} \nc{\Om}{\Omega} \nc{\om}{\omega}
\def\frakm{\mathfrak{m}}
\def\nuhat{{\bf \nu}}
\nc{\SL}{{\rm SL}} \nc{\GL}{{\rm GL}} \nc{\PGL}{{\rm PGL}}
\nc{\G}{{\rm G}}
\nc{\cpt}{{\op{cpt}}} \nc{\Dol}{{\op{Dol}}} \nc{\DR}{{\op{DR}}}
\nc{\B}{{\op{B}}} \nc{\Triv}{\op{Triv}} \nc{\Hod}{{\op{Hod}}}
\nc{\Log}{{\op{Log}}} \nc{\Exp}{{\op{Exp}}} \nc{\Est}{E_{\op{st}}}
\nc{\Hst}{H_{\op{st}}} \nc{\Left}[1]{\hbox{$\left#1\vbox to
  10.5pt{}\right.\nulldelimiterspace=0pt \mathsurround=0pt$}}
\nc{\Right}[1]{\hbox{$\left.\vbox to
  10.5pt{}\right#1\nulldelimiterspace=0pt \mathsurround=0pt$}}
\nc{\LEFT}[1]{\hbox{$\left#1\vbox to
  15.5pt{}\right.\nulldelimiterspace=0pt \mathsurround=0pt$}}
\nc{\RIGHT}[1]{\hbox{$\left.\vbox to
 15.5pt{}\right#1\nulldelimiterspace=0pt \mathsurround=0pt$}}
\nc{\bee}{{\bf E}} \nc{\bphi}{{\bf \Phi}}
\title{Higgs bundles  and indecomposable parabolic bundles over the projective line}
\author{ Emmanuel Letellier \\ {\it
  Universit\'e  Paris Diderot-Paris 7/ IMJ-PRG} \\ {\tt
 letellier@math.univ-paris-diderot.fr} }
\begin{document}

\maketitle

\begin{abstract} In  this paper we count the number of isomorphism classes of geometrically indecomposable quasi-parabolic structures of a given type on a given vector bundle on the projective line over a finite field. We give a conjectural cohomological interpretation for this counting using the moduli space of Higgs fields on the given vector bundle over the complex projective line  with prescribed residues. We prove a certain number of results which bring evidences to the main conjecture. We  detail  the case of rank $2$ vector bundles.

\end{abstract}

\pagestyle{myheadings}

\newpage
\tableofcontents
\newpage

\section{Introduction}

Fix a reduced divisor $D=\b_1+\dots+\b_r$ of $\bbP^1_{\F_q}$ with $\b_i$ of degree $d_i$ ($d_i$ is the degree of the field extension $\F_q(\b_i)/\F_q$). We call a \emph{quasi-parabolic structure} \footnote{In the literature the terminology \emph{parabolic} means quasi-parabolic together with a weight structure. However, unlike in the introduction, we will never use the terminology ``parabolic " in the common sense  (i.e. with weight structure) in the main text of the paper and so from Section \ref{generalities} we will say for short ``parabolic " instead of ``quasi-parabolic " as there will be no possible confusion.} on a rank $n$  vector bundle $\calE$ on $\bbP^1_{\F_q}$ an $r$-tuple $E=(E_{\a_1},\dots,E_{\a_r})$ of infinite non-increasing sequences

$$E_{\b_i}\,:\,\calE(\b_i)=E_{i0}\supseteq E_{i1}\supseteq\cdots \supseteq E_{ir_i}\supseteq\cdots$$ of $\F_q(\b_i)$-vector spaces with only finitely many non-zero terms. A vector bundle equipped with a quasi-parabolic structure is called  a \emph{quasi-parabolic bundle}.

The \emph{type} of a quasi-parabolic structure $E$ is the $r$-tuple $\s=(s_1,\dots,s_r)$  of non-increasing sequences $s_i: s_{i0}=n\geq s_{i1}\geq\cdots\geq s_{il_i}\geq\cdots$  such that ${\rm dim}\, E_{i,j}=s_{i,j}$ for all $i,j$.

A quasi-parabolic vector bundle is \emph{geometrically indecomposable} if it remains indecomposable after extension of scalars from $\F_q$ to $\overline{\F}_q$ and a quasi-parabolic structure $E$ on $\calE$  is (geometrically) indecomposable  if the quasi-parabolic bundle $(\calE,E)$ is (geometrically) indecomposable.

We denote by $\calA_{\s,D}^\calE(q)$ the number of isomorphism classes of geometrically indecomposable quasi-parabolic structures of type $\s$ on $\calE$. 
\bigskip

As a motivation for studying $\calA_{\s,D}^\calE(q)$, consider the total counting 

$$
\calA_{\s,D}^{n,d}(q):=\sum_\calE\calA_{\s,D}^\calE(q)
$$
where the sum is over the isomorphism classes of vector bundles of degree $d$ and rank $n$. This sum is finite by Theorem \ref{finiteness}. It is also conjectured to be independent on $d$ (see Conjecture \ref{independent}). 
\bigskip

When $(\s, d)$ is sufficiently general and assuming that ${\rm deg}(\a_i)=1$ for all $i=1,\dots,r$, the counting $\calA_{\s,D}^{n,d}(q)$  gives the Poincar\'e polynomial of a certain moduli space of stable parabolic meromorphic Higgs bundles.  As Schiffmann told me, this follows from an adaptation to the parabolic case of his strategy  in the non-parabolic case \cite{Schiffmann}  using the work of Lin \cite{lin}. 

Via non-abelian Hodge theory and the Riemann-Hilbert monodromy map, these moduli spaces of stable parabolic meromorphic Higgs bundles are diffeomorphic to certain character varieties with generic semisimple local monodromy of type $\s$ at punctures. The existence of the diffeomorphism arising from non-abelian Hodge theory we want is a particular case of Biquard, Boalch \cite[Theorem 5]{B} \cite{BB} by setting the \emph{irregular types} to be zero. The tame case (i.e. the particular case of \cite{B}\cite{BB} where the irregular types equal zero) was known before but does not seem to be explicitly written in the literature in the form we want. As Boalch explained to me the tame case follows from the work of many authors including Simpson \cite{Simpson}, Konno \cite{Konno} and Nakajima \cite{Nak}.

The above character varieties (studied in \cite{aha}) are of the following form. Let  $\bfC_\s=(C_1,\dots,C_r)$ be a \emph{generic} tuple of semisimple conjugacy classes of $\GL_n(\C)$ of type $\s$, namely for each $i=1,\dots,r$, the multiplicities of the eigenvalues of $C_i$ are given by the successive differences $s_{ij}-s_{i(j+1)}$, then define the affine GIT quotient

$$
\M_{\bfC_\s}:=\{(x_1,\dots,x_r)\in C_1\times\cdots\times C_r\,|\, x_1\cdots x_r=1\}/\!/\GL_n(\C),
$$
where $\GL_n(\C)$ acts diagonally by conjugation. 

Then it follows from the above discussion that for a general $(\s,d)$ and some $\bfC_\s$ as above we have

$$
\calA_{\s,D}^{n,d}(q)=q^{-\frac{1}{2}{\rm dim}\M_{\bfC_\s}}\sum_i{\rm dim}(H_c^{2i}(\M_{\bfC_\s},\C))\, q^i.
$$

A  computable formula for $\calA_{\s,D}^{n,d}(q)$ was not given in the literature. However, using the work of Lin \cite{lin}, it may be possible (at least in theory) to extend Schiffmann's work in the non-parabolic case \cite{Schiffmann} to get a formula for $\calA_{\s,D}^{n,d}(q)$ (not only for $\bbP^1$ but also for higher genus curves). 
\bigskip

One motivation for computing $\calA_{\s,D}^{n,d}(q)$ is to bring evidences for the conjectural formula for the mixed Hodge polynomial of $\M_{\bfC_\s}$ \cite{aha} which formula involves Macdonald polnomials. Schiffmann's type formulas (even in the non-parabolic case where everything is explicit) are not easily comparable with the conjectural formulas in \cite{aha}\cite{HRV}. In this paper we give a formula for the individual quantities $\calA_{\s,D}^\calE(q)$ (see Theorem \ref{MAINtheo2}) using the same kind of technics as in \cite{aha}\cite{HLRV}\cite{ICQV} (we could generalize our work to higher genus curves assuming that $\calE$ is semisimple). The resulting formula  for $\calA_{\s,D}^{n,d}(q)$ looks similar to those in \cite{aha} even if we can not yet prove any precise relation. For instance we see the appearance of Hall-Littlewood polynomials which are specializations of Macdonald polynomials.

\bigskip

 An other motivation for studying $\calA_{\s,D}^{n,d}(q)$ comes from  a conjecture of Drinfled \cite{Dr} on the existence of a Lefschetz trace formula for the counting of certain $l$-adic local systems on smooth projective curves in positive characteristics.  In this direction, Deligne \cite[Theorem 3.5]{Del} related the quantity $\calA_{\s,D}^{n,d}(q)$  with the counting of certain $l$-adic local systems with some constraints on local monodromy. 
 \bigskip
 
We can also see the study of the individual terms $\calA_{\s,D}^\calE(q)$ as a generalisation of Kac's work on quiver representations of star-shaped quivers. Indeed in the special case where $\calE=\calO(a)^n$ and $d_1=\cdots=d_r=1$, the counting $\calA_{\s,D}^\calE(q)$ is given by the evaluation at $q$ of the so-called Kac polynomial which was introduced 35 years ago and  which still very well studied in the literature  (see \S \ref{Kacpoly} and Remark \ref{kacpolynomial} for more details). Also in the case $\calE=\calO(a)^n$ and $d_1=\cdots=d_r=1$, a cohomological interpretation of $\calA_{\s,D}^\calE(q)$ is given in \cite{HLRV} (see also \cite{crawley-etal} when $\s$ is \emph{indivisible}, i.e. $\text{gcd}(s_{i,j})=1$). In this paper we give a conjectural cohomological interpretation (see \S \ref{conjsec} below) of the terms $\calA_{\s,D}^\calE(q)$ for any $\calE$ and arbitrary $d_1,\dots,d_r$. We prove this conjecture for $\calE=\calO(a)^n$ and arbitrary degrees $d_1,\dots,d_r$ by adapting the ideas of \cite{HLRV}. For a general $\calE$, the difficulty for proving the conjecture comes the fact that ${\rm Aut}(\calE)$ is not reductive and so we can not use GIT.

\bigskip

Before stating precisely  the results of this paper let us remark (see Corollary \ref{s'bis}) that the quantity $\calA_{\s,D}^\calE(q)$ depends only on the multi-partition $\nuhat=(\nu^1,\dots,\nu^r)$ of $n$ arising from $\s=(s_1,\dots,s_r)$ where the parts of $\nu^i$ are given by the successive differences $s_{ij}-s_{i(j+1)}$ of the terms of the sequence $s_i$. Therefore in the following we will use the notation $\calA_{\nu,D}^\calE(q)$ instead of $\calA_{\s,D}^\calE(q)$.
\bigskip

\subsection{The main conjecture} \label{conjsec}

 \bigskip

Consider a reduced divisor $\mathcal{D}=\a_1+\cdots+\a_l$ on $\bbP^1_\C$. Choose a \emph{generic} $l$- tuple $\bfS=(S_1,\dots,S_l)$ of semisimple regular adjoint orbits of $\gl_n(\C)$ (see \S \ref{Higgs} for the definition of generic tuples). Given a rank $n$ vector bundle $\calE$ on $\bbP^1_\C$ we define 

$$
X_\bfS^\calE=X_{\bfS,\mathcal{D}}^\calE:=\left.\left\{\varphi: \calE\rightarrow\calE\otimes\Omega_1(\mathcal{D})\,\right|\, {\rm Res}_{\a_i}(\varphi)\in S_i, \text{ for all }i=1,\dots,l\right\}.
$$

The action of ${\rm Aut}(\calE)$ on $X_\bfS^\calE$ induces a free action of ${\rm PAut}(\calE):={\rm Aut}(\calE)/\C^\times$.

We expect that the categorical quotient $\calX_\bfS^\calE$ of $X_\bfS^\calE$ by ${\rm PAut}(\calE)$ exists (in the category of algebraic variety) and that the quotient map is a principal ${\rm PAut}(\calE)$-bundle in the \'etale topology. We also expect that the quotient $\calX_\bfS^\calE$ is non-singular with pure mixed Hodge structure and has polynomial count (see Conjecture \ref{congind}). In particular, conjecturally  it has vanishing odd cohomology.

 This is the case when $\calE=\calO(a)^n$ (see \cite[Theorem 2.2.4]{aha}) as  ${\rm Aut}(\calE)=\GL_n$ and 

$$
X_\bfS^{\calO(a)^n}\simeq \left.\left\{(X_1,\dots,X_l)\in S_1\times\cdots\times S_l\,\right|\, X_1+\cdots+X_l=0\right\}.
$$

For now we may define $\calX_\bfS^\calE$ as the quotient stack $[X_\bfS^\calE/{\rm PAut}(\calE)]$. 

Fix a multi-partition $\muhat=(\mu^1,\dots,\mu^l)\in(\calP_n)^l$ and let $S_\muhat$ be the stabilizer of $\muhat$ in $\frak{S}_l$ acting on $(\calP_n)^l$ by permutation. The group $S_\muhat$ is of the form $\prod_{i=1}^s\frak{S}_{l_i}$ where $l_1,\dots,l_s$ are the multiplicities of the distinct coordinates $\mu^{h_1},\dots,\mu^{h_s}$ of $\muhat$. Define the $\C[(\frak{S}_n)^l]$-module

$$
R_\muhat={\rm ind}_{\frak{S}_{\mu^1}}^{\frak{S}_n}(1)\boxtimes\cdots\boxtimes {\rm Ind}_{\frak{S}_{\mu^l}}^{\frak{S}_n}(1),
$$
where for a partition $\lambda=(\lambda_1,\dots,\lambda_s)$, we put $\frak{S}_\lambda:=\prod_{i=1}^s\frak{S}_{\lambda_i}$ and ${\rm Ind}_{\frak{S}_\lambda}^{\frak{S}_n}(1)$ is the $\C[\frak{S}_n]$-module induced from the trivial $\C[\frak{S}_\lambda]$-module. The action of $(\frak{S}_n)^l$ on $R_\muhat$ extends in the obvious way to an action of $(\frak{S}_n)^l\rtimes S_\muhat$.

Say now that a reduced divisor on $\bbP^1_{\F_q}$ is compatible with $w\in S_\muhat=\prod_{i=1}^s\frak{S}_{l_i}$ if it is of the form 

$$
D_w=\sum_{i=1}^s\sum_{j=1}^{p_i}\a_{i,j},
$$
where for each $i=1,\dots,s$, the degrees $d_{i,1},\dots,d_{i,p_i}$ of $\a_{i,1},\dots,\a_{i,p_i}$  form a partition of $l_i$ that gives the cycle-type decomposition of the coordinate of $w$ in $\frak{S}_{l_i}$. We let $r=\sum_{i=1}^sp_i$ be the total number of closed points of $D_w$ and denote by $\muhat_w\in(\calP_n)^r$ the multi-partition which has coordinate $\mu^{h_i}$ at $\a_{i,j}$. 

The following conjecture is the main conjecture of the paper (see Conjecture \ref{conjgeneral}).

\begin{conjecture} There exists a structure of $\C[(\frak{S}_n)^l\rtimes\frak{S}_l]$-module on $H_c^i(\calX_\bfS^\calE,\C)$ such that for any $\muhat\in(\calP_n)^l$, any $w\in S_\muhat$, any finite field $\F_q$ and any reduced divisor $D_w$ on $\bbP^1_{\F_q}$ compatible with $w$ we have

$$
\calA_{\muhat_w,D_w}^\calE(q)=q^{-\frac{1}{2}{\rm dim}\, \calX_{\bfS}^\calE}\sum_i{\Tr}\left(w\,|\, W_\muhat^{2i}\right)\, q^i,
$$
where $W_\muhat^i:={\rm Hom}_{(\frak{S}_n)^l}\left(R_\muhat,\varepsilon^l\otimes H_c^{2i}(\calX_\bfS^\calE,\C)\right)$, $\varepsilon^l:=\underbrace{\varepsilon\boxtimes\cdots\boxtimes\varepsilon}_l$ with $\varepsilon$ the sign representation of $\frak{S}_n$ and where $S_\muhat$ acts on $W_\muhat^i$ as $(w\cdot f) (v)=w\cdot f(w^{-1}\cdot v)$.
\label{MAINconj}\end{conjecture}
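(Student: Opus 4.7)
The plan is to extend the strategy of \cite{HLRV} from the case $\calE = \calO(a)^n$ to general $\calE$. On the arithmetic side, one first establishes a closed-form expression for $\calA_{\muhat_w,D_w}^\calE(q)$ (the content of Theorem \ref{MAINtheo2} alluded to in the introduction) via a ``logarithm in the Hall algebra'' argument: expand the generating series of isomorphism classes of parabolic bundles on $\calE$ over $\F_q$ as an Euler product over indecomposable classes, invert via $\Log$, and reduce the counting to character computations on ${\rm Aut}(\calE)(\F_q)$. Since ${\rm Aut}(\calE)$ acquires a parabolic subgroup structure from the Harder-Narasimhan filtration of $\calE$, Green-type formulas for the associated reductive Levi should give $\calA_{\muhat_w,D_w}^\calE(q)$ as a sum indexed by symmetric-function data involving Hall-Littlewood polynomials, explaining their appearance advertised in the introduction.

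On the geometric side, one must (i) construct $\calX_\bfS^\calE$ as a genuine algebraic variety and (ii) verify that it has polynomial count with pure mixed Hodge structure, as encoded in the expected Conjecture \ref{congind}. For $\calE = \calO(a)^n$, ${\rm Aut}(\calE) = \GL_n$ is reductive, GIT produces (i), and (ii) follows from the identification $X_\bfS^{\calO(a)^n} \simeq \{(X_1,\dots,X_l) : \sum X_i = 0\}$ together with the purity result \cite[Theorem 2.2.4]{aha}. For general $\calE$, the non-reductivity of ${\rm Aut}(\calE)$ rules out GIT, and some substitute is required---perhaps stratifying by Harder-Narasimhan type of the Higgs field and gluing étale-local quotients, leveraging the freeness of the ${\rm PAut}(\calE)$-action on $X_\bfS^\calE$ guaranteed by genericity of $\bfS$. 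Once (i) and (ii) are in place, Katz's theorem equates $|\calX_\bfS^\calE(\F_q)|$ with the $E$-polynomial at $q$, matching the dimension shift $q^{-\frac{1}{2}\dim\calX_\bfS^\calE}$ in the statement.

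The core combinatorial input is the identification of the $w$-twist. A generic $l$-tuple $\bfS$ over $\C$ carries a family of $\F_q$-forms parametrised by $(\frak{S}_n)^l \rtimes \frak{S}_l$: the $\frak{S}_n$-factors act as the Weyl groups permuting the eigenvalues within each orbit $S_i$, while $\frak{S}_l$ permutes orbits. Choosing the $\F_q$-form corresponding to $w \in S_\muhat$ has the geometric meaning that Frobenius permutes both closed points of $\mathcal{D}$ and their eigenvalue labels according to the cycle-type of $w$---exactly the content of $D_w$ being compatible with $w$ and of it carrying the multi-partition $\muhat_w$. Applying Grothendieck-Lefschetz to Frobenius composed with $w$ on $H_c^\bullet(\calX_\bfS^\calE,\C)$, together with the isotypic decomposition under $(\frak{S}_n)^l$ (tensored with the sign twist $\varepsilon^l$ familiar from Hitchin-type moduli), singles out the $R_\muhat$-isotypic component as $W_\muhat^\bullet$ and yields the desired trace formula. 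The match of the two sides then reduces to a symmetric-function identity comparing the arithmetic expression of paragraph one with the evaluation on the geometric side, closely parallel to the key identity of \cite{HLRV}.

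The main obstacle is item (i) above: constructing $\calX_\bfS^\calE$ as a variety when ${\rm Aut}(\calE)$ is not reductive. Even granting a stacky definition and working with $[X_\bfS^\calE/{\rm PAut}(\calE)]$, establishing polynomial count and purity without a global GIT presentation appears to require genuinely new input---possibly an inductive argument over Harder-Narasimhan strata, or an embedding of $\calX_\bfS^\calE$ into a global Hitchin-type moduli space followed by the identification of a connected component. A natural intermediate test case is $\calE$ a direct sum of distinct line bundles, where ${\rm Aut}(\calE)$ is a parabolic subgroup of $\GL_n$; here one may hope to reduce to the $\GL_n$-case via parabolic induction, checking the conjecture in a regime still rich enough to exhibit all the combinatorial phenomena while isolating the non-reductivity difficulty.
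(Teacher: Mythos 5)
This statement is the paper's main conjecture: it is proved there only for $\calE=\calO(a)^n$ (Theorem \ref{theotriv}) and otherwise supported by partial evidence, so neither you nor the paper has a complete proof; your outline reproduces essentially the paper's own strategy (\S\ref{strategy}): the arithmetic formula of Theorem \ref{theo2}, the expansion of $h_{\muhat}$ into power sums to isolate the $R_\muhat$-isotypic part, a point count of the twisted spaces $X_{\bfS_\lambdahat,D_w}^\calE$ via Fourier analysis (Theorems \ref{avantlast} and \ref{lasttheo}), and the acknowledged obstruction that ${\rm Aut}(\calE)$ is not reductive. Two imprecisions are worth flagging. First, for a general power sum $p_\lambdahat$ the Harish-Chandra/Green-function formalism of your first paragraph is not enough: the paper must pass to Deligne--Lusztig induction from twisted maximal tori $T_v$ with $v$ of cycle type $\lambdahat$, and the sign $\varepsilon^l$ in $W_\muhat^i$ is not a ``Hitchin-type'' twist but comes from the Kazhdan--Springer formula $\calF^{\gl_n(q^d)}(1_{S_v})=\varepsilon(v)\,q^{\frac{1}{2}\dim S_v}R_{\t_v(q^d)}^{\gl_n(q^d)}(\eta_{S_v})$. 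Second, ``applying Grothendieck--Lefschetz to Frobenius composed with $w$'' presupposes that $w$ acts, but $\frak{H}$ does not act on the variety, only (conjecturally) on its cohomology; the action must be manufactured from the family $\pi:\calX_\muhat\to\t_\muhat^{\rm gen}$ whose sheaves $R^i\pi_!\C$ are constant (the Nakajima/Maffei/Crawley-Boevey--van den Bergh mechanism), and one must then compare $(wF)^*$ on a $wF$-stable form with $F^*\circ\rho^i(w)$ on an $F$-stable form as in the proof of Theorem \ref{theo2.5}. For general $\calE$ this construction is exactly as open as the existence and purity of $\calX_\bfS^\calE$ (Conjecture \ref{congind}), so the gap you identify in your last paragraph is in fact two intertwined gaps, and your suggested intermediate case (sums of distinct line bundles, so that $\calH_\omhat^\m$ is computable by Corollary \ref{polynomialitymm}) is consistent with the partial results the paper actually obtains.
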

\bigskip

\begin{remark} The expectation that the quantities $\calA_{\muhat,D}^\calE(q)$ are given by polynomials with coefficients in the character ring of products of symmetric groups (see Conjecture \ref{mainconj1} for a precise statement) was suggested to me by Deligne. The above conjecture gives a geometric realization of Deligne's expectation.
\end{remark}

Following ideas of Nakajima \cite{nakajima2}, Crawley-Boevey and van den Bergh \cite{crawley-etal} we can define an action of the group  $\frak{H}:=(\frak{S}_n)^l\rtimes\frak{S}_l$ on the compactly supported cohomology $H_c^i(\calX_{\bfS}^{\calO(a)^n},\C)$ (see \S \ref{theindivcase} for more details) and we prove that the above conjecture is true when $\calE=\calO(a)^n$ (see Theorem \ref{theotriv}). The difficulty is that the group $\frak{H}$ does not act on the variety itself but only on its cohomology. The same strategy for constructing the action of $\frak{H}$ on $H_c^i(\calX_\bfS^\calE,\C)$ should work also for a general $\calE$ but some theoritical arguments are more delicate due to the fact that ${\rm Aut}(\calE)$ is not a reductive group for instance.

When $\muhat=(\mu^1,\dots,\mu^l)\in(\calP_n)^l$ is \emph{indivisible} (i.e. when the gcd of all parts of the partitions $\mu^1,\dots,\mu^l$ equals $1$)  we have a more direct conjectural geometrical interpretation of $\calA_{\muhat_w,D_w}^\calE(q)$ (see Conjecture \ref{congind}).

\subsection{Main results}\label{section2}

Let $D=\b_1+\cdots+\b_r$ be a reduced divisor on $\bbP^1_{\F_q}$ with $d_i={\rm deg}(\b_i)$ and let $\muhat=(\mu^1,\dots,\mu^r)\in(\calP_n)^r$. We fix a vector bundle $\calE$ on $\bbP^1_{\F_q}$ of rank $n$.

We first start with a representation theoritical interpretation of $\calA_{\muhat,D}^\calE(q)$. 

For each $i$, consider the subgroup $L_i=\GL_{\mu^i_1}\times\cdots\times \GL_{\mu^i_{s_i}}$ of $\GL_n$ where $(\mu^i_1,\dots,\mu^i_{s_i})=\mu^i$. For $i=1,\dots, r$, put $L_i(q^d):=L_i(\F_q(\b_i))$ and denote by $R_{L_i(q^{d_i})}^{\GL_n(q^{d_i})}$ the Harish-Chandra induction which is a $\Z$-linear map from the ring of complex characters of $L_i(q^{d_i})$ to the ring of complex characters of $\GL_n(q^{d_i})$. 

We prove the following result (see Theorem \ref{theo}).

\begin{theorem}For any \emph{generic} $r$-tuple $(\alpha_1,\dots,\alpha_r)$ of linear characters of $L_1(q^{d_1}),\dots,L_r(q^{d_r})$ we have

$$
\calA_{\muhat,D}^\calE(q)=\frac{1}{|{\rm Aut}(\calE)|}\sum_{f\in{\rm Aut}(\calE)}\prod_{i=1}^rR_{L_i(q^{d_i})}^{\GL_n(q^{d_i})}(\alpha_i)(f(\b_i)),
$$
where $f\mapsto f(\b_i)$ is the evaluation map ${\rm Aut}(\calE)\rightarrow \GL_n(q^{d_i})$.
\label{MAINtheo1}\end{theorem}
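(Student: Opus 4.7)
The approach is a Burnside-type orbit count combined with the classical formula for Harish-Chandra induction, in the spirit of \cite{HLRV}. One starts from the standard identity
$$
R_{L_i(q^{d_i})}^{\GL_n(q^{d_i})}(\alpha_i)(g)=\sum_{F}\alpha_i\bigl(g|_{\mathrm{gr}\,F}\bigr),
$$
where $F$ runs over the flags of type $\mu^i$ in $\F_q(\b_i)^n$ stabilised by $g\in\GL_n(q^{d_i})$ and $g|_{\mathrm{gr}\,F}\in L_i(q^{d_i})$ is the induced action on the associated graded; this is a direct consequence of Mackey's formula applied to the parabolic induction from the standard parabolic $P_i$ with Levi $L_i$.

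Now a quasi-parabolic structure $E$ of type $\muhat$ on $\calE$ is the data of a flag $E_i$ of type $\mu^i$ in $\calE(\b_i)$ at each closed point $\b_i$, and an $f\in\Aut(\calE)$ preserves $E$ iff $f(\b_i)\cdot E_i=E_i$ for every $i$, i.e.\ iff $f\in\Aut(\calE,E)$. Substituting the above expression for each factor into the right-hand side of the theorem and interchanging the order of summation,
$$
\mathrm{RHS}=\frac{1}{|\Aut(\calE)|}\sum_{E}\sum_{f\in\Aut(\calE,E)}\widetilde{\alpha}(f),
$$
where $\widetilde{\alpha}(f):=\prod_{i=1}^r\alpha_i\bigl(f(\b_i)|_{\mathrm{gr}\,E_i}\bigr)$ defines a linear character of $\Aut(\calE,E)$. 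Regrouping the $E$'s into $\Aut(\calE)$-orbits, using $|\mathrm{orbit}(E)|=|\Aut(\calE)|/|\Aut(\calE,E)|$, yields
$$
\mathrm{RHS}=\sum_{[E]}\bigl\langle\widetilde{\alpha},\mathbf{1}\bigr\rangle_{\Aut(\calE,E)},
$$
which by orthogonality of characters counts the isomorphism classes $[E]$ on which $\widetilde{\alpha}$ is trivial.

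The remaining step is to identify those classes as precisely the geometrically indecomposable ones. Writing $A:=\mathrm{End}(\calE,E)$, its Jacobson radical $\frakm$ is nilpotent, so for $f\in 1+\frakm$ the endomorphism $f(\b_i)-\mathrm{id}$ is nilpotent on $\calE(\b_i)$ and induces a unipotent element of $L_i(q^{d_i})$ on $\mathrm{gr}\,E_i$; since each $\alpha_i$ is a linear character of the reductive group $L_i(q^{d_i})$, it vanishes on unipotents, so $\widetilde\alpha$ factors through the reductive quotient $(A/\frakm)^\times$ of $\Aut(\calE,E)$. By Krull--Schmidt this quotient is a product of multiplicative groups of the residue fields of the indecomposable summands of $(\calE,E)$, and the genericity hypothesis on $(\alpha_1,\ldots,\alpha_r)$ from \S\ref{Higgs} is tailored so that: (i) $\widetilde\alpha$ is trivial on the scalar subgroup $\F_q^\times$ when $A/\frakm=\F_q$, i.e.\ when $(\calE,E)$ is geometrically indecomposable; (ii) for any non-trivial decomposition of $(\calE,E)$ over $\overline\F_q$, there exists a ``partial-scalar'' torus on which $\widetilde\alpha$ is non-trivial. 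The main obstacle is this last step: one must translate the (a priori character-theoretic) genericity condition into the language of Krull--Schmidt decompositions of quasi-parabolic bundles, and in particular handle carefully the partial scalars coming from residue field extensions $\F_{q^e}/\F_q$ that appear when an indecomposable bundle over $\F_q$ fails to be geometrically indecomposable.
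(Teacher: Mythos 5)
Your opening moves (the induced-character formula for $R_{L_i(q^{d_i})}^{\GL_n(q^{d_i})}(\alpha_i)$ as a sum over stabilized flags, the interchange of summation, and the regrouping into $\Aut(\calE)$-orbits to obtain $\sum_{[E]}\langle\tilde\alpha,1\rangle_{\Aut(\calE,E)}$) are correct and coincide with the paper's Burnside set-up (Proposition \ref{burn}, Lemma \ref{lemma}). Where you genuinely diverge is the endgame. The paper does not treat an arbitrary generic tuple directly: it first proves the formula for the special generic tuple $(1,\dots,1,\alpha\circ N\circ{\rm det})$ (Proposition \ref{propindec}), where the twisting character is a linear character of all of $\Aut(\calE)$, so that $\sum_{h\in H}(\alpha\circ{\rm det})(h)=0$ for any stabilizer $H$ containing a non-central semisimple element follows at once from orthogonality; it then proves separately (Theorem \ref{indep}, via the intermediate formula of Theorem \ref{intermediate}) that the right-hand side is independent of the choice of generic tuple. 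Your route, if completed, would bypass Theorem \ref{indep} for the purposes of this statement, which is a real economy (although the paper needs Theorem \ref{intermediate} later in any case, e.g.\ for Theorem \ref{theo2}).

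The step you explicitly defer, however, is where the content of the theorem sits, and as written it is a gap. What must be shown is that if $(\calE,E)$ is not geometrically indecomposable then $\tilde\alpha$ is non-trivial on $\Aut(\calE,E)$. Concretely: by Proposition \ref{indecriterion}, $\Aut(\calE,E)$ contains a non-central semisimple element, which after conjugation one may take to be $s\in L_\calE(q)$; setting $M=C_{\GL_n}(s)$, a proper Levi, one has $Z_M\subset C_{\GL_n}(T)=T\subset {\rm Stab}_{\GL_n(k(\a_i))}(E_i)$ for any maximal torus $T$ of that stabilizing parabolic containing $s$, whence $Z_{\GL_n}(q)\subsetneq Z_M(q)\subset\Aut(\calE,E)$; one must then check that $Z_M(q)$, projected to the Levi quotient of each stabilizing parabolic, lands in a $\GL_n(q^{d_i})$-conjugate $g_iL_i(q^{d_i})g_i^{-1}$ (so that the hypothesis of the genericity definition is actually met) and that $\tilde\alpha|_{Z_M(q)}=\prod_i(^{g_i}\alpha_i)|_{Z_M(q)}$, which genericity declares non-trivial. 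This is precisely the argument carried out in the paper's proof of Proposition \ref{propindec} for a single twisting character; you need it uniformly in all $r$ factors. Note also that the genericity condition you invoke is the one defined just above Theorem \ref{theo} for linear characters of the $L_i(q^{d_i})$, not the one of \S\ref{Higgs}, which concerns adjoint orbits. Once this lemma is supplied, your proof closes and gives a legitimate alternative to the paper's two-step strategy.
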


We use this result to prove our second result. 

Write 

$$
\calE=\bigoplus_{i=1}^f\calO(b_i)^{m_i},
$$
with $b_1>b_2>\cdots>b_f$, and  for $\v=(v_1,\dots,v_r)\in\N^r$ put 

$$
\calE^\v:=\bigoplus_{i=1}^f\calO(b_i)^{v_i}
$$
and $|\v|=\sum_{i=1}^rv_i$.

For  a multipartition $\lambdahat=(\lambda^1,\dots,\lambda^r)$ and $\v=(v_1,\dots,v_r)$ such that $|\lambda^1|=\cdots=|\lambda^r|=|\v|$ define 

$$
\calH_{\lambdahat,D}^\v(q):=\frac{\#\{h\in{\rm Aut}(\calE^\v)\,|\, h(\b_i)\in C_{\lambda^i}(q^{d_i}) \text{ for all }i=1,\dots,r\}}{|{\rm Aut}(\calE^\v)|},
$$
where $C_{\lambda^i}(q^{d_i})$ denotes the unipotent conjugacy class of $\GL_n(q^{d_i})$ of Jordan type $\lambda^i$ (i.e. the size of the Jordan blocks are given by the parts of $\lambda^i$).

Let $Y=\{Y_1,\dots,Y_r\}$ be an other set of commuting variables. Then for any integer $s>0$ and any multipartition $\lambdahat=(\lambda^1,\dots,\lambda^r)\in(\calP_s)^r$, we define 

$$
\calH_{\lambdahat,D}(q,Y):=\sum_{\v, |\v|=s}\calH_{\lambdahat,D}^\v(q)\, Y^\v,
$$
where $Y^\v:=Y_1^{v_1}\cdots Y^{v_r}_r$ if $\v=(v_1,\dots,v_r)$.

For each $i=1,\dots,r$, let $\x_i=\{x_{i,1},x_{i,2},\dots\}$ be an infinite set  of commuting variables and denote by $\Lambda(\x_i)$ the ring of symmetric functions in the variables of $\x_i$.

Put 

$$
\Omega_D(q)=\Omega_D(\x_1,\dots,\x_r; q,Y):=1+\sum_{s>0}\sum_{\lambdahat=(\lambda^1,\dots,\lambda^r)\in(\calP_s)^r}\calH_{\lambdahat,D}(q,Y)\prod_{i=1}^r\tilde{H}_{\lambda^i}(\x_i; q^{d_i}),
$$
where for a partition $\lambda$ of $s$, $\tilde{H}_\lambda(\x_i;q)$ is the modified Hall-Littlewood symmetric function which is of homogeneous degree $s$ (see \S \ref{Hall}).

For a partition $\mu$ denote by $h_\mu(\x_i)$ the corresponding complete symmetric function. 

We prove the following theorem (see Theorem \ref{theo2}).

\begin{theorem}We have

$$
\calA_{\muhat,D}^\calE(q)=\left\langle {\rm Coeff}\,_{Y^\m}\left[(q-1)\Log\,\Omega_D(q)\right],h_{\mu^1}(\x_1)\cdots h_{\mu^r}(\x_r)\right\rangle,
$$
where $\m=(m_1,\dots,m_r)$, $\Log$ is the pletystic logarithm (see \S\ref{relation} and \S\ref{Re-writting}) and $\left\langle\,,\,\right\rangle$ is the usual Hall pairing on symmetric functions.
\label{MAINtheo2}\end{theorem}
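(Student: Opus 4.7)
My plan is to derive Theorem \ref{MAINtheo2} from Theorem \ref{MAINtheo1} by two successive manipulations: a character-theoretic conversion that turns each Harish--Chandra induced character into a modified Hall--Littlewood value, followed by a plethystic exponential argument over the category of vector bundles on $\bbP^1$ that produces the $\Log$.

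Starting from the formula in Theorem \ref{MAINtheo1}, the first task is to rewrite each factor $R_{L_i(q^{d_i})}^{\GL_n(q^{d_i})}(\alpha_i)(f(\b_i))$ in symmetric-function language. For a generic linear character $\alpha_i$ of the Levi $L_i(q^{d_i})=\prod_j\GL_{\mu^i_j}(\F_{q^{d_i}})$, genericity should eliminate all semisimple contributions except the trivial one, so the character value depends only on the unipotent Jordan type $\lambda^i$ of $f(\b_i)$. Via Green's theorem and the Frobenius characteristic map, this unipotent value is identified with the pairing of the modified Hall--Littlewood polynomial $\tilde H_{\lambda^i}(\x_i;q^{d_i})$ against $h_{\mu^i}(\x_i)$. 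Substituting into Theorem \ref{MAINtheo1} and pulling the Hall inner product outside yields
\begin{equation*}
\calA_{\muhat,D}^\calE(q)\;=\;\Bigl\langle\,\frac{1}{|\Aut(\calE)|}\sum_{f\in\Aut(\calE)}\prod_{i=1}^r\tilde H_{\lambda^i(f)}(\x_i;q^{d_i}),\;h_{\mu^1}(\x_1)\cdots h_{\mu^r}(\x_r)\,\Bigr\rangle,
\end{equation*}
where $\lambda^i(f)$ is the Jordan type of $f(\b_i)$ and the sum is effectively restricted to $f$ with unipotent evaluations at every puncture.

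Next one regroups the sum by Jordan type: by the very definition of $\calH_{\lambdahat,D}^{\m}(q)$, the inner double sum becomes $\mathrm{Coeff}_{Y^\m}[\Omega_D(q)-1]$. The remaining task is to replace $\Omega_D-1$ by $(q-1)\Log\Omega_D$. This follows from a standard plethystic exponential identity: because vector bundles on $\bbP^1_{\F_q}$ admit unique Krull--Schmidt decompositions into line bundles and quasi-parabolic structures transport multiplicatively across direct summands, $\Omega_D$ is the plethystic exponential of the generating function of \emph{indecomposable} quasi-parabolic bundles. Applying $\Log$ therefore isolates the indecomposable contribution, and the prefactor $(q-1)$ absorbs the scalar subgroup $\F_q^\times\subset\Aut(\calE)$ that acts trivially on parabolic structures, converting the $1/|\Aut(\calE)|$-weighted counts into the $1/|\mathrm{PAut}(\calE)|$-weighted counts of isomorphism classes required on the left-hand side.

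The hardest step is the character identity in paragraph two. Establishing that $R_{L_i}^{\GL_n}(\alpha_i)$ evaluates to the stated Hall--Littlewood pairing requires handling the twisted groups $\GL_n(\F_{q^{d_i}})$ with $d_i>1$ uniformly, which means invoking Shintani descent to reduce to $\F_q$-computations, checking that the generic character $\alpha_i$ behaves compatibly with the Frobenius characteristic map after descent, and verifying that the non-unipotent Green-polynomial contributions cancel exactly (a version of the Whittaker-type vanishing used in \cite{HLRV}\cite{ICQV}). A secondary obstacle is the plethystic Exp identity in the presence of the $Y$-grading: one must show that the prescribed-unipotent-evaluation counts factor multiplicatively across the summands $\calO(b_i)^{v_i}$. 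This should follow from the upper-triangular structure of $\Aut(\calE^\v)$ induced by the Harder--Narasimhan filtration, together with the observation that $\calH_{\lambdahat,D}^\v$ depends only on the Levi quotient of $\Aut(\calE^\v)$, since the unipotent radical acts freely on the relevant unipotent conjugacy data at each puncture.
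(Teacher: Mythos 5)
There is a genuine gap at the central step. Your claim that genericity ``eliminates all semisimple contributions except the trivial one,'' so that the sum over $f\in{\rm Aut}(\calE)$ in Theorem \ref{MAINtheo1} is effectively restricted to $f$ with unipotent evaluations at every puncture, is false. Genericity is a condition on the whole tuple $(\alpha_1,\dots,\alpha_r)$ and it does not kill the terms where the $f(\b_i)$ have a common non-central semisimple part $l$ with centralizer a proper Levi $M$; it only controls the sum over $l\in(Z_M)_{\rm reg}(q)$ of the products $\prod_i\alpha_i(\cdots)$, which in the paper's Theorem \ref{intermediate} evaluates to the nonzero constant $(q-1)K_M^o$ (e.g.\ $(-1)^{s-1}(s-1)!(q-1)$ for a split $M$ with $s$ blocks). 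These non-unipotent types are precisely what generate the higher (plethystic and multinomial) terms of $\Log\,\Omega_D$, via the identity $C_\omhat^o=K_M^o/|W_{\GL_n(q)}(M,\calO_1,\dots,\calO_r)|$ and the factorization $\calH_\omhat(q,Y)=\prod_{(e,\lambdahat)}\calH_\lambdahat(q^e,Y^e)^{\omhat(e,\lambdahat)}$ of Proposition \ref{Hom}, which reflects the decomposition of the centralizer of $l$ in ${\rm Aut}(\calE)$ as a product of automorphism groups of bundles over extension fields. Your unipotent-only reduction would instead yield $\bigl\langle{\rm Coeff}_{Y^\m}[(q-1)(\Omega_D-1)],h_\muhat\bigr\rangle$, which picks out only the linear term of the $\Log$ and is not equal to $\calA_{\muhat,D}^\calE(q)$.

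The attempted repair in your third paragraph does not work either, because it misidentifies the source of the $\Log$. The Krull--Remak--Schmidt/$\Exp$ identity in this paper (Proposition \ref{Hua}) relates the generating function of \emph{all} parabolic structures $\calM_\s^\m$ to that of geometrically indecomposable ones $\calA_\s^\m$; it is not a statement about $\Omega_D$, which is built from the counts $\calH_{\lambdahat,D}^\v(q)$ of automorphisms with prescribed unipotent evaluations weighted by Hall--Littlewood functions, and $\Omega_D$ is not the plethystic exponential of a generating function of indecomposable parabolic bundles. The multiplicativity that actually matters is across the isotypic factors of $C_{{\rm Aut}(\calE)}(l)$ for semisimple $l$ (with field extensions accounting for the $\psi_e$ twists), not across Krull--Schmidt summands of $\calE$. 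Finally, the Shintani-descent machinery you invoke is unnecessary: the groups $\GL_n(\F_{q^{d_i}})$ are split groups over the residue fields $k(\a_i)$, and the needed identity $\langle\tilde H_\omhat,h_\muhat\rangle=\prod_i R_{L_i(q^{d_i})}^{\GL_n(q^{d_i})}(1)(l_M.\calO_i)$ is quoted directly from \cite[Corollary 2.2.3]{aha2}.
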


\begin{remark}The important feature of this result, which is crucial for us, is the fact that it separates the quasi-parabolic structure (on the right hand side of the pairing) from the data coming from the divisor $D$ and the vector bundle $\calE$ (on the left hand side). The strategy for proving the main conjecture is to decompose (as shown in \S \ref{strategy}) the complete symmetric functions in the basis of power symmetric functions from which we see the appearance of the $\frak{S}_n$-modules ${\rm Ind}_{\frak{S}_{\mu^i}}^{\frak{S}_n}(1)$ and  $\calX_\bfS^\calE$ in Conjecture \ref{MAINconj}.
\end{remark}
\bigskip

The problem of computing explicitly $\calA_{\muhat,D}^\calE(q)$ using Theorem \ref{MAINtheo2} is discussed in \S \ref{explicitcomp}. In the case where $\calE=\calO(b_1)^{m_1}\oplus\calO(b_2)^{m_2}$, there is an efficient way to compute $\calA_{\muhat,D}^\calE(q)$ and we can prove the following result (see Corollary \ref{polynomialitymm}).

\begin{theorem} Assume that $\calE=\calO(b_1)^{m_1}\oplus\calO(b_2)^{m_2}$. Then $\calA_{\muhat,D}^\calE(q)$ is given by the evaluation at $q$ of a polynomial with integer coefficients which depend only on the degrees $d_1,\dots,d_r$ of the points $\a_1,\dots,\a_r$ (and not on their position).
\label{introindep}\end{theorem}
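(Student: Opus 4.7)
The strategy is to apply Theorem \ref{MAINtheo2}, which expresses $\calA_{\muhat,D}^\calE(q)$ as the outcome of a sequence of universal operations---assembly of $\Omega_D(\x_1,\dots,\x_r;q,Y)$, multiplication by $(q-1)$, plethystic $\Log$, extraction of the $Y^\m$-coefficient, and Hall pairing against $h_{\mu^1}(\x_1)\cdots h_{\mu^r}(\x_r)$---applied to the collection of $\calH_{\lambdahat,D}^\v(q)$'s. Each of these operations preserves the class of formal series whose $Y$-coefficients are polynomials in $q$ with integer coefficients depending only on $d_1,\dots,d_r$, so it suffices to establish this property for every individual $\calH_{\lambdahat,D}^\v(q)$ with $\v=(v_1,v_2)\in\N^2$ and $\lambdahat\in(\calP_{|\v|})^r$.

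Since $\Hom(\calO(b_1),\calO(b_2))=0$ for $b_1>b_2$, the group $\Aut(\calE^\v)$ consists of the matrices $\bigl(\begin{smallmatrix}A&C\\0&B\end{smallmatrix}\bigr)$ with $A\in\GL_{v_1}(\F_q)$, $B\in\GL_{v_2}(\F_q)$ and $C\in V:=\mathrm{Mat}_{v_1\times v_2}\bigl(H^0(\bbP^1,\calO(b_1-b_2))\bigr)$, the evaluation at $\a_i$ being $\bigl(\begin{smallmatrix}A&C(\a_i)\\0&B\end{smallmatrix}\bigr)\in\GL_{|\v|}(\F_{q^{d_i}})$. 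Unipotency of the classes $C_{\lambda^i}(q^{d_i})$ forces $(A,B)$ to be unipotent. For each such pair let $T_i(A,B)\subset\mathrm{Mat}_{v_1\times v_2}(\F_{q^{d_i}})$ denote the set of $M$'s for which $\bigl(\begin{smallmatrix}A&M\\0&B\end{smallmatrix}\bigr)$ has Jordan type $\lambda^i$, and let $\rho:V\to\prod_{i=1}^r\mathrm{Mat}_{v_1\times v_2}(\F_{q^{d_i}})$ be the joint evaluation. The required inner count is $\#\rho^{-1}\bigl(\prod_i T_i(A,B)\bigr)$. Classical Green-type formulas for block-triangular matrices over a finite field give each $|T_i(A,B)|$ as a polynomial in $q^{d_i}$ with integer coefficients. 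Moreover $\rho=\rho_0^{\oplus v_1v_2}$ for the scalar map $\rho_0:H^0(\calO(b_1-b_2))\to\prod_i\F_{q^{d_i}}$; by the Chinese Remainder Theorem, $\rho_0$ is surjective exactly when $b_1-b_2\geq\sum_i d_i-1$, and in any case the $\F_q$-dimensions of $\ker\rho$ and $\mathrm{Im}\,\rho$ depend only on $b_1-b_2$ and $\sum_i d_i$. In the surjective regime the conditions at different points decouple and $\#\rho^{-1}\bigl(\prod_iT_i(A,B)\bigr)=|\ker\rho|\prod_i|T_i(A,B)|$, which summed over the (polynomially counted) unipotent pairs $(A,B)$ in $\GL_{v_1}(\F_q)\times\GL_{v_2}(\F_q)$ delivers the claim.

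The hard part will be the non-surjective regime $b_1-b_2<\sum_i d_i-1$, in which $\mathrm{Im}(\rho)$ is a proper $\F_q$-subspace of the target whose position \emph{a priori} depends on the actual positions of the $\a_i$. My plan to handle this is to expand each indicator $\mathbf{1}_{T_i(A,B)}$ in the additive characters of $(\mathrm{Mat}_{v_1\times v_2}(\F_{q^{d_i}}),+)$, so that the $C$-summation turns into a product of additive Gauss sums along $\rho$, each vanishing unless the corresponding character tuple annihilates $\mathrm{Im}(\rho)$. Because $T_i(A,B)$ is stable under the translations $M\mapsto M+AX-XB$ (coming from conjugation by unipotent block upper matrices) and under the Galois action of $\mathrm{Gal}(\F_{q^{d_i}}/\F_q)$, its Fourier coefficients are supported on a Galois-symmetric subset, and a careful accounting should reveal that the surviving contributions depend only on the $\F_q$-dimensions of $\ker\rho$ and $\mathrm{Im}\,\rho$---hence only on the degrees. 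Combining this with the polynomial count over the unipotent pairs $(A,B)$ will yield $\calH_{\lambdahat,D}^\v(q)$ as a polynomial in $q$ with integer coefficients depending only on $d_1,\dots,d_r$, and Theorem \ref{MAINtheo2} then completes the proof.
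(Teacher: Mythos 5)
Your reduction via Theorem \ref{MAINtheo2} to the individual counts $\#\{h\in{\rm Aut}(\calE^\v)\mid h(\a_i)\in C_{\lambda^i}(q^{d_i})\}$ is sound and parallels the paper's route (Formula (\ref{theo2bis}) together with Proposition \ref{Hom}). One bookkeeping point: the $\calH_{\lambdahat,D}^\v(q)$ are rational functions of $q$, not polynomials (they carry $|{\rm Aut}(\calE^\v)|$ in the denominator), so what you can hope to propagate through $\Log$ and the Hall pairing is ``rational function of $q$ depending only on the degrees''; to conclude you must then invoke, as the paper does, the fact that $\calA_{\muhat,D}^\calE(q)$ counts $\F_q$-points of a constructible set, which forces the rational function to be a polynomial with integer coefficients. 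Your treatment of the surjective regime $b_1-b_2\geq\sum_i d_i-1$ is correct.

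The genuine gap is exactly where you write ``a careful accounting should reveal'': the non-surjective regime, which is the only place position-dependence can enter and hence the entire content of the theorem. The count there is $|\ker\rho|\cdot|{\rm Im}(\rho)\cap\prod_iT_i(A,B)|$, and neither $\dim{\rm Im}(\rho)$ nor the cardinalities $|T_i(A,B)|$ determine this intersection. The Fourier transcription does not dissolve the difficulty: the sum becomes a sum over $({\rm Im}\rho)^{\perp}$ of products of Fourier coefficients of the $\mathbf{1}_{T_i(A,B)}$, and $({\rm Im}\rho)^{\perp}$ is cut out by relations of the form $\sum_i{\rm Tr}_{\F_{q^{d_i}}/\F_q}(\xi_i\,\a_i^{\,j})=0$ for $0\leq j\leq b_1-b_2$, which visibly depend on the positions of the $\a_i$. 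Translation-invariance and Galois-stability of $T_i(A,B)$ constrain the support of its Fourier transform but do not imply the sum depends only on $\dim({\rm Im}\rho)^{\perp}$; that assertion is essentially equivalent to the statement being proved and is left unproved. The paper closes this gap by a different device: it fixes the nilpotent diagonal part in Jordan form and shows that the condition ``Jordan type $\mu^i$ at $\a_i$'' is equivalent to an explicit conjunction of linear conditions ``$A^{s-1}X(\a_i)\in{\rm Im}(A^s)$'' and their negations (conditions (\ref{sub/comp})); inclusion--exclusion then reduces everything to counting preimages, under the joint evaluation map, of products of linear subspaces, whose dimensions are controlled by the degrees alone. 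Supplying such a linearization of your sets $T_i(A,B)$ (or a worked-out substitute for the ``careful accounting'') is what is missing.
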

\bigskip

Note that Conjecture \ref{MAINconj} implies that the above statement is true for any $\calE$.

Let us state our last main result which is proved using Theorem \ref{MAINtheo1}.

Assume that $\muhat$ is indivisible (namely the gcd of the parts of all coordinates of $\muhat$ equals $1$). Then under some condition on the characteristic of $\F_q$ (see \S \ref{geominter}) and for $q$ sufficiently large we can always make a choice of a \emph{generic} $r$-tuple $\A=(A_1,\dots,A_r)$ of semisimple adjoint orbits of $\gl_n(q^{d_1}),\dots,\gl_n(q^{d_r})$ of type $\muhat$, i.e. the multiplicities of the eigenvalues of $A_i$ coincides with the parts of the partition $\mu^i$.

We then consider

$$
X_{\A,D}^\calE:=\left\{\varphi:\calE\rightarrow\calE\otimes\Omega^1(D)\,\left|\, {\rm Res}_{\a_i}(\varphi)\in A_i \text{ for all }i=1,\dots,r\right.\right\}
$$
and we prove the following result using Fourier analysis (see Theorem \ref{maintheoHiggs}).

\begin{theorem}
$$
\frac{|X_{{\bf A},D}^\calE|}{|{\rm PAut}(\calE)|}=q^{\frac{1}{2}d_{\bf A}}\calA_{\muhat,D}^\calE(q),
$$
with $d_{\bf A}:=\sum_{i=1}^rd_i\,{\rm dim}\, A_i(\overline{\F}_q)-2n^2+2$.
\label{theointro} \end{theorem}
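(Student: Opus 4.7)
The plan is to carry out additive Fourier analysis on the finite-dimensional $\F_q$-vector space
$V := H^0(\bbP^1_{\F_q},\,\mathrm{End}(\calE)\otimes\Omega^1(D))$,
in the spirit of \cite{HLRV} and \cite{ICQV}, and to reduce the count of $X_{\A,D}^{\calE}$ to the Harish--Chandra induction formula of Theorem~\ref{MAINtheo1}. Since $X_{\A,D}^{\calE}$ is cut out in $V$ by the conditions $\mathrm{Res}_{\a_i}(\varphi)\in A_i$, I would start from
\begin{equation*}
|X_{\A,D}^{\calE}| \;=\; \sum_{\varphi\in V}\,\prod_{i=1}^{r} 1_{A_i}\!\bigl(\mathrm{Res}_{\a_i}(\varphi)\bigr).
\end{equation*}

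Next, I would apply Fourier inversion on each $\gl_n(\F_q(\a_i))$ with the additive character $\psi_i:=\psi\circ\Tr_{\F_q(\a_i)/\F_q}$ and the trace pairing, and swap the order of summation. The resulting inner sum $\sum_{\varphi\in V}\psi\bigl(\sum_i\Tr_{\F_q(\a_i)/\F_q}\mathrm{tr}(\mathrm{Res}_{\a_i}(\varphi)\,y_i)\bigr)$ equals $|V|$ or $0$ according as the $\F_q$-linear form on $V$ defined by $(y_i)_i$ vanishes. By the residue theorem and Serre duality on $\bbP^1$, the annihilator of $\mathrm{Res}(V)$ inside $\bigoplus_i\gl_n(\F_q(\a_i))$ is exactly the image of the evaluation map $\mathrm{End}(\calE)\to\bigoplus_i\gl_n(\F_q(\a_i))$, $f\mapsto(f(\a_i))_i$. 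Assuming $D$ is sufficiently positive so that this map is injective, this yields
\begin{equation*}
|X_{\A,D}^{\calE}| \;=\; \frac{|V|}{\prod_i q^{d_i n^2}}\;\sum_{f\in\mathrm{End}(\calE)}\;\prod_{i=1}^{r}\widehat{1_{A_i}}\!\bigl(f(\a_i)\bigr).
\end{equation*}

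Then I would invoke the Fourier-transform formulas for generic semisimple adjoint orbits in $\gl_n$, which go back to the author's earlier work on Deligne--Lusztig induction on the Lie algebra: for each $i$, the orbit $A_i$ determines a linear character $\alpha_i$ of the Levi $L_i(\F_q(\a_i))$ (attached to the eigenvalues of $A_i$), and, up to an explicit power of $q$, the Fourier transform $\widehat{1_{A_i}}$ agrees with $R_{L_i(q^{d_i})}^{\GL_n(q^{d_i})}(\alpha_i)$ on the invertible locus. The critical point is then that the contributions of $f$ with non-invertible $f(\a_i)$ cancel in the summation, so that the sum collapses to one over $f\in\Aut(\calE)$. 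Comparing with Theorem~\ref{MAINtheo1} then identifies the sum with $|\Aut(\calE)|\cdot\calA_{\muhat,D}^{\calE}(q)$, up to an overall power of $q$.

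The final step is bookkeeping: combining $|V|$ (computed by Riemann--Roch applied to $\mathrm{End}(\calE)\otimes\Omega^1(D)$), the Fourier normalizations $q^{-d_i n^2}$, the constants appearing in the Fourier transform formulas above, the relation $|\mathrm{PAut}(\calE)|=|\Aut(\calE)|/(q-1)$, and the identity $\dim A_i(\overline{\F}_q)=n^2-\dim L_i$, the net exponent should reduce to $\tfrac12 d_{\A}=\tfrac12\bigl(\sum_i d_i\dim A_i(\overline{\F}_q)-2n^2+2\bigr)$ as stated. The main obstacle, relative to the $\calE=\calO(a)^n$ case treated in \cite{HLRV}, is precisely the reduction of the $\mathrm{End}(\calE)$-sum to an $\Aut(\calE)$-sum: since $\Aut(\calE)$ is only an open subscheme of the vector space $\mathrm{End}(\calE)$ and is not reductive in general, the cancellation of the non-invertible contributions has to be extracted cleanly from the genericity hypothesis on $\A$ rather than from a group-theoretic statement about the Harish--Chandra induced character vanishing off the invertible locus, and this is where I expect most of the technical work to lie.
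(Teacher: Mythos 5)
Your opening step---Fourier inversion on $\bigoplus_i\gl_n(\F_q(\a_i))$ and the identification of the annihilator of the residue map with the image of the evaluation map ${\rm End}(\calE)\to\bigoplus_i\gl_n(\F_q(\a_i))$---is exactly the paper's Proposition \ref{eqHiggs} (proved there by direct computation from the explicit coordinate description of $X_{\bf A}^\calE$ rather than via Serre duality, and with no positivity hypothesis on $D$: when the evaluation map is not injective, the factor $|\ker|$ cancels against the extra $h^1$ in $|V|$, so your standing assumption is removable but should be removed). The final appeal to Theorem \ref{MAINtheo1} is also the paper's route. The gap is in the middle step, which is precisely the one you flag as carrying the technical weight.

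The claim that, up to a power of $q$, $\widehat{1_{A_i}}$ agrees with the group character $R_{L_i(q^{d_i})}^{\GL_n(q^{d_i})}(\alpha_i)$ on the invertible locus, with the non-invertible contributions cancelling, is not correct, and no genericity hypothesis rescues it as a pointwise statement. What holds pointwise (Formula (\ref{Fouror}), after Springer and Lehrer) is $\calF^{\gl_n(q^{d_i})}(1_{A_i})=q^{\frac{1}{2}{\rm dim}\,A_i}R_{\frakl_i(q^{d_i})}^{\gl_n(q^{d_i})}(\eta_i)$, where the right-hand side is the \emph{Lie-algebra} Harish--Chandra induction of an \emph{additive} character $\eta_i$ of $\frakl_i(q^{d_i})$; this is an invariant function on all of $\gl_n$ and does not restrict on $\GL_n$ to $R_{L_i}^{\GL_n}(\alpha_i)$ (already at central elements $zI$ the former varies like $\psi(z\,\Tr A_i)$ and the latter like the multiplicative character $\alpha_i(zI)$, so they cannot differ by a constant). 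The passage from the additive sum over ${\rm End}(\calE)$ to the multiplicative sum over $\Aut(\calE)$ is a \emph{global} identity, Theorem \ref{charsum}: $(q-1)\sum_{h\in{\rm End}(\calE)}\prod_iR_{\frakl_i}^{\gl_n}(\eta_i)(h(\a_i))=q\sum_{h\in\Aut(\calE)}\prod_iR_{L_i}^{\GL_n}(\alpha_i)(h(\a_i))$ for generic tuples. Its proof is not a cancellation of non-invertible terms: both sides are expanded over types $(M,\calO_1,\dots,\calO_r)$ as in Theorem \ref{intermediate}; the counting functions $\hat{\calH}$ and the values of $R(1)$ on unipotent, respectively nilpotent, parts match term by term under the unipotent--nilpotent correspondence, and the discrepancy $q$ versus $q-1$ arises from summing the generic additive, respectively multiplicative, characters over the regular elements of $z_\frakm(q)$, respectively $Z_M(q)$. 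One also needs the intermediate lemma that a generic ${\bf A}$ yields a generic tuple $(\eta_1,\dots,\eta_r)$, which your proposal omits. So the skeleton is right, but the mechanism you propose for the crucial step would fail.
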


This result is used to bring evidences to the more direct conjectural geometric interpretation of $\calA_{\muhat,D}^\calE(q)$ in the indivisible case (see Conjecture \ref{congind} and Remark \ref{maintheoappli}) but also to Conjecture \ref{MAINconj} when applied to $r$-tuple of semisimple regular adjoint orbits.

\subsection{Example : Rank 2 case}

We keep the same notation as in \S \ref{section2}. 

We use Theorem \ref{MAINtheo2} to compute explicitly $\calA_{\muhat,D}^\calE(q)$ when $n=2$. Reducing the number of points of $D$ if necessary, we assume without loss of generality that the coordinates of $\muhat$ are all equal to the partition $(1,1)$ of $2$. 

Put $l=\sum_{i=1}^rd_i$. For $0< m\leq l$  denote by $X^l_m$ the set of subsets of $\{1,\dots,l\}$ of size $m$. The symmetric group $\mathfrak{S}_l$ acts naturally on $X^l_m$ and we denote by $\chi^l_m$ the character of the representation of $\mathfrak{S}_l$ in the $\C$-vector space freely generated by $X^l_m$. 

Let $w\in\frak{S}_l$ be of cycle-type decomposition $(d_1,\dots,d_r)$.

 We use the formula in Theorem \ref{MAINtheo2} to prove the following result (see \S \ref{rank2}).

\begin{theorem} (i) If $\calE=\calO(a)^2$, then 

$$
\calA_{\muhat,D}^\calE(q)=\begin{cases}\sum_{m=0}^{l-3}\left(\sum_{i=1}^{\left[\frac{l-m-1}{2}\right]}\chi^l_{m+2i+1}(w)\right)\, q^m&\text{ if }l\geq 3,\\
0&\text{ otherwise.}
\end{cases}
$$
(ii) If $\calE=\calO(a)\oplus\calO(b)$ with $a>b$, then

$$
\calA_{\muhat,D}^\calE(q)=\begin{cases}\sum_{m=0}^{l-(a-b+2)}\left(\sum_{s=m+a-b+2}^l\chi^l_s(w)\right)\, q^m&\text{ if }l\geq a-b+2,\\
0&\text{ otherwise.}
\end{cases}
$$
\end{theorem}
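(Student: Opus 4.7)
The plan is to apply Theorem~\ref{MAINtheo2} directly with $n=2$ and all $\mu^i=(1,1)$, and then to extract the $Y^\m$-coefficient of $(q-1)\Log\Omega_D$ and pair it against $\prod_i h_1(\x_i)^2$. I would first compute $\Omega_D$ to the total $Y$-degree needed (degree~$2$ in $Y_1$ in case~(i), and through $Y_1Y_2$ in case~(ii)), then take the plethystic logarithm, and finally simplify. The Kostka--Foulkes formulas for $n=2$ yield $\tilde H_{(2)}(\x;q)=s_{(2)}(\x)$ and $\tilde H_{(1,1)}(\x;q)=s_{(2)}(\x)+q\,s_{(1,1)}(\x)$, and hence the Hall pairings $\langle \tilde H_{(2)}(\x;q^{d_i}),h_1(\x)^2\rangle=1$ and $\langle \tilde H_{(1,1)}(\x;q^{d_i}),h_1(\x)^2\rangle=1+q^{d_i}$, which is what makes the character theory of $\mathfrak{S}_l$ enter through the generating function $\prod_i(1+z^{d_i})$.

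The required $\calH^\v_{\lambdahat,D}(q)$ can be read off directly. For $|\v|=1$ the bundle $\calE^\v$ is a line bundle with $\Aut=\F_q^\times$, giving $\calH=\tfrac{1}{q-1}$ and forcing $\lambdahat=((1),\dots,(1))$. For $|\v|=2$ in case~(i), $\Aut(\calO(a)^2)=\GL_2(\F_q)$ consists of constant matrices, so the Jordan type of $h(\b_i)$ agrees with the $\overline{\F_q}$-Jordan type of $h$ and is the same at every puncture; only $\lambdahat=((1,1),\dots,(1,1))$ (from $h=I$) and $\lambdahat=((2),\dots,(2))$ (from the $q^2-1$ non-identity unipotents) contribute. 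For $\v=(1,1)$ in case~(ii), $\Aut(\calO(a)\oplus\calO(b))=(\F_q^\times)^2\ltimes H^0(\calO(a-b))$; unipotency forces the diagonal entries to be $1$, and the off-diagonal section $\phi\in H^0(\calO(a-b))$ controls the Jordan type via the condition $\phi(\b_i)=0$ (type $(1,1)$) or $\phi(\b_i)\neq 0$ (type $(2)$), so a standard inclusion--exclusion on vanishing patterns enumerates such $\phi$ in terms of the degrees $(d_i)$ and $a-b$.

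The concluding step is a combinatorial simplification. In case~(ii) the $Y_1Y_2$-coefficient of $\Log\Omega_D$ is simply $A_{(1,1)}-A_{(1,0)}A_{(0,1)}$ (no Adams correction can produce a mixed monomial $Y_1Y_2$). After summing over $\lambdahat\in\{(1,1),(2)\}^r$, the key identity
\[
\sum_{S\subseteq U}(-1)^{|U|-|S|}\prod_{i\in S}(1+q^{d_i})=q^{D_U},\qquad D_U:=\sum_{i\in U}d_i,
\]
collapses the $\lambdahat$-sum to a single sum over subsets $U\subseteq\{1,\dots,r\}$; the generating-function identity $\sum_m \chi^l_m(w)\,z^m=\prod_i(1+z^{d_i})$ (with $w\in\mathfrak{S}_l$ of cycle type $(d_1,\dots,d_r)$) then rewrites this in terms of permutation characters, and splitting into the regimes $D_U\le a-b+1$ versus $D_U>a-b+1$ yields the stated formula of~(ii). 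Case~(i) follows the same outline, but the $Y_1^2$-coefficient of $\Log\Omega_D$ additionally contains the Adams twist $-\tfrac12\psi_2(A_1)$, which has to be tracked carefully alongside the $-\tfrac12 A_1^2$ contribution. The main obstacle will be the correct handling of the plethystic logarithm --- in particular the Adams operations on the parameter $q$ (via $q\mapsto q^k$) and on the $\x_i$-alphabets (via $p_n\mapsto p_{nk}$) --- and organizing the resulting rational expression into the parity-indexed sum $\sum_m\big(\sum_i\chi^l_{m+2i+1}(w)\big)\,q^m$ asserted in case~(i).
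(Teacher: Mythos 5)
Your proposal is correct and follows essentially the same route as the paper: Theorem \ref{MAINtheo2} is specialized to $n=2$ with all $\mu^i=(1,1)$, the Hall pairings $\langle\tilde H_{(2)},h_1^2\rangle=1$, $\langle\tilde H_{(1,1)},h_1^2\rangle=1+q^{d_i}$ and the $\calH$-values you list coincide with the paper's tables (\ref{table}), your "Adams twist" in case (i) is the paper's degree-$2$ type $(2,(1,\dots,1))$ with its $C^o_\omhat=-\tfrac12$, and your subset inclusion--exclusion combined with $\sum_m\chi^l_m(w)z^m=\prod_i(1+z^{d_i})$ is the same binomial collapse the paper carries out (the paper only writes it out for $d_1=\cdots=d_r=1$, so your version handling general degrees directly is, if anything, slightly cleaner). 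The final simplification in case (i), which you defer, is exactly the step the paper also dispatches as "it follows by calculation" from its Proposition \ref{proprk1}.
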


\begin{remark} We see from the above theorem that the polynomials $\calA_{\muhat,D}^\calE(q)$ are monic when $n=2$ and so the corresponding spaces $\calX_\bfS^\calE$ of \S \ref{conjsec} must be irreducible  thanks to Theorem \ref{theointro}.
\end{remark}

We deduce the following corollary which confirm the independence of $\calA_{\muhat,D}^{n,d}(q)$ from $d$ in the $n=2$ case.

\begin{corollary}For any $d\in\Z$,

$$
\calA_{\muhat,D}^{2,d}(q)=\begin{cases}\sum_{m=0}^{l-3}\sum_{a=1}^{\left[\frac{l-m-1}{2}\right]}\left(\sum_{s=m+2a+1}^l\chi_s^l(w)\right)\, q^m&\text{ if }l\geq 3,\\
0&\text{ otherwise.}
\end{cases}
$$
\end{corollary}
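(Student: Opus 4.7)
The plan is to enumerate the isomorphism classes of rank $2$ degree $d$ vector bundles on $\bbP^1_{\F_q}$, substitute the formulas from the preceding theorem into $\calA_{\muhat,D}^{2,d}(q)=\sum_\calE\calA_{\muhat,D}^\calE(q)$, and then simplify combinatorially.

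By Grothendieck's theorem, any rank $2$ bundle of degree $d$ on $\bbP^1$ is either $\calO(a)^2$ (which occurs only when $d$ is even, with $a=d/2$) or $\calO(a)\oplus\calO(b)$ with $a>b$ and $a+b=d$. I will parameterize the latter by $e:=a-b\geq 1$, which has the same parity as $d$. Thus $\calA_{\muhat,D}^{2,d}(q)$ equals $\calA_{\muhat,D}^{\calO(d/2)^2}(q)$ when $d$ is even, plus $\sum_{e\geq 1,\, e\equiv d\pmod 2}\calA_{\muhat,D}^{\calO(a)\oplus\calO(b)}(q)$ in either case.

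Next, I collect the coefficient of $q^m$ for each $m\in\{0,\ldots,l-3\}$, grouping contributions by the character values $\chi^l_s(w)$. Setting $f=e+2$ in the split-bundle contributions and $s=m+2i+1$ in the trivial-bundle contribution, I expect the following: for each $s\in\{m+3,\ldots,l\}$, in the odd $d$ case the coefficient of $\chi^l_s(w)$ equals the number of odd integers $f$ with $3\leq f\leq s-m$; in the even $d$ case it equals the number of even integers $f$ with $4\leq f\leq s-m$, plus $1$ if $s-m$ is odd. A short case check on the parity of $s-m$ shows that both quantities simplify to $\bigl[(s-m-1)/2\bigr]$, so the two parity cases of $d$ collapse to the single identity
$$
\calA_{\muhat,D}^{2,d}(q)=\sum_{m=0}^{l-3}q^m\sum_{s=m+3}^l\left[\tfrac{s-m-1}{2}\right]\chi^l_s(w).
$$

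Finally, I swap the order of summation: for fixed $s$, the values $a$ satisfying $1\leq a\leq [(l-m-1)/2]$ and $2a+1\leq s-m$ are exactly $a=1,\ldots,[(s-m-1)/2]$, since $s\leq l$ makes $a\leq [(s-m-1)/2]$ the binding bound. This rewrites the previous display as the claimed double sum, and the boundary case $l<3$ follows immediately from the vanishing in both parts of the preceding theorem.

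The main obstacle is the parity bookkeeping: tracking precisely which values of $e$ and of $s-m$ contribute, and checking that the sums over even and over odd $d$ telescope to the same polynomial coefficients. Beyond this, the argument is purely formal and uses no new input besides the preceding theorem and the classification of rank $2$ bundles on $\bbP^1$.
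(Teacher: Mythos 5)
Your proposal is correct and follows essentially the same route as the paper: the paper's proof of the corresponding body statement (Theorem \ref{sumind}) likewise just sums the explicit rank-$2$ formulas of Proposition \ref{proprk2} and Theorem \ref{theork2} over all bundle types of a fixed degree, using that each summand depends only on $a-b$ so that only the parity of $d$ matters. The only difference is organizational — the paper checks $d=0$ and $d=1$ separately and matches the two answers, whereas you identify the common coefficient $\bigl[(s-m-1)/2\bigr]$ of $\chi^l_s(w)\,q^m$ for both parities before reindexing — and your parity bookkeeping checks out.
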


Let $\bfC=(C_1,\dots,C_r)$ be a generic $r$-tuple of semisimple regular conjugacy classes of $\GL_2(\C)$. The Poincar\'e polynomial of the character variety $\M_\bfC$ has been computed first by Boden and Yokogawa \cite{boden} and one can verify that the above formula for $q^{\frac{1}{2}{\rm dim}\,\M_\bfC}\calA_{\muhat,D}^{(2,d)}(q)$, with $d_1=\cdots=d_r=1$ (i.e. $w=1$), matches their calculation and also that of \cite[\S 1.5.3]{aha} for the conjectured mixed Hodge polynomial of $\M_\bfC$. 
\bigskip

\textbf{Acknowlegments.} I am grateful to P. Boalch, T. Hausel, F. Rodriguez-Villegas and P. Satg\'e for many useful discussions. I am also very grateful to P. Deligne and O. Schiffmann for sharing their insight with me. This work was motivated in part by Deligne's lectures \emph{Syst\`emes locaux l-adiques sur une vari\'et\'e sur un corps fini­} given at IHES in Spring 2013 and  I wish to thank him for his motivating lectures. This work was supported by ANR-13-BS01-0001-01.

\section{Geometrically indecomposable parabolic bundles: Generalities}\label{generalities}

Let $k$ be a field and  denote by $\bbP^1_k$ the projective line over $k$. For a closed point $\a$ of $\bbP^1_k$, we denote by $k(\a)$ its residue field.

\subsection{Parabolic vector bundles: Definitions}

\begin{nothing}\textbf{Generalities on vector bundles over $\bbP^1$.}
Denote by ${\rm Bun}(\bbP^1_k)$ the category of all vector bundles over $\bbP^1_k$ (which we identify with locally free sheaves on $\bbP^1_k$) and by ${\rm Bun}^{n,d}(\bbP^1_k)$ the full subcategory of vector bundles of rank $n$ and degree $d$. The trivial line bundle is denoted by $\calO=\calO_{\bbP^1_k}$. By a well-known result of Grothendieck,  any vector bundle  on $\bbP^1_k$ is isomorphic to a direct sum $\calO(b_1)^{m_1}\oplus\calO(b_2)^{m_2}\oplus\cdots\oplus\calO(b_f)^{m_f}$ and the two sequences of integers $b_1>b_2>\cdots>b_f$ and $(m_1,\dots,m_f)\in(\N)^s$ are uniquely determined by $\calE$. For a vector bundle   $\calE$  on $\bbP^1_k$ and a closed point $\a$ of $\bbP^1_k$, we denote by $\calE(\a)$ the $k(\a)$-vector space $\calE_\a\otimes k(\a)$ and for a vector bundle morphism $\varphi:\calE\rightarrow\calE'$, we denote by $\varphi(\a)$ the induced $k(\a)$-linear map $\calE(\a)\rightarrow\calE'(\a)$.

For a non-negative integer $d$,  we identify  ${\rm Hom}\left(\calO,\calO(d)\right)$ with the global sections of $\calO(d)$  by mapping $\varphi\in {\rm Hom}\left(\calO,\calO(d)\right)$ to $\varphi(1)$. Writing $\bbP^1_k$ as a gluing of $U_1={\rm Spec}\,k[t]$ with $U_2={\rm Spec}\,k[t^{-1}]$, the global sections of $\calO(d)$ 
are given by pairs of polynomials $(P(t),Q(t^{-1}))\in k[t]\times k[t^{-1}]$ satisfying the relation $P(t)=t^dQ(t^{-1})$, and so the projection $(P,Q)\mapsto P$ gives an isomorphism between the global sections of $\calO(d)$ and the polynomials in $k[t]$ of degree less or equal to $d$. If $d\geq d'$, we  identify ${\rm Hom}\left(\calO(d'),\calO(d)\right)$ with the $k$-vector space of polynomials of $k[t]$ of degree less or equal to $d-d'$. The closed points of $\bbP^1_k$ of degree $d$  correspond to irreducible monic polynomials of degree $d$ in $k[t]$ and so define injections $\calO\hookrightarrow\calO(d)$. 

Consider a rank $n$ vector bundle  $\calE\simeq\bigoplus_{i=1}^f\calO(b_i)^{m_i}$ with $b_1>b_2>\cdots>b_f$. We denote by  $L_\calE$ the standard Levi subgroup $\GL_{m_1}\times\cdots\times \GL_{m_f}$ of $\GL_n$, by $G_\calE$ the unique parabolic subgroup of $\GL_n$ having $L_\calE$ as a Levi factor and containing the upper triangular matrices, and by $U_\calE$ the unipotent radical of $G_\calE$. For any commutative ring $R$, identify ${\rm Mat}_n(R[t])$ with $\prod_{1\leq i,j\leq f}{\rm Mat}_{m_i,m_j}(R[t])$ in the obvious way and denote by $U_{\calE,t}(R)$ the subgroup of $U_\calE(R[t])$ whose $(i,j)$-component are $m_i\times m_j$- matrices with coefficients in the $R$-module of polynomials of degree less or equal to $b_i-b_j$ whenever $i<j$, is the matrix $0$ when $i>j$ and the identity matrix $I_{m_i}$ when $i=j$. We then put $G_{\calE,t}(R):=L_\calE(R)\ltimes U_{\calE,t}(R)$. The automorphism group ${\rm Aut}(\calE)$ is then naturally identified with the  group $G_{\calE,t}(k)$ and the endomorphism algebra ${\rm End}(\calE)$ with ${\rm Lie}(G_{\calE,t})(k)$. For a closed point $\a$ of $\bbP^1_k$ contained in $U_1$, the natural map ${\rm End}(\calE)\rightarrow {\rm End}_{k(\a)} (\calE(\a))$ coincides then with the evaluation map ${\rm Lie}(G_{\calE,t})(k)\rightarrow {\rm Lie}\,G_\calE(k(\a))$ given by the canonical map $k[t]\rightarrow k(\a)$.

Recall that the semisimple elements of a connected affine algebraic group $G$ lives in a maximal torus of $G$  and that all maximal tori are conjugate. Therefore, the semisimple element of the algebraic group $G_{\calE,t}$ are all conjugate to a semisimple element of $L_\calE$.

\label{aut}
\end{nothing}
\begin{nothing}\textbf{Parabolic vector bundles.}\label{parvec}
Fix a reduced divisor $D=\a_1+\dots+\a_r$ of $\bbP^1_k$ with $\a_i$ of degree $d_i$. We assume that $\a_1,\dots,\a_r$ are all in $U_1$. Given a vector bundle $\calE$, we consider the set $\calF(\calE)$ of all $r$-tuples $E=(E_{\a_1},\dots,E_{\a_r})$ of infinite non-increasing sequences

$$E_{\a_i}\,:\,\calE(\a_i)=E_{i0}\supseteq E_{i1}\supseteq\cdots \supseteq E_{ir_i}\supseteq\cdots$$ of $k(\a_i)$-vector spaces with only finitely many non-zero terms.

We call an element of $\calF(\calE)$ a \emph{parabolic structure} on $\calE$. A \emph{Borelic structure} is a parabolic structure with full flags.  

The category ${\rm Bun}_D^{par}(\bbP^1_k)$ of \emph{parabolic}\footnote{In the literature,  the terminology \emph{quasi-parabolic} is rather used, see footnotes in the  introduction.} vector bundles on $\bbP^1_k$ is defined as follows. The objects of ${\rm Bun}_D^{par}(\bbP^1_k)$ are pairs $(\calE,E)$ with $\calE\in {\rm Bun}(\bbP^1_k)$ and $E\in\calF(\calE)$ and morphisms $(\calE,E)\rightarrow(\calE',E')$ are vector bundles morphisms $\varphi: \calE\rightarrow\calE'$ such that for each $i=1,\dots,r$, the induced $k(\a_i)$-linear map $\varphi(\a_i)$ preserves the partial flags, namely $\varphi(\a_i)(E_{ij})\subseteq E'_{ij}$ for all positive integer $j$ which for short will be denoted by $\varphi(E)\subseteq E'$. The obvious definition of direct sums makes ${\rm Bun}_D^{par}(\bbP^1_k)$ a $k$-linear additive category. 

\begin{remark} Each point $\a_i$  defines an inclusion of locally free sheaves $\calO\subseteq\calO(d_i)$ and so an inclusion $\calE\subseteq\calE(d_i):=\calE\otimes\calO(d_i)$ for any vector bundle $\calE$. Note that giving a non-increasing filtration $\calE(\a_i)=E_{i,0}\supseteq E_{i,1}\supseteq\cdots\supseteq E_{i,l}$ of $k(\a_i)$-vector spaces is equivalent to giving a non-increasing filtration 
$$\calE(d_i)=\calE_{i,0}\supseteq\calE_{i,1}\supseteq\cdots\supseteq\calE_{i,l}\supseteq\calE$$of locally free sheaves.  This correspondence is given by  $E_{i,s}={\rm Ker}\left(\calE(\a_i)\rightarrow\calE_{i,s}(\a_i)\right)$.
\end{remark}

Recall the following fact \cite[Corollary 1.8.1]{GL}.

\begin{proposition}The category ${\rm Bun}_D^{par}(\bbP^1_k)$ is a \emph{Krull-Remak-Schmidt category}, i.e.,  any non-zero  object is a finite direct sum of indecomposable objects and up to a permutation, the indecomposable components in such a direct sum are unique up to isomorphism. \end{proposition}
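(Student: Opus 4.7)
The plan is to invoke a standard criterion for the Krull--Remak--Schmidt property: any $k$-linear additive category in which every Hom-space is a finite-dimensional $k$-vector space and in which every idempotent splits is automatically Krull--Remak--Schmidt. I therefore need to verify each of these two conditions for ${\rm Bun}_D^{par}(\bbP^1_k)$.

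First I would establish finite-dimensionality of Hom-spaces. For two vector bundles $\calE,\calE'$ over $\bbP^1_k$ the space ${\rm Hom}(\calE,\calE')=H^0(\bbP^1_k,\mathcal{H}om(\calE,\calE'))$ is finite-dimensional over $k$, as one reads off directly from Grothendieck's decomposition recalled in \S\ref{aut}. The parabolic Hom-space $\{\varphi:\calE\to\calE'\mid\varphi(E)\subseteq E'\}$ is a $k$-linear subspace of it, hence is itself finite-dimensional. In particular every endomorphism algebra in ${\rm Bun}_D^{par}(\bbP^1_k)$ is a finite-dimensional $k$-algebra, and therefore semiperfect with local indecomposable factors.

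Next I would check that idempotents split. Given an idempotent $e:(\calE,E)\to(\calE,E)$, the underlying morphism $e:\calE\to\calE$ is an idempotent in the essentially abelian category of coherent sheaves and thus yields a canonical splitting $\calE=\calE_1\oplus\calE_2$ with $\calE_1={\rm im}(e)$ and $\calE_2={\rm ker}(e)$. At each parabolic point $\a_i$, the fiber map $e(\a_i)$ is an idempotent $k(\a_i)$-endomorphism of $\calE(\a_i)$ that stabilizes every subspace $E_{i,j}$, so an elementary linear algebra argument gives $E_{i,j}=(E_{i,j}\cap\calE_1(\a_i))\oplus(E_{i,j}\cap\calE_2(\a_i))$. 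Setting $E^k_{i,j}:=E_{i,j}\cap\calE_k(\a_i)$ defines parabolic structures $E^1$ on $\calE_1$ and $E^2$ on $\calE_2$ together with an isomorphism $(\calE,E)\cong(\calE_1,E^1)\oplus(\calE_2,E^2)$ through which $e$ becomes projection onto the first summand.

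Combining these two points with the standard consequence of the Krull--Schmidt--Azumaya theorem -- that an object in such a category decomposes as a finite direct sum of indecomposables (successive splitting of non-trivial idempotents must terminate since the endomorphism ring is Artinian) and that the decomposition is unique up to order and isomorphism (because the endomorphism rings of the indecomposable factors are local) -- yields the proposition. The only mildly delicate point is the compatibility of the flags with the sheaf-level splitting in the second step, but this reduces to the purely linear statement used above.
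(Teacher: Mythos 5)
Your argument is correct, but it is worth noting that the paper does not actually prove this proposition: it is simply recalled as a known fact with a citation to Geigle--Lenzing \cite[Corollary 1.8.1]{GL}. What you have written is therefore a genuine, self-contained proof where the paper offers none, and it follows the standard template: a $k$-linear additive category that is Hom-finite and idempotent-complete is Krull--Remak--Schmidt (indecomposables then have local endomorphism rings because a finite-dimensional $k$-algebra without non-trivial idempotents is local, and Azumaya's theorem gives uniqueness). Both verifications are sound. Finite-dimensionality of parabolic Hom-spaces is clear since they sit inside ${\rm Hom}(\calE,\calE')=H^0(\bbP^1_k,\mathcal{H}om(\calE,\calE'))$. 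For idempotent splitting, the only points deserving explicit mention are (a) that $\ker(e)$ and ${\rm im}(e)$ are again locally free --- they are direct summands of a locally free sheaf on a smooth curve, hence locally free --- so the splitting stays inside ${\rm Bun}(\bbP^1_k)$, and (b) the linear-algebra fact you invoke, namely that an idempotent $p$ of a vector space preserving a subspace $W$ gives $W=(W\cap{\rm im}\,p)\oplus(W\cap\ker p)$ via $w=p(w)+(w-p(w))$; with these, the flags do decompose compatibly and $e$ becomes the projection onto the first parabolic summand. The trade-off is the usual one: the citation is shorter, while your argument makes the paper self-contained and makes transparent exactly which structural features of parabolic bundles (Hom-finiteness and the splitting of fibers along an idempotent) are responsible for the Krull--Remak--Schmidt property.
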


Given a \emph{parabolic type} of rank $n$, that is an $r$-tuple $\s=(s_1,\dots,s_r)$ of infinite non-increasing sequences $s_i\,:\, s_{i0}=n\geq s_{i1}\geq\cdots\geq s_{il_i}\geq\cdots$ of non-negative integers with finitely many non-zero terms, we denote by $\calF_\s(\calE)$ the set of parabolic structures $E=(E_{\a_1},\dots,E_{\a_r})\in\calF(\calE)$ of type $\s$ on a rank $n$ vector bundle $\calE$, namely the $k(\a_i)$-vector subspaces $E_{ij}$ of $\calE(\a_i)$  are of dimension $s_{ij}$. The rank of a parabolic type $\s$ is denoted by $|\s|$.  The group ${\rm Aut}(\calE)$ acts in the obvious way  on $\calF_\s(\calE)$. 
Say that two parabolic structures $E$ and $E'$ on $\calE$ are \emph{isomorphic} if the two parabolic bundles $(\calE,E)$ and $(\calE,E')$ are isomorphic, i.e.  if they live in the same ${\rm Aut}(\calE)$-orbit in $\calF_\s(\calE)$ for some parabolic type $\s$.

For a non-increasing sequence of integers $n\geq n_1\geq n_2\geq\cdots\geq n_h>0$ consider the unique parabolic subgroup of $\GL_n$ containing both the upper triangular matrices and the block diagonal matrices $\GL_{n_h}\times\GL_{n_{h+1}-n_h}\times\cdots\times\GL_{n-n_1}$.   For each $i=1,\dots,r$, let $P_{s_i}$ denote the parabolic subgroup of $\GL_n$ corresponding to the sequence $s_i$. Then the natural action of ${\rm Aut}(\calE)$ on $\calF_\s(\calE)$ is identified with the action of $G_{\calE,t}(k)$ on 
$$
\GL_n(k(\a_1))/P_{s_1}(k(\a_1))\times\cdots\times\GL_n(k(\a_r))/P_{s_r}(k(\a_r))
$$
given by $f\cdot (g_1P_{s_1},\dots,g_rP_{s_r})=(f(\a_1)g_1P_{s_1},\dots,f(\a_r)g_rP_{s_r})$ where $f\mapsto f(\a_i)$ is the evaluation map $G_{\calE,t}(k)\rightarrow\GL_n(k(\a_i))$ given by $k[t]\rightarrow k(\a_i)$ on each matrix coordinate. 
\end{nothing}

\subsection{Geometrically indecomposable parabolic vector bundles}

\begin{nothing}\textbf{Definition and finiteness property}. Let $k$ be a finite field and let $K/ k$ be an algebraic  field extension. We denote by $f=f_{K/k}: \bbP^1_K\rightarrow \bbP^1_k$ the canonical map. We define extension of scalars of parabolic vector bundles from $k$ to $K$ as the pull back functor  $f^*:{\rm Bun}_D^{par}(\bbP^1_k)\rightarrow {\rm Bun}_{D_K}^{par}(\bbP^1_K)$ where $D_K=K\times_kD$. If $(\calE',E')=f^*(\calE,E)$ with $\calE=\bigoplus_i\calO_{\bbP^1_k}(b_i)^{m_i}$, then $\calE'=\bigoplus_i\calO_{\bbP^1_K}(b_i)^{m_i}$ and if $\b_i$ is a closed point of $\bbP^1_K$ above $\a_i$, then $\calE'(\b_i)\simeq\calE(\a_i)\otimes_{k(\a_i)}K(\b_i)$ and $E'$ is obtained from $E$ as $E'_{\b_i}=E_{\a_i}\otimes_{k(\a_i)}K(\b_i)$, i.e., $E'_{ij}=E_{ij}\otimes_{k(\a_i)}K(\b_i)$ for all $j$.

We have isomorphisms

$$
{\rm End}(f^*\calE)\simeq K\otimes_k{\rm End}(\calE)\,,\,\,\,\, {\rm End}(f^*(\calE,E))\simeq K\otimes_k{\rm End}(\calE,E).
$$

A parabolic vector bundle $(\calE,E)$ is said to be \emph{geometrically indecomposable} over $k$, if for $K=\overline{k}$, with $\overline{k}$ an algebraic closure of $k$, the parabolic vector bundle $f^*(\calE,E)$ is indecomposable. 

For $(\calE,E)\in{\rm Bun}_D^{par}(\bbP^1_k)$, denote by ${\rm Jac}(\calE,E)$ the Jacobson radical ideal of the $k$-algebra ${\rm End}(\calE,E)$. It is the largest two-sided nilpotent ideal. 

If $(\calE,E)$ is indecomposable then ${\rm Jac}(\calE,E)={\rm End}_{\rm nil}(\calE,E)$. Indeed $(\calE,E)$ is indecomposable if and only if any element of ${\rm End}(\calE,E)$ is either nilpotent or invertible. 

Since the extension $K/k$ is separable, by  \cite[\S 7, no 2, Corollaire 2]{bourbaki} we have an isomorphism

$$
K\otimes_k {\rm Jac}(\calE,E)\simeq {\rm Jac}(f^*(\calE,E)).
$$

Put 

$$
{\rm topEnd}(\calE,E):={\rm End}(\calE,E)/{\rm Jac}(\calE,E).
$$
We then have 

\begin{equation}
K\otimes_k{\rm topEnd}(\calE,E)\simeq {\rm topEnd}(f^*(\calE,E)).
\label{isoJac}\end{equation}
From the above isomorphisms, we deduce the following criterion for geometrical indecomposability.

\begin{proposition} Let  $(\calE,E)\in{\rm Bun}_D^{par}(\bbP^1_k)$. The following statements are equivalent :

(1) $(\calE,E)$ is geometrically indecomposable.

(2) ${\rm topEnd}(\calE,E)\simeq k$.

(3) ${\rm End}(\calE,E)$ does not contain non-scalar semisimple elements.
\label{indecriterion}\end{proposition}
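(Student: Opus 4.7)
The plan is to prove $(1)\Leftrightarrow(2)$ by base change to $\overline{k}$ using the already-established isomorphism (\ref{isoJac}), then $(2)\Leftrightarrow(3)$ via the Wedderburn--Malcev theorem.

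For $(1)\Leftrightarrow(2)$, I would first record an auxiliary criterion valid over any field $L$: a parabolic bundle $(\calF,F)$ on $\bbP^1_L$ is indecomposable iff ${\rm End}(\calF,F)$ has no non-trivial idempotents; since idempotents lift modulo the nilpotent ideal ${\rm Jac}(\calF,F)$, this happens iff the semisimple finite-dimensional $L$-algebra ${\rm topEnd}(\calF,F)$ contains no non-trivial idempotents, equivalently iff it is a division algebra over $L$. Applying this with $L=\overline{k}$ and using that the only finite-dimensional division algebra over $\overline{k}$ is $\overline{k}$ itself, $f^*(\calE,E)$ is indecomposable iff ${\rm topEnd}(f^*(\calE,E))\cong\overline{k}$. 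By (\ref{isoJac}) this holds iff $\dim_k{\rm topEnd}(\calE,E)=1$, equivalently ${\rm topEnd}(\calE,E)\cong k$.

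For $(2)\Leftrightarrow(3)$, set $A:={\rm End}(\calE,E)$. Since $k$ is finite (hence perfect), the semisimple quotient $A/{\rm Jac}(A)$ is a separable $k$-algebra, so the Wedderburn--Malcev theorem furnishes a $k$-algebra section $S\hookrightarrow A$ of the projection, giving a decomposition $A=S\oplus{\rm Jac}(A)$ as $k$-vector spaces. For $(2)\Rightarrow(3)$, assume $A/{\rm Jac}(A)\cong k$; given a semisimple $s\in A$, its image in $k$ is some scalar $\lambda$, so $n:=s-\lambda\cdot 1\in{\rm Jac}(A)$ is nilpotent, while $k[n]=k[s]$ is \'etale so $n$ is simultaneously semisimple, forcing $n=0$ and $s=\lambda\cdot 1$. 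For $(3)\Rightarrow(2)$, I argue contrapositively: if ${\rm topEnd}(\calE,E)\neq k$, then by the Wedderburn decomposition $S\cong\prod_i M_{n_i}(D_i)$ --- with each $D_i$ a finite field extension of $k$ by Wedderburn's little theorem on finite division rings --- one can always locate a non-scalar element $\bar b\in S$ whose minimal polynomial $P\in k[T]$ is separable: a non-trivial idempotent coming from the product structure if there are $\geq 2$ factors, a diagonal matrix with distinct entries if some $n_i\geq 2$, or a primitive element of $D_i/k$ (automatically separable since $k$ is perfect) if some $D_i\neq k$. Because the minimal polynomial of $\bar b$ depends only on the subalgebra $k[\bar b]$, the same $P$ is its minimal polynomial as an element of $A$, so $\bar b$ is a non-scalar semisimple element of $A$, contradicting $(3)$.

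The main technical subtlety is concentrated in the implication $(3)\Rightarrow(2)$: semisimplicity of an element of $A$ is not directly readable from the quotient $A/{\rm Jac}(A)$, and the Wedderburn--Malcev section is exactly what is needed to transfer the algebraic structure of the semisimple quotient back inside $A$. Once such a section is available, invariance of the minimal polynomial under subalgebra inclusion is what promotes a non-scalar element of the quotient to a genuine non-scalar semisimple element of $A$.
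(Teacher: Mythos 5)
Your proof is correct. For $(1)\Leftrightarrow(2)$ you take essentially the paper's route: base change to $\overline{k}$ via the isomorphism (\ref{isoJac}) plus the fact that $\overline{k}$ is the only finite-dimensional division algebra over itself; you merely spell out the intermediate equivalence (indecomposable iff no non-trivial idempotents iff the top of the endomorphism ring is a division algebra) that the paper asserts without comment. Where you genuinely add something is $(2)\Leftrightarrow(3)$: the paper's proof stops after $(1)\Leftrightarrow(2)$ and gives no argument for $(3)$, beyond the earlier Fitting-type remark that indecomposability means every endomorphism is nilpotent or invertible. Your $(2)\Rightarrow(3)$ (the image of a semisimple element is a scalar, so the difference is nilpotent and, lying in the \'etale algebra $k[s]$, also semisimple, hence zero) is clean, and for $(3)\Rightarrow(2)$ the Wedderburn--Malcev section together with the observation that the minimal polynomial depends only on the unital subalgebra $k[b]$ is exactly the mechanism needed to promote a non-scalar element of the quotient to a genuine non-scalar semisimple element of ${\rm End}(\calE,E)$. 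An alternative closer to what the paper leaves implicit is to invoke the Jordan--Chevalley decomposition $x=x_s+x_n$ with $x_s\in k[x]$, which shows directly that if all semisimple elements are scalar then ${\rm End}(\calE,E)=k\cdot 1+{\rm Jac}(\calE,E)$; your route buys a self-contained argument at the cost of quoting Wedderburn--Malcev and Wedderburn's little theorem. One cosmetic point: over a small finite field a matrix in $M_{n_i}(D_i)$ need not admit $n_i$ pairwise distinct diagonal entries, but a diagonal idempotent such as $e_{11}$ already furnishes the required non-scalar semisimple element, so nothing is lost.
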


\begin{proof} From the isomorphism (\ref{isoJac}), the parabolic vector bundle   $(\calE,E)$ is geometrically indecomposable if and only if $\overline{k}\otimes_k{\rm topEnd}(\calE,E)$ is a division ring. The assertion (2) implies thus (1) and for the converse implication we recall that $\overline{k}$ is the only finite dimensional associative division algebra over $\overline{k}$.

\end{proof}

Say that a parabolic structure $E\in\calF(\calE)$ is indecomposable  (resp. geometrically indecomposable) if the parabolic bundle $(\calE,E)$ is indecomposable (resp. geometrically indecomposable). We denote by $\calF_\s(\calE)^{\rm ind}$ (resp. $\calF_\s(\calE)^{\rm geo-ind}$) the subset of $\calF_\s(\calE)$ of indecomposable (resp. geometrically indecomposable) parabolic structures on $\calE$. 

We want to prove the following result.

\begin{theorem}Given integers $n\geq1$ and $d$, there is only a finite number of isomorphism classes of geometrically indecomposable parabolic bundles $(\calE,E)$ with $\calE$ of degree $d$ and rank $n$.
\label{finiteness}\end{theorem}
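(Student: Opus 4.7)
The plan is to reduce to finitely many cases in three nested layers. First, a parabolic type $\s = (s_1,\ldots,s_r)$ of rank $n$ is a tuple of non-increasing sequences bounded by $n$ with finitely many non-zero terms, so types form a finite set once $n$ and $D$ are fixed. Second, for fixed $\calE$ and $\s$, the set $\calF_\s(\calE)$ identifies with the finite product $\prod_i \GL_n(k(\a_i))/P_{s_i}(k(\a_i))$ of flag varieties over the finite residue fields $k(\a_i)$, hence is itself finite, so there are only finitely many $\Aut(\calE)$-orbits and therefore finitely many isomorphism classes of parabolic structures of type $\s$ on $\calE$. It therefore suffices to show that, for each fixed type $\s$, only finitely many isomorphism classes of rank $n$, degree $d$ vector bundles $\calE$ admit a geometrically indecomposable parabolic structure of type $\s$.

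By Grothendieck's theorem, write $\calE \simeq \bigoplus_{i=1}^f \calO(b_i)^{m_i}$ with $b_1 > \cdots > b_f$, $\sum m_i = n$, and $\sum b_i m_i = d$. The compositions $(m_1,\ldots,m_f)$ range over a finite set of compositions of $n$, and once such a composition is fixed the relations $\sum m_i = n$ and $\sum b_i m_i = d$ determine $\calE$ up to finitely many choices by the spread $b_1 - b_f$. The theorem thus reduces to the following claim: there exists a constant $N = N(n, \s, D)$ such that whenever $b_1 - b_f > N$, every parabolic structure $E$ of type $\s$ on $\calE$ admits a non-scalar semisimple endomorphism, contradicting geometric indecomposability by Proposition~\ref{indecriterion}(3).

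To prove this claim, the idea is to use interpolation in the unipotent radical $U_{\calE,t}$ of $\Aut(\calE) = G_{\calE,t}(k)$. For a given $E$, one looks for a non-scalar semisimple $\sigma \in L_\calE$ and some $u \in U_{\calE,t}(k)$ such that $u\sigma u^{-1} \in {\rm End}(\calE, E)$, equivalently, such that the translated flag $u^{-1} E_{\a_i}$ is $\sigma(\a_i)$-invariant at each $\a_i$. Since $\sigma$ is a constant block-diagonal matrix, this invariance is a constraint on $u(\a_i) \in \GL_n(k(\a_i))$ cutting out the $\sigma$-invariant flags of the appropriate Bruhat type inside the $U_\calE(k(\a_i))$-orbit of $E_{\a_i}$. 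The matrix entries of $u$ are polynomials of degrees bounded by pairwise differences $b_i - b_j$, so by the Chinese Remainder Theorem the system of interpolation conditions at the $\a_1, \ldots, \a_r$ is solvable as soon as the relevant polynomial degree bound exceeds $\sum_i d_i - 1$; a pigeonhole argument using $f \leq n$ shows this is achieved for some pairwise difference once $b_1 - b_f \geq (n-1)\sum_i d_i$. The main obstacle is that a naive central choice of $\sigma$ does not always produce solvable interpolation, because the $G_{\calE,t}$-action on the flag varieties has several Bruhat strata and some flags are never $U_{\calE,t}$-translates of central Levi-invariant ones; one handles this by letting $\sigma$ vary over all non-central semisimple elements of $L_\calE$ and performing a finite case analysis over the combinatorial type (Bruhat position) of the flag $E$, a data that depends only on $n$ and $\s$. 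Combining the combinatorial analysis with the interpolation bound yields the universal constant $N(n, \s, D)$ and completes the proof.
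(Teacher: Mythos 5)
Your reduction is the same as the paper's: bound the number of admissible $\calE$ of given rank and degree by showing that a large consecutive gap $b_i-b_{i+1}$ forces every parabolic structure to carry a non-scalar semisimple endomorphism (this is exactly Proposition \ref{DS}), obtain the large gap from the spread $b_1-b_f$ by pigeonhole, and conclude via Proposition \ref{indecriterion}(3); the interpolation bound $b_i-b_{i+1}\geq\sum_s d_s-1$ coming from the Chinese Remainder Theorem is also the right one. But the decisive step is missing. What you defer to ``a finite case analysis over the Bruhat position of $E$, letting $\sigma$ vary over all non-central semisimple elements of $L_\calE$'' is precisely the content of the proof of Proposition \ref{DS}, and as written it is a placeholder, not an argument: you have not shown that a flag $g_iP_i$ in an arbitrary position is stabilized by some $u_i\sigma u_i^{-1}$ with $u_i$ supported in the block where the degree bound is large. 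Constraining $u_i$ to that block is essential, because entries of $U_\calE$ outside it may have degree bound as small as $1$ and cannot be interpolated at all the points of $D$; your appeal to the $U_\calE(k(\a_i))$-orbit of $E_{\a_i}$ glosses over exactly this.

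Moreover your diagnosis of the obstacle points the wrong way: no variation of $\sigma$ and no case analysis over Bruhat strata is needed. The paper fixes a single $\sigma$ with exactly two eigenvalues split at the large gap, sets $M=C_{\GL_n}(\sigma)\supseteq L_\calE$ and lets $Q\supseteq G_\calE$ be the corresponding parabolic with unipotent radical $U$ (so every matrix entry of $U$ carries the degree bound $\geq b_i-b_{i+1}$), and then uses the Bruhat decomposition $\GL_n=\bigcup_w QwP_i$ to write $g_i=q_iw_i$. Since $\sigma$ is diagonal it lies in $w_iP_iw_i^{-1}$, and Lemma \ref{lem2.6.6} allows one to solve $q_i^{-1}\sigma u_iq_i=\sigma$ with $u_i\in U$, so that $\sigma u_i\in\sigma U=\{u\sigma u^{-1}\,:\,u\in U\}$ stabilizes $g_iP_i$ --- uniformly in the flag. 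Until you supply this (or an equivalent) argument, the proof is incomplete at its key point.
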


It is a consequence of the following proposition.

\begin{proposition} Consider the vector bundle on $\bbP^1_k$

$$
\calE=\bigoplus_{i=1}^f\calO(b_i)^{m_i},
$$
with $b_1>b_2>\cdots>b_f$ and $f\geq 2$, and assume that for some $i=1,\dots,f-1$ we have $b_i-b_{i+1}+1\geq\sum_{s=1}^rd_s$. Then there is no geometrically indecomposable  parabolic structure on $\calE$.
\label{DS}\end{proposition}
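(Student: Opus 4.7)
The plan is to exhibit, for any parabolic structure $E$ on $\calE$, a non-scalar semisimple element of $\mathrm{End}(\calE,E)$ and then conclude via the criterion of Proposition \ref{indecriterion}. Set $\calE^{(1)}:=\bigoplus_{j\leq i}\calO(b_j)^{m_j}$ and $\calE^{(2)}:=\bigoplus_{j>i}\calO(b_j)^{m_j}$, so $\calE=\calE^{(1)}\oplus\calE^{(2)}$ with both summands proper and non-zero (as $f\geq 2$); let $p_1,p_2\in\mathrm{End}(\calE)$ be the associated projectors. The naive candidate $p_1$ is a non-scalar idempotent but it generally does not preserve $E$. The idea is to conjugate it into a new idempotent $e:=g\,p_1\,g^{-1}$ by a carefully chosen $g=1+\phi p_2$ with $\phi\in\mathrm{Hom}(\calE^{(2)},\calE^{(1)})$.

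First, at each closed point $\a_s$ of $D$, I would produce a $k(\a_s)$-subspace $W_s\subset\calE(\a_s)$ complementary to $\calE^{(1)}(\a_s)$ and compatible with the flag $E_{\a_s}$, in the sense that $E_{sj}=\bigl(E_{sj}\cap\calE^{(1)}(\a_s)\bigr)\oplus\bigl(E_{sj}\cap W_s\bigr)$ for every $j$. A standard linear-algebra construction gives this: pick a basis of the quotient $\calE(\a_s)/\calE^{(1)}(\a_s)$ adapted to the induced flag and lift each basis vector to a representative living in the smallest piece $E_{sj}$ still mapping onto it. Recording $W_s$ as the graph of a linear map presents it as data $\psi_s\in\mathrm{Hom}_{k(\a_s)}\bigl(\calE^{(2)}(\a_s),\calE^{(1)}(\a_s)\bigr)$.

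Next, I would globalise by showing that the evaluation map
$$
\mathrm{Hom}(\calE^{(2)},\calE^{(1)})\longrightarrow\bigoplus_{s=1}^r\mathrm{Hom}_{k(\a_s)}\!\bigl(\calE^{(2)}(\a_s),\calE^{(1)}(\a_s)\bigr)
$$
is surjective, so that the collection $(\psi_s)_s$ admits a lift $\phi$. Decomposing into summands $\calO(b_l)\to\calO(b_j)$ for $j\leq i<l$, this reduces to surjectivity of $H^0(\calO(b_j-b_l))\to H^0(\calO(b_j-b_l)|_D)$, which is equivalent to $H^1(\calO(b_j-b_l-D))=0$. Since the hypothesis yields $b_j-b_l\geq b_i-b_{i+1}\geq \deg D-1$, Riemann--Roch on $\bbP^1$ supplies the desired vanishing.

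Finally, set $g:=1+\phi p_2\in\mathrm{Aut}(\calE)$: it fixes $\calE^{(1)}$ and sends $\calE^{(2)}$ to a new complement whose fibre at each $\a_s$ is precisely $W_s$. The conjugate idempotent $e:=g\,p_1\,g^{-1}=p_1-\phi p_2$ thus projects onto $\calE^{(1)}$ along $g(\calE^{(2)})$, and at each $\a_s$ its fibre is the projector along $W_s$, which preserves the flag by Step 1. Therefore $e$ lies in $\mathrm{End}(\calE,E)$; being a non-trivial idempotent it is a non-scalar semisimple element, and Proposition \ref{indecriterion} forces $(\calE,E)$ to split non-trivially after extension to $\overline{k}$. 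The only non-routine point is ensuring that the local flag-compatible splittings can be lifted to a global $\phi$, and this is exactly where the numerical hypothesis is used sharply through Riemann--Roch.
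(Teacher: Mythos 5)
Your proof is correct. The underlying strategy is the same as the paper's: exhibit a non-scalar semisimple element of ${\rm End}(\calE,E)$ of the form (block element along the splitting $\calE=\calE^{(1)}\oplus\calE^{(2)}$) conjugated by a unipotent $g=1+\phi p_2$, where the numerical hypothesis $b_i-b_{i+1}\geq\deg D-1$ is used precisely to lift pointwise data to a global $\phi\in{\rm Hom}(\calE^{(2)},\calE^{(1)})$. But your execution differs in two genuine ways. For the local step, your adapted-basis construction of a flag-compatible complement $W_s$ (lifting a basis of $\calE(\a_s)/\calE^{(1)}(\a_s)$ into the deepest flag pieces, which indeed yields $E_{sj}=(E_{sj}\cap\calE^{(1)}(\a_s))\oplus(E_{sj}\cap W_s)$ for all $j$) replaces the paper's Bruhat-decomposition computation with $g_i=q_iw_i$ and Lemma \ref{lem2.6.6}; it is more elementary, and the resulting element $e=gp_1g^{-1}=p_1-\phi p_2$ is an honest idempotent of ${\rm End}(\calE,E)$, so you actually produce a direct-sum decomposition of the parabolic bundle rather than only a non-scalar semisimple endomorphism. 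For the global step, you work directly over $k$ with points of arbitrary degree, phrasing the lifting as surjectivity of $H^0(\calO(b_j-b_l))\to H^0(\calO(b_j-b_l)|_D)$ via $H^1(\bbP^1,\calO(b_j-b_l-\deg D))=0$ (equivalently, surjectivity of $k[t]^{\leq b_j-b_l}\to k[t]/(P_1\cdots P_r)$), whereas the paper first reduces to $\overline{k}$ so that $\sum_sd_s=r$. Both variants are sound and your route is, if anything, cleaner; the only point worth spelling out in a final write-up is the two-line verification that the adapted-basis complement is flag-compatible at every level $j$.
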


Before proving Proposition \ref{DS} let us recall the following standard result (see for instance \cite[Lemma 2.6.6]{letellier} for Lie algebras).

\begin{lemma}Let $Q=M\ltimes U$ be a parabolic subgroup of $\GL_n(k)$ with $U$ the unipotent radical of $Q$. Let $\sigma\in\GL_n$ such that the centralizer $C_{\GL_n}(\sigma)\subset M$. Then the map $U\rightarrow\sigma U$, $u\mapsto u\sigma u^{-1}$ is an isomorphism.
\label{lem2.6.6}\end{lemma}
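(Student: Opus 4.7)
The plan is to verify, in this order: well-definedness, injectivity, étaleness, and surjectivity of the map $\phi:U\to\sigma U$, $u\mapsto u\sigma u^{-1}$. Combining these will give an injective étale morphism whose image is closed and nonempty in an irreducible target, hence an isomorphism.

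First I would observe that $\sigma\in M$: since $\sigma\in C_{\GL_n}(\sigma)$ and by hypothesis $C_{\GL_n}(\sigma)\subset M$. Because $U$ is normal in $Q=M\ltimes U$, $\sigma$ then normalizes $U$, and the factorization $u\sigma u^{-1}=\sigma\cdot(\sigma^{-1}u\sigma)u^{-1}$ shows the image lies in $\sigma U$, so $\phi$ is well-defined. Injectivity is immediate: if $u\sigma u^{-1}=v\sigma v^{-1}$ then $v^{-1}u\in C_{\GL_n}(\sigma)\cap U\subset M\cap U=\{1\}$.

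For étaleness, I would compute the differential at the identity. Using right-invariant trivialization to identify $T_\sigma(\sigma U)$ with $\mathfrak{u}$, one gets $d\phi_1(X)=(1-\mathrm{Ad}(\sigma))X$ for $X\in\mathfrak{u}$. Its kernel is $\mathfrak{u}^\sigma=C_{\gl_n}(\sigma)\cap\mathfrak{u}=\mathrm{Lie}(C_{\GL_n}(\sigma))\cap\mathfrak{u}$; by hypothesis $\mathrm{Lie}(C_{\GL_n}(\sigma))\subset\mathfrak{m}$, and $\mathfrak{m}\cap\mathfrak{u}=0$, so $d\phi_1$ is an isomorphism. To propagate to an arbitrary point $u\in U$ I would exploit the equivariance $\phi(u_0 u)=u_0\phi(u)u_0^{-1}$, valid for all $u_0\in U$; since $u_0$ normalizes both $U$ and $\sigma U$ (the latter because $u_0\sigma u_0^{-1}\in\sigma U$), this relation exhibits $\phi$ as intertwining left translation $L_{u_0}$ on $U$ with conjugation by $u_0$ on $\sigma U$, both being isomorphisms. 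Hence $d\phi$ being an iso at $1$ forces it to be an iso at every point, so $\phi$ is étale.

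Finally, an injective étale morphism is an open immersion, so $\phi(U)$ is open in $\sigma U$. By the Kostant--Rosenlicht theorem, the orbit $\phi(U)=U\cdot\sigma$ of the unipotent group $U$ acting on the affine variety $\sigma U$ is closed. Being open, closed, and nonempty in the irreducible variety $\sigma U$, it must equal $\sigma U$. So $\phi$ is a surjective open immersion, hence an isomorphism.

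The step I expect to be most delicate is passing from the group-level hypothesis $C_{\GL_n}(\sigma)\subset M$ to the Lie-algebra inclusion $\mathrm{Lie}(C_{\GL_n}(\sigma))\subset\mathfrak{m}$ needed for the differential computation; this uses only functoriality of $\mathrm{Lie}$ on closed subgroup inclusions, but one should keep track of it in positive characteristic. Everything else is formal once injectivity of the differential at $1$ is in hand.
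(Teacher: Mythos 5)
Your proof is correct, but note that the paper does not actually prove this lemma: it is quoted as a standard fact, with a pointer to the Lie algebra analogue in \cite[Lemma 2.6.6]{letellier}, so there is no in-paper argument to compare yours against. Your route (injectivity from $C_{\GL_n}(\sigma)\cap U\subset M\cap U=\{1\}$, \'etaleness from the differential $1-{\rm Ad}(\sigma)$ plus $U$-equivariance, then open immersion plus Rosenlicht's closed-orbit theorem) is a perfectly valid geometric proof. The classical argument is more elementary and worth knowing: filter $U$ by its descending central series, observe that on each successive quotient (a vector group) the induced map is the linear map $1-{\rm Ad}(\sigma)$, which is injective hence bijective because ${\rm Ad}(\sigma)$ has no nonzero fixed vector in $\fraku$, and lift solutions step by step; this gives bijectivity (and the scheme isomorphism) directly, with no appeal to Zariski's main theorem or to Kostant--Rosenlicht. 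One small correction to your closing remark: the genuinely delicate point in positive characteristic is not the functorial inclusion ${\rm Lie}(C_{\GL_n}(\sigma))\subset\frakm$, which is automatic, but the identification of the kernel of $1-{\rm Ad}(\sigma)$ on $\gl_n$, namely $\mathfrak{c}_{\gl_n}(\sigma)$, with ${\rm Lie}(C_{\GL_n}(\sigma))$ --- i.e.\ smoothness of the centralizer. For $\GL_n$ this always holds, since $C_{\GL_n}(\sigma)$ is the open locus of invertible elements in the linear subspace $\{A\in{\rm Mat}_n: A\sigma=\sigma A\}$, but it is exactly the step that can fail for other reductive groups in small characteristic.
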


\begin{proof}[Proof of Proposition \ref{DS}]We are reduced to prove the statement for algebraically closed fields. We thus assume that $k$ is algebraically closed. In particular, $\sum_{s=1}^rd_s=r$. We are going to define  for any parabolic structure $E$ on $\calE$ a non-scalar semisimple element of ${\rm End}(\calE,E)$. The proposition will be then a consequence of Proposition \ref{indecriterion}. 

We put $n=\sum_{i=1}^fm_i$ and assume that $b_i-b_{i+1}+1\geq r$ for some $i=1,\dots,f-1$. Let $a,b$ be any distinct non-zero elements of $k$. Denote by $\sigma=(a_{hh})$ the diagonal matrix in $\GL_n(k)$ with $a_{hh}=a$ for $h\leq \sum_{j\leq i}m_j$ and  $a_{hh}=b$  elsewhere. Denote by $M$ the centralizer of $\sigma$  in $\GL_n$ and by $Q$ the parabolic subgroup of $\GL_n$ containing the upper triangular matrices and having $M$ as a Levi subgroup. Then clearly $M$ contains $L_\calE$ and $Q$ contains the parabolic $G_\calE$. Denote by $U$ the unipotent radical of $Q$.
Chose any parabolic subgroups $P_1,\dots,P_r$ of $\GL_n$ and a parabolic structure $E=(g_1P_1,\dots,g_rP_r)$. We need to prove that there exists non-trivial $u_1,\dots,u_r\in U$  such that for each $i=1,\dots,r$, the element $\sigma u_i$ stabilizes $g_iP_i$, namely $g_i^{-1}\sigma u_ig_i\in P_i$. Indeed, because of the assumption $b_i-b_{i+1}+1\geq r$, there exists a non-trivial $u\in {\rm Aut}(\calE)$ whose off-diagonal coordinates are zero outside the block defining $U$ such that for any $i=1,\dots,r$, we have $u(\a_i)=u_i$. The element $\sigma u$ belongs then to ${\rm Aut}(\calE,E)$ and by Lemma \ref{lem2.6.6} it is semisimple.

Since for any parabolic subgroup $P$ of $\GL_n$ we have

$$
\GL_n=\bigcup_{w\in \frak{S}_n}QwP
$$
from Bruhat decomposition, we can write each $g_i$ in the form $q_iw_i$ for some $q_i\in Q$ and $w_i\in \frak{S}_n$. Now write $q_i$ as $t_iv_i$ with $t_i\in M$ and $v_i\in U$ where $U$ is the unipotent radical of $Q$. Then for any $u_i$ in $U$ we have $q_i^{-1}\sigma u_iq_i=v_i^{-1}t_i^{-1}\sigma t_i t_i^{-1} u_i t_i v_i$. Since $\sigma$ is central in $M$, we have $q_i^{-1}\sigma u_iq_i=v_i^{-1}\sigma v_i v_i^{-1} t_i^{-1}u_i t_i v_i$. Using again Lemma \ref{lem2.6.6}, we see that $v_i^{-1}\sigma v_i=\sigma f_i$ for some $f_i\in U$. We now choose $u_i$ such that $f_iv_i^{-1}t_i^{-1}u_it_iv_i=1$. Then $q_i^{-1}\sigma u_i q_i=\sigma$. Since $\sigma$ is diagonal it belongs to the parabolic subgroup $w_iP_iw_i^{-1}$ and so $\sigma u_i$ stabilizes $g_iP_i$.
\end{proof}

\end{nothing}

\begin{nothing}\textbf{Example: Rank-two parabolic bundles.}\label{exrk2}
Vector bundles on $\bbP^1_k$ of rank two are of the form $\calE_{a,b}=\calO(a)\oplus\calO(b)$ with $a\geq b$. We have 

$$
{\rm Aut}(\calE_{a,a})\simeq \GL_2(k),\hspace{1cm}{\rm Aut}(\calE_{a,b})\simeq\left.\left\{\left(\begin{array}{cc}\alpha&P(t)\\0&\beta\end{array}\right)\,\right|\,\alpha,\beta\in k, P(t)\in k[t]\text{ of degree }\leq a-b\right\}
$$
if $a>b$.
Note that the only proper parabolic subgroups of $\GL_2$ are the Borel subgroups. Let thus $B$ be the Borel subgroup of $\GL_2$ of upper triangular matrices and consider the space of borelic structures 

$$
\calF_{\rm bor}(\calE_{a,b})=\GL_2(k(\a_1)/B(k(\a_1))\times\cdots\times\GL_2(k(\a_r))/B(k(\a_r)).
$$
 Let us prove the following result.
 
 \begin{lemma} (i) There exists a geometrically indecomposable borelic structure on $\calE_{a,a}$ if and only if $\sum_{s=1}^rd_s\geq 3$. Moreover there is a unique geometrically borelic structure on $\calE_{a,a}$ up to isomorphism if and only if $\sum_{s=1}^rd_s=3$.
 
 \noindent (ii) There exists a geometrically indecomposable borelic structure on $\calE_{a,b}$ , with $a>b$, if and only if $a-b+2\leq\sum_{s=1}^r d_s$. Moroever it is unique (up to isomorphism) if and only if $a-b+2=\sum_{s=1}^rd_s$.
 \label{existence}\end{lemma}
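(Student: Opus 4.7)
The plan is to apply Proposition~\ref{indecriterion}(3), which identifies geometric indecomposability with the absence of non-scalar semisimple endomorphisms of $(\calE,E)$. I will parametrize such endomorphisms explicitly in the two cases, translate the ``preserves $(L_1,\ldots,L_r)$'' condition into a constraint on the residue-field data, and then analyze when that constraint fails (existence) and admits a unique orbit (uniqueness).

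For part (i), I use ${\rm End}(\calE_{a,a})=M_2(k)$. A non-scalar semisimple $\sigma\in M_2(\overline{k})$ has exactly two eigenlines in $\bbP^1(\overline{k})$, so $\sigma$ preserves the extension of the borelic structure iff each of the $N:=\sum_s d_s$ geometric lines obtained from the $L_i$'s via the $\mathrm{Gal}(\overline{k}/k)$-action lies in this eigenline pair. Hence $(\calE_{a,a},E)$ is geometrically indecomposable iff at least three of these $N$ lines are distinct. The cases $N\le 2$ and $N\ge 3$ then give the non-existence and existence halves (existence being immediate by inspection, e.g.\ choosing $L_1$ outside every proper intermediate $\bbP^1(k')$). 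For uniqueness in the boundary case $N=3$ I would treat the three sub-cases $(d_1,d_2,d_3)=(1,1,1)$, $(d_1,d_2)=(2,1)$, and $d_1=3$ separately. The first case is the classical 3-transitivity of $\PGL_2(k)$ on $\bbP^1(k)$. The other two reduce to a direct comparison of cardinalities: $|\PGL_2(k)|=q(q^2-1)$ matches the total count of admissible configurations, and the stabilizer of any geometrically indecomposable configuration in $\PGL_2(k)$ is trivial since it fixes three distinct points of $\bbP^1(\overline{k})$, giving simple transitivity.

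For part (ii), the non-existence direction when $\sum d_s\le a-b+1$ is exactly Proposition~\ref{DS}. For the converse and for uniqueness, I would parametrize a non-scalar semisimple element of ${\rm End}(\calE_{a,b})\otimes_k\overline{k}$ as $\sigma=\bigl(\begin{smallmatrix}\alpha & P(t) \\ 0 & \beta\end{smallmatrix}\bigr)$ with $\alpha\ne\beta$ and $P\in\overline{k}[t]_{\le a-b}$; a direct computation gives the two eigenlines of $\sigma(\a)$ as $[1:0]$ and $[P(\a):\beta-\alpha]$. Thus $\sigma$ preserves $(L_i)$ iff, writing $L_i=[l_i:1]$ whenever $L_i\ne[1:0]$, we have $l_i=Q(\a_i)$ for $Q:=P/(\beta-\alpha)\in\overline{k}[t]_{\le a-b}$. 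The evaluation map $k[t]_{\le a-b}\to\prod_i k(\a_i)\cong k^{\sum d_s}$ is injective when $\sum d_s\ge a-b+1$ (a nonzero polynomial of degree $\le a-b$ has at most $a-b$ roots), and its image $V$ is a $k$-subspace of dimension $a-b+1$; decomposable configurations with all $L_i=[l_i:1]$ are exactly those with $(l_i)\in V$. Hence existence when $\sum d_s\ge a-b+2$ follows by picking $(l_i)\notin V$. For uniqueness when $\sum d_s=a-b+2$ I would first observe that no geometrically indecomposable configuration can have any $L_i=[1:0]$ (otherwise the reduced interpolation has $\le a-b+1$ constraints and is solvable), then note that $V$ is the kernel of a single $k$-linear form $\ell$ and that the ${\rm Aut}(\calE_{a,b})$-action $(l_i)\mapsto\gamma(l_i)+(Q(\a_i))$ with $\gamma=\alpha/\beta\in k^\times$, $Q\in k[t]_{\le a-b}$, scales $\ell((l_i))$ by $\gamma$; so all configurations with $\ell((l_i))\ne 0$ form a single orbit.

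I expect the most delicate step to be the case-by-case uniqueness argument in part (i) for the mixed sub-cases $(d_1,d_2)=(2,1)$ and $d_1=3$: unlike the clean linear-algebra argument available for part (ii), these require a Galois-equivariant transitivity statement for $\PGL_2(k)$ on non-$k$-rational points of $\bbP^1(k(\a))$, which I would verify by the orbit-stabilizer count indicated above.
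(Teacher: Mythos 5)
Your proposal is correct, and it takes a genuinely different (and more complete) route than the paper. The paper's proof is constructive: it exhibits one explicit borelic structure in each case (built from $u_i\sigma B$ with $c_i\in k(\a_i)\setminus k$, resp.\ $(\sigma B,\dots,\sigma B,u_r\sigma B)$) and verifies via the stabilizer equations that it admits no non-scalar semisimple automorphism; it does not address the uniqueness assertions at all. You instead \emph{classify} the decomposable configurations: in (i) via the observation that a non-scalar semisimple element of $M_2(k)$ fixes exactly a Galois-stable pair of points of $\bbP^1(\overline k)$, so indecomposability is equivalent to the $L_i$'s and their conjugates spanning at least three distinct geometric points; in (ii) via the identification of the decomposable locus (away from $[1{:}0]$) with the image $V$ of the evaluation map $k[t]_{\le a-b}\to\prod_i k(\a_i)$, whose injectivity in the relevant range is the same degree count the paper uses for its single example. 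This buys you the existence statements immediately and, more importantly, the uniqueness statements in the boundary cases, which you obtain by orbit--stabilizer counting in (i) (the counts $(q^2-q)(q+1)$ and $q^3-q$ do match $|\PGL_2(\F_q)|$, and triviality of stabilizers follows since an element of $\PGL_2(k)$ fixing three points of $\bbP^1(\overline k)$ is trivial) and by the transitivity of the affine action $(l_i)\mapsto\gamma(l_i)+(Q(\a_i))$ on the complement of the hyperplane $V$ in (ii). Two small points: in (i) you should note that the Galois orbit of $L_i$ may have fewer than $d_i$ elements, so the relevant count is the number of \emph{distinct} geometric lines (your phrasing already accommodates this); and neither you nor the paper spells out the ``only if'' direction of uniqueness (non-uniqueness when the inequality is strict), though in your framework it falls out at once since for $\sum_s d_s\ge a-b+3$ the quotient $k^{\sum d_s}/V$ has dimension $\ge 2$ and distinct lines in it give distinct orbits, and similarly in (i) the configuration count exceeds $|\PGL_2(\F_q)|$ once $\sum_s d_s\ge 4$.
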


\begin{proof}We use Proposition \ref{indecriterion}. 

Let us prove (i). 

Consider

$$
u_i=\left(\begin{array}{cc}1&c_i\\0&1\end{array}\right),\hspace{1cm}\sigma=\left(\begin{array}{cc}0&1\\1&0\end{array}\right),\hspace{1cm}g=\left(\begin{array}{cc}\alpha&\beta\\\gamma&\delta\end{array}\right)
$$
with $c_i\in k(\a_i)\backslash k$, $g\in\GL_2(k)$. The element $g$ stabilizes $u_i\sigma B$ if and only if 

\beq
-\gamma c_i^2+(\alpha-\delta)c_i+\beta=0.
\label{equa}\eeq
In particular if $d_i\geq 3$ this equation holds if and only if $\gamma=\beta=0$ and $\alpha=\delta$, i.e., if $g$ is a central matrix. Hence if one of the degrees $d_1,\dots,d_r$, say $d_i$, is larger than $3$, then any borelic structure having $u_i\sigma B$ in its $i$-th coordinate is geometrically indecomposable. If now we have $d_i=2$ and $d_j=1$, then any borelic structure having $u_i\sigma B$ in its $i$-th coordinate and $B$ in its $j$-th coordinate will be geometrically indecomposable. Indeed, $g\in\GL_2(k)$ stabilizes $B$ if and only if $\gamma=0$. Now by equation (\ref{equa}), if $\gamma=0$ we must have $\alpha-\delta=\beta=0$. The last case is when $d_i=d_j=d_s=1$. In this case it is easy to see that any borelic structure having $B$ in its $i$-th coordinate, $\sigma B$ in its $j$-th coordinate and $u_s\sigma B$ in its $s$-th coordinate  must be geometrically indecomposable. Conversely, from the above discussion we prove easily that if $\sum_id_i<3$, then there is no geometrically indecomposable borelic structures.

Let us now prove (ii). Note that the ``only if" part is precisely Proposition \ref{DS}. We assume that $a-b+1<\sum_{s=1}^rd_s$ and we construct a geometrically indecomposable borelic structure. Let $u_i$, $\sigma$ be as above and consider 

$$
g(t)=\left(\begin{array}{cc}\alpha&P(t)\\0&\beta\end{array}\right)\in {\rm Aut}(\calE_{a,b}).
$$
Notice that $g(\a_i)$ always stabilizes $B$. It stabilizes $\sigma B$ if and only if $P(\a_i)=0$ and stabilizes $u_i\sigma B$ if and only if $P(\a_i)=c_i(\beta-\alpha)$. Consider the borelic structure $E=(\sigma B,\dots,\sigma B,u_r\sigma B)$. Suppose that $g(t)$ stabilizes $E$. We must have $P(\a_i)=0$ for $1\leq i\leq r-1$ and $P(\a_r)=c_r(\beta-\alpha)$. For each $i=1,\dots,r$, let $P_i$ be a generator of the prime ideal $\a_i$. If $P(t)\neq 0$, the first constraint implies that $P=P_1\cdots P_{r-1}T$ for some $T\in k[t]$. Put $Q=P_1\cdots P_{r-1}$. The second constraint implies that $T(\a_r)=c_r(\beta-\alpha) Q(\a_r)^{-1}$. Since $a-b<\sum_{s=1}^rd_s-1$ we must have ${\rm deg}(T)\leq d_r-2$. If $\beta-\alpha\neq 0$, we can choose $c_r\in k(\a_r)$ such that $T(\a_r)\neq c_r(\beta-\alpha) Q(\a_r)^{-1}$ for all $T\in k[t]$ of degree less or equal to $d_r-2$.  The constraints $P(\a_i)=0$ for $1\leq i\leq r-1$ and $P(\a_r)=c_r(\beta-\alpha)$ are then possible only if $\alpha=\beta$ and $P=0$, i.e., if $g(t)$ is central. We thus deduce that $E$ is geometrically indecomposable. 
 \end{proof}

\end{nothing}

\begin{nothing}\textbf{Parabolic structures on $\calO(a)^n$.}\label{Kacpoly}
Assume that $\calE=\calO(a)^n$ so that ${\rm Aut}(\calE)=\GL_n(k)$, and that $d_1=\cdots=d_r=1$.  Let $\s=(s_1,\dots,s_r)$ be a parabolic type of rank $n$. 

Let $l_i$ be such that $l_i+1$ is the length of $s_i$, i.e., $s_i : n\geq s_{i1}\geq\cdots\geq s_{il_i}>0$.
Consider the following quiver $\Gamma=(I,\Omega)$ with $I$ the set of vertices $\{0\}\cup\{[i,j]\}_{i,j}$ and $\Omega$ the set of arrows. It is equipped with a dimension vector $\v=(v_i)_{i\in I}\in\N^I$ defined by $v_0=n$ and $v_{[i,j]}=s_{ij}$.

\begin{center}
\unitlength 0.1in
\begin{picture}( 52.1000, 15.4500)(  4.0000,-17.0000)
% CIRCLE 2 0 3 0
% 4 1375 1010 1305 1010 975 1010 975 1010
%
\special{pn 8}%
\special{ar 1376 1010 70 70  0.0000000 6.2831853}%
% CIRCLE 2 0 3 0
% 4 1945 410 1875 410 1545 410 1545 410
%
\special{pn 8}%
\special{ar 1946 410 70 70  0.0000000 6.2831853}%
% CIRCLE 2 0 3 0
% 4 2945 410 2875 410 2545 410 2545 410
%
\special{pn 8}%
\special{ar 2946 410 70 70  0.0000000 6.2831853}%
% CIRCLE 2 0 3 0
% 4 5540 410 5470 410 5140 410 5140 410
%
\special{pn 8}%
\special{ar 5540 410 70 70  0.0000000 6.2831853}%
% CIRCLE 2 0 3 0
% 4 1945 810 1875 810 1545 810 1545 810
%
\special{pn 8}%
\special{ar 1946 810 70 70  0.0000000 6.2831853}%
% CIRCLE 2 0 3 0
% 4 2945 810 2875 810 2545 810 2545 810
%
\special{pn 8}%
\special{ar 2946 810 70 70  0.0000000 6.2831853}%
% CIRCLE 2 0 3 0
% 4 5540 810 5470 810 5140 810 5140 810
%
\special{pn 8}%
\special{ar 5540 810 70 70  0.0000000 6.2831853}%
% CIRCLE 2 0 3 0
% 4 1945 1610 1875 1610 1545 1610 1545 1610
%
\special{pn 8}%
\special{ar 1946 1610 70 70  0.0000000 6.2831853}%
% CIRCLE 2 0 3 0
% 4 2945 1610 2875 1610 2545 1610 2545 1610
%
\special{pn 8}%
\special{ar 2946 1610 70 70  0.0000000 6.2831853}%
% CIRCLE 2 0 3 0
% 4 5540 1610 5470 1610 5140 1610 5140 1610
%
\special{pn 8}%
\special{ar 5540 1610 70 70  0.0000000 6.2831853}%
% VECTOR 2 0 3 0
% 2 1890 1560 1440 1050
%
\special{pn 8}%
\special{pa 1890 1560}%
\special{pa 1440 1050}%
\special{fp}%
\special{sh 1}%
\special{pa 1440 1050}%
\special{pa 1470 1114}%
\special{pa 1476 1090}%
\special{pa 1500 1088}%
\special{pa 1440 1050}%
\special{fp}%
% VECTOR 2 0 3 0
% 2 2870 410 2020 410
%
\special{pn 8}%
\special{pa 2870 410}%
\special{pa 2020 410}%
\special{fp}%
\special{sh 1}%
\special{pa 2020 410}%
\special{pa 2088 430}%
\special{pa 2074 410}%
\special{pa 2088 390}%
\special{pa 2020 410}%
\special{fp}%
% VECTOR 2 0 3 0
% 4 3720 410 3010 410 3730 410 3010 410
%
\special{pn 8}%
\special{pa 3720 410}%
\special{pa 3010 410}%
\special{fp}%
\special{sh 1}%
\special{pa 3010 410}%
\special{pa 3078 430}%
\special{pa 3064 410}%
\special{pa 3078 390}%
\special{pa 3010 410}%
\special{fp}%
\special{pa 3730 410}%
\special{pa 3010 410}%
\special{fp}%
\special{sh 1}%
\special{pa 3010 410}%
\special{pa 3078 430}%
\special{pa 3064 410}%
\special{pa 3078 390}%
\special{pa 3010 410}%
\special{fp}%
% VECTOR 2 0 3 0
% 2 2870 810 2020 810
%
\special{pn 8}%
\special{pa 2870 810}%
\special{pa 2020 810}%
\special{fp}%
\special{sh 1}%
\special{pa 2020 810}%
\special{pa 2088 830}%
\special{pa 2074 810}%
\special{pa 2088 790}%
\special{pa 2020 810}%
\special{fp}%
% VECTOR 2 0 3 0
% 2 2870 1610 2020 1610
%
\special{pn 8}%
\special{pa 2870 1610}%
\special{pa 2020 1610}%
\special{fp}%
\special{sh 1}%
\special{pa 2020 1610}%
\special{pa 2088 1630}%
\special{pa 2074 1610}%
\special{pa 2088 1590}%
\special{pa 2020 1610}%
\special{fp}%
% VECTOR 2 0 3 0
% 4 3730 810 3020 810 3740 810 3020 810
%
\special{pn 8}%
\special{pa 3730 810}%
\special{pa 3020 810}%
\special{fp}%
\special{sh 1}%
\special{pa 3020 810}%
\special{pa 3088 830}%
\special{pa 3074 810}%
\special{pa 3088 790}%
\special{pa 3020 810}%
\special{fp}%
\special{pa 3740 810}%
\special{pa 3020 810}%
\special{fp}%
\special{sh 1}%
\special{pa 3020 810}%
\special{pa 3088 830}%
\special{pa 3074 810}%
\special{pa 3088 790}%
\special{pa 3020 810}%
\special{fp}%
% VECTOR 2 0 3 0
% 4 3730 1610 3020 1610 3740 1610 3020 1610
%
\special{pn 8}%
\special{pa 3730 1610}%
\special{pa 3020 1610}%
\special{fp}%
\special{sh 1}%
\special{pa 3020 1610}%
\special{pa 3088 1630}%
\special{pa 3074 1610}%
\special{pa 3088 1590}%
\special{pa 3020 1610}%
\special{fp}%
\special{pa 3740 1610}%
\special{pa 3020 1610}%
\special{fp}%
\special{sh 1}%
\special{pa 3020 1610}%
\special{pa 3088 1630}%
\special{pa 3074 1610}%
\special{pa 3088 1590}%
\special{pa 3020 1610}%
\special{fp}%
% VECTOR 2 0 3 0
% 2 5465 410 4745 410
%
\special{pn 8}%
\special{pa 5466 410}%
\special{pa 4746 410}%
\special{fp}%
\special{sh 1}%
\special{pa 4746 410}%
\special{pa 4812 430}%
\special{pa 4798 410}%
\special{pa 4812 390}%
\special{pa 4746 410}%
\special{fp}%
% VECTOR 2 0 3 0
% 2 5465 810 4745 810
%
\special{pn 8}%
\special{pa 5466 810}%
\special{pa 4746 810}%
\special{fp}%
\special{sh 1}%
\special{pa 4746 810}%
\special{pa 4812 830}%
\special{pa 4798 810}%
\special{pa 4812 790}%
\special{pa 4746 810}%
\special{fp}%
% VECTOR 2 0 3 0
% 2 5465 1610 4745 1610
%
\special{pn 8}%
\special{pa 5466 1610}%
\special{pa 4746 1610}%
\special{fp}%
\special{sh 1}%
\special{pa 4746 1610}%
\special{pa 4812 1630}%
\special{pa 4798 1610}%
\special{pa 4812 1590}%
\special{pa 4746 1610}%
\special{fp}%
% VECTOR 2 0 3 0
% 2 1880 840 1450 990
%
\special{pn 8}%
\special{pa 1880 840}%
\special{pa 1450 990}%
\special{fp}%
\special{sh 1}%
\special{pa 1450 990}%
\special{pa 1520 988}%
\special{pa 1500 972}%
\special{pa 1506 950}%
\special{pa 1450 990}%
\special{fp}%
% VECTOR 2 0 3 0
% 2 1900 460 1430 960
%
\special{pn 8}%
\special{pa 1900 460}%
\special{pa 1430 960}%
\special{fp}%
\special{sh 1}%
\special{pa 1430 960}%
\special{pa 1490 926}%
\special{pa 1468 922}%
\special{pa 1462 898}%
\special{pa 1430 960}%
\special{fp}%
% DOT 2 0 3 0
% 4 1945 1010 1945 1210 1945 1410 1945 1410
%
\special{pn 8}%
\special{sh 1}%
\special{ar 1946 1010 10 10 0  6.28318530717959E+0000}%
\special{sh 1}%
\special{ar 1946 1210 10 10 0  6.28318530717959E+0000}%
\special{sh 1}%
\special{ar 1946 1410 10 10 0  6.28318530717959E+0000}%
\special{sh 1}%
\special{ar 1946 1410 10 10 0  6.28318530717959E+0000}%
% DOT 2 0 3 0
% 4 4055 410 4265 410 4455 410 4455 410
%
\special{pn 8}%
\special{sh 1}%
\special{ar 4056 410 10 10 0  6.28318530717959E+0000}%
\special{sh 1}%
\special{ar 4266 410 10 10 0  6.28318530717959E+0000}%
\special{sh 1}%
\special{ar 4456 410 10 10 0  6.28318530717959E+0000}%
\special{sh 1}%
\special{ar 4456 410 10 10 0  6.28318530717959E+0000}%
% DOT 2 0 3 0
% 4 4055 810 4265 810 4455 810 4455 810
%
\special{pn 8}%
\special{sh 1}%
\special{ar 4056 810 10 10 0  6.28318530717959E+0000}%
\special{sh 1}%
\special{ar 4266 810 10 10 0  6.28318530717959E+0000}%
\special{sh 1}%
\special{ar 4456 810 10 10 0  6.28318530717959E+0000}%
\special{sh 1}%
\special{ar 4456 810 10 10 0  6.28318530717959E+0000}%
% DOT 2 0 3 0
% 4 4055 1610 4265 1610 4455 1610 4455 1610
%
\special{pn 8}%
\special{sh 1}%
\special{ar 4056 1610 10 10 0  6.28318530717959E+0000}%
\special{sh 1}%
\special{ar 4266 1610 10 10 0  6.28318530717959E+0000}%
\special{sh 1}%
\special{ar 4456 1610 10 10 0  6.28318530717959E+0000}%
\special{sh 1}%
\special{ar 4456 1610 10 10 0  6.28318530717959E+0000}%
\put(19.7000,-2.4500){\makebox(0,0){$[1,1]$}}%
\put(29.7000,-2.4000){\makebox(0,0){$[1,2]$}}%
\put(55.7000,-2.5000){\makebox(0,0){$[1,l_1]$}}%
\put(19.7000,-6.5500){\makebox(0,0){$[2,1]$}}%
\put(29.7000,-6.4500){\makebox(0,0){$[2,2]$}}%
\put(55.7000,-6.5500){\makebox(0,0){$[2,l_2]$}}%
\put(19.7000,-17.8500){\makebox(0,0){$[r,1]$}}%
\put(29.7000,-17.8500){\makebox(0,0){$[r,2]$}}%
\put(55.7000,-17.8500){\makebox(0,0){$[r,l_r]$}}%
\put(14.3000,-7.6000){\makebox(0,0){$0$}}%
\special{pn 8}%
\special{sh 1}%
\special{ar 2950 1010 10 10 0  6.28318530717959E+0000}%
\special{sh 1}%
\special{ar 2950 1210 10 10 0  6.28318530717959E+0000}%
\special{sh 1}%
\special{ar 2950 1410 10 10 0  6.28318530717959E+0000}%
\special{sh 1}%
\special{ar 2950 1410 10 10 0  6.28318530717959E+0000}%
\end{picture}%
\end{center}

\noindent For an arrow $\gamma\in\Omega$, denote by $t(\gamma)$ the tail of $\gamma$ and by $h(\gamma)$ its head. Recall that a representation of $(\Gamma,\v)$ over $k$ is a collection of $k$-linear maps $\varphi=\{\phi_\gamma: V_{t(\gamma)}\rightarrow V_{h(\gamma)}\}_{\gamma\in\Omega}$ with ${\rm dim}_k V_i=v_i$. It is then clear that if $\varphi$ is indecomposable
then its coordinates must be all injective (see \cite[Lemma 3.2.1]{aha2}). Therefore, by taking the images of the $V_{[i,j]}$ in $V_0$ along the arrows of each leg, we define a surjective map from the set of (geometrically) indecomposable representations of $(\Gamma,\v)$ to the set of (geometrically) indecomposable parabolic structures on $\calO(a)^n$ of type $\s$.  It is not hard to see (see \cite[\S 3.2]{aha2}) that this map is a bijection between isomorphism classes of (geometrically) indecomposables. We can thus deduce a necessary and sufficient condition for the existence of geometrically indecomposable structures on $\calO(a)^n$ using the following theorem due to Kac \cite{kac2}.

\begin{theorem} There exists a geometrically \footnote{Kac used the terminology ``absolutely" instead of ``geometrically".} indecomposable representation of $(\Gamma,\v)$ over $k$ if and only if $\v$ is a root of the Kac-Moody algebra associated to $\Gamma$. Moreover, $\v$ is a real root if and only if the geometrically indecomposable representations of $(\Gamma,\v)$ over $k$ are all isomorphic.
\end{theorem}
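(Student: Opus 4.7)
My plan is to follow Kac's original approach, combining the Bernstein–Gelfand–Ponomarev reflection functors with a counting argument over finite fields, which is well-suited to the number-theoretic flavor of this paper. Throughout I set $\text{Rep}(\Gamma,\v):=\prod_{\gamma\in\Omega}\Hom(k^{v_{t(\gamma)}},k^{v_{h(\gamma)}})$ and let $G_\v:=\prod_{i\in I}\GL_{v_i}$ act by simultaneous conjugation, so that isomorphism classes of representations are exactly the $G_\v$-orbits.

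The first tool is the reflection functor $\Sigma_i^{\pm}$ at a sink or source $i$, which gives an equivalence between the full subcategory of representations of $\Gamma$ not containing the simple object $S_i$ as a summand and the analogous subcategory for the reflected quiver $s_i\Gamma$ (with all arrows at $i$ reversed); on dimension vectors it acts by the simple reflection $s_i(\v)=\v-\langle\v,\alpha_i\rangle\alpha_i$ of the Weyl group $W$ attached to $\Gamma$. Since reflection functors preserve indecomposability, iterating them reduces the classification question to the case where $\v$ equals either a simple root $\alpha_i$ or lies in the fundamental region $F=\{\v\neq 0:\mathrm{supp}(\v)\text{ is connected and }\langle\v,\alpha_i\rangle\leq 0\text{ for all }i\}$.

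For the existence direction when $\v$ is a real positive root, I would write $\v=w(\alpha_i)$ for some $w\in W$ and apply the reflection functors corresponding to a reduced expression for $w$ to transport the unique indecomposable of dimension $\alpha_i$ (the simple $S_i$) to one of dimension $\v$; uniqueness in the real root case is inherited from the simple-root case. For $\v$ in $F$ I would use a counting argument: applying Burnside's formula to the $G_\v(\F_q)$-action on $\text{Rep}(\Gamma,\v)(\F_q)$ and stratifying by Krull–Schmidt decomposition types yields an expression for the total count of orbits in terms of the numbers $i_{\v'}(q)$ of absolutely indecomposable representations in smaller dimension vectors, and a plethystic (M\"obius) inversion shows that $i_\v$ is a polynomial in $q$. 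The inequality $\langle\v,\v\rangle\leq 0$, valid on $F$, together with the dimension formula $\dim\mathrm{Rep}(\Gamma,\v)-\dim G_\v=-\tfrac{1}{2}\langle\v,\v\rangle$, yields a positive-dimensional moduli of $G_\v$-orbits containing indecomposables, forcing $i_\v(q)$ to be a nonzero polynomial and hence positive for $q$ large, which provides the required geometrically indecomposable representation over $\overline{\F}_q$ (and descends to a suitable finite extension of $k$).

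The converse proceeds by induction on $\sum_{i\in I}v_i$: if $\v\notin F$ and $\v$ is not a simple root, some $s_i(\v)$ has strictly smaller total dimension and reflection functors reduce to a smaller case; if $\v\in F$, the standard characterization of imaginary roots of a Kac–Moody algebra shows $\v$ is an imaginary root. For the uniqueness dichotomy, real roots give unique indecomposables by the reflection argument, while in $F$ the positive-dimensional moduli of orbits constructed above yields infinitely many pairwise non-isomorphic indecomposables over $\overline{k}$. The main obstacle in this plan is the existence step in the fundamental region: producing an actual indecomposable (rather than merely counting orbits) requires either a Schofield-type construction of general representations or a careful analysis of the generic Krull–Schmidt stratum, and this is the technical core of Kac's theorem.
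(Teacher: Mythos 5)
The paper does not actually prove this statement: it is quoted as Kac's theorem with a citation to \cite{kac2} (only the reduction of the paper's parabolic-structure count to quiver representations is carried out in the surrounding text), so your proposal can only be measured against Kac's original argument, which is indeed the strategy you outline. The reflection-functor reduction to simple roots or the fundamental region, the Burnside/M\"obius counting argument showing that the number $i_\v(q)$ of absolutely indecomposable representations is a polynomial in $q$, and the treatment of real roots are all correctly in place.

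The genuine gap is the one you flag yourself, and as written your substitute for it is circular: you invoke a ``positive-dimensional moduli of $G_\v$-orbits containing indecomposables'' in order to conclude that $i_\v(q)\neq 0$, but whether any indecomposable orbit exists at all is exactly the point at issue; the inequality $\dim\mathrm{Rep}(\Gamma,\v)-\dim G_\v\geq 0$ only guarantees many orbits, not that any of them is indecomposable. The missing ingredient is Kac's parameter-count comparison: for $\v$ in the fundamental region and any nontrivial decomposition $\v=\sum_t\v_t$ into positive vectors, the stratum of representations with Krull--Schmidt type $(\v_t)_t$ depends on strictly fewer parameters than the set of all representations, which rests on the strict inequality $\langle\v,\v\rangle<\sum_t\langle\v_t,\v_t\rangle$ for the Tits form (with a separate treatment of the degenerate tame/isotropic case where all $\v_t$ are proportional). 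Only with this in hand does the total count of decomposables fall short, forcing $i_\v(q)>0$. Two smaller points: the counting argument as set up yields existence only for $q$ large, whereas the theorem asserts existence over the given finite field $k$, so an additional step (Kac's, or positivity of the coefficients as in \cite{HLRV}) is needed to evaluate at the actual $q$; and for the uniqueness dichotomy the relevant conclusion over finite $k$ is ``at least two non-isomorphic geometrically indecomposables'', not ``infinitely many over $\overline{k}$'', which again requires knowing the counting polynomial has positive degree and is large enough at $q$ itself.
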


\end{nothing}

\begin{nothing}\textbf{Constructability.} \label{constructible}In this section we assume that  $k$ is an algebraically closed field and that $\calE$ is a rank $n$ vector bundle on $\bbP^1_k$.  The set $\calF_\s(\calE)^{\rm ind}$ is then a constructible subset of the $k$-variety

$$
\calF_\s(\calE)\simeq\GL_n/P_{s_1}\times\cdots\times\GL_n/P_{s_r}.
$$
The proof goes along the same line as in the quiver case (see \cite[\S 2.5]{KR}). We reproduce it for the convenience of the reader.

For any integer $d$, define 
\begin{align*}
\calF_\s(\calE)_d:&=\{E\in\calF_\s(\calE)\,|\, {\rm dim}\,{\rm End}(\calE,E)=d\}\\
&=\{E\in\calF_\s(\calE)\,|\, {\rm dim}\,{\rm Stab}_{{\rm Aut}(\calE)}(E)=d\}.
\end{align*}
By a standard argument, this set is locally closed in $\calF_\s(\calE)$.

Consider now the closed subvariety

$$
\bbX=\left\{(E,f)\in\calF_\s(\calE)\times{\rm End}(\calE)\,|\,f(E)\subset E, f\text{ nilpotent}\right\}
$$
of $\calF_\s(\calE)\times{\rm End}(\calE)$ and the projection on the first coordinate $p:\bbX\rightarrow \calF_\s(\calE)$. Then using the fact that the function $\bbX\rightarrow\Z$, $x\mapsto{\rm dim}_x p^{-1}(p(x))$ is upper semicontinuous  we see that for all integer $h$ the set $\{(E,f)\in\bbX\,|\, {\rm dim}_f\,{\rm End}_{\rm nil}(\calE,E)\geq h\}$ is closed. The set $\{E\in\calF_\s(\calE)\,|\,{\rm dim}_0\,{\rm End}_{\rm nil}(\calE,E)\geq h\}$ is thus closed. But ${\rm End}_{\rm nil}(\calE,E)$ being a cone, the trivial endomorphism $0$ belongs to all irreducible components of ${\rm End}_{\rm nil}(\calE,E)$ and so $\{E\in\calF_\s(\calE)\,|\,{\rm dim}\,{\rm End}_{\rm nil}(\calE,E)\geq h\}$ is closed in $\calF_\s(\calE)$. Hence for all integers $d$ and $h$, the set $\calF_\s(\calE)_d^h:=\{E\in\calF_\s(\calE)_d\,|\,{\rm dim}\,{\rm End}_{\rm nil}(\calE,E)\geq h\}$ is closed in $\calF_\s(\calE)_d$. Now by Proposition \ref{indecriterion}, an object $E\in\calF_\s(\calE)$ is indecomposable if and only if ${\rm dim}\,{\rm End}_{\rm nil}(\calE,E)\geq{\rm dim}\,{\rm End}(\calE,E)-1$. Putting $h_o:={\rm dim}\,{\rm End}(\calE,E)-1$, we see that $\calF_\s(\calE)^{\rm ind}=\bigcup_d\calF_\s(\calE)_d^{h_o}$ and so that it is a constructible subset of $\calF_\s(\calE)$.
\end{nothing}

\begin{nothing}\textbf{Galois descent theory.}\label{Frobenius}  Here $k=\F_q$ is a finite field and let $K/k$ be a Galois extension. Consider the generator $\sigma:K\rightarrow K, x\mapsto x^q$ of ${\rm Gal}(K/k)$ and denote by $K^\sigma$ the $K$-algebra with underlying ring $K$ and structural morphism $K\rightarrow K^\sigma=K$, $\lambda\mapsto \lambda^q$. Denote by $F^{\rm abs}:\bbP^1_K\rightarrow\bbP^1_K$ the absolute Frobenius which on each chart ${\rm Spec}\, K[z]$ with $z=t,t^{-1}$ is induced by the ring homomorphism $K[z]\rightarrow K[z]$, $P\mapsto P^q$. Then $F^{\rm abs}$ factorizes as $\mathfrak{F}\circ F$ accordingly to the following commutative diagram 

$$\xymatrix{\bbP^1_K\ar[rr]^F\ar[rrd]&&K^\sigma\times_K\bbP^1_K\ar[d]\ar[rr]^{\mathfrak{F}}&&\bbP^1_K\ar[d]\\
&&{\rm Spec}\, K\ar[rr]^{\sigma}&&{\rm Spec}\,K}
$$

The obvious $k$-structure on $\bbP^1_K$ gives a natural identification $\bbP^1_K\simeq K^\sigma\times_K\bbP^1_K$. Under this identification  the relative Frobenius $F$ is given on  each chart ${\rm Spec}\,K[z]$ with $z=t$ or $z=t^{-1}$ by the $K$-algebra homomorphism $P(z)\mapsto P(z^q)$ while  $\mathfrak{F}$  is given by the $k$-algebra homomorphism $K[z]\rightarrow K[z]$, $\sum_ia_iz^i\mapsto \sum_ia_i^qz^i$. Notice that $F^*(\calO(n))\simeq\calO(qn)$ and $\mathfrak{F}^*(\calO(n))\simeq\calO(n)$ for all integer $n$.
\bigskip

The divisor $D_K$ being defined over $k$, we have a natural identification $D_K$ with $K^\sigma\times_K D_K$. The functor $\mathfrak{F}^*$ acts on ${\rm Bun}^{par}_{D_K}(\bbP^1_K)$ as follows. If $(\calE',E')=\mathfrak{F}^*(\calE,E)$ then $\calE'=\calE$ and for any closed point $\b$ of the divisor $D_K\simeq K^\sigma\times_KD_K$, the partial  flag $E'_\b$ is $K^\sigma\times_KE_{\mathfrak{F}(\b)}$. Notice that if $(\calE,E)$ is obtained by extension of scalars from a parabolic vector bundle on $\bbP^1_k$, then it is automatically $\mathfrak{F}^*$-stable as the following diagram commutes

$$
\xymatrix{\bbP^1_K\simeq K^\sigma\times_K\bbP^1_K\ar[dr]_{f_{K/k}}\ar[rr]^{\mathfrak{F}}&&\bbP^1_K\ar[dl]^{f_{K/k}}\\
&\bbP^1_k&}
$$

From Galois descent theory we get the following result.

\begin{proposition} If the parabolic bundle $(\calE,E)\in{\rm Bun}^{par}_{D_K}(\bbP^1_K)$ is $\mathfrak{F}^*$-stable then it is of the form $f_{K/k}^*(\calE_k,E_k)$ with $(\calE_k,E_k)\in{\rm Bun}^{par}_D(\bbP^1_k)$.
\end{proposition}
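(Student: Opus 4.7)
The plan is to apply standard Galois descent along the faithfully flat cover $f_{K/k}: \bbP^1_K \to \bbP^1_k$. Recall that $\Gal(K/k)$ is (pro)cyclic, topologically generated by the $k$-automorphism $\sigma$ of $K$, and that the $k$-morphism $\mathfrak{F}: \bbP^1_K \to \bbP^1_K$ covers $\Spec\sigma$. Faithfully flat descent for quasi-coherent sheaves under the cover $f_{K/k}$ produces an equivalence between $\mathrm{Bun}(\bbP^1_k)$ and the category of vector bundles on $\bbP^1_K$ equipped with a $\Gal(K/k)$-descent datum, and the isomorphism $\varphi:\mathfrak{F}^*\calE \xrightarrow{\sim}\calE$ packaged inside the $\mathfrak{F}^*$-stability hypothesis plays the role of such a datum (the cocycle issue is discussed at the end). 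This yields $\calE_k \in \mathrm{Bun}(\bbP^1_k)$ with $\calE \simeq f_{K/k}^*\calE_k$.

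To descend the parabolic structure, I use that $D_K = K\times_k D$, so $\mathfrak{F}$ permutes the closed points of $D_K$ lying over each $\a_i$. Fix a closed point $\b$ of $D_K$ above $\a_i$. Then the fibre $\calE(\b) \simeq \calE_k(\a_i)\otimes_{k(\a_i)} K(\b)$ inherits a $\Gal(K(\b)/k(\a_i))$-action from the descent datum on $\calE$, and the explicit description of $\mathfrak{F}^*(\calE,E)$ given just above the proposition says exactly that the isomorphism $\mathfrak{F}^*(\calE,E)\simeq(\calE,E)$ matches $E_{\mathfrak{F}(\b)}$ with $K^\sigma\otimes_K E_\b$. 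Consequently the collection $\{E_\b\}_{\b\mid\a_i}$ is Galois-equivariant, and ordinary Galois descent for subspaces of a finite-dimensional vector space along the separable extension $K(\b)/k(\a_i)$ produces a unique partial flag $E_{k,\a_i}$ in $\calE_k(\a_i)$ whose base change recovers $E_\b$ for every $\b$ above $\a_i$. Collecting these for $i=1,\dots,r$ assembles a parabolic structure $E_k$ on $\calE_k$ with $f_{K/k}^*(\calE_k,E_k)\simeq(\calE,E)$.

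The main subtlety is that a priori "$\mathfrak{F}^*$-stable" only provides an isomorphism $\varphi: \mathfrak{F}^*\calE \xrightarrow{\sim}\calE$, whereas a genuine descent datum is a coherent family of such isomorphisms satisfying a cocycle condition; in the cyclic case this reduces to demanding that a suitable iterate of $\varphi$ equal the identity of $\calE$. When $K=\F_{q^m}$, that iterate is an automorphism of $\calE$, and one modifies $\varphi$ by an element of $\Aut(\calE)$ so the cocycle holds; this is possible by a Lang--Steinberg style argument applied to the connected algebraic group $\Aut(\calE)=G_{\calE,t}$ introduced in \S\ref{aut}. For $K=\overline{k}$ one passes through finite layers, observing that any $\mathfrak{F}^*$-stable $(\calE,E)$ is already defined over some $\F_{q^m}$ because all data involved (the finitely many Grothendieck summands of $\calE$ and the finitely many flags in finite-dimensional fibres) are of finite type.
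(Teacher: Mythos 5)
The paper itself offers no written proof of this proposition (it simply invokes ``Galois descent theory''), so your sketch has to be judged on its own terms. The overall route --- descend the underlying bundle by Galois/faithfully flat descent, descend the subspaces $E_\b$ fibrewise, and repair the cocycle condition by a Lang--Steinberg argument --- is the standard and correct one, but there is a genuine gap in how your second and third paragraphs interact. The flag-equivariance you invoke in the second paragraph is equivariance with respect to the \emph{original} isomorphism $\varphi:\mathfrak{F}^*(\calE,E)\to(\calE,E)$ furnished by $\mathfrak{F}^*$-stability. But the descent datum you actually use to produce $\calE_k$ is the \emph{corrected} isomorphism $\varphi'$ obtained in the third paragraph by composing $\varphi$ with a solution $a$ of a twisted Lang equation in $\Aut(\calE)=G_{\calE,t}$. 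Nothing forces $a$ to preserve the partial flags, so $\varphi'$ need not carry $K^\sigma\otimes_K E_{\mathfrak{F}(\b)}$ to $E_\b$, and the family $\{E_\b\}$ need not be equivariant for the Galois action on the fibres induced by $\varphi'$. As written, the descent of the parabolic structure therefore does not go through.

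The repair is short but is really the content of the proposition: the cocycle must be corrected inside $\Aut(\calE,E)$ rather than $\Aut(\calE)$. Since $\Aut(\calE,E)$ is the unit group of the finite-dimensional algebra ${\rm End}(\calE,E)$, it is Zariski-open in an affine space, hence connected, so Lang--Steinberg applies to it and yields a correcting automorphism that does preserve the flags; the whole parabolic bundle then descends at once. (Alternatively, keep your two-step strategy: after descending $\calE$, note that the $\Aut(\calE)$-orbit of $E$ in $\calF_\s(\calE)$ is stable under the resulting Frobenius, and an $F$-stable orbit of the connected group $\Aut(\calE)$ contains a rational point $E'$ by Lang; then $(\calE,E)\cong(\calE,E')$ with $(\calE,E')$ descending. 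Either way, connectedness of the relevant automorphism group is the key input, and your sketch does not apply it to the right group.) Your reduction of the case $K=\overline{k}$ to a finite subextension is fine.
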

\end{nothing}

\section{Counting geometrically indecomposable parabolic bundles over finite fields}\label{counting}

\subsection{Preliminaries and notation} Unless specified $k$ is a finite field $\F_q$ and $\overline{k}$ for an algebraic closure of $k$.  Recall that $D=\a_1+\a_2+\cdots+ \a_r$ is a reduced divisor on $\bbP^1_k$ and $d_i$ is the degrees of  $\a_i$. 

We denote by $\calM_\s^\calE(q)=\calM_{\s,D}^\calE(q)$ (resp. $\calI_\s^\calE(q)=\calI_{\s,D}^\calE(q)$, $\calA_\s^\calE(q)=\calA_{\s,D}^\calE(q)$) the number of isomorphism classes of parabolic structures of type $\s$ on $\calE$ (resp. indecomposable parabolic structures, geometrically indecomposable parabolic structures). 

Fix once for all line bundles $\calO(b_1),\dots,\calO(b_f)$ on $\bbP^1_k$ with $b_1>b_2>\cdots>b_f$.

Then if $\m=(m_1,\dots,m_f)$ is an $f$-tuple of non-negative integers, we put 

$$\calE^\m:=\bigoplus_{i=1}^f\calO(b_i)^{m_i},
$$
 and call $\m$ a vector bundle type. The rank of a vector bundle type $\m$ is by definition the rank of $\calE^\m$ that is $\sum_im_i$ and  is denoted by ${\rm rk}(\m)$. We use the notation $\calA_\s^\m(q)$, $\calI_\s^\m(q)$ and $\calM_\s^\m(q)$ instead of  $\calA_\s^{\calE^\m}(q)$, $\calI_\s^{\calE^\m}(q)$ and $\calM_\s^{\calE^\m}(q)$.

Denote by  $\gcd(\m,\s)$ the gcd of the coordinates of the sequences $\m$ and $\s$. If $r\,\mid\, \gcd(\m,\s)$, we denote by $\m/d$ and $\s/d$ the sequences obtained from $\m$ and $\s$ by dividing all coordinates by $d$.

We have the following relation bewteen the number of indecomposables and geometrically indecomposables.

\begin{proposition}
\begin{align*}
&\calA_\s^\m(q)=\sum_{d\,\mid\, \gcd(\m,\s)}\sum_{r\,\mid\, d}\frac{\mu(r)}{d}\calI_{\s/d}^{\m/d}(q^r),\\
&\calI_\s^\m(q)=\sum_{d\,\mid\, \gcd(\m,\s)}\sum_{r\,\mid\, d}\frac{\mu(r)}{d}\calA_{\s/d}^{\m/d}(q^{d/r}).
\end{align*}
In particular if $\gcd(\m,\s)=1$ we have $\calA_\s^\m(q)=\calI_\s^\m(q)$.
\label{abso-ind}\end{proposition}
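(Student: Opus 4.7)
The plan is a Galois-descent and Möbius-inversion argument, in the spirit of Kac's counting of indecomposable quiver representations over finite fields. First I would pin down the geometric structure of an indecomposable parabolic bundle $(\calE,E)$ of type $(\m,\s)$ over $\F_q$ through its endomorphism ring. Since ${\rm Bun}_D^{par}(\bbP^1_k)$ is Krull--Remak--Schmidt, ${\rm End}(\calE,E)$ is local, and by Wedderburn's theorem (finite division algebras are fields) ${\rm topEnd}(\calE,E)\simeq\F_{q^d}$ for some $d\geq 1$. By~\eqref{isoJac} we then have ${\rm topEnd}((\calE,E)_{\overline{\F}_q})\simeq\overline{\F}_q\otimes_{\F_q}\F_{q^d}\simeq\overline{\F}_q^{\,d}$. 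On the other hand, writing a Krull--Remak--Schmidt decomposition $(\calE,E)_{\overline{\F}_q}\simeq\bigoplus_i(\calN_i,N_i)^{n_i}$ into pairwise non-isomorphic geometrically indecomposable summands and invoking Proposition~\ref{indecriterion} gives ${\rm topEnd}((\calE,E)_{\overline{\F}_q})\simeq\prod_i{\rm Mat}_{n_i}(\overline{\F}_q)$, which forces every $n_i=1$ and exactly $d$ summands. The Frobenius permutes the summands, and must act transitively (otherwise $(\calE,E)$ would decompose over $\F_q$), so all $(\calN_i,N_i)$ share a common type $(\m/d,\s/d)$ (in particular $d\mid\gcd(\m,\s)$) and each has minimal field of definition $\F_{q^d}$.

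Next I would turn this into a count using an auxiliary quantity $f_{\nu,\tau}(r)$, the number of isomorphism classes over $\overline{\F}_q$ of geometrically indecomposable parabolic bundles of type $(\nu,\tau)$ with minimal field of definition exactly $\F_{q^r}$. The bijection just established, between $\F_q$-indecomposables of type $(\m,\s)$ and Galois orbits of geometric indecomposables, reads
\[
\calI_\s^\m(q)=\sum_{d\mid\gcd(\m,\s)}\frac{f_{\m/d,\s/d}(d)}{d}.
\]
Parallel to this, for a geometrically indecomposable $(\calN,N)$ over $\overline{\F}_q$ the automorphism group ${\rm Aut}(\calN,N)\simeq\overline{\F}_q^{\,\times}\ltimes(1+{\rm Jac}\,{\rm End}(\calN,N))$ is connected (an extension of $\Gm$ by a unipotent group), so Lang's theorem gives a unique $\F_{q^e}$-form of $[(\calN,N)]$ whenever its minimal field of definition divides $\F_{q^e}$. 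This yields
\[
\calA_\tau^\nu(q^e)=\sum_{r\mid e}f_{\nu,\tau}(r),
\]
and standard Möbius inversion then gives $f_{\nu,\tau}(d)=\sum_{e\mid d}\mu(d/e)\,\calA_\tau^\nu(q^e)$.

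Substituting this expression for $f_{\m/d,\s/d}(d)$ into the formula for $\calI_\s^\m(q)$ and reindexing $r=d/e$ produces exactly the second formula of the proposition. The first formula is the Möbius inverse of the second in the coupled parameters ``divisor $d$ of $\gcd(\m,\s)$'' and ``Frobenius twist $s$'': applying the second formula to each pair $(\m/d_0,\s/d_0,q^{s_0})$ gives a lower-triangular system in $(d,s)$ which admits a unique inverse, and checking that the two stated formulas are mutual inverses reduces to the identity $\sum_{r\mid d}\mu(r)=\delta_{d,1}$ applied in each parameter. The ``in particular'' clause is immediate: when $\gcd(\m,\s)=1$ only the term $d=r=1$ survives in both sums.

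The main obstacle is the structural first step: combining~\eqref{isoJac}, Wedderburn, and Krull--Remak--Schmidt to show that the $\overline{\F}_q$-base change of an $\F_q$-indecomposable is a direct sum of exactly $d$ non-isomorphic geometrically indecomposables of common type $(\m/d,\s/d)$ forming a single Galois orbit. The supplementary Lang-theoretic input (connectedness of ${\rm Aut}(\calN,N)$ for geometrically indecomposable $(\calN,N)$, guaranteeing existence and uniqueness of descent) is also essential, but more standard; once both are in place the remaining Möbius inversion is routine.
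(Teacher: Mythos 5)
Your proof is correct and follows essentially the same route as the paper: the paper reduces everything to the Galois-descent identity $\sum_{d\mid\gcd(\m,\s)}\tfrac{1}{d}\calI_{\s/d}^{\m/d}(q)=\sum_{d\mid\gcd(\m,\s)}\tfrac{1}{d}\calA_{\s/d}^{\m/d}(q^d)$ (cited from the quiver case in \cite{KR}) and then applies M\"obius inversion, which is exactly your argument with the bookkeeping organized through $f_{\nu,\tau}(r)$ instead of the paper's auxiliary series $\calX_\s^\m(q)$. The only difference is that you supply in full the descent input (the ${\rm topEnd}$ analysis, transitivity of Frobenius on the geometric summands, and Lang's theorem for existence and uniqueness of forms) that the paper delegates to the reference.
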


\begin{proof} From Galois descent theory (see \cite{KR} in the quiver representations case) we find
\begin{equation}
\calX_\s^\m(q):=\sum_{d\,\mid\, \gcd(\m,\s)}\frac{1}{d}\calI_{\s/d}^{\m/d}(q)=\sum_{d\,\mid\, \gcd(\m,\s)}\frac{1}{d}\calA_{\s/d}^{\m/d}(q^d).
\label{id-abi}\end{equation}
We then use the M\"obius function to  obtain 

$$
\calI^\m_\s(q)=\sum_{d\,\mid\,\gcd(\m,\s)}\frac{\mu(d)}{d}\calX^{\m/d}_{\s/d}(q).
$$
and then use the  identity (\ref{id-abi}) to prove the second formula of  the proposition and so also the first one.

\end{proof}

\subsection{Relation between $\calM_\s^\calE(q)$ and $\calA_\s^\calE(q)$}\label{relation}Let $X$ be an infinite set $\{X_{ij}\,|\,i=1,\dots,r,\, j\in\N\}$    of independent commuting variables and $Y$ be a finite set $\{Y_1,\dots,Y_f\}$ of independent commuting variables. 

For a vector bundle type  $\m=(m_1,\dots,m_f)$ and a parabolic type $\s$, we put 
$$
Y^\m:=Y_1^{m_1}\cdots Y_f^{m_f},\hspace{2cm} X^\s:=\prod_{i,j}X_{ij}^{s_{ij}}.
$$
Denote by $R=K_o({\rm Sch}/\F_q)$ the Grothendieck group of the category ${\rm Sch}/\F_q$ of separated $\F_q$-schemes of finite type. The map ${\rm Sch}/\F_q\rightarrow \Z$, that maps $X$ to $\# X(\F_q)$ extends to a map $R\rightarrow \Z$, $\gamma\mapsto \#\gamma(\F_q)$. For any formal power series 

$$
f(q,X,Y,T):=\sum_{n>0}\sum_{{\rm rk}(\m)=|\s|=n}\# \gamma_\s^\m(\F_q) Y^\m X^\s T^n,
$$
where $\gamma_\s^\m\in R$, we define for each integer $d>0$ the series

$$
\psi_d(f):=f(q^d,X^d,Y^d,T^d),$$
where $X^d$ and $Y^d$ denotes the set of variables $\{X_{ij}^d\}$ and $\{Y_i^d\}$.

We then put 

$$
\Psi(f):=\sum_{d>0}\frac{\psi_d(f)}{d},
$$
and we define the plethystic exponential $\Exp$ of $f$ as
$$
\Exp(f):=\exp(\Psi(f)).
$$
The pletystic logarithm of a power series $g$ of the form $1+f$, with $f$ as above, is defined as

$$
\Log(g)=\Psi^{-1}(\log(g)),
$$
with $\Psi^{-1}(g)=\sum_{d>0}\mu(d)\frac{\psi_d(g)}{d}$ where $\mu$ is the ordinary M\"obius function. The operators $\Exp$ and $\Log$ are inverse of each other.

\begin{remark} If $f=\# \gamma(\F_q) T$, then $\Exp(f)$ is the usual zeta function of $\gamma\in{\rm Sch}/\F_q$.

\end{remark}

\begin{proposition} We have 

$$
\Log\left(1+\sum_{n> 0}\left(\sum_{|\s|=n}\sum_{{\rm rk}(\m)=n}\calM_\s^\m(q)Y^\m X^\s \right)T^n\right)=\sum_{n>0}\left(\sum_{|\s|=n}\sum_{{\rm rk}(\m)=n}\calA_\s^\m(q)Y^\m X^\s\right) T^n.
$$
\label{Hua}\end{proposition}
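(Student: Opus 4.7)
The plan is to combine the Krull-Remak-Schmidt property of ${\rm Bun}_D^{par}(\bbP^1_k)$ with the Galois-descent identity of Proposition~\ref{abso-ind} in two separate steps: first extract indecomposables via ordinary $\log$, then pass from indecomposables to geometrically indecomposables via the plethystic correction $\Psi^{-1}$.

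\textbf{Step 1 (Krull--Remak--Schmidt $\Rightarrow$ ordinary logarithm).} Since ${\rm Bun}_D^{par}(\bbP^1_k)$ is a Krull--Remak--Schmidt $k$-linear category, every isomorphism class of parabolic bundle is uniquely a finite multiset of isomorphism classes of indecomposables. Grading by $(\m,\s)$ and observing that the set of indecomposable classes in each bidegree is finite (this follows from Theorem~\ref{finiteness} applied over $\overline{\F}_q$ together with Galois descent), the generating function factors as
\[
\M(q,X,Y,T):=1+\sum_{n>0}\sum_{|\s|=n,\,{\rm rk}(\m)=n}\calM_\s^\m(q)\,Y^\m X^\s T^n \;=\;\prod_{[(\calE,E)]\text{ indec}}\frac{1}{1-Y^{\m(\calE)}X^{\s(E)}T^{n}}.
\]
Taking the ordinary logarithm and expanding $-\log(1-u)=\sum_{k\ge 1}u^k/k$ gives, after regrouping by the new indices $\m'=k\m$, $\s'=k\s$,
\[
\log\M\;=\;\sum_{n',\s',\m'}\Bigl(\sum_{k\mid\gcd(\m',\s')}\frac{1}{k}\calI_{\s'/k}^{\m'/k}(q)\Bigr)Y^{\m'}X^{\s'}T^{n'}\;=\;\sum_{n',\s',\m'}\calX_{\s'}^{\m'}(q)\,Y^{\m'}X^{\s'}T^{n'},
\]
where $\calX_{\s'}^{\m'}(q)$ is exactly the quantity appearing in formula~(\ref{id-abi}).

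\textbf{Step 2 (Plethystic correction $\Rightarrow$ geometrically indecomposable).} By definition $\Log=\Psi^{-1}\circ\log$ with $\Psi^{-1}(g)=\sum_{d>0}\mu(d)\psi_d(g)/d$, so applying $\psi_d$ (which replaces $q\to q^d$, $Y_i\to Y_i^d$, $X_{ij}\to X_{ij}^d$, $T\to T^d$) to the expression above and regrouping once more (setting $\m''=d\m'$, etc.) yields
\[
\Log\M\;=\;\sum_{n,\s,\m}\Bigl(\sum_{d\mid\gcd(\m,\s)}\frac{\mu(d)}{d}\calX_{\s/d}^{\m/d}(q^d)\Bigr)Y^\m X^\s T^n.
\]
The identity~(\ref{id-abi}) from Proposition~\ref{abso-ind} rewrites $\calX$ in terms of $\calA$; substituting this and swapping the two divisor sums reduces the inner coefficient to
\[
\sum_{f\mid\gcd(\m,\s)}\frac{\calA_{\s/f}^{\m/f}(q^f)}{f}\sum_{d\mid f}\mu(d),
\]
and the inner M\"obius sum collapses to $\delta_{f,1}$, leaving exactly $\calA_\s^\m(q)$. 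This is the desired equality.

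\textbf{Main obstacle.} The nontrivial content is Step~1: justifying the product decomposition of $\M$ over isomorphism classes of indecomposables in the parabolic category, which requires both the KRS property (already recorded in the excerpt) and a finiteness statement for indecomposables of fixed numerical type. The finiteness for \emph{geometrically} indecomposables is Theorem~\ref{finiteness}, and the passage to indecomposables over $\F_q$ uses the Galois descent discussed in~\S\ref{Frobenius}. Once Step~1 is in place, Step~2 is essentially a bookkeeping exercise in M\"obius inversion on $\gcd(\m,\s)$ that uses Proposition~\ref{abso-ind} as a black box.
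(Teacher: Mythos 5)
Your proof is correct and follows essentially the same route as the paper: the Krull--Remak--Schmidt property yields the Euler-product factorization over indecomposables, the ordinary $\log$ produces the $\calI$-terms, and the plethystic correction $\Psi^{-1}$ combined with Proposition~\ref{abso-ind} (equivalently the identity~(\ref{id-abi})) converts these to the $\calA$-coefficients. Your Step~2 merely carries out in detail the M\"obius bookkeeping that the paper compresses into one line, and your appeal to Theorem~\ref{finiteness} for finiteness in each bidegree is harmless overkill since $\calI_\s^\m(q)\le\calM_\s^\m(q)=\#(\calF_\s(\calE^\m)/{\rm Aut}(\calE^\m))$ is finite for trivial reasons.
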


\begin{proof} The proof goes along the same lines as in the quiver representations case \cite{hua}. We recall it for the convenience of the reader. From the fact that the category of parabolic bundles is Krull-Remak-Schmidt, we have the following formal identity

$$
1+\sum_{n>0}\sum_{|\s|=n}\sum_{{\rm rk}(\m)=n}\calM_\s^\m(q) Y^\m X^\s=\prod_{n>0}\prod_{|\s|=n}\prod_{{\rm rk}(\m)=n}\left(1-Y^\m X^\s\right)^{-\calI_\s^\m(q)}.
$$
Taking the formal $\log$ of this identity we find

$$
\log\left(1+\sum_{n>0}\sum_{|\s|=n}\sum_{{\rm rk}(\m)=n}\calM_\s^\m(q) Y^\m X^\s\right)=\sum_{i>0}\sum_{n>0}\sum_{|\s|=n}\sum_{{\rm rk }(\m)=n}\frac{1}{i}\calI_\s^\m(q)Y^{i\m}X^{i\s}.
$$
We now apply the operator $\Psi^{-1}$ and we use Proposition \ref{abso-ind}.

\end{proof}

\bigskip

\subsection{Character formula for $\calM_\s^\calE(q)$ and $\calA_\s^\calE(q)$} 

In this section we give formulas for $\calM_\s^\calE(q)$ and $\calA_\s^\calE(q)$ in terms of character values of general linear groups over finite fields.

\subsubsection{Harish-Chandra induction: Review}\label{Harish-Chandra}For simplicity, we call the Levi factors of the parabolic subgroups of a connected reductive algebraic group $G$, the \emph{Levi subgroups} of $G$. We consider on $\GL_n$ the usual $\F_q$-structure which corresponds to the relative Frobenius $F:\GL_n(\overline{k})\rightarrow\GL_n(\overline{k})$, $(a_{ij})\mapsto (a_{ij}^q)$. For a subgroup $H$ of $\GL_n$ which is defined over $\F_q$, we denote by $H(q)$ the finite group of $\F_q$-rational points of $H$. We will call  the block diagonal subgroups $\GL_{n_1}\times\GL_{n_2}\times\cdots\times\GL_{n_s}$ of $\GL_n$ the \emph{standard Levi subgroups} of $\GL_n$. The \emph{standard parabolic subgroups} are the parabolic subgroups of $\GL_n$ that contains the upper triangular matrices. Let $L$ be a standard Levi subgroup. Choose a parabolic subgroup $P$ of $\GL_n$ defined over $\F_q$ and having $L$ as a Levi factor (for instance take $P$ to be standard). Denote by $U_P$ the unipotent radical of $P$ so that $P=L\ltimes U_P$. Consider the $\C$-vector space $M$ generated by the finite set $\GL_n(q)/U_P(q)=\{gU_P(q)\,|\, g\in \GL_n(q)\}$. It is endowed with an action of $\GL_n(q)$ by left multiplication and with an action of $L(q)$ by right multiplication (which is well-defined as $U_P$ is a normal subgroup of $P$). Obviously these two actions commutes. We define the \emph{Harish-Chandra functor} $R_{L(q)}^{\GL_n(q)}$ from the category  of finite dimensional left $\C[L(q)]$-modules to the catgeory of finite dimensional $\C[\GL_n(q)]$-modules as 

$$
V\rightarrow M\otimes_{\C[L(q)]}V.
$$
It is well-known that the Harish-Chandra functor depends only on $L$ and not on the choice of $P$. The functor induces a $\C$-linear map, denoted again $R_{L(q)}^{\GL_n(q)}$, from the $\C$-vector space $\calC(L(q))$ of class functions on $L(q)$ to $\calC(\GL_n(q))$. It is given by the following explicit formula for any $f\in\calC(L(q))$ and $g\in\GL_n(q)$:

$$
R_{L(q)}^{\GL_n(q)}(f)(g)=\frac{1}{|P(q)|}\sum_{h\in\GL_n(q),\, h^{-1}gh\in P}f(\pi_P(h^{-1}gh)),
$$
where $\pi_P:P\rightarrow L$ is the canonical projection. 

We have the following easy lemmas.

\begin{lemma}Let $1$ denotes the identity character. For any linear character $\alpha:\F_q^\times\rightarrow\C^\times$ we have

$$
R_{L(q)}^{\GL_n(q)}(\alpha\circ{\rm det})=R_{L(q)}^{\GL_n(q)}(1)\cdot(\alpha\circ {\rm det}).
$$
\label{lemtriv}\end{lemma}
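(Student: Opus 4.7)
The plan is to apply the explicit formula for Harish-Chandra induction recalled just above the lemma and exploit the fact that $\alpha\circ\det$ is a one-dimensional character of $\GL_n(q)$ that restricts correctly via the projection $\pi_P$. The key observation is that the determinant of an element of the parabolic $P$ coincides with the determinant of its Levi part, because $\det$ is trivial on unipotent elements and hence on $U_P$.

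More concretely, I would argue as follows. Let $g\in\GL_n(q)$ and consider an element $h\in\GL_n(q)$ with $h^{-1}gh\in P$. Writing $h^{-1}gh=\pi_P(h^{-1}gh)\cdot u$ for some $u\in U_P(q)$, and using $\det(u)=1$, we get
\[
\det\bigl(\pi_P(h^{-1}gh)\bigr)=\det(h^{-1}gh)=\det(g).
\]
Therefore $(\alpha\circ\det)\bigl(\pi_P(h^{-1}gh)\bigr)=\alpha(\det g)$, which is a constant depending only on $g$ (not on $h$).

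Plugging $f=\alpha\circ\det$ into the explicit formula
\[
R_{L(q)}^{\GL_n(q)}(f)(g)=\frac{1}{|P(q)|}\sum_{\substack{h\in\GL_n(q)\\h^{-1}gh\in P}}f\bigl(\pi_P(h^{-1}gh)\bigr),
\]
we may pull the factor $\alpha(\det g)$ out of the sum, and the remaining sum is by definition $R_{L(q)}^{\GL_n(q)}(1)(g)$. This yields the identity.

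There is essentially no obstacle here: the lemma is a direct computation using only that $\det$ kills $U_P$ and is central enough to pull out of the Harish-Chandra sum. The only mild care needed is to make sure one interprets $\alpha\circ\det$ consistently as a character of $\GL_n(q)$ on the left-hand side (via restriction to $L(q)$) and on the right-hand side (as a character twisting the induced function on $\GL_n(q)$), but this is immediate from the commutation of $\det$ with $\pi_P$ on $P$.
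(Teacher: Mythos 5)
Your proof is correct. The paper states this as one of two ``easy lemmas'' and gives no proof; your direct computation from the explicit Harish-Chandra formula --- using that $\det$ is trivial on $U_P(q)$ so that $\det\bigl(\pi_P(h^{-1}gh)\bigr)=\det(h^{-1}gh)=\det(g)$, allowing the constant $\alpha(\det g)$ to be pulled out of the sum --- is exactly the verification the paper implicitly relies on.
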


\begin{lemma} For any $g\in\GL_n(q)$, we have

$$
R_{L(q)}^{\GL_n(q)}(1)(g)=\#\left\{h\,P(q)\in \GL_n(q)/P(q)\,\left|\, h^{-1}gh\in P(q)\right\}\right.,
$$
from which we see that for any conjugacy class $C$ of $\GL_n(q)$ we have

$$
R_{L(q)}^{\GL_n(q)}(1)(C)=\frac{|C\cap P(q)|\cdot |G(q)/P(q)|}{|C|}.
$$
\label{lemma}\end{lemma}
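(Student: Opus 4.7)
The plan is to derive both equalities directly from the explicit formula
$$R_{L(q)}^{\GL_n(q)}(f)(g)=\frac{1}{|P(q)|}\sum_{h\in\GL_n(q),\, h^{-1}gh\in P}f(\pi_P(h^{-1}gh))$$
recalled just before the statement, specialised to $f=1$. In that case the summand is identically $1$, so
$$R_{L(q)}^{\GL_n(q)}(1)(g)=\frac{1}{|P(q)|}\#\bigl\{h\in\GL_n(q)\,\big|\,h^{-1}gh\in P(q)\bigr\}.$$

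The key point for the first formula is that the set $S_g:=\{h\in\GL_n(q)\,|\,h^{-1}gh\in P(q)\}$ is stable under right multiplication by $P(q)$: if $h\in S_g$ and $p\in P(q)$, then $(hp)^{-1}g(hp)=p^{-1}(h^{-1}gh)p\in P(q)$ since $P(q)$ is a subgroup. Therefore $S_g$ is a union of right $P(q)$-cosets, and dividing its cardinality by $|P(q)|$ yields exactly the number of cosets $hP(q)$ with $h^{-1}gh\in P(q)$, which is the stated expression.

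For the second formula I would use the standard double counting argument on the set
$$\Sigma_C:=\bigl\{(g,hP(q))\in C\times \GL_n(q)/P(q)\,\big|\,h^{-1}gh\in P(q)\bigr\}.$$
Summing first over $g\in C$, and using that $R_{L(q)}^{\GL_n(q)}(1)$ is a class function whose value at $g$ was just computed, gives $|\Sigma_C|=|C|\cdot R_{L(q)}^{\GL_n(q)}(1)(C)$. Summing instead over $hP(q)\in\GL_n(q)/P(q)$, for each fixed coset the fibre is $\{g\in C\,|\,h^{-1}gh\in P(q)\}=h(C\cap P(q))h^{-1}$, which has cardinality $|C\cap P(q)|$ independent of $h$ since conjugation by $h$ is a bijection preserving the class $C$. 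Hence $|\Sigma_C|=|\GL_n(q)/P(q)|\cdot|C\cap P(q)|$, and equating the two expressions yields the formula.

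There is no genuine obstacle here: the lemma is purely bookkeeping from the explicit Harish-Chandra formula, the well-definedness of the quotient $\GL_n(q)/P(q)$, and the class-function nature of $R_{L(q)}^{\GL_n(q)}(1)$. The only mild subtlety is ensuring the right $P(q)$-invariance of $S_g$, which uses that $P(q)$ is a subgroup (so that $p^{-1}(h^{-1}gh)p$ stays in $P(q)$), and this is immediate.
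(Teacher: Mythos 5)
Your proof is correct, and since the paper states this lemma without proof (as "easy"), your bookkeeping argument — specialising the explicit Harish-Chandra formula to $f=1$, noting right $P(q)$-invariance of $\{h\mid h^{-1}gh\in P(q)\}$, and double counting the incidence set over $C\times\GL_n(q)/P(q)$ — is exactly the intended standard verification. No gaps.
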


\subsubsection{Character formulas}

To a parabolic type $\s=(s_1,\dots,s_r)$ of rank $n$, we can associate an $r$-tuple $(L_1,\dots,L_r)$ of standard Levi subgroups of $\GL_n$ such that the size of the blocks of each $L_i$ is given by the differences $s_{ij}-s_{i(j+1)}\geq 0$ of the terms of the sequence $s_i$. It also defines in the obvious way an $r$-tuple $(P_1,\dots,P_r)$ of standard parabolic subgroups of $\GL_n$, each $P_i$ having $L_i$ as a Levi factor.  

 For $f\in{\rm Aut}(\calE)$, recall that $f(\a_i)$ is the induced automorphism $\calE(\a_i)\rightarrow\calE(\a_i)$  of $k(\a_i)$-vector spaces. It defines a  conjugacy class of $\GL_n(q^{d_i}):=\GL_n(k(\a_i))$ and we denote again by $f(\a_i)$ a representative of this conjugacy class.

\begin{proposition} Let $\s=(s_1,\dots,s_r)$ and $(L_1,\dots,L_r)$ be related as above. Then

$$
\calM_\s^\calE(q)=\frac{1}{|{\rm Aut}(\calE)|}\sum_{f\in{\rm Aut}(\calE)}\prod_{i=1}^kR_{L_i(q^{d_i})}^{\GL_n(q^{d_i})}(1)(f(\a_i)).
$$
\label{burn}\end{proposition}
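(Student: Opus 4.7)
The proof is essentially Burnside's orbit-counting formula applied to the group action described in \S\ref{parvec}, combined with Lemma \ref{lemma}.

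First I would recall from \S\ref{parvec} that isomorphism classes of parabolic structures of type $\s$ on $\calE$ are in bijection with $\Aut(\calE)$-orbits on the finite set
$$
\calF_\s(\calE)\simeq\GL_n(k(\a_1))/P_{s_1}(k(\a_1))\times\cdots\times\GL_n(k(\a_r))/P_{s_r}(k(\a_r)),
$$
where $\Aut(\calE)=G_{\calE,t}(k)$ acts diagonally through the evaluation maps $f\mapsto f(\a_i)$ on each factor. Hence by Burnside's lemma,
$$
\calM_\s^\calE(q)=\frac{1}{|\Aut(\calE)|}\sum_{f\in\Aut(\calE)}\left|\calF_\s(\calE)^f\right|,
$$
where $\calF_\s(\calE)^f$ denotes the fixed-point set of $f$.

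Next I would observe that because the action is diagonal on the $r$ factors and the $i$-th factor is acted on solely by $f(\a_i)$, the fixed-point count factors:
$$
\left|\calF_\s(\calE)^f\right|=\prod_{i=1}^r\#\left\{g_iP_{s_i}(k(\a_i))\in\GL_n(k(\a_i))/P_{s_i}(k(\a_i))\,\left|\,g_i^{-1}f(\a_i)g_i\in P_{s_i}(k(\a_i))\right.\right\}.
$$
By Lemma \ref{lemma} applied to $G=\GL_n$, $P=P_{s_i}$ and base field $k(\a_i)=\F_{q^{d_i}}$, each factor equals $R_{L_i(q^{d_i})}^{\GL_n(q^{d_i})}(1)(f(\a_i))$, since $P_{s_i}$ has $L_i$ as Levi factor. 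Substituting back yields the desired formula.

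There is no real obstacle here: the only points to be careful with are the identification of the $\Aut(\calE)$-action on $\calF_\s(\calE)$ with the diagonal action via the evaluation maps (already stated at the end of \S\ref{parvec}) and the fact that Lemma \ref{lemma} is a statement about conjugacy classes, which is consistent since the counted quantity depends only on the $\GL_n(q^{d_i})$-conjugacy class of $f(\a_i)$.
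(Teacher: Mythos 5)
Your proposal is correct and follows exactly the paper's own argument: Burnside's orbit-counting formula on $\calF_\s(\calE)$, the $\Aut(\calE)$-equivariant identification of $\calF_\s(\calE)$ with the product of partial flag varieties, and Lemma \ref{lemma} to identify each factor of the fixed-point count with $R_{L_i(q^{d_i})}^{\GL_n(q^{d_i})}(1)(f(\a_i))$. No gaps.
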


\begin{proof} The group ${\rm Aut}(\calE)$ acts on $\calF_\s(\calE)$ in the obvious way and $\calM_\s^\calE(q)$ is precisely the number of ${\rm Aut}(\calE)$-orbits on $\calF_\s(\calE)$. Hence by Burnside formula (for the number of orbits of a finite group acting on a finite set) we have 

\beq
\calM_\s^\calE(q)=\frac{1}{|{\rm Aut}(\calE)|}\sum_{f\in{\rm Aut}(\calE)}\#\left\{E\in\calF_\s(\calE)\,\left|\,f(E)= E\right\}\right..
\label{Burnside}
\eeq
We conclude from Lemma \ref{lemma} and the fact that there is an ${\rm Aut}(\calE)$-equivariant bijection between $\calF_\s(\calE)$ and $\prod_{i=1}^r\GL_n(k(\a_i))/P_i(k(\a_i))$.
\end{proof}

Denote by $\calP$ the set of all partitions (including the empty partition) and let $\calP_n$ be the set of partitions of size $n$. A partition $\lambda$ of $n$ will be denoted either in the form $(\lambda_1,\lambda_2,\dots,\lambda_t)$ with $\lambda_1\geq\lambda_2\geq\cdots\geq\lambda_t$ and $\sum_i\lambda_i=n$ or in the form $(1^{m_1},2^{m_2},\dots)$ where $m_i$ denotes the number of $j=1,\dots,t$ such that $i=\lambda_j$. If $\lambda=(\lambda_1,\dots,\lambda_t)$, we call the $\lambda_i$'s the parts of $\lambda$.

Let $\muhat=(\mu^1,\dots,\mu^r)\in(\calP_n)^r$ be the multi-partition obtained from the parabolic type $\s=(s,\dots,s_r)$ such that the parts of $\mu^i$ are given by the successive differences $s_{ij}-s_{i(j+1)}$. The multi-partitions $\muhat=(\mu^1,\dots,\mu^r)$ considered in this paper will always have their coordinates $\mu^i$ all of the same size. We denote by $|\muhat|$ the common size of the coordinates. 

Say that two parabolic types $\s$ and $\s'$ are equivalent if they give the same multi-partition.

Since Harish-Chandra induction does not depend on the choice of the parabolic subgroup, the above proposition has the following consequence.

\begin{corollary} If $\s$ and $\s'$ are equivalent parabolic types then
$$
\calM_\s^\calE(q)=\calM_{\s'}^\calE(q).
$$
\label{s'}\end{corollary}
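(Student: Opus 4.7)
The plan is to reduce everything to Proposition~\ref{burn} and to the fact that $R_{L(q^d)}^{\GL_n(q^d)}(1)$ depends only on the $\GL_n$-conjugacy class of the standard Levi subgroup $L$, and in particular only on the multiset of block sizes of $L$.

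First I would unpack how the Levi subgroups $L_i$ (resp.\ $L_i'$) attached to $\s$ (resp.\ $\s'$) depend on the parabolic type. By construction, the block sizes of $L_i$ are the non-zero successive differences $s_{ij}-s_{i(j+1)}$, listed in the order given by the sequence $s_i$. The equivalence of $\s$ and $\s'$ means that these differences coincide as multisets with the parts of the common partition $\mu^i$. Hence $L_i$ and $L_i'$ are standard Levi subgroups of $\GL_n$ whose block sizes agree up to permutation, so they are conjugate inside $\GL_n(\F_q)$ by a permutation matrix (an element of the Weyl group of $\GL_n$, which is defined over the prime field).

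Next I would use the fact, recalled in \S\ref{Harish-Chandra}, that the Harish-Chandra functor $R_{L(q^{d_i})}^{\GL_n(q^{d_i})}$ depends only on $L$ and not on the auxiliary choice of a parabolic subgroup containing $L$. Combined with conjugation by the permutation element $w\in\GL_n(\F_q)$ sending $L_i$ to $L_i'$, one obtains, for the trivial character (which is $w$-invariant), the equality of class functions
\[
R_{L_i(q^{d_i})}^{\GL_n(q^{d_i})}(1)=R_{L_i'(q^{d_i})}^{\GL_n(q^{d_i})}(1)
\]
on $\GL_n(q^{d_i})$. Thus the factors $R_{L_i(q^{d_i})}^{\GL_n(q^{d_i})}(1)(f(\a_i))$ appearing in Proposition~\ref{burn} are the same whether we use $\s$ or $\s'$.

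Substituting into the character formula of Proposition~\ref{burn} gives $\calM_\s^\calE(q)=\calM_{\s'}^\calE(q)$, which is the claim. There is no real obstacle; the only mild subtlety is keeping track of zero differences, which simply correspond to redundant $\GL_0$ factors and may be discarded without changing the Levi subgroup or its Harish-Chandra induction.
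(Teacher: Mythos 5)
Your proof is correct and follows essentially the same route as the paper, which deduces the corollary from Proposition \ref{burn} together with the invariance of $R_{L(q^{d_i})}^{\GL_n(q^{d_i})}(1)$ under replacing $L$ by a conjugate Levi (equivalently, under the choice of parabolic). You merely make explicit the conjugation-by-a-permutation-matrix step and the treatment of zero differences, which the paper leaves implicit.
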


When $\muhat$ is the multi-partition arising from  a parabolic type $\s$, we will use the notation $\calM_\muhat^\calE(q)$ instead of $\calM_\s^\calE(q)$, and $\M^\m_\muhat(q)$ instead of $\M^\m_\s(q)$  (to avoid any confusion we will use the Greek alphabet for multi-partitions and the latin alphabet for parabolic types).
\bigskip

For a group $H$, denotes by $Z_H$ the center of $H$. 

\begin{remark}If $\alpha:\F_q^\times\rightarrow\C^\times$ is a linear character of order $n$ (i.e. the subgroup generated by $\alpha$ is of order $n$) then the restriction of $(\alpha\circ{\rm det}):\GL_n(q)\rightarrow\C^\times$ to $Z_{\GL_n}$ is trivial but its restriction to $Z_L$ is non-trivial  for any proper Levi subgroup $L$ of $\GL_n$ defined over $\F_q$.
\label{remgen}\end{remark}
\bigskip

\begin{proposition}Let $(L_1,\dots,L_r)$ be an $r$-tuple of standard Levi subgroups of $\GL_n$ obtained from a parabolic type $\s$. Then for any  linear character $\alpha:\F_q^\times\rightarrow\C^\times$ of order $n$, we have

$$
\calA_\s^\calE(q)=\frac{1}{|{\rm Aut}(\calE)|}\sum_{f\in{\rm Aut}(\calE)}\left(\prod_{i=1}^rR_{L_i(q^{d_i})}^{\GL_n(q^{d_i})}(1)(f(\a_i))\right)\cdot (\alpha\circ{\rm det})(f).
$$
\label{propindec}\end{proposition}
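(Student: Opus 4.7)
The plan is to match the right-hand side, call it $S$, with $\calA_\s^\calE(q)$ via a Burnside-style count over ${\rm Aut}(\calE)$-orbits on $\calF_\s(\calE)$. I will first unwind $S$ by imitating the proof of Proposition~\ref{burn}: using Lemma~\ref{lemma} and the ${\rm Aut}(\calE)$-equivariant bijection $\calF_\s(\calE) \simeq \prod_i \GL_n(k(\a_i))/P_i(k(\a_i))$, $S$ rewrites as
\[
S = \frac{1}{|{\rm Aut}(\calE)|} \sum_{f \in {\rm Aut}(\calE)} \#\{E \in \calF_\s(\calE) : f\cdot E = E\}\,(\alpha \circ \det)(f);
\]
after swapping the double sum and grouping by ${\rm Aut}(\calE)$-orbits---which is consistent because $\alpha \circ \det$ is a class function---this becomes
\[
S = \sum_{[E]} \frac{1}{|{\rm Aut}(\calE, E)|} \sum_{f \in {\rm Aut}(\calE, E)} (\alpha \circ \det)(f),
\]
where ${\rm Aut}(\calE, E)$ denotes the stabilizer of $E$. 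By character orthogonality each inner average equals $1$ or $0$ according as $(\alpha \circ \det)|_{{\rm Aut}(\calE, E)}$ is trivial or not, so the proposition reduces to the claim that this triviality is equivalent to the geometric indecomposability of $(\calE,E)$.

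The easy direction is immediate: if $(\calE, E)$ is geometrically indecomposable, Proposition~\ref{indecriterion}(3) forces the semisimple part of any $f \in {\rm Aut}(\calE, E)$ (Jordan--Chevalley decomposition inside this algebraic group) to be a scalar $\lambda I$, and since the unipotent part has determinant $1$, $\det(f) = \lambda^n$, whence $\alpha(\det f) = \alpha(\lambda)^n = 1$, as $\alpha$ has order $n$.

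The substantive work lies in the converse. If $(\calE, E)$ is not geometrically indecomposable, Proposition~\ref{indecriterion}(3) supplies a non-scalar semisimple element $\sigma \in {\rm End}(\calE, E)$; replacing $\sigma$ by $\sigma + cI$ for generic $c \in k$ we may assume in addition that $\sigma$ is invertible, so $\sigma \in {\rm Aut}(\calE, E)$. I will then decompose the \'etale commutative subalgebra $k[\sigma] \subseteq {\rm End}(\calE, E)$ as a product $K_1 \times \cdots \times K_s$ of finite field extensions of $k$ (strictly larger than $k$ by hypothesis). The idempotents $e_j$ split $\calE = \bigoplus_j e_j \calE$, and the $K_j$-action on $e_j \calE$ equips it with the structure of a vector bundle of rank $r_j$ on $\bbP^1_{K_j}$, with $\sum_j r_j [K_j : k] = n$. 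Reading off the action on the line bundle $\det \calE = \bigotimes_j \det_k(e_j \calE)$ then gives the determinant character on $k[\sigma]^\times \subseteq {\rm Aut}(\calE, E)$:
\[
\det(x) = \prod_{j=1}^{s} N_{K_j/k}(x_j)^{r_j},
\]
which is a fiberwise consequence of the classical identity that multiplication by $\lambda \in K$ on a $K$-vector space of dimension $r$ has $k$-determinant $N_{K/k}(\lambda)^r$. Now pick $j_0$ with $r_{j_0} < n$: this is possible because $k[\sigma] \neq k$ forces either $s \geq 2$ or $[K_1:k] \geq 2$, so some factor has $r_{j_0}[K_{j_0}:k] < n$. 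The character $\alpha^{r_{j_0}}$ of $k^\times$ is then non-trivial (since $\alpha$ has order $n$), and the surjectivity of $N_{K_{j_0}/k}$ produces an $x_{j_0}$ with $\alpha(N_{K_{j_0}/k}(x_{j_0}))^{r_{j_0}} \neq 1$; the element $(1,\dots,1,x_{j_0},1,\dots,1) \in k[\sigma]^\times$ then witnesses the non-triviality of $(\alpha \circ \det)|_{{\rm Aut}(\calE, E)}$. I expect the main place where care is needed is the sheaf-theoretic verification of the displayed determinant formula; with that secured, the rest is clean bookkeeping.
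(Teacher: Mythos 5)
Your proof is correct and follows essentially the same route as the paper's: Burnside plus character orthogonality reduce the statement to showing that $(\alpha\circ{\rm det})$ is trivial on ${\rm Stab}_{{\rm Aut}(\calE)}(E)$ exactly when $(\calE,E)$ is geometrically indecomposable, and both directions are handled as in the paper's proof of Proposition \ref{propindec} (the paper phrases the converse by producing $Z_{C_{\GL_n}(\sigma)}(q)\subset {\rm Stab}(E)$ and invoking Remark \ref{remgen}, while you carry out the identical norm computation via the \'etale subalgebra $k[\sigma]$ and the induced splitting of $\calE$). One small remark: the step replacing $\sigma$ by $\sigma+cI$ to make it invertible can fail for small $q$ (all of $\F_q$ may occur among the eigenvalues of $\sigma$ when $q\le n$), but it is also unnecessary, since your argument only uses the unit group $k[\sigma]^\times\subset{\rm Aut}(\calE,E)$, which is available whether or not $\sigma$ itself is invertible.
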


\begin{proof}Following the proof of Proposition \ref{burn}, we need to see that

\beq
\calA_\s^\calE(q)=\frac{1}{|{\rm Aut}(\calE)|}\sum_{f\in{\rm Aut}(\calE)}\#\left\{E\in\calF_\s(\calE)\,\left|\,f(E)= E\right\}\right.\,(\alpha\circ{\rm det})(f).
\label{burn2}\eeq
Write 

$$
\calF_\s(\calE)=\coprod_{H\leq {\rm Aut}(\calE)}\left\{E\in\calF_\s(\calE)\,\left|\, {\rm Stab}_{{\rm Aut}(\calE)}(E)=H\right.\right\} 
$$
where the union is over the subgroups of ${\rm Aut}(\calE)$. Put $\calF_\s^H(\calE):=\left\{E\in\calF_\s(\calE)\,\left|\, {\rm Stab}_{{\rm Aut}(\calE)}(E)=H\right.\right\}$.

Then the RHS of (\ref{burn2}) equals

$$
\frac{1}{|{\rm Aut}(\calE)|}\sum_{H\leq{\rm Aut}(\calE)}|\calF_\s^H(\calE)|\, \sum_{h\in H}(\alpha\circ{\rm det})(h).
$$
Assume that $H={\rm Stab}_{{\rm Aut}(\calE)}(E)$, with $E\in\calF_\s(\calE)$, contains a non-central semisimple element, then the character $(\alpha\circ{\rm det})$ is a non-trivial character of $H$. Indeed,  conjugating $H$ in ${\rm Aut}(\calE)$ if necessary, we may assume that $H$ contains a non-central semisimple element $s$ of $L_\calE$. Now an element of $\GL_n$ stabilizes $E$ if and only if it lives in  the intersection of the parabolic subgroups $P_i={\rm Stab}_{\GL_n}(E_i)$, $i=1,\dots,r$,  where $E_i$ is the $i$-th coordinate of $E$. Denote by $L$ the centralizer in $\GL_n$ of $s$. Then $Z_L\subset P_i$ for all $i=1,\dots,r$ and so  $Z_{\GL_n}\subsetneq Z_L\subset H$. By Remark \ref{remgen}, the restriction of $(\alpha\circ{\rm det})$ to $Z_L$ is non-trivial from which we have that $(\alpha\circ{\rm det}):H\rightarrow\C^\times$ is non-trivial. We thus have $\sum_{h\in H}(\alpha\circ{\rm det})(h)=0$ when $H$ contains a non-central semisimple element. 

We deduce that
$$
\calA_\s^\calE(q)=\frac{1}{|{\rm Aut}(\calE)|}\sum_H|H|\, |\calF_\s^H(\calE)|,
$$
where the sum is over the subgroups $H$ of ${\rm Aut}(\calE)$ which do not contains non-central semisimple elements. By Proposition \ref{indecriterion}, a parabolic structure $E\in\calF_\s(\calE)$ is geometrically indecomposable if and only if ${\rm Stab}_{{\rm Aut}(\calE)}(E)/Z_{\GL_n}$ does not contain semisimple elements. It is then not difficult using Burnside formula for $\calA_\s^\calE(q)$ to prove Formula (\ref{burn2}).

\end{proof}

Say that an $r$-tuple $(\alpha_1,\dots,\alpha_r)$ of linear characters of $L_1(q^{d_1}),\dots,L_r(q^{d_r})$  is \emph{generic}, if for any Levi subgroup $M$ of $\GL_n$ defined over $\F_q$ and such that $Z_M(q)\subset g_iL_i(q^{d_i})g_i^{-1}$ for some $g_i\in\GL_n(q^{d_i})$, the linear character $(^{g_1}\alpha_1)|_{Z_M(q)}\cdots(^{g_r}\alpha_r)|_{Z_M(q)}$ is non-trivial except for $M=\GL_n$. If $\alpha:\F_q^\times\rightarrow\C^\times$ is of order $nd_r$ and if $N=N_{\F_{q^{d_r}}/\F_q}:\F_{q^{d_r}}\rightarrow\F_q$ is the norm map $x\mapsto xx^q\cdots x^{q^{d_r-1}}$, then the $r$-tuple $(1,1,\dots,1,\alpha\circ N\circ{\rm det})$ is generic by Remark \ref{remgen}.

\bigskip

\begin{theorem} Let $(L_1,\dots,L_r)$ be an $r$-tuple of standard Levi subgroups of $\GL_n$ obtained from a parabolic type $\s$. Then for any generic tuple $(\alpha_1,\dots,\alpha_r)$ of linear characters of $L_1(q^{d_1}),\dots,L_r(q^{d_r})$ we have 

$$
\calA_\s^\calE(q)=\frac{1}{|{\rm Aut}(\calE)|}\sum_{f\in{\rm Aut}(\calE)}\prod_{i=1}^rR_{L_i(q^{d_i})}^{\GL_n(q^{d_i})}(\alpha_i)(f(\a_i)).
$$
\label{theo}\end{theorem}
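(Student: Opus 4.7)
The plan is to follow the template of Propositions \ref{burn} and \ref{propindec}, replacing the trivial/single character by the combined character extracted from the Harish-Chandra expansions of the $\alpha_i$. Grouping the sum in the explicit formula of \S\ref{Harish-Chandra} by right cosets modulo $P_i$ gives
$$R_{L_i(q^{d_i})}^{\GL_n(q^{d_i})}(\alpha_i)(x) = \sum_{\substack{g_iP_i(q^{d_i})\in\GL_n(q^{d_i})/P_i(q^{d_i}) \\ g_i^{-1}xg_i\in P_i(q^{d_i})}} \alpha_i\!\left(\pi_{P_i}(g_i^{-1}xg_i)\right).$$
Substituting this into the right-hand side of the theorem and using the ${\rm Aut}(\calE)$-equivariant identification $\calF_\s(\calE)\simeq\prod_i\GL_n(k(\a_i))/P_i(k(\a_i))$ from \S\ref{parvec} yields, after swapping the two sums,
$$\frac{1}{|{\rm Aut}(\calE)|}\sum_{E\in\calF_\s(\calE)}\sum_{f\in H_E}\chi_E(f),\qquad \chi_E(f):=\prod_{i=1}^r\alpha_i\!\left(\pi_{P_i}(g_i^{-1}f(\a_i)g_i)\right),$$
where $H_E:={\rm Stab}_{{\rm Aut}(\calE)}(E)$ and $E=(g_iP_i)_i$. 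Since each $\pi_{P_i}:P_i\to L_i$ is a group homomorphism and the $\alpha_i$ are linear characters, $\chi_E$ is a linear character of $H_E$. By orbit--stabilizer it will suffice to show that $\chi_E\equiv 1$ on $H_E$ when $E$ is geometrically indecomposable and $\chi_E\not\equiv 1$ on $H_E$ otherwise.

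For $E$ geometrically indecomposable, Proposition \ref{indecriterion}(3) forces every $f\in H_E$ to have Jordan decomposition $f=\lambda u$ with $\lambda\in Z_{\GL_n}(\F_q)$ scalar and $u$ unipotent. Linear characters of Levi subgroups vanish on unipotents, and $\lambda I_n$ is central in each $L_i$, so $\chi_E(f)=\prod_i\alpha_i(\lambda I_n)$; the genericity condition applied with $M=\GL_n$ (where one can take $g_i=1$) makes this character trivial on $Z_{\GL_n}(\F_q)$.

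For $E$ not geometrically indecomposable, Proposition \ref{indecriterion} provides a non-central semisimple $s\in H_E$. Since semisimple elements of $G_{\calE,t}(k)$ are conjugate to elements of $L_\calE$ (see \S\ref{aut}) and the character sum $\sum_{f\in H_E}\chi_E(f)$ is invariant under the ${\rm Aut}(\calE)$-action on $\calF_\s(\calE)$, we may assume $s\in L_\calE(k)$. Put $M:=C_{\GL_n}(s)$, a proper $\F_q$-Levi of $\GL_n$. The auxiliary fact ($\star$) is that whenever a semisimple $t$ lies in a Levi $L$ of $\GL_n$, the center $Z_{C_{\GL_n}(t)}$ is contained in $L$: indeed every element of $Z_{C_{\GL_n}(t)}$ acts by a scalar on each eigenspace of $t$, and these eigenspaces refine the block decomposition of $L$ (since $t\in L$), so that the element is block-diagonal in $L$. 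Applying ($\star$) globally to $s\in L_\calE$ gives $Z_M\subset L_\calE$, hence $Z_M(\F_q)\subset L_\calE(k)\subset{\rm Aut}(\calE)$. To check $Z_M(\F_q)\subset H_E$, fix $i$: the semisimple element $s(\a_i)\in g_iP_i(k(\a_i))g_i^{-1}$ is $P_i$-conjugate to an element of $g_iL_i(k(\a_i))g_i^{-1}$; modifying $g_i$ within its coset (which does not change $E$), we may assume $s(\a_i)\in g_iL_i(k(\a_i))g_i^{-1}$, and then ($\star$) applied locally to $g_i^{-1}s(\a_i)g_i\in L_i(k(\a_i))$ yields $g_i^{-1}Z_Mg_i\subset L_i$ over $k(\a_i)$. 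Consequently $Z_M(\F_q)$ stabilizes $E$, and because $\pi_{P_i}$ is the identity on $L_i$,
$$\chi_E\bigm|_{Z_M(\F_q)}=\prod_{i=1}^r\bigl(^{g_i}\alpha_i\bigr)\bigm|_{Z_M(\F_q)}.$$
By the genericity hypothesis applied to the proper Levi $M$, this restricted character is non-trivial, so $\sum_{f\in H_E}\chi_E(f)=0$.

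Combining the two cases via orbit--stabilizer gives $\frac{1}{|{\rm Aut}(\calE)|}\sum_{E\text{ g.i.}}|H_E|=\calA_\s^\calE(q)$, as required. The main obstacle is the non-indecomposable step: one must check both that $Z_M(\F_q)$ genuinely lies in $H_E$ and that the restriction of $\chi_E$ to $Z_M(\F_q)$ is exactly the product character from the genericity condition. Both points rest on the Levi-centralizer containment ($\star$), applied once globally to $L_\calE$ and once locally to each $L_i$, after exploiting the freedom to replace $g_i$ by $g_ip_i$ for $p_i\in P_i$.
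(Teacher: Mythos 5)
Your proof is correct, but it takes a genuinely different route from the paper's. The paper first establishes the formula only for the special generic tuple $(1,\dots,1,\alpha\circ N\circ{\rm det})$ (Proposition \ref{propindec} together with Lemma \ref{lemtriv}), where the twisting character is a \emph{global} linear character of ${\rm Aut}(\calE)$ that can be pulled outside the Harish-Chandra inductions; it then deduces the general case from Theorem \ref{indep}, whose proof goes through the type decomposition of Theorem \ref{intermediate} and the computation imported from \cite[Theorem 4.3.1]{aha}. You instead run a twisted Burnside argument directly for an arbitrary generic tuple: the twisting character $\chi_E$ now depends on the parabolic structure $E$ through the coset representatives $g_i$, and the two ingredients that make this work --- that $\chi_E$ is a well-defined linear character of ${\rm Stab}_{{\rm Aut}(\calE)}(E)$ (using that the $\alpha_i$ are class functions on $L_i$ and that $\pi_{P_i}$ and evaluation are homomorphisms), and the Levi--centralizer containment $(\star)$ applied once to $L_\calE$ and once to each $L_i$ after normalizing $g_i$ within its coset --- are exactly what is needed to see that the genericity condition kills the non-indecomposable orbits and acts trivially on the indecomposable ones. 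Your approach buys a self-contained proof that bypasses Theorem \ref{indep} altogether (indeed it reproves that independence as a byproduct); the paper's detour through Theorem \ref{intermediate} is motivated by the fact that the intermediate formula is needed anyway for Theorem \ref{theo2}. Two harmless conventions you rely on, which the paper also uses implicitly: linear characters of $L_i(q^{d_i})$ are trivial on unipotent elements (i.e.\ factor through determinants on each block), and genericity is read, as in its Lie algebra version, to include triviality of $\prod_i\alpha_i$ on $Z_{\GL_n}(q)$ --- without the latter the statement itself would fail.
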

\bigskip

For each $i=1,\dots,r$, the character $R_{L_i(q^{d_i})}^{\GL_n(q^{d_i})}(\alpha_i)$ defines, by composing with the natural evaluation map ${\rm Aut}(\calE)\rightarrow \GL_n(q^{d_i})=\GL_n(k(\a_i))$, a character of ${\rm Aut}(\calE)$ which we denote by $R_{L_i(q^{d_i}),\alpha_i}^\calE$. Denote by $\langle\,,\,\rangle$ the usual inner product of class functions on ${\rm Aut}(\calE)$. Then we may re-write Proposition \ref{burn} and Theorem \ref{theo} as 

\begin{align*}
\calM_\s^\calE(q)&=\left\langle R_{L_1(q^{d_1}),1}^\calE\otimes\cdots\otimes R_{L_r(q^{d_r}),1}^\calE,1\right\rangle,\\
\calA_\s^\calE(q)&=\left\langle R_{L_1(q^{d_1}),\alpha_1}^\calE\otimes\cdots\otimes R_{L_r(q^{d_r}),\alpha_r}^\calE,1\right\rangle,
\end{align*}
for any generic tuple $(\alpha_1,\dots,\alpha_r)$ of linear characters of $L_1(q^{d_1}),\dots,L_r(q^{d_r})$.

Notice that if $\calE=\calO(a)^n$ then ${\rm Aut}(\calE)=\GL_n$ and so the above inner products are inner products of characters of $\GL_n(\F_q)$.
\bigskip

To prove the theorem we need the following theorem which will be proved in the next section (see below Theorem \ref{intermediate}).

\begin{theorem} The inner product $\left\langle R_{L_1(q^{d_1}),\alpha_1}^\calE\otimes\cdots\otimes R_{L_r(q^{d_r}),\alpha_r}^\calE,1\right\rangle$ does not depend on the choice of the generic tuple $(\alpha_1,\dots,\alpha_r)$.

\label{indep}\end{theorem}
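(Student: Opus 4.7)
The plan is to apply the strategy of Proposition \ref{propindec} factor by factor: expand each Harish--Chandra induction using its explicit formula, swap the order of summation, and invoke character orthogonality to reduce the inner product to a combinatorial count that is manifestly independent of the $\alpha_i$.

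First, I would generalize Lemma \ref{lemma} to a linear character, obtaining
$$
R_{L_i(q^{d_i})}^{\GL_n(q^{d_i})}(\alpha_i)(f(\a_i))=\sum_{\substack{E_i=x_iP_i(q^{d_i})\\ f(\a_i)\cdot E_i=E_i}}\alpha_i\!\left(\pi_{P_i}(x_i^{-1}f(\a_i)x_i)\right),
$$
where $\pi_{P_i}:P_i\to L_i$ is the canonical projection. Substituting this into the inner product, swapping the order of summation, and grouping by parabolic structures yields
$$
\langle R_{L_1(q^{d_1}),\alpha_1}^\calE\otimes\cdots\otimes R_{L_r(q^{d_r}),\alpha_r}^\calE,1\rangle=\frac{1}{|\Aut(\calE)|}\sum_{E\in\calF_\s(\calE)}\sum_{f\in H_E}\chi_E(f),
$$
where $H_E:={\rm Stab}_{\Aut(\calE)}(E)$ and $\chi_E(f):=\prod_i\alpha_i(\pi_{P_i}(x_i^{-1}f(\a_i)x_i))$ is a linear character of the finite group $H_E$. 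By orthogonality, the inner sum is $|H_E|$ when $\chi_E\equiv 1$ and $0$ otherwise.

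The crux is the claim that $\chi_E\equiv 1$ on $H_E$ exactly when $E$ is geometrically indecomposable. If $E$ is geometrically indecomposable, Proposition \ref{indecriterion} implies $H_E$ contains no non-central semisimple element, so every $f\in H_E$ decomposes as $f=\lambda I\cdot f_u$ with $\lambda\in k^\times$ and $f_u$ unipotent; since linear characters of $L_i(q^{d_i})$ factor through block determinants, they vanish on unipotents of the Levi, so $\chi_E(f)$ reduces to $\prod_i\alpha_i|_{Z_{\GL_n}(q)}(\lambda I)$, which is trivial by the $M=\GL_n$ case of genericity. Conversely, if $E$ is geometrically decomposable, $H_E$ contains a non-central semisimple element $s$ which, after conjugation in $\Aut(\calE)$, may be placed in $L_\calE(k)$; set $M:=C_{\GL_n}(s)$, a proper Levi over $\F_q$. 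Since $s$ stabilizes each flag, $s\in x_iP_ix_i^{-1}$, and the standard fact that $C_{\GL_n}(s)\cap x_iP_ix_i^{-1}$ is a parabolic of $M$ whose Levi contains $Z_M$ yields $Z_M\subset x_iL_ix_i^{-1}$; together with $Z_M(k)\subset L_\calE(k)\subset\Aut(\calE)$, this gives $Z_M(q)\subset H_E$, and $\chi_E|_{Z_M(q)}=\prod_i(^{x_i}\alpha_i)|_{Z_M(q)}$ is non-trivial by genericity applied to the proper Levi $M$.

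Putting everything together via orbit--stabilizer,
$$
\langle R_{L_1(q^{d_1}),\alpha_1}^\calE\otimes\cdots\otimes R_{L_r(q^{d_r}),\alpha_r}^\calE,1\rangle=\sum_{E\text{ geom.\ indec.}}\frac{|H_E|}{|\Aut(\calE)|}=\calA_\s^\calE(q),
$$
which does not depend on $(\alpha_1,\dots,\alpha_r)$. I expect the main obstacle to be the geometrically decomposable case, namely verifying the simultaneous containments $Z_M\subset x_iL_ix_i^{-1}$ and $Z_M(k)\subset H_E$ for all $i$; this is the step that fuses the combinatorial genericity condition with the algebraic structure of the stabilizer, via standard facts about centralizers of semisimple elements inside parabolic subgroups.
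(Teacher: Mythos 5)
Your argument is correct in substance but takes a genuinely different route from the paper's. The paper obtains Theorem \ref{indep} as an immediate corollary of Theorem \ref{intermediate}: the inner product is expanded over $r$-tuples of conjugacy classes, these are grouped by type $(M,\calO_1,\dots,\calO_r)$, and the computation of \cite[Theorem 4.3.1 (2)]{aha} shows that the total contribution of each type is $(q-1)K_M^o\,\calH^\calE_{(M,\calO_1,\dots,\calO_r)}(q)\,\prod_{i}R_{L_i(q^{d_i})}^{\GL_n(q^{d_i})}(1)(l_M.\calO_i)/|W_{\GL_n(q)}(M,\calO_1,\dots,\calO_r)|$, which visibly involves no $\alpha_i$. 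You instead generalize the Burnside-plus-orthogonality argument of Proposition \ref{propindec} from the special tuple $(1,\dots,1,\alpha\circ\det)$ to an arbitrary generic tuple, showing directly that the inner product equals $\calA_\s^\calE(q)$. Your route proves Theorems \ref{indep} and \ref{theo} in one stroke and is more self-contained; what it does not buy is the intermediate formula itself, which the paper reuses later (Theorems \ref{theo2} and \ref{charsum}).

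Two steps need shoring up. First, in the geometrically indecomposable direction you use that each $\alpha_i$ kills unipotent elements of $L_i(q^{d_i})$; this is because one-dimensional characters of $\GL_m(q^{d_i})$ factor through the determinant, with the sole degenerate exception of $\GL_2(\F_2)$ factors (a convention the paper also uses implicitly). Second, the containment $Z_M\subset x_iL_ix_i^{-1}$ does not follow from $Z_M\subset x_iP_ix_i^{-1}$ for the chosen representative $x_i$; you must change the representative. This can be done over the relevant finite field: $A:=x_i^{-1}Z_M(q)x_i$ has order prime to $p$ (as $Z_M(q)\cong\prod_j\F_{q^{e_j}}^\times$), so $A$ and $\pi_{P_i}(A)$ are two complements to the normal $p$-group $U_{P_i}(q^{d_i})$ in $A\,U_{P_i}(q^{d_i})$, hence conjugate by some $u\in U_{P_i}(q^{d_i})$ by Schur--Zassenhaus. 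Setting $g_i:=x_iu^{-1}$, one has $g_iP_i=x_iP_i$, $Z_M(q)\subset g_iL_i(q^{d_i})g_i^{-1}$, and $(^{g_i}\alpha_i)(z)=\alpha_i(\pi_{P_i}(x_i^{-1}zx_i))$ for $z\in Z_M(q)$, because $\alpha_i\circ\pi_{P_i}$ is unchanged by $U_{P_i}$-conjugation. With that, the restriction of your $\chi_E$ to $Z_M(q)$ is exactly the character that genericity declares non-trivial for the proper Levi $M$, and the proof closes; one should also note, as in Proposition \ref{indecriterion}, that a non-scalar semisimple endomorphism yields a non-central semisimple automorphism after adding a suitable scalar, so that $s$ may indeed be taken in $H_E$.
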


\begin{proof}[Proof of Theorem \ref{theo}]  From Lemma \ref{lemtriv} and Proposition \ref{propindec} we have 

$$
\calA_\s^\calE(q)=\frac{1}{|{\rm Aut}(\calE)|}\sum_{f\in{\rm Aut}(\calE)}\left(\prod_{i=1}^{r-1}R_{L_i(q^{d_i})}^{\GL_n(q^{d_i})}(1)(f(\a_i))\right)R_{L_r(q^{d_r})}^{\GL_n(q^{d_r})}(\alpha_r)(f(\a_r)),
$$
with $\alpha_r=(\alpha\circ N\circ{\rm det})$ where $\alpha$ is a linear character of $\F_q^\times$ of order $nd_r$. The Theorem follows then from Theorem \ref{indep} and the fact that $(1,\dots,1,\alpha_r)$ is a generic tuple.
\end{proof}

From the independence of Harish-Chandra induction from the choice of the parabolic subgroup, Theorem \ref{theo} implies the following result.

\begin{corollary} If $\s$ and $\s'$ are equivalent parabolic types then

$$
\calA_\s^\calE(q)=\calA_{\s'}^\calE(q).
$$
\label{s'bis}\end{corollary}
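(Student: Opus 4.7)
The plan is to deduce Corollary~\ref{s'bis} directly from Theorem~\ref{theo}. By that theorem, for any generic $r$-tuple $(\alpha_1,\dots,\alpha_r)$ of linear characters of $L_1(q^{d_1}),\dots,L_r(q^{d_r})$,
$$
\calA_\s^\calE(q)=\frac{1}{|{\rm Aut}(\calE)|}\sum_{f\in{\rm Aut}(\calE)}\prod_{i=1}^r R_{L_i(q^{d_i})}^{\GL_n(q^{d_i})}(\alpha_i)(f(\a_i)),
$$
where $L_i$ is the standard Levi subgroup whose block sizes are the successive differences $s_{ij}-s_{i,j+1}$. My strategy is to show that the right-hand side depends only on the multi-partition $\muhat$ associated to $\s$.

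First I would choose a specific generic tuple that behaves well under reorderings. Take
$$
(\alpha_1,\dots,\alpha_r)=(1,\dots,1,\alpha\circ N_{\F_{q^{d_r}}/\F_q}\circ\det),
$$
with $\alpha$ a linear character of $\F_q^\times$ of order $nd_r$; this tuple is generic by Remark~\ref{remgen}, and each $\alpha_i$ factors through $\det\colon L_i(q^{d_i})\to\F_{q^{d_i}}^\times$, hence is the restriction to $L_i(q^{d_i})$ of a character of $\GL_n(q^{d_i})$ that depends only on the determinant.

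Next, if $\s$ and $\s'$ are equivalent parabolic types then for each $i$ the multisets of successive differences $\{s_{ij}-s_{i,j+1}\}$ and $\{s'_{ij}-s'_{i,j+1}\}$ coincide (they are the parts of $\mu^i$). Hence the standard Levi subgroups $L_i$ and $L'_i$ have the same multiset of block sizes and so are conjugate in $\GL_n$ by a permutation matrix $w_i$. Because Harish-Chandra induction is independent of the choice of the parabolic subgroup containing a given Levi (see \S\ref{Harish-Chandra}), and because it is equivariant under conjugation of the Levi in the sense that $R_{{}^{w}L(q^{d})}^{\GL_n(q^{d})}({}^{w}\chi)=R_{L(q^{d})}^{\GL_n(q^{d})}(\chi)$ as class functions on $\GL_n(q^{d})$, we obtain
$$
R_{L'_i(q^{d_i})}^{\GL_n(q^{d_i})}(\alpha'_i)=R_{L_i(q^{d_i})}^{\GL_n(q^{d_i})}(\alpha_i)
$$
as class functions: indeed, the characters $\alpha_i$ and $\alpha'_i$ are both restrictions of the same determinantal character of $\GL_n(q^{d_i})$, so ${}^{w_i}\alpha_i=\alpha'_i$ automatically. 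Plugging these equalities into the formula of Theorem~\ref{theo} applied to $\s$ and to $\s'$ yields $\calA_\s^\calE(q)=\calA_{\s'}^\calE(q)$.

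There is no genuine obstacle here: once one has Theorem~\ref{theo} the corollary is essentially bookkeeping. The only mildly delicate point is to arrange the generic tuple so that the identifications above go through uniformly in $\s$ and $\s'$, which is why I single out the determinantal tuple $(1,\dots,1,\alpha\circ N\circ\det)$; any other choice would require one to conjugate the characters along with the Levi, but would give the same final identity by the same argument.
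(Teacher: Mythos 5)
Your proof is correct and follows essentially the same route as the paper, which deduces the corollary from Theorem \ref{theo} together with the fact that Harish--Chandra induction depends only on the Levi (up to conjugacy) and not on the chosen parabolic. You are in fact slightly more careful than the paper in making explicit the conjugation-equivariance $R_{{}^{w}L}^{\GL_n}({}^{w}\chi)=R_{L}^{\GL_n}(\chi)$ needed because equivalent types yield standard Levis whose blocks may appear in different orders, and in fixing a determinantal generic tuple that transports trivially.
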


If $\muhat$ is the multi-partition obtained from the parabolic type $\s$ we will use also the notation $\calA_\muhat^\calE(q)=\calA_{\muhat,D}^\calE(q)$ instead of $\calA_\s^\calE(q)=\calA_{\s,D}^\calE(q)$ and $\calA^\m_\muhat(q)=\calA^\m_{\muhat,D}(q)$  instead of $\calA^\m_\s(q)=\calA^\m_{\s,D}(q)$.

\subsection{Intermediate formula}

Write 

$$
\left\langle R_{L_1(q^{d_1}),\alpha_1}^\calE\otimes\cdots\otimes R_{L_r(q^{d_r}),\alpha_r}^\calE,1\right\rangle=
\sum_{(C_1,\dots,C_r)}\frac{\#\{f\in{\rm Aut}(\calE)\,|\, f(\a_i)\in C_i\}}{|{\rm Aut}(\calE)|}\prod_{i=1}^rR_{L_i(q^{d_i})}^{\GL_n(q^{d_i})}(\alpha_i)(C_i),
$$
where the sum if over the set of $r$-tuples of conjugacy classes of $\GL_n(q^{d_1})\times\cdots\times\GL_n(q^{d_r})$.
\bigskip

We now identify ${\rm Aut}(\calE)$ with the  group  $G_{\calE,t}(q)=L_\calE(q)\ltimes U_{\calE,t}(q)$ as in \S \ref{aut}. The semisimple part $f_s$ of any $f\in{\rm Aut}(\calE)$ is ${\rm Aut}(\calE)$-conjugate to an element in $L_\calE(q)$. Therefore if $f\in{\rm Aut}(\calE)$ satisfies $f(\a_i)\in C_i$ for all $i=1,\dots,r$, then the conjugacy classes $C_1,\dots,C_r$ must have a common semisimple part in $\GL_n(q)$.

The sum of the above formula is then a sum over the set $\calC$ of $r$-tuples $(C_1,\dots,C_r)$ of conjugacy classes of $\GL_n(q^{d_1})\times\cdots\times\GL_n(q^{d_r})$ such that there exist elements $x_1,\dots,x_r\in C_1,\dots,C_r$ with $(x_1)_s=\cdots=(x_r)_s\in\GL_n(q)$.

Let $(C_1,\dots,C_r)$ be an element of $\calC$ and let $l\in\GL_n(q)$ be the common semisimple part of $x_1,\dots,x_r\in C_1,\dots,C_r$. Denote by $M$ the centralizer $C_{\GL_n}(l)$ of $l$ in $\GL_n$ and for each $i=1,\dots,r$, let $\calO_i$ be the $M(q^{d_i})$-conjugacy class of the unipotent part of $x_i$. If we choose other elements $x_1',\dots,x_r'\in C_1,\dots,C_r$ with common semisimple part $l'\in\GL_n(q)$, then the corresponding tuple $(M',\calO_1',\dots,\calO_r')$ will be $\GL_n(q)$-conjugate to $(M,\calO_1,\dots,\calO_r)$.  We call the $\GL_n(q)$-conjugacy class of $(M,\calO_1,\dots,\calO_r)$ the \emph{type} of $(C_1,\dots,C_r)\in\calC$ (for short we will say that $(M,\calO_1,\dots,\calO_r)$ is the type of $(C_1,\dots,C_r)$).
\bigskip

For a Levi subgroup $M$ of $\GL_n$, denote by $(Z_M)_{\rm reg}$ the set of elements of $\GL_n$ whose centralizer in $\GL_n$ equals $M$. We clearly have $(Z_M)_{\rm reg}\subset Z_M$.

\begin{remark}Notice that for any Levi subgroup $L$ of $\GL_n$, if $(Z_M)_{\rm reg}\cap L\neq\emptyset$, then $(Z_M)_{\rm reg}\subset L$.\label{reg}\end{remark}

\begin{remark} (i) The $r$-tuples $(C_1,\dots,C_r)\in\calC$ of type $(M,\calO_1,\dots,\calO_r)$ are all of the form $(C_{l,1},\dots,C_{l,r})$, $l\in (Z_M)_{\rm reg}(q)$, where $C_{l,i}$ is the $\GL_n(q^{d_i})$-conjugacy class of $l\calO_i$.

\noindent (ii) If there exists $f\in{\rm Aut}(\calE)$ satisfying $f(\a_i)\in C_i$ for all $i=1,\dots,r$ and if $(C_1,\dots,C_r)$ is of type $(M,\calO_1,\dots,\calO_r)$, then we can always find a $\GL_n(q)$-conjugate $M'$ of $M$ such that  $Z_{M'}\subset L_\calE$.

\end{remark}

\begin{proposition} The cardinality $
\#\{f\in{\rm Aut}(\calE)\,|\, f(\a_i)\in C_i,\, i=1,\dots,r\}$ depends only on the type of $(C_1,\dots,C_r)$.
\label{indep2}\end{proposition}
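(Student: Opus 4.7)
My plan is to partition $X_l := \{f \in {\rm Aut}(\calE) : f(\a_i) \in C_{l,i}\}$ according to the $H$-conjugacy class of the Jordan semisimple part of $f$, where $H = {\rm Aut}(\calE) = G_{\calE,t}(q)$, and then establish a class-by-class bijection $X_l \leftrightarrow X_{l'}$ by conjugation with a well-chosen element of $L_\calE(q)$. Since every semisimple element of $H$ is $H$-conjugate to one lying in $L_\calE(q)$, the contributing classes can be indexed by representatives $s_\sigma \in L_\calE(q)$ whose characteristic polynomial equals $\chi_l$. A standard Jordan bookkeeping then gives
\begin{equation*}
|X_l^\sigma| = \frac{|H|}{|C_H(s_\sigma)|} \cdot \#\{u \in C_H(s_\sigma)_{\rm unip} : u(\a_i) \in \calO_{s_\sigma,i} \text{ for all } i\},
\end{equation*}
where $\calO_{s_\sigma,i} := g^{-1}\calO_i g$ for any $g \in \GL_n(q)$ with $g s_\sigma g^{-1} = l$, the result being independent of $g$ by $M(q^{d_i})$-invariance of $\calO_i$.

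Assuming, as granted by Remark 3.3.12(ii), that $Z_M \subset L_\calE$, both $l$ and $l'$ lie in $L_\calE(q)$ with all eigenvalues in $\F_q$. Consequently the classes $\sigma$ (resp.\ $\sigma'$) contributing to $X_l$ (resp.\ $X_{l'}$) are parametrized by the same combinatorial ``multiplicity distribution'' $\pi = (p_{s,j})$ recording the dimensions in which each $M$-block eigenspace of $l$ (resp.\ $l'$) meets each $L_\calE$-block. For a fixed $\pi$, pick representatives $s_\sigma, s_{\sigma'} \in L_\calE(q)$ with this distribution, and choose $h \in L_\calE(q)$ that maps, inside each $L_\calE$-block, the $\lambda_j$-eigenspace of $s_\sigma$ to the $\mu_j$-eigenspace of $s_{\sigma'}$ for every $j$; such $h$ exists because all the eigenspaces are $\F_q$-rational. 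Although $hs_\sigma h^{-1} \neq s_{\sigma'}$ in general, both elements share the same eigenspace decomposition, so $C_{\GL_n}(hs_\sigma h^{-1}) = M_{\sigma'}$ and conjugation by $h$ restricts to an isomorphism $C_H(s_\sigma) \xrightarrow{\sim} C_H(s_{\sigma'})$ over $\F_q$. A direct computation shows that $g':=gh^{-1}$ conjugates $s_{\sigma'}$ to $l'$, whence $h\calO_{s_\sigma,i}h^{-1} = \calO_{s_{\sigma'},i}$. Since $h$ is a constant element, $(huh^{-1})(\a_i) = h \cdot u(\a_i) \cdot h^{-1}$, and so $u \mapsto huh^{-1}$ gives the required bijection between the unipotent counting sets. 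Combined with $|C_H(s_\sigma)| = |C_H(s_{\sigma'})|$, this yields $|X_l^\sigma| = |X_{l'}^{\sigma'}|$, and summing over $\pi$ gives $|X_l| = |X_{l'}|$.

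The main obstacle will be verifying the identity $h\calO_{s_\sigma,i}h^{-1} = \calO_{s_{\sigma'},i}$, since $h$ conjugates $s_\sigma$ only to an element with the same eigenspaces as $s_{\sigma'}$, not to $s_{\sigma'}$ itself. This rests on the key computation $g(h^{-1}s_{\sigma'}h)g^{-1} = l'$, which uses crucially that $l$ and $l'$ are both block-scalar with respect to the same Levi $M$, differing only in the values $\lambda_j$ versus $\mu_j$, so that $h$ (chosen to preserve the indexing of eigenvalues) intertwines the two transport data in exactly the way needed to make $gh^{-1}$ a legitimate choice of conjugator of $s_{\sigma'}$ onto $l'$.
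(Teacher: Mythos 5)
Your overall strategy --- decompose $X_l$ according to the ${\rm Aut}(\calE)$-class of the semisimple part, reduce each piece to a count of unipotent elements in the centralizer, and match the pieces for $l$ and $l'$ --- is the same as the paper's, with the bookkeeping made explicit; the counting formula for $|X_l^\sigma|$, the independence from $g$, and the computation $g'=gh^{-1}$, $h\calO_{s_\sigma,i}h^{-1}=\calO_{s_{\sigma'},i}$ are all correct. There is, however, one flawed step: the assertion that $Z_M\subset L_\calE$ forces $l$ and $l'$ to have all eigenvalues in $\F_q$, hence that the relevant eigenspaces are $\F_q$-rational --- this is exactly what you invoke to produce $h\in L_\calE(q)$. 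It is false whenever $M$ is not split: take $\calE=\calO(a)^2$, so $L_\calE=\GL_2$, and let $M$ be a non-split maximal torus; then $Z_M=M\subset L_\calE$ but $l\in(Z_M)_{\rm reg}(q)$ has eigenvalues in $\F_{q^2}\setminus\F_q$. These are precisely the types with $e>1$ that the paper cannot avoid (e.g.\ the type $(2,(1,\dots,1))$ in the rank-two computation), and for them the individual eigenspaces are only defined over an extension and are permuted by Frobenius, so "choose $h\in L_\calE(q)$ sending the $\lambda_j$-eigenspace to the $\mu_j$-eigenspace'' is not justified as written.

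The gap is repairable without changing your structure. Since $s_\sigma=g^{-1}lg\in(Z_{g^{-1}Mg})_{\rm reg}\cap L_\calE$, Remark \ref{reg} gives $(Z_{g^{-1}Mg})_{\rm reg}\subset L_\calE$, so $\tilde{s}:=g^{-1}l'g$ also lies in $L_\calE(q)$; it has exactly the same eigenspace decomposition as $s_\sigma$ but with eigenvalue $\mu_j$ in place of $\lambda_j$. Now $\tilde{s}$ and $s_{\sigma'}$ are semisimple elements of $L_\calE(q)=\prod_k\GL_{m_k}(q)$ with the same characteristic polynomial in each factor (both determined by $\pi$ and the $\mu_j$'s), hence are $L_\calE(q)$-conjugate, and any conjugator serves as your $h$; the rest of your argument then goes through verbatim. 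Better still, taking $s_{\sigma'}:=\tilde{s}$ as the representative from the outset gives $h=1$, $C_H(s_\sigma)=C_H(s_{\sigma'})$ and $\calO_{s_\sigma,i}=\calO_{s_{\sigma'},i}$ on the nose --- which collapses your matching to the paper's one-line observation that $C_{{\rm Aut}(\calE)}(l)=C_{{\rm Aut}(\calE)}(l')$ for all $l,l'\in(Z_M)_{\rm reg}$.
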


\begin{proof}Assume that $(M,\calO_1,\dots,\calO_r)$ is the type of $(C_1,\dots,C_r)$ and  that $Z_M\subset L_\calE$. Since any semisimple element of ${\rm Aut}(\calE)$ is conjugate to a semisimple element of $L_\calE(q)$, we are reduced to prove that cardinality of $\{f\in{\rm Aut}(\calE)\,|\, f_s=l,\hspace{.2cm} f(\a_i)\in C_{l,i}, i=1,\dots, r\}$, where $f_s$ denotes the semisimple part of $f$, is constant when $l$ runs over $(Z_M)_{\rm reg}(q)$. But this is clear as this cardinality equals that of $\{u(t)\in C_{{\rm Aut}(\calE)}(l)\,|\, u(\a_i)\in\calO_i, \, i=1,\dots,r\}$ and $C_{{\rm Aut}(\calE)}(l)=C_{{\rm Aut}(\calE)}(l')$ for all $l,l'\in (Z_M)_{\rm reg}$.
\end{proof}

If $(C_1,\dots,C_r)$ is of type $(M,\calO_1,\dots,\calO_r)$, we put 

$$
\calH_{(M,\calO_1,\dots,\calO_r)}^\calE(q):=\frac{\#\{f\in{\rm Aut}(\calE)\,|\, f(\a_i)\in C_i\}}{|{\rm Aut}(\calE)|}.
$$

For a Levi subgroup $L$ of $\GL_n$ defined over $\F_q$, we define the constant $K_L^o$ as follows. Since $L$ is defined over $\F_q$, there exists a multi-set $\{(e_i,n_i)\}_{i=1,\dots,s}$ of pairs of positive integers such that  $L\simeq\prod_{i=1}^s\GL_{n_i}(\overline{k})^{e_i}$ and $L(q)\simeq \prod_{i=1}^s\GL_{n_i}(q^{e_i})$. Define

$$
K_L^o=\begin{cases}(-1)^{s-1}e^{s-1}\mu(e)(e-1)!&{\rm if }\,e_i=e\, \text{ for all }i,\\
0&{\rm otherwise}.\end{cases}
$$
For a Levi subgroup $M$ of $\GL_n$ defined over $\F_q$, we put

 $$
 W_{\GL_n(q)}(M,\calO_1,\dots,\calO_r):=N_{\GL_n(q)}(M,\calO_1,\dots,\calO_r)/M(q).
 $$
 
 We have the following formula. 
 
 \begin{theorem}For any generic $r$-tuple $(\alpha_1,\dots,\alpha_r)$ of linear characters of $L_1(q^{d_1}),\dots,L_r(q^{d_r})$, we have
 
 $$
 \left\langle R_{L_1(q^{d_1}),\alpha_1}^\calE\otimes\cdots\otimes R_{L_r(q^{d_r}),\alpha_r}^\calE,1\right\rangle=
\sum_{(M,\calO_1,\dots,\calO_r)}\frac{(q-1) K_M^o\,\calH_{(M,\calO_1,\dots,\calO_r)}^\calE(q)}{|W_{\GL_n(q)}(M,\calO_1,\dots,\calO_r)|}\prod_{i=1}^rR_{L_i(q^{d_i})}^{\GL_n(q^{d_i})}(1)(l_M.\calO_i),
$$
where $l_M$ is a fixed element of $(Z_M)_{\rm reg}(q)$. 
\label{intermediate}\end{theorem}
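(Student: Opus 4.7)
The plan is to begin from the decomposition displayed just above the statement,
\[
\left\langle R_{L_1(q^{d_1}),\alpha_1}^\calE\otimes\cdots\otimes R_{L_r(q^{d_r}),\alpha_r}^\calE,1\right\rangle=\sum_{(C_1,\dots,C_r)\in\calC}\calH^\calE_{(C_1,\dots,C_r)}(q)\prod_{i=1}^rR_{L_i(q^{d_i})}^{\GL_n(q^{d_i})}(\alpha_i)(C_i),
\]
and to regroup the right-hand side according to the type $(M,\calO_1,\dots,\calO_r)$ of the tuple $(C_1,\dots,C_r)$. By Proposition~\ref{indep2} the weight $\calH^\calE_{(C_1,\dots,C_r)}(q)$ depends only on this type, while the remark preceding Proposition~\ref{indep2} shows that the tuples of a given type are parametrised by the $W_{\GL_n(q)}(M,\calO_1,\dots,\calO_r)$-orbits on $(Z_M)_{\rm reg}(q)$ via $l\mapsto(C_{l,1},\dots,C_{l,r})$. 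Since the product of Harish--Chandra values is constant on such an orbit, the expression reduces to
\[
\sum_{(M,\calO_1,\dots,\calO_r)}\frac{\calH^\calE_{(M,\calO_1,\dots,\calO_r)}(q)}{|W_{\GL_n(q)}(M,\calO_1,\dots,\calO_r)|}\,\Sigma(M,\calO_\bullet),\qquad \Sigma(M,\calO_\bullet):=\sum_{l\in (Z_M)_{\rm reg}(q)}\prod_{i=1}^rR_{L_i(q^{d_i})}^{\GL_n(q^{d_i})}(\alpha_i)(l\calO_i).
\]

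The main task is then to identify $\Sigma(M,\calO_\bullet)$ with $(q-1)K_M^o\prod_{i}R_{L_i(q^{d_i})}^{\GL_n(q^{d_i})}(1)(l_M\calO_i)$. For each fixed $l$ I would invoke the Harish--Chandra character formula on the Jordan decomposition $l\cdot u_i$, $u_i\in\calO_i$: the value $R_{L_i(q^{d_i})}^{\GL_n(q^{d_i})}(\alpha_i)(lu_i)$ is a sum over cosets $gL_i(q^{d_i})\subset\GL_n(q^{d_i})$ with $g^{-1}lg\in L_i$ of a term of the shape $\alpha_i(g^{-1}lg)$ times an inductive contribution living in the centraliser $M=C_{\GL_n}(l)$. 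Because $l\in(Z_M)_{\rm reg}$, Remark~\ref{reg} forces $Z_M\subset{}^gL_i$ for each such $g$, so the $l$-dependence of the factor isolates cleanly as a character value $\alpha_i({}^{g^{-1}}l)$, while the unipotent piece stabilises to the Harish--Chandra induction of the trivial character inside $M$ evaluated at $\calO_i$.

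Summation over $l\in(Z_M)_{\rm reg}(q)$ then reduces $\Sigma(M,\calO_\bullet)$ to a sum of the form $\sum_{l\in(Z_M)_{\rm reg}(q)}\prod_i\alpha_i({}^{g_i^{-1}}l)$ multiplied by fixed unipotent factors. I would carry out this last sum by Möbius inversion, replacing $(Z_M)_{\rm reg}$ by an alternating sum over $Z_{M'}(q)$ for $F$-stable Levi subgroups $M'\supseteq M$ of $\GL_n$. On each $Z_{M'}(q)$ the character sum factors across the $\Gm$-orbits of Frobenius, and the genericity of $(\alpha_1,\dots,\alpha_r)$ forces it to vanish unless $M'=\GL_n$, in which case it equals $q-1$. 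The vanishing restricts the surviving $M$ to those whose centre has all Galois orbits of a common length $e$, and unwinding the Möbius coefficients on the orbit structure yields exactly the prescription $K_M^o=(-1)^{s-1}e^{s-1}\mu(e)(e-1)!$ of the statement; Theorem~\ref{indep} guarantees that the result is independent of the choice of generic tuple, matching the constant $\prod_i R_{L_i(q^{d_i})}^{\GL_n(q^{d_i})}(1)(l_M\calO_i)$ obtained by applying Lemma~\ref{lemtriv} once the characters $\alpha_i$ have been absorbed into the scalar factor.

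The main obstacle will be carrying out the second and third steps together in a coordinated way: namely, isolating the $l$-dependence of each $R_{L_i(q^{d_i})}^{\GL_n(q^{d_i})}(\alpha_i)(l\calO_i)$ as a sum of simple character values $\alpha_i({}^{g^{-1}}l)$ when the cosets $gL_i(q^{d_i})$ are non-trivial, tracking the resulting twists of $\alpha_i$ in the genericity argument, and performing the Möbius inversion bookkeeping on Frobenius orbits of $Z_M$ carefully enough that the extracted constant is precisely $K_M^o$ rather than a coarser combinatorial expression. Once this consolidation of Möbius coefficients is achieved the claimed formula falls out.
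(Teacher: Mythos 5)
Your proposal is correct and follows essentially the same route as the paper: the paper likewise starts from the displayed decomposition over $\calC$, uses Proposition~\ref{indep2} to factor out $\calH^\calE_{(M,\calO_1,\dots,\calO_r)}(q)$ by type, and then invokes (without reproving) exactly the computation you sketch --- the character formula at $l\cdot u_i$, genericity killing all central sums except on $Z_{\GL_n}(q)$, and the M\"obius bookkeeping producing $(q-1)K_M^o/|W_{\GL_n(q)}(M,\calO_1,\dots,\calO_r)|$ --- by citing the proof of Theorem 4.3.1(2) of Hausel--Letellier--Rodriguez-Villegas. In fact your outline supplies more detail on that key step than the paper's own proof does.
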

Note that $R_{L_i(q^{d_i})}^{\GL_n(q^{d_i})}(1)(l_M.\calO_i)$ does not depend on the choice of $l_M\in(Z_M)_{\rm reg}(q)$.

Since the right hand side does not depend on the $\alpha_i$, this proves the independance of the left hand side from the choice of a generic tuple $(\alpha_1,\dots,\alpha_r)$.

\begin{proof}[Proof of Theorem \ref{intermediate}] Going through the same computation as the one performed in the proof of \cite[Theorem 4.3.1 (2)]{aha} we find that 

$$
\sum_{(C_1,\dots,C_r)}\prod_{i=1}^rR_{L_i(q^{d_i})}^{\GL_n(q^{d_i})}(\alpha_i)(C_i),
$$
where the sum runs over the set of $(C_1,\dots,C_r)\in\calC$ of fixed type $(M,\calO_1,\dots,\calO_r)$, equals

$$
\frac{(q-1) K_M^o}{|W_{\GL_n(q)}(M,\calO_1,\dots,\calO_r)|}\prod_{i=1}^rR_{L_i(q^{d_i})}^{\GL_n(q^{d_i})}(1)(\calO_i).
$$
\end{proof}
 
\subsection{Formulas in terms of Hall-Littlewood symmetric functions}

We use the notation of \S \ref{relation}. 

\subsubsection{Re-writting Proposition \ref{Hua}}\label{Re-writting}Let $\x=\{x_1,x_2,\dots\}$ be a set of infinitely many commuting variables and denote by $\Lambda(\x)$ the ring of symmetric functions in the variables $\x$ as in \cite{macdonald}. For a partition $\mu$, denote by $m_\mu=m_\mu(\x)\in\Lambda(x)$ the corresponding monomial symmetric function. Recall that the set of monomial symmetric functions $\{m_\mu\}_{\mu\in\calP}$ forms a $\Z$-basis of $\Lambda(\x)$.

Make the change of variables

$$
X_{i0}=x_{i,1},\hspace{1cm}X_{ij}=x_{i,j}^{-1}x_{i,j+1}, \hspace{2cm} i=1,\dots,r,\,\,j\in\Z,
$$
and for each $i=1,\dots,r$, denote by $\x_i$ the set of new variables $\{x_{i,1},x_{i,2},\dots\}$. For a multi-partition $\muhat=(\mu^1,\dots,\mu^r)$, put 

$$
m_\muhat:=m_{\mu^1}(\x_1)m_{\mu^2}(\x_2)\cdots m_{\mu^r}(\x_r).
$$
\bigskip

It belongs to the ring $\Lambda=\Lambda(\x_1)\otimes_\Z\Lambda(\x_2)\otimes_\Z\cdots\otimes_\Z\Lambda(\x_r)$ of functions separately symmetric in each set $\x_1,\x_2,\dots,\x_r$.

With this change of variables we have
$$
\sum_{\s}X^\s=m_\muhat,
$$
where the sum is over the set of parabolic types giving $\muhat$.

From corollaries \ref{s'} and \ref{s'bis} we deduce the following identities.

\begin{lemma} We have 

\begin{align*}
&\sum_{|\s|=n}\calM_\s^\calE(q)X^\s =\sum_{|\muhat|=n}\calM_\muhat^\calE(q) m_\muhat\\
&\sum_{|\s|=n}\calA_\s^\calE(q)X^\s =\sum_{|\muhat|=n}\calA_\muhat^\calE(q) m_\muhat\\
\end{align*}\end{lemma}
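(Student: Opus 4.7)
The proof is essentially a bookkeeping exercise combining two ingredients already available in the paper: the identity $\sum_{\s \to \muhat} X^\s = m_\muhat$ (where the sum is over parabolic types $\s$ giving rise to the multi-partition $\muhat$), which is stated just before the lemma, together with Corollaries \ref{s'} and \ref{s'bis} which say that $\calM_\s^\calE(q)$ and $\calA_\s^\calE(q)$ depend only on the equivalence class $\muhat$ of $\s$. The plan is simply to group the left-hand sums by equivalence class.

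More concretely, first I would verify the preliminary monomial identity, since the paper only asserts it. Under the substitution $X_{i0} = x_{i,1}$, $X_{ij} = x_{i,j}^{-1} x_{i,j+1}$, a telescoping calculation gives
\[
X^\s = \prod_{i,j} X_{ij}^{s_{ij}} = \prod_{i} \prod_{k \geq 1} x_{i,k}^{\,s_{i,k-1}-s_{i,k}}.
\]
A parabolic type $\s$ is the same datum as, for each $i$, a finitely supported sequence $(a_{i,k})_{k \geq 1}$ of non-negative integers summing to $n$ (recover $s_{i,k} = n - \sum_{j \leq k} a_{i,j}$), and the associated partition $\mu^i$ is the rearrangement in weakly decreasing order of the positive entries of this sequence. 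Thus summing over parabolic types $\s$ with fixed $\muhat$ amounts, for each coordinate $i$, to summing $\prod_k x_{i,k}^{a_{i,k}}$ over all finitely supported sequences $(a_{i,k})$ whose positive entries form the multiset of parts of $\mu^i$; by definition of the monomial symmetric function, this sum equals $m_{\mu^i}(\x_i)$, whence $\sum_{\s \to \muhat} X^\s = \prod_i m_{\mu^i}(\x_i) = m_{\muhat}$.

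Granting this, I would finish by rewriting the sum over parabolic types as a double sum over multi-partitions and parabolic types giving rise to them:
\[
\sum_{|\s|=n} \calM_\s^\calE(q)\, X^\s = \sum_{|\muhat|=n} \sum_{\s \to \muhat} \calM_\s^\calE(q)\, X^\s = \sum_{|\muhat|=n} \calM_\muhat^\calE(q) \sum_{\s \to \muhat} X^\s = \sum_{|\muhat|=n} \calM_\muhat^\calE(q)\, m_\muhat,
\]
where the second equality uses Corollary \ref{s'} to pull $\calM_\muhat^\calE(q) = \calM_\s^\calE(q)$ out of the inner sum, and the third uses the monomial identity. The same argument with Corollary \ref{s'bis} in place of Corollary \ref{s'} yields the second equality for $\calA$.

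There is essentially no obstacle here: once the paper's preceding observation $\sum_{\s \to \muhat} X^\s = m_\muhat$ is accepted, the lemma is a direct regrouping. The only mildly subtle point is the combinatorial verification of that identity, specifically that one must allow zero entries in $(a_{i,k})$ (coming from equal consecutive terms in the non-increasing sequence $s_i$) so that every distinct monomial in $m_{\mu^i}(\x_i)$ is produced exactly once.
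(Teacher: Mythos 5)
Your proposal is correct and matches the paper's (implicit) argument exactly: the paper asserts the monomial identity $\sum_{\s\to\muhat}X^\s=m_\muhat$ after the change of variables and then deduces the lemma from Corollaries \ref{s'} and \ref{s'bis} by precisely the regrouping you describe. Your telescoping verification of the monomial identity, including the point about allowing zero successive differences, correctly fills in the detail the paper leaves unstated.
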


Proposition \ref{Hua} can be re-written as follows.

\begin{proposition}We have 

$$
\Log\left(1+\sum_{\m> 0}\sum_{|\muhat|={\rm rk}(\m)}\calM_\muhat^\m(q)\,m_\muhat\, Y^\m\right)=\sum_{\m>0}\sum_{|\muhat|={\rm rk}(\m)}\calA_\muhat^\m(q)\,m_\muhat\, Y^\m,$$
where the first sum (in each side) is over the set of vector bundle types.

\label{propHuafin}\end{proposition}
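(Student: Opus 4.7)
The statement is a direct translation of Proposition \ref{Hua} into the monomial basis via the change of variables introduced just before the statement, so the plan is to push the identity of Proposition \ref{Hua} through this substitution and group parabolic types by their associated multi-partition.

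First I would check the key symmetric-function identity. Under the substitution $X_{i0}=x_{i,1}$ and $X_{ij}=x_{i,j}^{-1}x_{i,j+1}$ for $j\geq 1$, a telescoping computation yields
$$X^\s=\prod_{i=1}^r\prod_{j\geq 1}x_{i,j}^{s_{i,j-1}-s_{i,j}}$$
for any parabolic type $\s$. For a fixed coordinate $i$, the sequence of exponents $(s_{i,j-1}-s_{i,j})_{j\geq 1}$ records a composition of $n$ whose nonzero entries are a rearrangement of the parts of $\mu^i$; running over all non-increasing $s_i$ with multi-partition $\mu^i$ produces exactly the distinct permutations of the parts of $\mu^i$, each inserted into infinitely many variables of $\x_i$. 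Taking the product over $i$ gives $\sum_{\s\mapsto\muhat}X^\s=m_\muhat$, which is the content of the lemma preceding the statement.

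Next I would group terms in Proposition \ref{Hua} by their multi-partition. By Corollary \ref{s'} (resp. Corollary \ref{s'bis}) the counts $\calM_\s^\m(q)$ and $\calA_\s^\m(q)$ depend only on $\muhat$, so for each fixed $\m$ and $\muhat$,
$$\sum_{\s\mapsto\muhat}\calM_\s^\m(q)\,X^\s=\calM_\muhat^\m(q)\,m_\muhat,\qquad \sum_{\s\mapsto\muhat}\calA_\s^\m(q)\,X^\s=\calA_\muhat^\m(q)\,m_\muhat.$$
Substituting into Proposition \ref{Hua} and noting that the auxiliary variable $T$ becomes redundant because the grading $|\muhat|={\rm rk}(\m)$ is already tracked by $Y^\m$, one obtains exactly the identity of Proposition \ref{propHuafin}, provided $\Log$ commutes with the change of variables.

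The only point deserving a second look is this compatibility. The operator $\Log=\Psi^{-1}\circ\log$ is a linear combination of the Adams-type maps $\psi_d:X_{ij}\mapsto X_{ij}^d$, $Y_i\mapsto Y_i^d$, $q\mapsto q^d$. Since $X_{ij}^d=x_{i,j}^{-d}x_{i,j+1}^d$ is the image of $X_{ij}$ under the substitution $x_{i,j}\mapsto x_{i,j}^d$, the map $\psi_d$ is intrinsically defined on the ring of formal power series in the $x_{i,j},Y_i,T$, independently of which generating set ($X$ or $x$) we use. Hence $\Log$ commutes with the substitution, completing the proof. I do not anticipate any real obstacle; the statement is essentially a bookkeeping reformulation of Proposition \ref{Hua}.
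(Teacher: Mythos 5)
Your proposal is correct and follows exactly the route the paper takes: the paper derives Proposition \ref{propHuafin} from Proposition \ref{Hua} by the same change of variables, the identity $\sum_{\s\mapsto\muhat}X^\s=m_\muhat$, and Corollaries \ref{s'} and \ref{s'bis}, simply asserting the rewriting without further proof. Your added checks (the telescoping computation and the compatibility of $\Log$, i.e.\ of the Adams operations $\psi_d$, with the substitution) are exactly the bookkeeping the paper leaves implicit.
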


\subsubsection{Hall-Littlewood symmetric functions}\label{Hall}

For a partition $\lambda=(\lambda_1,\lambda_2,\dots,\lambda_t)$ of $n$, with $\lambda_1\geq\lambda_2\geq\cdots\geq\lambda_t>0$, we denote by $C_\lambda$ the unipotent conjugacy class of $\GL_n$ with $t$ Jordan blocks of size $\lambda_1,\lambda_2,\dots,\lambda_t$. 

It is well-known that for any two partitions $\lambda,\mu\in\calP_n$, there exists a polynomial $\tilde{K}_{\lambda\mu}(t)\in\Z[t]$, called the \emph{modified Kostka-Foulke polynomial}, such that for any finite field $\F_q$, the evaluation $\tilde{K}_{\lambda\mu}(q)$ coincides with the value of $\calU_\lambda$ at $C_\mu$. It is also known that the polynomials $\tilde{K}_{\lambda\mu}(t)$ have non-negative integer coefficients (see for instance \cite{Greenpoly} for a cohomological interpretation of the coefficients).

Denote by $s_\lambda(\x)\in\Lambda(\x)$ the Schur symmetric function associated with a partition $\lambda$. The set $\{s_\mu(\x)\}_{\mu\in\calP}$ forms a $\Z$-basis of $\Lambda(\x)$. We denote by $\langle\,,\,\rangle$ the Hall pairing on $\Lambda(\x)$. Recall that it makes the  basis $\{s_\mu(\x)\}_\mu$ an orthonormal basis. We define the \emph{modified Hall-Littlewood} symmetric functions $\tilde{H}_\lambda(\x;q)\in\Q(q)\otimes_\Z\Lambda(\x)$, $\lambda\in\calP$, as 

$$
\tilde{H}_\lambda(\x;q)=\sum_\mu \tilde{K}_{\lambda\mu}(q)s_\mu(\x).
$$
There are other equivalent definitions  from the theory of symmetric functions (see \cite{macdonald}) but the definition here will be sufficient for us.

\subsubsection{Main formula}

For a vector bundle type $\m$ of rank $n$ and a multi-partition $\muhat=(\mu^1,\dots,\mu^r)$ of size $n$, put

\beq
\calH_\muhat^\m(q)=\calH_{\muhat,D}^\m(q):=\frac{\#\{h\in{\rm Aut}(\calE^\m)\,|\,h(\a_i)\in C_{\mu^i}(q^{d_i})\text{ for all }i=1,2,\dots,r\}}{|{\rm Aut}(\calE^\m)|}.
\label{defH}\eeq
This is just the $\calH$-function $\calH_{(M,\calO_1,\dots,\calO_r)}^{\calE^\m}(q)$ introduced earlier with $M=\GL_n$ and $\calO_i= C_{\mu^i}(q^{d_i})$. Define 

$$
\calH_\muhat(q,Y)=\calH_{\muhat,D}(q,Y):=\sum_{{\rm rk}(\m)=n}\calH_\muhat^\m(q)Y^\m.
$$
For a multi-partition $\muhat=(\mu^1,\dots,\mu^r)\in(\calP_n)^r$,

$$
\tilde{H}_\muhat(\x_1,\dots,\x_r;q):=\prod_{i=1}^r\tilde{H}_{\mu^i}(\x_i;q^{d_i})\in\Q(q)\otimes_\Z\otimes\Lambda.
$$
We denote by $\oP$ the subset of multi-partitions in $\calP^r$ whose coordinates are all of same size. 

Put 

$$
\Omega(q)=\Omega_D(q):=1+\sum_{\muhat\in\oP\backslash\{\emptyset\}}\calH_\muhat(q,Y)\tilde{H}_\muhat(\x_1,\dots,\x_r;q).
$$

Denote by $\{h_\mu(\x)\}_{\mu\in\calP}$ the $\Z$-basis of $\Lambda(\x)$ of complete symmetric function.  It is the dual basis of monomial symmetric functions $\{m_\mu(\x)\}_{\mu\in\calP}$ with respect to the Hall pairing $\langle\,,\,\rangle$.

We extend the definition of the Hall pairing to $\Lambda$ by setting

$$
\langle a_1(\x_1)\cdots a_r(\x_r),b_1(\x_1)\cdots b_r(\x_r)\rangle=\langle a_1(\x_1),b_1(\x_1)\rangle\cdots\langle a_r(\x_r),b_r(\x_r)\rangle.
$$

\begin{theorem} For a vector bundle type $\m$ of rank $n$ and a multi-partition $\muhat$ of size $n$, we have

\begin{equation}
\calA_\muhat^\m(q)=\left\langle {\rm Coeff}\,_{Y^\m}\left[(q-1)\Log\,\Omega(q)\right],h_\muhat\right\rangle,
\label{explicitFor}\end{equation}
with $h_\muhat=h_{\mu^1}(\x_1)\cdots h_{\mu^r}(\x_r)$.

\label{theo2}\end{theorem}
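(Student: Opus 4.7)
The strategy is to establish the generating-function identity
\[
1 + \sum_{\m,\,|\muhat|={\rm rk}(\m)}\calM^\m_\muhat(q)\,m_\muhat\,Y^\m \;=\; \Exp\!\bigl((q-1)\,\Log\,\Omega_D(q)\bigr)
\]
in $\Q(q)\otimes_\Z\Lambda[[Y]]$. Granting this, applying $\Log$ to both sides and invoking Proposition~\ref{propHuafin} (in the form obtained by the change of variables $\sum_\s X^\s = m_\muhat$ of Section~3.4.1) yields $\sum_{\m,\muhat}\calA^\m_\muhat(q)\,m_\muhat\,Y^\m = (q-1)\Log\Omega_D(q)$. Extracting the coefficient of $Y^\m$ and using the Hall-duality $\langle m_\muhat, h_\nuhat\rangle = \delta_{\muhat,\nuhat}$ then produces precisely the formula in Theorem~\ref{theo2}.

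To compute the left-hand side, I would start from Proposition~\ref{burn} and stratify $\Aut(\calE^\m)$ by the type $(M,\calO_1,\dots,\calO_r)$ of the tuple $(f(\a_1),\dots,f(\a_r))$, as in Section~3.3: the common semisimple part of $f$ determines a Levi $M\subseteq\GL_n$, conjugate into $L_{\calE^\m}$, and each unipotent part $u_i$ lies in an $M(q^{d_i})$-class $\calO_i$. Each stratum contributes
\[
\frac{|(Z_M)_{\rm reg}(q)|}{|W_{\GL_n(q)}(M,\calO_1,\dots,\calO_r)|}\,\calH^{\calE^\m}_{(M,\calO_1,\dots,\calO_r)}(q)\,\prod_{i=1}^r R_{L_i(q^{d_i})}^{\GL_n(q^{d_i})}(1)(l_M\cdot\calO_i),
\]
so that the generating function decomposes as a sum indexed by the Levi lattice.

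The symmetric-function content enters via the classical Green--Macdonald identity: for any Levi $M$ in $\GL_n$, the generating series
\[
\sum_{\nu}\,m_\nu(\x)\,R_{L_\nu(q^d)}^{\GL_n(q^d)}(1)(l_M\cdot\calO)
\]
evaluated on an $M(q^d)$-unipotent class $\calO$ of partition-type $\lambda$ equals (up to an explicit normalization by centralizer orders) a product of modified Hall--Littlewood symmetric functions $\tilde H_{\lambda^j}(\x;q^{de_j})$ indexed by the block factors of $M\simeq\prod_j\GL_{n_j}(q^e)^{e_j}$. After summing over $\muhat$ and $i$, the $M=\GL_n$ stratum thus contributes exactly $(q-1)\bigl(\Omega_D(q)-1\bigr)$, since $|(Z_{\GL_n})_{\rm reg}(q)|=q-1$. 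The proper-Levi strata, weighted by the Möbius-type coefficients $K_M^o$ of Theorem~\ref{intermediate}, provide the higher $\psi_d$-twists of $\Log\Omega_D$ that arise when one expands
\[
\Exp\!\bigl((q-1)\Log\Omega_D(q)\bigr) \;=\; \exp\!\Biggl(\sum_{d\ge 1}\frac{q^d-1}{d}\,\psi_d\!\bigl(\Log\Omega_D(q)\bigr)\Biggr).
\]

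The principal obstacle is this final matching: identifying the signed sum over strata with proper Levi $M\subsetneq\GL_n$ as the higher $\psi_d$-part of the plethystic exponential. This is a Möbius inversion on the lattice of $F$-stable Levi subgroups, in which the constants $K_M^o$, the normalizers $W_{\GL_n(q)}(M,\calO_1,\dots,\calO_r)$, and the Frobenius-twist cycle structure of $M$ must all be aligned with the $\psi_d$-twists on the plethystic side. The calculation is formal but delicate, in the spirit of \cite{aha}, \cite{HLRV} and \cite{ICQV}; the novelty here is that $\Aut(\calE^\m)$ is in general non-reductive, so the counts $\calH^{\calE^\m}_{(M,\calO_1,\dots,\calO_r)}(q)$ genuinely replace the reductive conjugacy-class enumerations of those earlier works, but their contribution factors cleanly through the Levi stratification above.
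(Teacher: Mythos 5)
Your overall architecture is genuinely different from the paper's: you propose to prove the $\Exp$-form identity $1+\sum\calM_\muhat^\m(q)\,m_\muhat Y^\m=\Exp\bigl((q-1)\Log\,\Omega_D(q)\bigr)$ and then pass to $\calA$ via Proposition~\ref{propHuafin}, whereas the paper never routes through $\calM$ at this point. It starts instead from the character formula for $\calA_\muhat^\m(q)$ with \emph{generic} linear characters (Theorem~\ref{theo}); its type-expansion is exactly Theorem~\ref{intermediate}, with coefficients $(q-1)K_M^o/|W_{\GL_n(q)}(M,\calO_1,\dots,\calO_r)|$, and these are matched directly with the coefficients $C_\omhat^o$ appearing in the formal expansion of $\Log$, using \cite[Corollary 2.2.3]{aha2} (your ``Green--Macdonald identity'') and Proposition~\ref{Hom}. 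The paper's route is shorter because the $\Log$-side combinatorial constants are precisely the $K_M^o/|W|$ already in hand; your route defers the generic-character input to the very end (inside Proposition~\ref{propHuafin}, via Proposition~\ref{abso-ind}) but must pay for it elsewhere.

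The place where it must pay is exactly what you call the ``principal obstacle,'' and as written this is a genuine gap. In your stratification of Proposition~\ref{burn}, the stratum of type $(M,\calO_1,\dots,\calO_r)$ carries the weight $|(Z_M)_{\rm reg}(q)|/|W_{\GL_n(q)}(M,\calO_1,\dots,\calO_r)|$, and the whole content of the theorem (granting \cite[Corollary 2.2.3]{aha2} and Proposition~\ref{Hom}) is the assertion that these weights coincide with the coefficients of the monomials $\prod_{(e,\muhat)}\bigl(\calH_\muhat(q^e,Y^e)\,\tilde{H}_\muhat(\x_1^e,\dots,\x_r^e;q^e)\bigr)^{\omhat(e,\muhat)}$ produced by expanding $\Exp\bigl((q-1)\Log\,\Omega_D(q)\bigr)$. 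That is an inclusion--exclusion computation of $|(Z_M)_{\rm reg}(q)|$ over the Levi lattice, aligned with the $\psi_d$-twists, which you do not carry out; verifying only that the $M=\GL_n$ stratum gives $(q-1)(\Omega_D(q)-1)$ checks the degree-one term on both sides and nothing more. Moreover your prose says the proper-Levi strata are ``weighted by the M\"obius-type coefficients $K_M^o$,'' which contradicts your own displayed formula: the $K_M^o$ live on the $\calA$/$\Log$ side (Theorem~\ref{intermediate}), not on the $\calM$/$\Exp$ side, and conflating the two leaves unclear exactly which identity you would need to prove.
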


The rest of this section is devoted to the proof of this theorem.

We now attached combinatorial data to the types of the tuples $(C_1,\dots,C_r)\in\calC$.
\bigskip

By \emph{type} we shall mean a function $\omhat:\Z_{>0}\times(\overline{\calP}\backslash\{\emptyset\})\rightarrow\N$ which has a finite support, i.e., the set of pairs $(e,\muhat)$ such that $\omhat(e,\muhat)\neq 0$ is finite. 
 The \emph{size} of the type $\omhat$ is defined as the non-negative integer $|\omhat|:=\sum_{(e,\muhat)}e\, \omhat(e,\muhat)$. The integers $e$ for which there exists $\muhat\in\oP$ such that $\omhat(e,\muhat)\neq 0$ are called the \emph{degrees} of $\omhat$. We denote by $\T$ the set of all types and by $\T_n$ the set of types of size $n$. 
\bigskip

The set $\T_n$ parametrizes the types of the $r$-tuples $(C_1,\dots,C_r)\in\calC$ as follows.
Assume that $(M,\calO_1,\dots,\calO_r)$ is the type of $(C_1,\dots,C_r)$. The Levi subgroup $M$ being defined over $\F_q$, there exists a unique multiset $\{(e_i,n_i)\}_{i=1,\dots,s}$ of pairs of positive integers such that $M\simeq \prod_{i=1}^s\GL_{n_i}(\overline{k})^{e_i}$ and $M(q)\simeq\prod_{i=1}^s\GL_{n_i}(q^{e_i})$. Now the unipotent conjugacy classes $\calO_1,\dots,\calO_r$ of $M(q^{d_1}),\dots, M(q^{d_r})$ are completely determined by multipartitions $\muhat_1,\dots,\muhat_s$ of $n_1,\dots,n_s$: The $i$-th coordinate of $\muhat_j$ is the partition that gives the size of the Jordan blocks of $\calO_i$ in $\GL_{n_j}$. The corresponding type is then the function in $\T_n$ that maps $(e,\muhat)$ to its multiplicity in the multiset $\{(e_i,\muhat_i)\}_{i=1,\dots,s}$.
\bigskip

If $(C_1,\dots,C_r)$ is of type $\omhat\in\T_n$, and if $\m$ is a vector bundle type of rank $n$, we define $\calH_\omhat^\m(q)$ by the right hand side of Formula (\ref{defH}) with $C_{\mu^i}(q^{d_i})$ replaced by $C_i$ (this notation is consistent with Proposition \ref{indep2}). We then put 

$$
\calH_\omhat(q,Y):=\sum_{{\rm rk}(\m)=n}\calH_\omhat^\m(q)Y^\m.
$$

\begin{proposition}We have

$$
\calH_\omhat(q,Y)=\prod_{(e,\muhat)}\calH_\muhat(q^e,Y^e)^{\omhat(e,\muhat)},
$$
where $(e,\muhat)$ runs over the set $\Z_{>0}\times(\overline{\calP}\backslash\{\emptyset\})$.
\label{Hom}\end{proposition}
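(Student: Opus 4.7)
The strategy is to use the Jordan decomposition $f = f_s f_u$ of elements of $\Aut(\calE^\m)$ and Galois descent to factorise the count over Galois orbits of eigenvalues of the semisimple part. First I would fix a representative $(M,\calO_1,\ldots,\calO_r)$ of the type $\omhat$ with $Z_M\subset L_\calE$ (possible by the remark preceding Proposition \ref{indep2}). The group $M$ has a canonical $\F_q$-factorisation
$$
M \simeq \prod_{(e,\muhat)}\GL_{|\muhat|}(\overline{k})^{\,e\cdot\omhat(e,\muhat)},
$$
in which, for each pair $(e,\muhat)$, the Frobenius cyclically permutes the $e$ copies of $\GL_{|\muhat|}$ within each of the $\omhat(e,\muhat)$ clusters. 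Correspondingly, each unipotent class $\calO_i$ in $M(q^{d_i})$ splits as a product, over such clusters, of unipotent classes of Jordan type $\mu^i$ in $\GL_{|\muhat|}(q^{e d_i})$.

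The second step is to perform the count fibre-over-$f_s$. Any $f\in\Aut(\calE^\m)$ with $f(\a_i)\in C_i$ for $(C_1,\ldots,C_r)$ of type $\omhat$ has semisimple part conjugate to some $l\in (Z_M)_{\rm reg}(q)\subset L_\calE$. After base change to $\overline{\F}_q$ the bundle $\calE^\m$ decomposes into $l$-eigenbundles, and the Frobenius permutes these eigenbundles according to the Galois structure of $M$; descent then gives a canonical $\F_q$-decomposition
$$
\calE^\m \;=\; \bigoplus_{(e,\muhat,k)} \calE_{e,\muhat,k},
$$
indexed by the Galois orbits of eigenvalues of $l$, where each $\calE_{e,\muhat,k}$ corresponds via Weil restriction to a rank $|\muhat|$ vector bundle $\calF_{e,\muhat,k}$ over $\F_{q^e}$ of some type $\m_{e,\muhat,k}$. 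Using $\pi_*\calO_{\bbP^1_{\F_{q^e}}}(b)\simeq\calO_{\bbP^1_{\F_q}}(b)^e$ one obtains the key bookkeeping identity $Y^{\m}=\prod_{(e,\muhat,k)}(Y^e)^{\m_{e,\muhat,k}}$. Moreover the centraliser factorises as
$$
C_{\Aut(\calE^\m)}(l)\;\simeq\;\prod_{(e,\muhat,k)}\Aut_{\F_{q^e}}\bigl(\calF_{e,\muhat,k}\bigr),
$$
and the condition $f(\a_i)\in C_i$ decouples across the factors, namely: the unipotent component on the $(e,\muhat,k)$-factor, evaluated at $\a_i$ over $\F_{q^{ed_i}}$, must land in the unipotent class of Jordan type $\mu^i$. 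Summing over all $l$ and all compatible decompositions of $\m$, and dividing by $|\Aut(\calE^\m)|$ (which itself factorises via the centraliser decomposition), the generating series splits as a product, yielding
$$
\calH_\omhat(q,Y)\;=\;\prod_{(e,\muhat)}\calH_\muhat(q^e,Y^e)^{\omhat(e,\muhat)}.
$$

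The main obstacle I expect is the decomposition of the centraliser and of the bundle $\calE^\m$ into $l$-eigenbundles over $\F_q$. In the fully reductive case (e.g.\ $\calE=\calO(a)^n$, so $\Aut(\calE)=\GL_n$) this is standard, but here $\Aut(\calE^\m)=G_{\calE,t}(k)=L_\calE(k)\ltimes U_{\calE,t}(k)$ is non-reductive with a unipotent radical controlled by the integers $b_i$. One must check that the Grothendieck decomposition of $\calE^\m$ is fully compatible with the eigenspace decomposition of $l$ (so that each $\calF_{e,\muhat,k}$ really has a well-defined type $\m_{e,\muhat,k}$), and that no mixing between distinct $l$-eigenbundles occurs in $U_{\calE,t}$. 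Once this compatibility is established, the product formula is a straightforward Galois-descent computation, exactly parallel to the reductive case.
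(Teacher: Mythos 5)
Your proposal is correct and follows essentially the same route as the paper: fix a common semisimple part $l\in L_{\calE^\m}(q)$ of a tuple of classes of type $\omhat$, identify $C_{{\rm Aut}(\calE^\m)}(l)$ with a product $\prod_i{\rm Aut}(\calE^{\v_i}_{\F_{q^{e_i}}})$ of automorphism groups of bundles over the extensions $\F_{q^{e_i}}$ subject to $\sum_ie_i\cdot\v_i=\m$, decouple the conditions $f(\a_i)\in C_i$ across the factors, and sum over the choices of $l$. The only difference is that you spell out (via eigenbundles and descent) the centralizer factorization that the paper simply asserts, which is a welcome addition rather than a divergence.
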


\begin{proof}Consider a type $\omhat\in \T_n$ which we write (as explained above) in the form of a multiset $\{(e_i,\muhat_i)\}_{i=1,\dots,s}$. We need to check the following identity

\beq
\calH_\omhat^\m(q)=\sum_{(\v_1,\dots,\v_s)}\prod_{i=1}^s\calH_{\muhat_i}^{\v_i}(q^{e_i}),
\label{decomp}
\eeq
where the sum is over the set of $s$-tuples $(\v_1,\dots,\v_s)$ of vector bundle types, with each $\v_i$ of rank $|\muhat_i|$, such that $\sum_{i=1}^se_i\cdot \v_i=\m$ (the vector $e\cdot \v$ being the vector obtained by multiplying all coordinates of $\v$ by $e$). To see this, we fix an  $r$-tuple $(C_1,\dots,C_r)$ of type $\omhat$ and we choose a common semisimple part $l\in L_{\calE^\m}(q)$ of $C_1,\dots,C_r$ (we may assume that such a choice in $L_{\calE^\m}(q)$ exists since otherwise $\calH_\omhat^\m(q)=0$). Let $M$ be the centralizer in ${\rm Aut}(\calE)$ of $l$. Then $M$ is isomorphic to $\prod_{i=1}^s{\rm Aut}(\calE^{\v_i}_{\F_{q^{e_i}}})$ where $\calE_{\F_{q^{e_i}}}^{\v_i}$ is a rank $|\muhat_i|$  vector bundle over $\bbP^1_{\F_{q^{e_i}}}$ of type $\v_i$ with the condition that $\sum_{i=1}^se_i\cdot\v_i=\m$. From this we see that $\#\{f\in{\rm Aut}(\calE^\m)\,|\, f_s=l, f(\a_i)\in C_i\}$ divided by $|M|$ equals 

$$
\prod_{i=1}^s\calH_{\muhat_i}^{\v_i}(q^{e_i}).
$$

We obtain the identity (\ref{decomp}) by summing over all possible choices for $l$ (up to conjugation in $L_{\calE^\m}(q)$).
\end{proof}

We are now in position to prove Theorem \ref{theo2}.

For a family of symmetric functions $A_\muhat(\x_1,\dots,\x_r;q,Y)$ indexed by $\oP$ with $A_\emptyset=1$, we extend its definition to types $\omhat\in\T$ as 

$$
A_\omhat(\x_1,\dots,\x_r;q,Y)=\prod_{(e,\muhat)}A_\muhat(\x_1^e,\dots,\x_r^e;q^e,Y^e)^{\omhat(e,\muhat)},
$$
where $(e,\muhat)$ runs over $\Z_{>0}\times(\oP\backslash\{\emptyset\})$.

From the formal properties of $\Log$ (see \cite[see page 355]{aha} for more details) we see that 

$$
\Log\left(\sum_{\muhat\in\oP}A_\muhat(\x_1,\dots,\x_r;q,Y)T^{|\muhat|}\right)=\sum_{\omhat\in\T}C_\omhat^oA_\omhat(\x_1,\dots,\x_r;q,Y)T^{|\omhat|},
$$
where 

$$
C_\omhat^o:=\frac{\mu(e)}{e}(-1)^{\ell(\omhat)-1}\frac{(\ell(\omhat)-1)!}{\prod_\muhat \omhat(e,\muhat)!},
$$
with $\ell(\omhat):=\sum_{(e,\muhat)}\omhat(e,\muhat)$, if $e$ is the only degree of $\omhat$ and $C_\omhat^o=0$ otherwise.

Theorem \ref{theo2} reduces thus to the following statement.

\begin{equation}
\calA_\muhat^\m(q)=(q-1)\sum_{\omhat\in\T}C_\omhat^o\calH_\omhat^\m(q)\left\langle \tilde{H}_\omhat(\x_1,\dots,\x_r;q),h_\muhat\right\rangle,
\label{theo2bis}\end{equation}
which  by Theorem \ref{theo} reduces to 

\begin{equation}
\frac{1}{|{\rm Aut}(\calE)|}\sum_{h\in{\rm Aut}(\calE)}\prod_{i=1}^rR_{L_i(q^{d_i})}^{\GL_n(q^{d_i})}(\alpha_i)(h(\a_i))=(q-1)\sum_{\omhat\in\T}C_\omhat^o\calH_\omhat^\m(q)\left\langle \tilde{H}_\omhat(\x_1,\dots,\x_r;q),h_\muhat\right\rangle.
\label{formula0}
\end{equation}

This is a consequence of Theorem \ref{intermediate}. Indeed from \cite[Corollary 2.2.3]{aha2} we have 

\begin{equation}
\left\langle \tilde{H}_\omhat(\x_1,\dots,\x_r;q),h_\muhat\right\rangle=\prod_{i=1}^rR_{L_i(q^{d_i})}^{\GL_n(q^{d_i})}(1)(l_M.\calO_i)
\label{eqq}\end{equation}
whenever $(M,\calO_1,\dots,\calO_r)$ corresponds to the type $\omhat$, and $L_1,\dots,L_r$ are standard Levi subgroups of $\GL_n$ whose block sizes are given by the parts of the coordinates of  the multi-partition $\muhat$. Moreover

$$
C_\omhat^o=\frac{K_M^o}{|W_{\GL_n(q)}(M,\calO_1,\dots,\calO_r)|}.
$$
Indeed, the group $W_{\GL_n(q)}(M,\calO_1,\dots,\calO_r)$ is isomorphic to 

$$
\prod_{(e,\muhat)}\left((\Z/e\Z)^{\omhat(e,\muhat)}\rtimes \calS_{\omhat(e,\muhat)}\right).
$$

\subsection{Explicit computation}\label{explicitcomp}
To compute explicitly $\calA_\muhat^\m(q)$ from Formula (\ref{explicitFor}) we need, as shown by Formula (\ref{theo2bis}), to compute the following two quantities

$$
\left\langle \tilde{H}_\omhat(\x_1,\dots,\x_r;q),h_\muhat\right\rangle\hspace{.5cm} \text{ and }\hspace{.5cm}\calH_\omhat^\m(q).
$$
The computation of the first one reduces to the computation  of the modified Kostka-Foulke polynomials  (see \S \ref{Hall}) which can be done by a combinatorial algorithm. The problem of computing Kostka-Foulke polynomials is very well-known and can be found in many places in the literature.

The computation of the quantities $\calH_\omhat^\m(q)$  does not seem to appear in the literature. By Proposition \ref{Hom} it reduces to  the unipotent case, namely to the computation of $\calH_{\muhat}^\m(q)$ defined by Formula (\ref{defH}) with $\muhat$ a multi-partition. Working with unipotent conjugacy classes or nilpotent orbits is equivalent, therefore we are reduced to understand the quantities

$$
\calN_\muhat^\calE(q)=\calN_{\muhat,D}^\calE(q):=\#\{ h\in{\rm End}(\calE)\,|\, h(\a_i)\in N_{\mu^i}\text{ for all }i=1,\dots,r\},
$$
where $\calE$ is rank $n$ vector bundle on $\bbP^1_k$, $\muhat=(\mu^1,\dots,\mu^r)\in(\calP_n)^r$ and where for a partition $\lambda$ of $n$, $N_\lambda$ denotes the nilpotent orbit of $\gl_n$ with Jordan blocks of size given by the parts of $\lambda$. 

A nilpotent orbit $N$ of $\gl_n$  is completely determined by the sequence $n_1,n_2,\dots$ of non-negative integers such that if $A\in N$ then $n_i$ is the rank of $A^i$. Since ${\rm End}(\calE)$ is explicit we have an obvious algorithm for computing the terms $\calN_\muhat^\calE(q)$.

We have the following conjecture :

\begin{conjecture}Let $\m=(m_1,\dots,m_f)\in \N^f$ such that $\sum_{i=1}^fm_i=n$. For ${\bf d}=(d_1,\dots,d_r)\in\N^r$ and $\muhat=(\mu^1,\dots,\mu^r)\in(\calP_n)^r$ there exists a polynomial  $N_{\muhat; {\bf d}}^\m(t)\in\Q[t]$ such that for any finite fields $\F_q$, and any divisor $D=\sum_{i=1}^r\a_i$ on $\bbP^1_{\F_q}$ with ${\rm deg}(\a_i)=d_i$ we have

$$
N_{\muhat; {\bf d}}^\m(q)=\calN_{\muhat,D}^\calE(q),
$$
where $\calE$ is a vector bundle on $\bbP^1_{\F_q}$ of type $\m$.
\label{conjHnil}\end{conjecture}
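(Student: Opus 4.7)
The conjecture asserts both polynomiality in $q$ and independence from the positions of the $\a_i$. The plan is to express $\calN_{\muhat,D}^\calE(q)$ as an alternating sum of $\F_q$-point counts of affine subvarieties of ${\rm End}(\calE)$ cut out by determinantal equations, and then show that each such count depends only on $\m$, $\muhat$, $\mathbf{d}$.

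First I would pass from orbits to orbit closures: M\"obius inversion on the closure ordering of nilpotent orbits of $\gl_n$ yields $\mathbf{1}_{N_\mu}=\sum_{\lambda}m_{\mu,\lambda}\,\mathbf{1}_{\overline{N_\lambda}}$ with integer coefficients $m_{\mu,\lambda}$ independent of $q$, so that
$$\calN_{\muhat,D}^\calE(q)=\sum_{\lambdahat}\Bigl(\prod_{i=1}^{r}m_{\mu^i,\lambda^i}\Bigr)\cdot\#\bigl\{h\in{\rm End}(\calE)\,\bigm|\,h(\a_i)\in\overline{N_{\lambda^i}}\text{ for all }i\bigr\}.$$
It thus suffices to prove the statement with each orbit closure $\overline{N_{\lambda^i}}$ in place of $N_{\mu^i}$.

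Next, $\overline{N_\lambda}\subset\gl_n$ is defined over $\Z$ by rank conditions $\rank(A^j)\le r_j^\lambda$, equivalently by the vanishing of all $(r_j^\lambda+1)$-minors of $A^j$. Combined with
$${\rm End}(\calE)=\bigoplus_{b_i\ge b_j}\mathrm{Mat}_{m_i\times m_j}\bigl(k[t]_{\le b_i-b_j}\bigr)$$
and the fact that evaluation at $\a_i$ is reduction modulo the irreducible polynomial $P_i(t)\in k[t]$ of degree $d_i$, the count becomes the number of $\F_q$-points of a closed subscheme $Z_D$ of the affine space ${\rm End}(\calE)$ whose defining equations are polynomial in the coefficients of $h$ and of the $P_i$. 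To handle polynomiality and position-independence simultaneously, I would stratify $Z_D$ by the individual ranks $\rank\bigl(h(\a_i)^j\bigr)$: on each stratum, the Chinese Remainder Theorem applied to the polynomial entries of $h$ should reduce the count to a Grassmannian-type enumeration of $k(\a_i)$-subspaces together with a lift to ${\rm End}(\calE)$, producing an affine bundle whose $\F_q$-point count is manifestly polynomial in $q$ and depends only on $\m$, $\mathbf{d}$, and the prescribed rank data.

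The main obstacle is the interplay between the block structure of ${\rm End}(\calE)$ and the simultaneous rank conditions at the different $\a_i$: since the off-diagonal blocks of $h$ are constrained to have polynomial degree $\le b_i-b_j$, the evaluations $h(\a_i)$ share their underlying polynomial coefficients, so the rank strata at different points are correlated and the stratification above is not a product. The cleanest way to handle this is probably to work universally over a moduli space $\calM_{\mathbf{d}}$ of ordered divisors of type $\mathbf{d}$ on $\bbP^1$ and show that the total space $\calZ\to\calM_{\mathbf{d}}$ of the counting problem is a Zariski-locally trivial fibration, or, failing that, to give an $\ell$-adic cohomological argument showing that its fibers all have the same point-count polynomial. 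The case $\calE=\calO(b_1)^{m_1}\oplus\calO(b_2)^{m_2}$, where the analogous statement for $\calA_{\muhat,D}^\calE(q)$ is proved by direct computation in Theorem~\ref{introindep}, suggests that the combinatorial route will eventually succeed in the general case.
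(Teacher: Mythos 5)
You are attempting to prove a statement that the paper itself leaves open: Conjecture \ref{conjHnil} is established there only under the hypothesis $b_i-b_{i+1}+1\geq\sum_s d_s$ (Proposition \ref{notinteresting}) and for two-block bundles $\m=(m_1,m_2)$. Your opening reduction is sound: the closure order on nilpotent orbits of $\gl_n$ is the dominance order on partitions, which is purely combinatorial, so M\"obius inversion writes $\mathbf{1}_{N_\mu}$ as a $q$-independent integer combination of the functions $\mathbf{1}_{\overline{N_\lambda}}$, and it suffices to treat orbit closures, i.e.\ rank conditions on the powers $h(\a_i)^j$. The gap is in the very next step, and it is exactly the difficulty that keeps the statement conjectural. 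The claim that the Chinese Remainder Theorem reduces the count to a Grassmannian-type enumeration fibered by an affine bundle with ``manifestly polynomial'' point count is only justified when the joint evaluation map ${\rm End}(\calE)\rightarrow\prod_i\gl_n(k(\a_i))$ is surjective with fibers of constant cardinality, and that requires the degree bounds $b_u-b_v$ on the off-diagonal blocks to dominate $\sum_i d_i$ --- precisely the hypothesis of Proposition \ref{notinteresting}, a regime in which there are no geometrically indecomposable structures anyway (Proposition \ref{DS}). In the interesting range the image of the evaluation map is a proper subspace, and its interaction with the rank strata (which are not linear, since they involve the powers $h(\a_i)^j$) a priori depends on the positions of the $\a_i$; you acknowledge this correlation as ``the main obstacle'' but give no argument controlling it.

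Your fallback suggestions do not close this gap: asserting that the universal family $\calZ\rightarrow\calM_{\bf d}$ over the space of divisors of type ${\bf d}$ is Zariski-locally trivial, or that its fibers have equal counting polynomials, is essentially a restatement of the conjecture, so invoking it is circular. The paper's proof in the case $\m=(m_1,m_2)$ shows what a genuine argument must accomplish: after fixing the Jordan form $A$ of the diagonal block, the conditions $h(\a_i)\in N_{\mu^i}$ become membership or non-membership of $A^{s-1}X(\a_i)$ in the fixed subspaces ${\rm Im}(A^s)$, i.e.\ \emph{linear} conditions on the evaluations of the single off-diagonal column $X(t)$, and one then only needs that a linear evaluation map has fibers of constant cardinality over its image. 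With three or more blocks the powers $h(\a_i)^j$ mix the off-diagonal blocks nonlinearly and no such linearization is available; since your sketch does not address this, it does not prove the conjecture.
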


A priori, the above conjecture implies that the right hand side of Formula (\ref{theo2bis}) is only a rational function in $q$, but since $\calA_\muhat^\m(q)$ counts also the number of points of  a constructible set over finite fields (see \S \ref{constructible}), it must be a polynomial with integer coefficients. Therefore Conjecture \ref{conjHnil} implies the following one.
\bigskip

\begin{conjecture}Let $\m=(m_1,\dots,m_f)\in \N^f$ such that $\sum_{i=1}^fm_i=n$. For ${\bf d}=(d_1,\dots,d_r)\in\N^r$ and $\muhat=(\mu^1,\dots,\mu^r)\in(\calP_n)^r$ there exists a polynomial $A_{\muhat; {\bf d}}^\m(t)\in\Z[t]$ such that for any finite fields $\F_q$, and any divisor $D=\sum_{i=1}^r\a_i$ on $\bbP^1_{\F_q}$ with ${\rm deg}(\a_i)=d_i$ we have

$$
A_{\muhat; {\bf d}}^\m(q)=\calA_{\muhat,D}^\calE(q),
$$
where $\calE$ is a vector bundle on $\bbP^1_{\F_q}$ of type $\m$.
\label{polynomiality}\end{conjecture}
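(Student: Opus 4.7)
The plan is to deduce Conjecture~\ref{polynomiality} from Conjecture~\ref{conjHnil}, as suggested in the paragraph immediately preceding it, using the character formula of Theorem~\ref{theo2}. Recall that (\ref{theo2bis}) expresses
\[\calA_\muhat^\m(q)=(q-1)\sum_{\omhat\in\T}C_\omhat^o\,\calH_\omhat^\m(q)\,\bigl\langle\tilde H_\omhat(\x_1,\dots,\x_r;q),h_\muhat\bigr\rangle\]
as a finite sum over types $\omhat\in\T$. Each $C_\omhat^o\in\Q$ is a universal combinatorial constant, and each Hall pairing $\bigl\langle\tilde H_\omhat,h_\muhat\bigr\rangle$ is already a polynomial in $q$ with non-negative integer coefficients depending only on $(\omhat,\muhat,\mathbf d)$: it is assembled by substitutions $q\mapsto q^{d_i}$ from the modified Kostka--Foulke polynomials $\tilde K_{\lambda\mu}(t)\in\Z[t]$ of \S\ref{Hall}. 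The whole problem therefore reduces to proving that $\calH_\omhat^\m(q)$ is a polynomial in $q$ whose coefficients depend only on $(\omhat,\m,\mathbf d)$.

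The first step is a reduction to the nilpotent count of Conjecture~\ref{conjHnil}. Proposition~\ref{Hom} writes
\[\calH_\omhat(q,Y)=\prod_{(e,\muhat)}\calH_\muhat(q^e,Y^e)^{\omhat(e,\muhat)},\]
so extracting the coefficient of $Y^\m$ expresses $\calH_\omhat^\m(q)$ as a $\Z_{\ge 0}$-linear combination of products $\prod_i\calH_{\muhat_i}^{\v_i}(q^{e_i})$. It therefore suffices to prove polynomiality of each $\calH_\muhat^\v(q)$. For this, the ${\rm Aut}(\calE^\v)$-equivariant Cayley/exponential bijection (and Springer's isomorphism in small characteristic) between the unipotent variety and the nilpotent cone of $\GL_n(q^{d_i})$ transfers the evaluation condition ``$h(\a_i)\in C_{\mu^i}(q^{d_i})$'' into ``$\tilde h(\a_i)\in N_{\mu^i}(q^{d_i})$'' compatibly at each $\a_i$; this yields $\calH_\muhat^\v(q)=\calN_{\muhat,D}^{\calE^\v}(q)/|{\rm Aut}(\calE^\v)|$, with $|{\rm Aut}(\calE^\v)|$ manifestly a polynomial in $q$ depending only on $\v$. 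Granting Conjecture~\ref{conjHnil}, one obtains the required polynomiality of $\calH_\omhat^\m(q)$, with the claimed dependence only on $(\omhat,\m,\mathbf d)$.

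Substituting back into (\ref{theo2bis}) then produces a single universal polynomial $A_{\muhat;\mathbf d}^\m(t)\in\Q[t]$ with
\[\calA_{\muhat,D}^\calE(q)=A_{\muhat;\mathbf d}^\m(q)\]
for every $\F_q$, every divisor $D$ of degree vector $\mathbf d$, and every vector bundle $\calE$ on $\bbP^1_{\F_q}$ of type $\m$. The rational denominators hidden in $C_\omhat^o$ are absorbed into the single polynomial $A_{\muhat;\mathbf d}^\m(t)$, and polynomiality (as opposed to mere rationality) in $q$ holds because the sum is finite and every summand is already polynomial in $q$.

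The main obstacle is the final upgrade $A_{\muhat;\mathbf d}^\m(t)\in\Q[t]\leadsto\Z[t]$. The constructibility result of \S\ref{constructible} combined with the Galois descent of \S\ref{Frobenius} ensures that $\calA_{\muhat,D}^\calE(q)$ is a non-negative integer for every prime power $q$, so $A_{\muhat;\mathbf d}^\m(t)$ is at least integer-valued; but integer-valued polynomials in $\Q[t]$ need not lie in $\Z[t]$. Closing this last gap really requires realising $A_{\muhat;\mathbf d}^\m(t)$ as the virtual point count of an honest $\F_p$-scheme (or of a $\Z$-linear combination of such), obtained from a uniform constructible stratification of the geometrically indecomposable locus inside ${\rm Aut}(\calE)\times\calF_\s(\calE)$ as $D$ and $\calE$ vary in families. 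This is precisely the cohomological enhancement proposed by the main Conjecture~\ref{MAINconj}, and it is the core remaining difficulty rather than a formal consequence of Conjecture~\ref{conjHnil}.
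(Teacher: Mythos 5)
Your reduction --- formula (\ref{theo2bis}), Proposition \ref{Hom}, and the unipotent/nilpotent dictionary, bringing everything down to Conjecture \ref{conjHnil} --- is exactly the paper's route. Two of your later steps, however, go wrong.

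First, the assertion that ``every summand is already polynomial in $q$'' is false. Conjecture \ref{conjHnil} makes the numerators $\calN_{\muhat,D}^{\calE^\v}(q)$ polynomial, but $\calH_{\muhat}^{\v}(q)$ is that numerator divided by $|{\rm Aut}(\calE^\v)|$, which contributes genuine denominators: in the rank-two tables of \S\ref{rank2} the quantities $\calH_\omhat^{\calE_{a,b}}(q)$ are $1/(q-1)^2$, $1/(q^2-1)$, and so on. So, granting Conjecture \ref{conjHnil}, the right-hand side of (\ref{theo2bis}) is a priori only a \emph{rational} function of $q$ --- exactly as the paper says --- and the polynomiality of the total is a nontrivial global cancellation, not a summand-by-summand observation. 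Your own sentence about ``absorbing the rational denominators'' is in tension with the claim that nothing needs absorbing.

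Second, you misdiagnose the passage to $\Z[t]$ as a gap that ``really requires'' Conjecture \ref{MAINconj}. The point of invoking \S\ref{constructible} is not merely that $\calA_{\muhat,D}^\calE(q)\in\Z_{\geq 0}$. Combined with the Galois descent of \S\ref{Frobenius} (and connectedness of the stabilizers of indecomposables), it realises $\calA(q^n)$, for a fixed $q$ and varying $n$, as the number of $\F_{q^n}$-points of a fixed constructible set, hence as an alternating sum $\sum_j m_j\alpha_j^n$ of powers of Frobenius eigenvalues with integer multiplicities. A rational function $R\in\Q(t)$ satisfying $R(q^n)=\sum_j m_j\alpha_j^n$ is forced, by the elementary division argument together with linear independence of the characters $n\mapsto\beta^n$, to be a polynomial whose coefficients are signed multiplicities of the eigenvalues $q^k$, hence integers. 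This is the standard Katz/Kac-polynomial argument; it disposes of both the denominator cancellation and the integrality in one stroke, with no cohomological input. The one genuine subtlety the paper leaves implicit --- and which would have been the right thing to flag --- is that the closed points of $D$ split upon extension from $\F_q$ to $\F_{q^n}$, so the degree vector ${\bf d}$ is only preserved for $n$ prime to the $d_i$, and the tower argument must be run on that subsequence (or the splitting tracked); this can be handled but deserves a word.
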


\bigskip

It is easy to compute explicitly $\calN_\muhat^\calE(q)$ and verify Conjecture \ref{conjHnil} in the following cases :

(1) $\calE=\calO(a)^n$, i.e. ${\rm Aut}(\calE)=\GL_n$.

(2) for each $i=1,\dots,r$, the partition $\mu^i$ is either the trivial partition $(1^n)$ (which corresponds to the zero orbits of $\gl_n$) or the partition $(n^1)$ (which corresponds to the regular nilpotent orbit).

We have the following proposition.

\begin{proposition} Conjecture \ref{conjHnil} is true if for all $i=1,\dots, f-1$, we have $b_i-b_{i+1}+1\geq \sum_{s=1}^r d_s$.
\label{notinteresting}\end{proposition}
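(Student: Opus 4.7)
The plan is to exploit the strong gap hypothesis to decouple the block-diagonal and off-diagonal parts of elements of ${\rm End}(\calE)$. Identifying ${\rm End}(\calE)$ with ${\rm Lie}(G_{\calE,t})(k)$ as in \S\ref{aut}, I write every endomorphism uniquely as $h=h_L+h_U$ with $h_L\in{\rm Lie}(L_\calE)(k)$ block diagonal (with constant entries in $k$) and $h_U\in{\rm Lie}(U_{\calE,t})(k)$ with polynomial entries. Since $h(\a_i)=h_L+h_U(\a_i)$ is block upper triangular in $\gl_n(k(\a_i))$, its nilpotency forces $h_L$ to be nilpotent, and the Jordan-type constraints $h(\a_i)\in N_{\mu^i}$ depend on $h_U$ only through the tuple of evaluations $(h_U(\a_1),\dots,h_U(\a_r))$.

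The first substantive step is to prove that, under the hypothesis $b_i-b_{i+1}+1\ge\sum_s d_s$ for all $i$, the evaluation map $h_U\mapsto(h_U(\a_1),\dots,h_U(\a_r))$ is surjective onto $\prod_{i=1}^r{\rm Lie}(U_\calE)(k(\a_i))$ with fibres of constant size $q^K$, where $K:=\sum_{i<j}m_im_j\bigl(b_i-b_j+1-\sum_s d_s\bigr)$. This is just the Chinese Remainder Theorem applied block by block: on block $(i,j)$ with $i<j$ the entries lie in $k[t]_{\le b_i-b_j}$ and are reduced modulo the pairwise coprime monic primes attached to $\a_1,\dots,\a_r$; since $b_i-b_j\ge b_i-b_{i+1}\ge\sum_s d_s-1$ the reduction is surjective with kernel of the stated size. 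Combining this with the previous paragraph yields
\[
\calN_{\muhat,D}^\calE(q)=q^K\sum_{h_L}\prod_{i=1}^r F(h_L,\mu^i,q^{d_i}),
\]
where $h_L$ ranges over nilpotent elements of ${\rm Lie}(L_\calE)(\F_q)=\prod_j\gl_{m_j}(\F_q)$ and $F(h_L,\mu,q^d):=\#\{u\in{\rm Lie}(U_\calE)(\F_{q^d})\mid h_L+u\in N_\mu(\F_{q^d})\}$. Crucially, neither $K$ nor any summand depends on the \emph{positions} of $\a_1,\dots,\a_r$: they depend only on $\m$, $\muhat$ and the degrees $d_1,\dots,d_r$.

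To finish, I would group the sum by $L_\calE$-orbits $\mathfrak{o}$ of nilpotents in ${\rm Lie}(L_\calE)$. In type $A$ every geometric orbit is a single $L_\calE(\F_q)$-orbit (Lang--Steinberg, using connectedness of centralisers of nilpotents in $\GL_n$), so $|\mathfrak{o}(\F_q)|$ is a polynomial in $q$ and $F(h_L,\mu,q^d)$ is constant on $\mathfrak{o}(\F_q)$. Conjugation identifies $V_\mu(h_L):=\{u\in{\rm Lie}(U_\calE)\mid h_L+u\in N_\mu\}$ with a fibre of the partial resolution $N_\mu\cap{\rm Lie}(G_\calE)\to{\rm Lie}(L_\calE)$, a Spaltenstein-type variety. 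For $\GL_n$ these fibres admit a paving by affine spaces (Spaltenstein, Shoji), hence $|V_\mu(h_L)(\F_{q^d})|$ is a polynomial in $q^d$ depending only on $(\mathfrak{o},\mu)$. Substituting back, each of the finitely many orbit contributions is polynomial in $q$ with coefficients depending only on $(\m,\muhat,d_1,\dots,d_r)$, and hence so is $\calN_{\muhat,D}^\calE(q)$. The main obstacle is precisely invoking this polynomial-count statement for the fibres $V_\mu(h_L)$: it is classical for $\GL_n$ but must be combined with Weil restriction of scalars in order to handle the several base fields $\F_{q^{d_i}}$ uniformly; once that ingredient is in place, the remainder of the argument is combinatorial and presents no further difficulty.
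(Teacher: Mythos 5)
Your proposal is correct and follows essentially the same route as the paper: both use the gap hypothesis to show that the evaluation map on the off-diagonal part of ${\rm End}(\calE)$ is surjective with fibres of constant size $q^K$, reduce to counting tuples $(l,u_1,\dots,u_r)$ with $l$ a nilpotent element of $\l_\calE(q)$, stratify by nilpotent $L_\calE$-orbits, and boil everything down to the polynomiality in $q^{d_i}$ of $|(C+\fraku_P)\cap N_{\mu^i}|$. The only difference is how that last fact is justified: the paper deduces it from the character formula for Harish-Chandra induction and the two-variable Green functions, hence Kostka--Foulkes polynomials, whereas you invoke affine pavings of Spaltenstein-type fibres --- equivalent classical inputs for $\GL_n$.
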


The above proposition is not very interesting for us as in this case, there is no geometrically indecomposable parabolic structure on $\calE$ by  Proposition \ref{DS}, however it is instructive to see why the conjecture is true in this case.

\begin{proof}[Proof of Proposition \ref{notinteresting}]

 We use the notation of \S \ref{aut}, and we put $\l_\calE={\rm Lie}(L_\calE)$, $\fraku_\calE={\rm Lie}(U_\calE)$. We see under the assumption of Proposition \ref{notinteresting}  the evaluation map 
$$
\varepsilon:{\rm End}(\calE)\rightarrow \l_\calE(q)\times \left(\fraku_\calE(q^{d_1})\times\cdots\times \fraku_\calE(q^{d_r})\right)
$$
$h\mapsto (h(\a_1),\dots,h(\a_r))$
is surjective with fibers all of same cardinality $q^c$ for some explicit non-negative integer $c$. Therefore we are reduced to see that the set 

$$
\left\{(l,u_1,\dots,u_r)\in\l_\calE(q)\times \fraku_\calE(q^{d_1})\times\cdots\times\fraku_\calE(q^{d_r})\,|\, l+u_i\in N_{\mu^i}(q^{d_i}) \text{ for all }i=1,\dots,r\right\}.
$$
has polynomial count. The equation $lu_i\in N_{\mu^i}$ with $l\in\l_\calE$ and $u\in\fraku_\calE$ forces $l$ to be nilpotent.  The set of nilpotent elements of $\l_\calE$ is the  (finite) union of its nilpotent orbits and so we are reduced to prove that for any nilpotent orbit $C$ of $\l_\calE(q)$, the cardinality of the set

$$
\left\{(l,u_1,\dots,u_r)\in C\times \fraku_\calE(q^{d_1})\times\cdots\times\fraku_\calE(q^{d_r})\,|\, l+u_i\in N_{\mu^i}(q^{d_i}) \text{ for all }i=1,\dots,r\right\}
$$
is a polynomial in $q$.

Choosing a representative $l$ of $C$, the cardinality of the above set equals

$$
|C|\cdot \# \left\{(u_1,\dots,u_r)\in\fraku_\calE(q^{d_1})\times\cdots\times \fraku_\calE(q^{d_r})\,|\,  lu_i\in N_{\mu^i}(q^{d_i}) \text{ for all }i=1,\dots,r\right\} =
$$
$$
|C|\cdot\prod_{i=1}^r\#\left\{u_i\in\fraku_\calE(q^{d_i})\,|\, l+u_i\in N_{\mu^i}(q^{d_i})\right\}.
$$
Moreover for each $i=1,\dots,r$

$$
\#\left\{u_i\in\fraku_\calE(q^{d_i})\,|\, l+u_i\in N_{\mu^i}(q^{d_i})\right\}=\frac{\#\left\{(t,u)\in C(q^{d_i})\times \fraku_\calE(q^{d_i})\,|\, t+u_i\in N_{\mu^i}(q^{d_i})\right\}}{|C(q^{d_i})|}.
$$

We are reduced to prove that if $P=L\ltimes U_P$ is a Levi decomposition of a parabolic subgroup of $\GL_n$ defined over $\F_q$  (with Lie algebra decomposition $\frakp=\l\oplus\fraku_P$), then for any nilpotent orbit  $N$ of $\gl_n(q)$ and any nilpotent orbit $C$ of $\l(q)$,  the cardinality of 

$$
\{(t,u)\in C\times \mathfrak{u}_P\,|\, t+u\in N\}=(C+\mathfrak{u}_P)\cap N.
$$
is a polynomial in $q$. Consider the unipotent conjugacy classes $U=N+1$ and $\calO=C+1$. For $x\in U$, we have the formula 

\begin{align*}
R_{L(q)}^{\GL_n(q)}(1_\calO)(x)&=\frac{\#\left\{g\in\GL_n(q)\,|\, g^{-1}xg\in \calO U_P\right\}}{|P(q)|}\\
&=\frac{|\calO U_P\cap U|\cdot |\GL_n(q)|}{|P(q)|\cdot|U|}\\
&=\frac{|(C+\fraku_P)\cap N|\cdot |\GL_n(q)|}{|P(q)|\cdot|N|}\\
\end{align*}
where $1_\calO$ is the characteristic function of $\calO$ (it takes the value $1$ on $\calO$ and zero elsewhere). From the \emph{character formula} for $R_{L(q)}^{\GL_n(q)}(1_\calO)$ (see for instance \cite[Proposition 12.2]{DM}) we see that the computation of $R_{L(q)}^{\GL_n(q)}(1_\calO)(x)$ reduces to the computation of the so-called (two-variable) \emph{Green functions} (see \cite[Definition 12.1]{DM}). Because we are in $\GL_n$, the computation of the values of the two-variable Green functions reduces to the computation of the  \emph{Green polynomials}  which are linear combination  of Kostka-Foulke polynomials \cite[III, \S 7, (7.11)]{macdonald}. We therefore deduce that the cardinality of $(C+\mathfrak{u}_P)\cap N$ is a polynomial in $q$. 
\end{proof}

\begin{remark}In the situation of Proposition \ref{notinteresting}, it is possible, as suggested by the proof, to write  an explicit formula for $\calN_{\muhat,D}^\calE(q)$ in terms of Kostka-Foulke polynomials. 
\end{remark}

We now prove Conjecture \ref{conjHnil} in a more interesting case.

\begin{theorem} Conjecture \ref{conjHnil} is true if $\m=(m_1,m_2)$, i.e. if $\calE$ is of the form $\calO(b_1)^{m_1}\oplus\calO(b_2)^{m_2}$.
\end{theorem}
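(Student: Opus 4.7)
The plan is to parametrize endomorphisms of $\calE$ explicitly and split the count into a diagonal (Jordan type) contribution and an off-diagonal (polynomial matrices) contribution, then use additive Fourier duality on the finite abelian groups $M_{m_1,m_2}(k(\a_i))$ combined with Serre duality on $\bbP^1$ to absorb the position dependence of the evaluation map into cohomological data depending only on the degrees.

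Write any $h\in {\rm End}(\calE)$ as a triple $h=(A,B,P(t))$ with $A\in\gl_{m_1}(k)$, $B\in\gl_{m_2}(k)$, and $P(t)\in V:=M_{m_1,m_2}(k[t]_{\leq d})$ where $d:=b_1-b_2\geq 0$ (assuming WLOG that $b_1>b_2$). The evaluation
$$h(\a_i)=\begin{pmatrix} A & P(\a_i)\\ 0 & B\end{pmatrix}$$
is block upper-triangular, so nilpotency forces $A$ and $B$ to be nilpotent. Grouping by Jordan types $(\lambda,\nu)\in\calP_{m_1}\times\calP_{m_2}$ of $(A,B)$ and exploiting $\GL_{m_1}(k)\times\GL_{m_2}(k)$-conjugation invariance (which acts on $P$ by $g_1 P g_2^{-1}$), the count reduces to
$$\calN_{\muhat,D}^\calE(q)=\sum_{\lambda,\nu}|N_\lambda(k)|\cdot |N_\nu(k)|\cdot N_{\lambda,\nu}(q,D),$$
where $N_{\lambda,\nu}(q,D):=|\{P\in V\mid P(\a_i)\in Z^{A_\lambda,B_\nu}_{\mu^i}(k(\a_i))\ \forall i\}|$ and $Z^{A,B}_\mu\subset M_{m_1,m_2}$ denotes the $k$-subscheme of matrices $X$ such that $\bigl(\begin{smallmatrix} A & X\\ 0 & B\end{smallmatrix}\bigr)\in N_\mu$. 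Since each $|N_\lambda(k)|$ is already a polynomial in $q$, it suffices to show that $N_{\lambda,\nu}(q,D)$ depends on the data only through $(q,d_1,\ldots,d_r)$ in a polynomial way.

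When $d+1\geq\sum_i d_i$, the evaluation map $\mathrm{ev}\colon V\to\prod_i M_{m_1,m_2}(k(\a_i))$ is surjective with equidistributed fibres, and the argument of Proposition \ref{notinteresting} applies verbatim, reducing the problem to polynomial counts of intersections $(C+\fraku_P)\cap N_\mu$ computed via Green functions and Kostka--Foulke polynomials. In the remaining (and genuinely new) regime $d+1<\sum_i d_i$, where $\mathrm{ev}$ is injective, I would apply additive Fourier duality on $\prod_i M_{m_1,m_2}(k(\a_i))$ to obtain
$$N_{\lambda,\nu}(q,D)=\frac{|V|}{\prod_i q^{d_im_1m_2}}\sum_{\vec Y\in V^\perp}\prod_{i=1}^r\widehat{\mathbf{1}_{Z^{A_\lambda,B_\nu}_{\mu^i}}}(Y_i),$$
where $V^\perp$ is the annihilator of $\mathrm{ev}(V)$ under the pairing built from the trace form on $M_{m_1,m_2}$ and the field traces $\mathrm{Tr}_{k(\a_i)/k}$. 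By Serre duality on $\bbP^1$, $V^\perp$ is canonically isomorphic to $H^1(\bbP^1,\calO(d-D))^{\oplus m_1m_2}$, a $k$-vector space whose dimension and residual $C_L:=C_{\GL_{m_1}}(A_\lambda)\times C_{\GL_{m_2}}(B_\nu)$-module structure depend only on the degrees $d_i$.

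The key obstacle is the last step: each $Z^{A,B}_\mu$ is $C_L$-stable, so the Fourier transforms $\widehat{\mathbf{1}_{Z^{A_\lambda,B_\nu}_{\mu^i}}}$ are $C_L$-invariant class functions, determined by their values on the finitely many $C_L$-orbits on $M_{m_1,m_2}(k(\a_i))$. One must show that (a) each such value is a polynomial in $q^{d_i}$ depending only on orbit data and the Jordan-type computation coming from the identity $\bigl(\begin{smallmatrix} A & X\\ 0 & B\end{smallmatrix}\bigr)^j=\bigl(\begin{smallmatrix} A^j & \sum_{a+b=j-1}A^aXB^b\\ 0 & B^j\end{smallmatrix}\bigr)$, and (b) the resulting ``orbital'' sums over $V^\perp$ have polynomial count depending only on the degrees. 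Item (a) follows by adapting the Deligne--Lusztig / Green-function computation already used in Proposition \ref{notinteresting} to the block-triangular rank formulas. Item (b) is the genuinely delicate technical input, which I would attack by an inclusion-exclusion over rank strata, using the explicit description of $H^1(\bbP^1,\calO(d-D))$ in terms of principal parts modulo global polynomials, so that each rank stratum reduces to a count of polynomials in $k[t]_{\leq d}$ divisible by a prescribed product of the local generators $P_i$ of the $\a_i$ --- a quantity whose evaluation is manifestly a polynomial in $q$ depending only on the $d_i$.
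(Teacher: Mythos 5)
Your opening reduction is sound and agrees with the paper's: decompose $h$ into diagonal blocks $(A,B)$ and an off-diagonal polynomial matrix $P$, note that $A,B$ must be nilpotent, fix their Jordan types and pull out the orbit sizes $|N_\lambda|\cdot|N_\nu|$, so that everything reduces to counting the $P\in M_{m_1,m_2}(k[t]^{\leq d})$ with $P(\a_i)\in Z^{A,B}_{\mu^i}$. From there, however, the paper takes a much more elementary route than yours. Writing $\bigl(\begin{smallmatrix}A&X\\0&B\end{smallmatrix}\bigr)^j=\bigl(\begin{smallmatrix}A^j&Y_j\\0&B^j\end{smallmatrix}\bigr)$ with $Y_j$ linear in $X$, and using that $N_{\mu^i}$ is determined by the ranks of the powers of its elements, the condition $h(\a_i)\in N_{\mu^i}$ becomes, in the case $\m=(n-1,1)$ with $d_i=1$ that the paper writes out explicitly, exactly the condition that each $A^{j-1}X(\a_i)$ lies in, or outside, the fixed subspace ${\rm Im}(A^j)$. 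The admissible set is therefore the preimage, under the $k$-linear map $P\mapsto(A^{j-1}P(\a_i))_{i,j}$, of a product of subspaces and complements of subspaces; inclusion--exclusion reduces the count to cardinalities of preimages of linear subspaces, which are powers of $q$ with exponents depending only on the degrees. No Fourier analysis is needed.

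Your proposal has a genuine gap at its core in the regime $d+1<\sum_i d_i$ (the only interesting one). Step (a) asserts that the additive Fourier transforms $\widehat{\mathbf{1}_{Z^{A,B}_{\mu^i}}}$ can be computed by ``adapting'' the Green-function computation of Proposition \ref{notinteresting}. That proposition computes the plain cardinality $|(C+\fraku_P)\cap N|$ via Harish-Chandra induction of an orbit indicator on a Levi; it says nothing about the Fourier transform, on the abelian group $M_{m_1,m_2}(k(\a_i))$, of the indicator of the locally closed stratum $Z^{A,B}_\mu$ cut out by rank conditions on the induced maps $\ker B^j\to{\rm coker}\,A^j$. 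These transforms are not Green functions in any established sense, and no argument is given that their values are polynomials in $q^{d_i}$ determined by orbit data. Step (b) --- showing that the sum of these transforms over the position-dependent subspace $V^\perp$ depends only on the degrees --- is the heart of the matter and you explicitly leave it open; the proposed inclusion--exclusion over rank strata is not carried out, and it is precisely there that the interaction between the embedding of $V^\perp$ in $\prod_i M_{m_1,m_2}(k(\a_i))$ and the level sets of $r$ different functions must be controlled. As it stands, the Fourier--Serre detour replaces the paper's direct linear-algebra count by two harder, unresolved subproblems.
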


\begin{proof}For simplicity we write the proof for $\m=(n-1,1)$, i.e. $\calE=\calO(b_1)^{n-1}\oplus\calO(b_2)$, and $d_1=\cdots=d_r=1$. 

Define

$$
Z_\muhat^\calE:=\{ h\in{\rm End}(\calE)\,|\, h(\a_i)\in N_{\mu^i}\text{ for all }i=1,\dots,r\},
$$
and put $m:=b_1-b_2$ and denote by $k[t]^{\leq m}$ the $k$-subspace of $k[t]$ of polynomials of degree $\leq m$. We have $L_\calE=\GL_{n-1}\times\GL_1\subset \GL_n$ and  put $\l_\calE:={\rm Lie}(L_\calE)$.

Then 

$$
{\rm End}(\calE)\simeq \l_\calE\oplus \left(k[t]^{\leq m}\right)^{n-1},
$$
and 

$$
{\rm End}(\calE)_{\rm nil}\simeq (\gl_{n-1}(k))_{\rm nil}\times \left(k[t]^{\leq m}\right)^{n-1}.
$$
Clearly we have 

$$
Z_\muhat^\calE=\coprod_C Z_{\muhat,C}^\calE,
$$
where the union runs over the set of nilpotent orbits of $\gl_{n-1}(k)$ and 

$$
Z_{\muhat,C}^\calE:=\left\{ (A,X(t))\in C\times \left(k[t]^{\leq m}\right)^{n-1}\,|\, M_{A,X(\a_i)}\in N_{\mu^i}\text{ for all }i=1,\dots,r\right\},
$$
with 

$$
M_{A,X(\a_i)}=\left(\begin{array}{ccc}A&\vline& X(\a	_i) \\ \hline 0&\vline &0\end{array}\right).
$$
We need to prove that  $|Z_{\muhat,C}^\calE|$ is a polynomial in $q$ that does not depend on the position of the points $\a_i$'s.

We fix a nilpotent orbit $C$ of $\gl_{n-1}(k)$ and a Jordan form matrix $A\in C$. Then 

$$
|Z_{\muhat,C}^\calE|=|C|\cdot \#\left\{X(t)\in \left(k[t]^{\leq m}\right)^{n-1}\,|\, M_{A,X(\a_i)}\in N_{\mu^i}\text{ for all }i=1,\dots,r\right\}.
$$
Since $|C|$ is a polynomial in $q$ we only care about the right factor. The nilpotent orbit $N_{\mu^i}$ is characterized by the strictly decreasing sequence $n_{i,1}>n_{i,2}>\dots>n_{i,s_i}=0$ such that if $B\in N_{\mu^i}$, then $n_{i,s}$ is the rank of $B^s$ for all $s=1,\dots,s_i$. This sequence can be read off directly from the partition $\mu^i$ as $n-l_{i,1}, n-l_{i,1}-l_{i,2},\dots$ where $l_{i,1},l_{i,2},\dots, l_{i,s_i}$ are the lengths of the columns of the Young diagram of $\mu^i$. 

Saying that $M_{A,X(\a_i)}$ is in $N_{\mu^i}$ is equivalent to saying that the rank of $\left(M_{A,X(\a_i)}\right)^s$ equals $n_{i,s}$ for all $s=1,2,\dots, s_i$. But $\left(M_{A,X(\a_i)}\right)^s=M_{A^s,A^{s-1}X(\a_i)}$ and the rank $h_s$ of $A^s$ is known from $C$. Therefore for each $s$, saying that the rank of 
$\left(M_{A,X(\a_i)}\right)^s$ equals $n_{i,s}$ if and only if we are in one of the following two stuations

\begin{equation}
\begin{cases}A^{s-1}X(\a_i)\in {\rm Im}(A^s)&\text{ and }n_{i,s}=h_s.\\
A^{s-1}X(\a_i)\notin  {\rm Im}(A^s)&\text{ and }n_{i,s}=h_s+1.\end{cases}
\label{sub/comp}
\end{equation}
For $i=1,\dots,r$, consider the $i$-th evaluation map 

$$
\varepsilon_i:\left(k[t]^{\leq m}\right)^{n-1}\rightarrow (k^{n-1})^{s_i-1}
$$
given by $\varepsilon_i(X(t))=(X(\a_i),AX(\a_i),\dots,A^{s_i-1}X(\a_i))$, and define the $k$-linear map

$$
\varepsilon=(\varepsilon_1,\dots,\varepsilon_r):\left(k[t]^{\leq m}\right)^{n-1}\rightarrow (k^{n-1})^{s_1-1}\times\cdots\times (k^{n-1})^{s_r-1}.
$$
The fibers $\varepsilon^{-1}(x)$, with $x\in {\rm Im}(\varepsilon)$, are all of same cardinality  $q^c$ for some explicit non-negative integer $c$ which is independent from $q$. Therefore, for any $k$-vector subspace $W$ of  $(k^{n-1})^{s_1-1}\times\cdots\times (k^{n-1})^{s_r-1}$, the cardinality of $\varepsilon^{-1}(W)$ is a polynomial in $q$. On the other hand, if not empty, by (\ref{sub/comp}) the set

$$
X_{A,\muhat}:=\left\{X(t)\in \left(k[t]^{\leq m}\right)^{n-1}\,|\, M_{A,X(\a_i)}\in N_{\mu^i}\text{ for all }i=1,\dots,r\right\}
$$
is the inverse image by $\varepsilon$ of a set of the form $\prod_{i=1}^r\prod_{j=1}^{s_i-1}A_i$ with $A_i$ a $k$-vector subspace of $k^{n-1}$ or the complementary in $k^{n-1}$ of a $k$-vector subspace. We can therefore express the cardinality of $X_{A,\muhat}$ as a simple combination  of cardinalities of inverse images by $\varepsilon$ of $k$-vector subspaces of $(k^{n-1})^{s_1-1}\times\cdots\times (k^{n-1})^{s_r-1}$. 
\end{proof}

\begin{corollary}Conjecture \ref{polynomiality} is true if $\m=(m_1,m_2)$, i.e. if $\calE=\calO(b_1)^{m_1}\oplus\calO(b_2)^{m_2}$.
\label{polynomialitymm}\end{corollary}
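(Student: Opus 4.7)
The plan is to deduce the corollary from the immediately preceding theorem via the structural formula of Theorem \ref{theo2}, upgrading the polynomial count of nilpotent data to a polynomial count of geometrically indecomposable parabolic bundles. First, the preceding theorem applied to every vector bundle type $\v=(v_1,v_2)\leq\m$ coordinatewise (each $\calE^\v$ being again of the shape $\calO(b_1)^{v_1}\oplus\calO(b_2)^{v_2}$) establishes polynomiality of $\calN_{\nuhat,D}^{\calE^\v}(q)$ in $q$ with coefficients depending only on the degrees $d_1,\dots,d_r$, for every $\nuhat\in(\calP_{|\v|})^r$. The unipotent counting $\calH_\nuhat^\v(q)$ is obtained from $\calN_{\nuhat,D}^{\calE^\v}(q)$ through the bijection $h\leftrightarrow h-1$ between unipotent automorphisms with prescribed Jordan types and nilpotent endomorphisms with the same Jordan types, combined with division by the polynomial $|{\rm Aut}(\calE^\v)|$; hence $\calH_\nuhat^\v(q)$ is a rational function in $q$ with the same dependence on the degrees.

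By Proposition \ref{Hom}, the quantity $\calH_\omhat^\m(q)$ attached to an arbitrary type $\omhat$ is a product of factors of the form $\calH_\nuhat^\v(q^e)$ with $e\cdot\v\leq\m$; because $\m$ has only two nonzero coordinates, so does each $\v$ entering this product, and the previous step applies. Substituting into the reformulation (\ref{theo2bis}) of Theorem \ref{theo2}, each summand of the (finite) sum contributing to the coefficient of $Y^\m$ is a product of a rational constant $C_\omhat^o$, the rational function $\calH_\omhat^\m(q)$, and the pairing $\langle\tilde{H}_\omhat,h_\muhat\rangle$, which by (\ref{eqq}) is a polynomial in $q$ with integer coefficients (a combination of modified Kostka-Foulke polynomials). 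Consequently $\calA_{\muhat,D}^\calE(q)$ is given, as a function of $q$, by an element of $\Q(q)$ whose coefficients depend only on the degrees $d_i$.

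Finally, the constructibility argument of \S\ref{constructible} shows that $\calA_{\muhat,D}^\calE(q)$ is a non-negative integer for every finite field $\F_q$, namely the number of ${\rm Aut}(\calE)$-orbits on the constructible subset $\calF_\s(\calE)^{\rm geo-ind}$. A rational function obtained as such a uniform geometric point count, taking non-negative integer values on all prime powers, must in fact be a polynomial with integer coefficients: this can be seen by stratifying $\calF_\s(\calE)^{\rm geo-ind}$ into locally closed polynomial-count pieces together with the Burnside-type orbit-count formula, which forces both the polynomial nature and the integrality of the coefficients. This last upgrade is the main subtle point of the argument, since a priori (\ref{theo2bis}) delivers only an element of $\Q(q)$; the geometric interpretation is essential for concluding that the hidden cancellations among the rational constants $C_\omhat^o$ produce the required $A_{\muhat;{\bf d}}^\m(t)\in\Z[t]$.
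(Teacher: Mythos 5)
Your argument is correct and is exactly the paper's route: the theorem just proved establishes Conjecture \ref{conjHnil} for two-step bundles, and the paper's discussion preceding Conjecture \ref{polynomiality} (formula (\ref{theo2bis}) together with Proposition \ref{Hom}, the unipotent/nilpotent translation, and the constructibility of the locus of geometrically indecomposable structures from \S\ref{constructible}) converts this into the desired polynomiality with integer coefficients. The only step you treat no more rigorously than the paper itself is the final upgrade from a rational function in $q$ taking the correct integer values to an element of $\Z[t]$, which both you and the paper justify by appeal to the constructible-count interpretation rather than by a detailed argument.
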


\subsection{Example : Rank-two parabolic bundles}\label{rank2}

In Example \ref{exrk2} we gave a necessary and sufficient condition for  $\calE_{a,b}=\calO(a)\oplus\calO(b)$, with $a\geq b$ to support geometrically indecomposable borelic structures. In this section we compute explicitly the number $\calA^{\calE_{a,b}}_{\rm bor}(q)$ of isomorphism classes of indecomposable borelic structures on $\calE_{a,b}$ using Formula (\ref{theo2bis}). 

For a subset $I$ of $\{1,\dots,r\}$, we put $\mu_I$ the multi-partition $(\mu_1,\dots,\mu_r)$ with $\mu_i=(1^2)$ if $i\in I$ and $\mu_i=(2^1)$ if $i\notin I$. Then the possible types of size $2$ are $(1,(1,\dots,1))^2$, $(2,(1,\dots,1))$ and $(1,\mu_I)$ where $I$ runs over the subsets of $\{1,\dots,r\}$.

We compute Formula (\ref{theo2bis}) using the two following tables.

\begin{equation}
\label{table}
\begin{array}{|c|c|c|}
\hline
(a,b)&\omhat&\calH_\omhat^{\calE_{a,b}}(q)\\
\hline
a=b&(1,(1,\dots,1))^2&\frac{1}{(q-1)^2}  \\
a=b&(2,(1,\dots,1))&\frac{1}{q^2-1} \\
a=b&(1,\mu_I)&\begin{cases} \frac{1}{q(q-1)} &\text{ if }I=\emptyset \\ \frac{1}{q(q-1)^2(q+1)}&\text{ if }I=\{1,\dots,r\}\\0&\text{ otherwise.}\end{cases} \\
a>b&(1,(1,\dots,1))^2&\frac{2}{(q-1)^2} \\
a>b&(2,(1,\dots,1))&0\\
a>b&(1,\mu_I)&\frac{\#\{P(t)\in k[t]\,|\, {\rm deg}\,P\leq a-b,\, P(\a_i)=0 \text{ for all }i\in I,\, P(\a_i)\neq 0 \text{ for all }i\notin I\}}{(q-1)^2q^{a-b+1}}\\
\hline
\end{array}
\end{equation}

Since we are working with borelic structures, we apply Formula (\ref{theo2bis}) with  $\muhat=((1^2),\dots,(1^2))$.

\begin{equation}
\begin{array}{|c|c|c|}
\hline
\omhat&C_\omhat^o&\left\langle \tilde{H}_\omhat(\x_1,\dots,\x_r;q),h_\muhat\right\rangle\\
\hline
(1,(1,\dots,1))^2&-\frac{1}{2}&2^r\\
(2,(1,\dots,1))&-\frac{1}{2}&\begin{cases}0&\text{ if }2\nmid d_i\text{ for some }i\\ 2^r & \text{ if } 2\mid d_i\text{ for all } i\end{cases} \\
(1,\mu_I)&1&\prod_{i=1}^r\begin{cases}(q^{d_i}+1)&\text{ if }\mu_i=(1^2)\\ 1& \text{ if }\mu_i=(2^1)\end{cases}\\
\hline
\end{array}
\end{equation}

\subsubsection{The case $a=b$}

From the tables  we find

\begin{proposition}
$$
\calA^{\calE_{a,a}}_{\rm bor}(q)=\begin{cases}\frac{-2^{r-1}}{q-1}+\frac{1}{q}+\frac{1}{q(q^2-1)}\prod_{i=1}^r(q^{d_i}+1) & \text{ if } 2\nmid d_i \text{ for some }i,\\
\frac{-2^{r-1}}{q-1}+\frac{1}{q}+\frac{-2^{r-1}}{q+1}+\frac{1}{q(q^2-1)}\prod_{i=1}^r(q^{d_i}+1) & \text{ if } 2\mid d_i \text{ for all }i.\end{cases}
$$
\label{proprk1}
\end{proposition}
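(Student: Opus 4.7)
The plan is to specialize Formula (\ref{theo2bis}) from Theorem \ref{theo2} to the case $\m = (2)$ (so that $\calE^\m = \calE_{a,a}$) and $\muhat = ((1^2),\dots,(1^2))$ (the borelic type). Since $|\muhat| = 2$, the sum over $\omhat \in \T$ is finite and ranges over types of size $2$, which fall into exactly three families: the ``split'' type $\omhat_0 := (1,(1,\dots,1))^2$, the ``non-split'' type $\omhat_1 := (2,(1,\dots,1))$, and the ``unipotent'' types $\omhat_I := (1,\mu_I)$ indexed by subsets $I \subseteq \{1,\dots,r\}$. The proof then amounts to computing the three quantities $C_\omhat^o$, $\calH_\omhat^{\calE_{a,a}}(q)$ and $\langle \tilde{H}_\omhat, h_\muhat\rangle$ for each $\omhat$, as recorded in the two tables, and assembling the result by the Hall-pairing identity.

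The constants $C_\omhat^o$ are immediate from the explicit formula in \S\ref{relation}. The Hall pairings $\langle \tilde H_\omhat, h_\muhat\rangle$ are obtained via the identity (\ref{eqq}), which converts them into products of Harish-Chandra induced characters $R_{T(q^{d_i})}^{\GL_2(q^{d_i})}(1)$ evaluated at the semisimple-unipotent factorization $l_M\cdot\calO_i$; for $\GL_2$ these values are simply the numbers of Borel subgroups fixed by the relevant element, namely $2$, $q^{d_i}+1$, $0$, or $1$ according as $l_M\cdot\calO_i$ is split regular, central, non-split, or of regular Jordan type. This gives $2^r$ for $\omhat_0$; the value $2^r$ (resp.\ $0$) for $\omhat_1$ according as $\F_{q^2}\subseteq\F_{q^{d_i}}$ for every $i$ (resp.\ not); and $\prod_{i\in I}(q^{d_i}+1)$ for $\omhat_I$, since $R^{\GL_2(q^{d_i})}_T(1)$ equals $q^{d_i}+1$ at a scalar and $1$ at a non-central element of $\lambda\cdot J$.

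The $\calH$-values at $\calE_{a,a}$ reduce to counting problems inside $\GL_2(q)$. Indeed ${\rm Aut}(\calO(a)^2)\simeq\GL_2(q)$ and the evaluation maps at the closed points $\a_i$ are all equal to the canonical inclusion $\GL_2(q)\hookrightarrow\GL_2(q^{d_i})$, so an element $f$ must hit the prescribed class in every $\GL_2(q^{d_i})$ simultaneously. For $\omhat_0$ and $\omhat_1$ this counts elements of $\GL_2(q)$ in a single semisimple $\F_q$-conjugacy class, giving $1/|C_{\GL_2(q)}(l_M)|$, equal to $1/(q-1)^2$ and $1/(q^2-1)$ respectively. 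For $\omhat_I$ the element $f$ must equal $\lambda I$ at the points with $i\in I$ and lie in $\lambda I+$ (regular nilpotent) at the points with $i\notin I$; being a single constant matrix, $f$ cannot satisfy both constraints unless $I\in\{\emptyset,\{1,\dots,r\}\}$. The main subtlety here is precisely this observation of exclusivity, which makes the entire $(1,\mu_I)$-sum collapse onto two terms; the remaining counts $(q^2-1)/|\GL_2(q)|$ and $1/|\GL_2(q)|$ yield the last row of the first table.

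Plugging the entries of both tables into (\ref{theo2bis}) and multiplying by $(q-1)$, the nonvanishing contributions are $-2^{r-1}/(q-1)$ from $\omhat_0$, $\tfrac1q$ and $\prod_i(q^{d_i}+1)/(q(q^2-1))$ from the two surviving $\omhat_I$, and additionally $-2^{r-1}/(q+1)$ from $\omhat_1$ precisely when $2\mid d_i$ for every $i$. Collecting these terms produces directly the two cases in Proposition \ref{proprk1}.
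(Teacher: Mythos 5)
Your proposal is correct and follows essentially the same route as the paper: Proposition \ref{proprk1} is obtained there by substituting the entries of tables (\ref{table}) and the following table into Formula (\ref{theo2bis}), and your derivations of those entries (the values of $C_\omhat^o$, the $\calH$-counts via conjugacy classes in $\GL_2(q)$ with the key collapse of the $(1,\mu_I)$-sum to $I=\emptyset$ and $I=\{1,\dots,r\}$, and the Hall pairings via (\ref{eqq}) as fixed-Borel counts) agree with the paper's tabulated values. The final assembly of the four surviving contributions reproduces both cases of the stated formula.
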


We see from the above formula that $\calA_{\rm bor}^{\calE_{a,a}}(q)$ depends only on the partition $\lambda$ of $l=\sum_{i=1}^rd_i$ whose parts are $d_1,\dots,d_r$ and not on the ordering of the $d_i$'s. In the following, for the sake of clarity we will write $\calA_{{\rm bor},\lambda}^{\calE_{a,a}}(q)$ instead of $\calA_{\rm bor}^{\calE_{a,a}}(q)$. 

For  an integer $0< m\leq l$ denote by $X^l_m$ the set of subsets of $\{1,\dots,l\}$ of size $m$. The symmetric group $\mathfrak{S}_l$ acts naturally on $X^l_m$ and we denote by $\chi^l_m$ the character of the representation of $\mathfrak{S}_l$ in the $\C$-vector space freely generated by $X^l_m$. For $l\geq 3$, consider the polynomial

$$
A_{\rm bor}^{(a,a)}(t)=\sum_{m=0}^{l-3}\left(\sum_{i=1}^{\left[\frac{l-m-1}{2}\right]}\chi^l_{m+2i+1}\right)t^m,
$$
with coefficients in the character ring of $\frak{S}_l$. For $w\in\frak{S}_l$, put

$$
A_{{\rm bor},w}^{(a,a)}(t):=\sum_{m=0}^{l-3}\left(\sum_{i=1}^{\left[\frac{l-m-1}{2}\right]}\chi^l_{m+2i+1}(w)\right)t^m\in \N[t].
$$

\begin{proposition} Let $w\in \frak{S}_l$ be an element of cycle-type $\lambda$, then 

$$
\calA_{{\rm bor},\lambda}^{\calE_{a,a}}(q)=\begin{cases}A_{\rm bor,w}^{(a,a)}(q)&\text{ if }l\geq 3,\\0&\text{ otherwise.}\end{cases}
$$

\label{proprk2}\end{proposition}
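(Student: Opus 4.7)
The plan is to reduce the statement to a purely combinatorial identity between two rational functions of $q$, using the fact that Proposition \ref{proprk1} is already established.

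First, I would note the elementary character-theoretic identity that, for $w \in \mathfrak{S}_l$ of cycle type $\lambda = (d_1, \dots, d_r)$, the quantity $\chi^l_m(w)$ counts the size-$m$ subsets of $\{1,\dots,l\}$ stable under $w$, equivalently the unions of $w$-cycles of total cardinality $m$. This gives the generating function
$$P(q) := \prod_{i=1}^r (q^{d_i}+1) = \sum_{j=0}^l \chi^l_j(w)\, q^j,$$
and specializing at $q=\pm 1$ computes $\sum_j \chi^l_j(w) = 2^r$ and $\sum_j (-1)^j \chi^l_j(w) = \prod_i(1+(-1)^{d_i})$, which is $0$ if some $d_i$ is odd and $2^r$ if all $d_i$ are even. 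In particular the partial sums $S_{\mathrm{odd}} := \sum_{j\geq 1,\, j\text{ odd}} \chi^l_j(w)$ and $S_{\mathrm{even}} := \sum_{j\geq 2,\, j\text{ even}} \chi^l_j(w)$ are determined explicitly in the two cases of Proposition \ref{proprk1}.

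Next I would rewrite the right-hand side of the desired equality as a generating function. Writing $A_{{\rm bor},w}^{(a,a)}(q) = \sum_{m\geq 0} b_m\, q^m$ with $b_m = \sum_{k\geq 1,\, m+2k+1\leq l} \chi^l_{m+2k+1}(w)$, interchange the order of summation:
$$\sum_m b_m q^m = \sum_{j=3}^l \chi^l_j(w)\sum_{\substack{0\leq m\leq j-3\\ j-m\text{ odd}}} q^m = \sum_{j=1}^l \chi^l_j(w)\cdot\frac{q^{j-1}-q^{\epsilon_j}}{q^2-1},$$
where $\epsilon_j = 0$ if $j$ is odd and $\epsilon_j = 1$ if $j$ is even (one checks that for $j\in\{1,2\}$ the formula yields $0$, so extending the sum to $j\geq 1$ is harmless). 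Splitting the right-hand side gives
$$\sum_m b_m q^m = \frac{1}{q^2-1}\left(\frac{P(q)-1}{q} - S_{\mathrm{odd}} - q\, S_{\mathrm{even}}\right).$$

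To conclude I would substitute the values of $S_{\mathrm{odd}}$ and $S_{\mathrm{even}}$ determined in the first step and put everything over the common denominator $q(q-1)(q+1)$. In the case where some $d_i$ is odd, $(S_{\mathrm{odd}},S_{\mathrm{even}}) = (2^{r-1},\, 2^{r-1}-1)$, and a direct simplification produces
$$\frac{P(q) + q^2 - 1 - 2^{r-1} q(q+1)}{q(q^2-1)},$$
which coincides with $\calA_{{\rm bor},\lambda}^{\calE_{a,a}}(q)$ as expressed in Proposition \ref{proprk1}. In the case where all $d_i$ are even, $(S_{\mathrm{odd}},S_{\mathrm{even}}) = (0,\, 2^r-1)$ gives
$$\frac{P(q) + q^2 - 1 - 2^r q^2}{q(q^2-1)},$$
and the same partial-fraction computation matches the second formula of Proposition \ref{proprk1}. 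Finally, for $l<3$ both sides vanish (the degree estimate gives $\deg A_{{\rm bor},w}^{(a,a)} = l-3<0$, and the closed form above reduces to $0$ by the same identities at $q=\pm 1$), so the ``otherwise'' case is automatic.

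The only real obstacle is bookkeeping: correctly handling the boundary terms $j\in\{0,1,2\}$ when inverting the order of summation, and checking that the two cases (some $d_i$ odd / all $d_i$ even) lead to precisely the two formulas of Proposition \ref{proprk1} rather than some rearrangement thereof. No new ideas are needed beyond the generating function identity $P(q) = \sum_j \chi^l_j(w)\, q^j$.
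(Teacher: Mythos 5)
Your proposal is correct and follows exactly the route the paper takes: the paper's proof of Proposition \ref{proprk2} is the one-line statement that it ``follows by calculation from the formulas of Proposition \ref{proprk1}'', and your argument is precisely that calculation, carried out correctly via the generating-function identity $\prod_i(q^{d_i}+1)=\sum_j\chi^l_j(w)q^j$ and the evaluations at $q=\pm1$. The bookkeeping (boundary terms $j\in\{0,1,2\}$, the two parity cases, and the vanishing for $l<3$) all checks out.
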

\begin{proof} It follows by calculation from the formulas of Proposition \ref{proprk1}.
\end{proof}

\begin{example}For $l=4$ we have $A_{{\rm bor}}^{(a,a)}(t)=t+\chi^4_1$, with $\chi^4_1$ the character of the standard representation of $\frak{S}_4$.  On the other hand  we have the following table.
\begin{equation}
\label{c}
\begin{array}{|c|c|c|}
\hline
r&\lambda=(d_1,\dots,d_r)&\calA_{{\rm bor},\lambda}^{\calE_{a,a}}(q)\\
\hline
4&(1,1,1,1)&  q+4\\
3&(2,1,1)&q+2 \\
2&(3,1)& q+1 \\
2&(2,2)&q \\
1&(4)&q\\
\hline
\end{array}
\end{equation}
\end{example}

\subsubsection{The case $a>b$}

We have 

\begin{equation}
\calA_{\rm bor}^{\calE_{a,b}}(q)=-\frac{2^r}{q-1}+(q-1)\sum_{I\subseteq\{1,\dots,r\}}\calH_{\mu_I}^{\calE_{a,b}}(q)\prod_{i=1}^r\begin{cases}(q^{d_i}+1)&\text{ if }\mu_i=(1^2)\\1&\text{ if }\mu_i=(2^1)\end{cases}.
\label{eqa-b}\end{equation}

Note that the right hand side depends only on the partition $\lambda$ of $l=\sum_{i=1}^rd_i$ with parts $d_1,\dots,d_r$. In the following we use the notation $\calA_{{\rm bor},\lambda}^{\calE_{a,b}}(q)$ instead of $\calA_{\rm bor}^{\calE_{a,b}}(q)$. 

For $l\geq a-b+2$, define 

$$
A_{\rm bor}^{(a,b)}(t)=\sum_{m=0}^{l-(a-b+2)}\left(\sum_{s=m+a-b+2}^l\chi^l_s\right)t^m,
$$
and for $w\in\frak{S}_l$, put $A_{{\rm bor},w}^{(a,b)}(t)=\sum_{m=0}^{l-(a-b+2)}\left(\sum_{s=m+a-b+2}^l\chi^l_s(w)\right)t^m$.

\begin{theorem} Let  $w\in\frak{S}_l$ an element of cycle-type $\lambda$, we have 

$$
\calA_{{\bor},\lambda}^{\calE_{a,b}}(q)=\begin{cases}A_{{\rm bor},w}^{(a,b)}(q)&\text{ if }l\geq a-b+2,\\0&\text{ otherwise}.\end{cases}
$$
\label{theork2}\end{theorem}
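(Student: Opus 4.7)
\textbf{Proof plan for Theorem \ref{theork2}.}
The strategy is to evaluate the right-hand side of (\ref{eqa-b}) in closed form, by making the factor $\calH_{\mu_I}^{\calE_{a,b}}(q)$ explicit via inclusion-exclusion and then organizing the resulting double sum over subsets of $\{1,\dots,r\}$ by the total degree $s_J:=\sum_{j\in J}d_j$. The key observation at the end is that, because $(d_1,\dots,d_r)$ is the cycle-type of $w$, the $w$-stable subsets of $\{1,\dots,l\}$ of size $m$ are exactly the unions of cycles whose sizes add up to $m$, so $\chi_m^l(w)=\#\{J\subseteq\{1,\dots,r\}:s_J=m\}$.

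First, let $P_i\in k[t]$ be the monic polynomial cutting out $\a_i$, and for $J\subseteq\{1,\dots,r\}$ let $V_J\subseteq k[t]^{\le a-b}$ be the subspace of polynomials divisible by $\prod_{j\in J}P_j$; then $|V_J|=q^{\max(0,\,a-b+1-s_J)}$. The numerator of $\calH_{\mu_I}^{\calE_{a,b}}(q)$ is, by inclusion-exclusion, $\sum_{J\supseteq I}(-1)^{|J|-|I|}|V_J|$. Substituting this into (\ref{eqa-b}) and swapping the order of summation over $I$ and $J$, the inner sum collapses via
\[
\sum_{I\subseteq J}(-1)^{|I|}\prod_{i\in I}(q^{d_i}+1)=\prod_{j\in J}\bigl(1-(q^{d_j}+1)\bigr)=(-1)^{|J|}q^{s_J}.
\]

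Next, I would use that $q^{\max(0,\,a-b+1-s_J)}\cdot q^{s_J}$ equals $q^{a-b+1}$ if $s_J\le a-b+1$ and $q^{s_J}$ otherwise. The factor $q^{a-b+1}$ in the denominator of (\ref{eqa-b}) then cancels, leaving
\[
\calA_{\bor}^{\calE_{a,b}}(q)=-\frac{2^r}{q-1}+\frac{1}{q-1}\left[\,\#\{J:s_J\le a-b+1\}+\sum_{J:\,s_J>a-b+1}q^{s_J-(a-b+1)}\right].
\]
Writing $2^r=\sum_{m=0}^{l}\chi_m^l(w)$ and splitting at $m=a-b+1$ using the character identity above, the constant term $-2^r/(q-1)$ combines with the first bracketed sum to leave only the indices $m\ge a-b+2$:
\[
\calA_{\bor}^{\calE_{a,b}}(q)=\sum_{m=a-b+2}^{l}\chi_m^l(w)\cdot\frac{q^{m-(a-b+1)}-1}{q-1}=\sum_{m=a-b+2}^{l}\chi_m^l(w)\bigl(1+q+\cdots+q^{m-(a-b+2)}\bigr).
\]
Exchanging the order of summation to collect coefficients of each power $q^j$ ($0\le j\le l-(a-b+2)$) yields exactly $A_{{\rm bor},w}^{(a,b)}(q)$, and when $l<a-b+2$ the outer sum is empty, giving $0$ in agreement with Lemma \ref{existence}(ii).

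The substantive part of the argument is neither the inclusion-exclusion nor the algebraic manipulation, which are routine: the conceptual step is recognizing $\chi_m^l(w)=\#\{J:s_J=m\}$, which converts the arithmetic sum over subsets into a character value and thereby links the counting formula (\ref{eqa-b}) directly to the representation-theoretic expression $A_{{\rm bor},w}^{(a,b)}$. Everything else is a verification that the telescoping of $2^r$ against the low-degree contributions is exact, which it is thanks to the $-2^r/(q-1)$ correction already present in (\ref{eqa-b}).
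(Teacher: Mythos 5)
Your proposal is correct, and it follows the same basic mechanism as the paper's proof: start from (\ref{eqa-b}), make $\calH_{\mu_I}^{\calE_{a,b}}(q)$ explicit by inclusion--exclusion on the vanishing loci, swap the order of summation, and let the resulting geometric series telescope against the $-2^r/(q-1)$ term. The difference is in execution and coverage. The paper only writes the proof for $\lambda=(1^l)$ (i.e.\ $d_1=\cdots=d_r=1$), indexing the ``vanishing exactly at'' and ``vanishing at least at'' counts $h_s(q)$, $\overline h_s(q)$ by the number $s$ of points and then grinding through binomial-coefficient identities such as $\binom{r-s}{n-s}\binom{r}{s}=\binom{n}{s}\binom{r}{n}$; in that case $\chi^l_s(1)=\binom{l}{s}$ and the final answer is read off. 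You instead keep the degrees $d_i$ arbitrary, organize the double sum by subsets $J\subseteq\{1,\dots,r\}$ and their total degree $s_J$, and invoke the identity $\chi^l_m(w)=\#\{J:s_J=m\}$ (correct: the $w$-fixed points of $X^l_m$ are the unions of $w$-cycles of total size $m$). This buys you the statement for arbitrary cycle type in one pass, whereas the paper's written argument covers only $w=1$ and leaves the general case to the reader; your inner-sum collapse $\sum_{I\subseteq J}(-1)^{|J|-|I|}\prod_{i\in I}(q^{d_i}+1)=q^{s_J}$ also replaces the paper's longest chain of binomial manipulations. All the individual steps check out ($|V_J|=q^{\max(0,\,a-b+1-s_J)}$, the splitting at $s_J\le a-b+1$, and the final reindexing), so this is a complete and in fact slightly stronger write-up than the one in the text.
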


Before proving Theorem \ref{theork2}, let us give some consequences.

\begin{corollary} If $\lambda=(l)$ (i.e. $r=1$), we have $$\calA_{{\rm bor},\lambda}^{\calE_{a,b}}(q)=\begin{cases}\sum_{m=0}^{l-(a-b+2)}q^m&\text{ if }l\geq a-b+2\\0&\text{ otherwise}. \end{cases}$$
\end{corollary}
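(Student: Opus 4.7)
The plan is to apply Theorem \ref{theork2} in the special case $\lambda=(l)$, i.e.\ $w\in\frak{S}_l$ is a single $l$-cycle, and to compute the character values $\chi_s^l(w)$ explicitly for this $w$. Recall that $\chi_s^l$ is the character of the permutation representation of $\frak{S}_l$ on the set $X_s^l$ of $s$-subsets of $\{1,\dots,l\}$, so $\chi_s^l(w)$ equals the number of $s$-subsets fixed by $w$.

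First I would observe that an $s$-subset $I\subseteq\{1,\dots,l\}$ is fixed by $w$ if and only if $I$ is a union of cycles of $w$. Since $w$ is a single $l$-cycle, its only two unions of cycles are $\emptyset$ and $\{1,\dots,l\}$, hence
\begin{equation*}
\chi_s^l(w)=\begin{cases}1&\text{if }s=0\text{ or }s=l,\\0&\text{otherwise.}\end{cases}
\end{equation*}

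Next I would plug this into the definition of $A_{{\rm bor},w}^{(a,b)}(t)$. If $l<a-b+2$, Theorem \ref{theork2} immediately gives $\calA_{{\rm bor},\lambda}^{\calE_{a,b}}(q)=0$, matching the second case. Otherwise, for each $m$ in the range $0\le m\le l-(a-b+2)$, the inner sum $\sum_{s=m+a-b+2}^{l}\chi_s^l(w)$ runs over $s\ge m+a-b+2\ge a-b+2\ge 3$, so the only value of $s$ in this range with $\chi_s^l(w)\ne 0$ is $s=l$, contributing exactly~$1$. Therefore
\begin{equation*}
A_{{\rm bor},w}^{(a,b)}(q)=\sum_{m=0}^{l-(a-b+2)}q^m,
\end{equation*}
which by Theorem \ref{theork2} is the desired value of $\calA_{{\rm bor},\lambda}^{\calE_{a,b}}(q)$.

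There is no real obstacle here: the entire content lies in Theorem \ref{theork2}, and the corollary is a one-step evaluation of the character $\chi_s^l$ at an $l$-cycle. The only point requiring a moment of care is checking that the range $s\ge m+a-b+2$ automatically excludes $s=0$ (which it does, since $a>b$ forces $a-b+2\ge 3$), so the $s=0$ contribution of $\chi_s^l(w)=1$ does not enter the sum.
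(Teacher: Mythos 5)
Your proof is correct and matches the paper's (implicit) intent: the corollary is stated without proof as an immediate consequence of Theorem \ref{theork2}, and the only content is the observation that an $l$-cycle fixes no $s$-subset of $\{1,\dots,l\}$ with $0<s<l$, so $\chi^l_s(w)$ contributes only at $s=l$ in the relevant range. Your check that $s=0$ is excluded because $a>b$ forces $a-b+2\geq 3$ is the right point of care.
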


\begin{theorem}For any integer $d$ and $w\in\frak{S}_l$ of cycle-type $\lambda$, we have 

$$
\sum_{a+b=d}\calA_{{\rm bor},\lambda}^{\calE_{a,b}}(q)=\begin{cases}\sum_{m=0}^{l-3}\sum_{a=1}^{\left[\frac{l-m-1}{2}\right]}\left(\sum_{s=m+2a+1}^l\chi^l_s(w)\right)\, q^m&\text{ if } l\geq 3,\\
0&\text{ otherwise}.\end{cases}
$$In particular this sum does not depend on $d$.
\label{sumind}\end{theorem}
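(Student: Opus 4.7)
The plan is to combine Proposition \ref{proprk2} (the $a=b$ case) with Theorem \ref{theork2} (the $a>b$ case) and rearrange the resulting sum. Since every rank-$2$ bundle of degree $d$ on $\bbP^1_{\F_q}$ is isomorphic to some $\calE_{a,b}=\calO(a)\oplus\calO(b)$ with $a\geq b$ and $a+b=d$, the summand $\calA_{{\rm bor},\lambda}^{\calE_{a,b}}(q)$ splits into the diagonal contribution $a=b=d/2$ (which occurs only when $d$ is even) and the off-diagonal contributions indexed by $k:=a-b>0$ with $k\equiv d\pmod 2$.

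By Theorem \ref{theork2}, the off-diagonal contribution for fixed $k$ is the polynomial $A_{{\rm bor},w}^{(a,b)}(q)=\sum_{m=0}^{l-(k+2)}\bigl(\sum_{s=m+k+2}^l\chi_s^l(w)\bigr)q^m$, which vanishes unless $k\leq l-2$. Extracting the coefficient of $q^m$ in the total sum, the off-diagonal part contributes
\[
\sum_{\substack{k\geq 1,\ k\equiv d\,(2)\\ k\leq l-m-2}}\sum_{s=m+k+2}^{l}\chi_s^l(w),
\]
while (when $d$ is even) Proposition \ref{proprk2} adds $\sum_{i=1}^{N}\chi_{m+2i+1}^l(w)$ with $N=\bigl[\tfrac{l-m-1}{2}\bigr]$. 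The goal is to show that in either parity of $d$, the total coefficient of $q^m$ equals $\sum_{a=1}^{N}\sum_{s=m+2a+1}^{l}\chi_s^l(w)$.

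I would expand the target as
\[
\sum_{a=1}^{N}\chi_{m+2a+1}^l(w)\;+\;\sum_{a=1}^{N}\sum_{s=m+2a+2}^{l}\chi_s^l(w)
\]
and match the two pieces. For odd $d$, write $k=2a-1$; then the constraint $k\leq l-m-2$ is exactly $a\leq N$, and the off-diagonal sum splits into $\chi^l_{m+2a+1}(w)$ plus $\sum_{s=m+2a+2}^{l}\chi^l_s(w)$, reproducing both target pieces. For even $d$, write $k=2j$; the off-diagonal part handles only the second piece, with $j$ ranging up to $\bigl[\tfrac{l-m-2}{2}\bigr]$, while the diagonal (Proposition \ref{proprk2}) supplies the first piece directly.

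The one subtle point, and the place where one must be careful, is the parity of $l-m$ in the even-$d$ case: when $l-m$ is odd, $\bigl[\tfrac{l-m-2}{2}\bigr]=N-1$, so the index ranges for $j$ and $a$ differ by one. The rescue is that for $a=N$ with $l-m$ odd one has $2N=l-m-1$, hence $m+2N+2=l+1$, so the ``extra'' inner sum $\sum_{s=m+2N+2}^{l}$ is empty. Thus no discrepancy remains, and the formula is independent of the parity of $d$, yielding the stated expression (and $0$ when $l<3$ since both cases are then vacuous).
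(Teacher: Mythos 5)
Your proof is correct and follows essentially the same route as the paper: both reduce to the parity of $d$ via the fact that $\calA_{{\rm bor},\lambda}^{\calE_{a,b}}(q)$ depends only on $a-b$, then combine Proposition \ref{proprk2} (diagonal case) with Theorem \ref{theork2} (off-diagonal case) and match coefficients of $q^m$. If anything, your handling of the edge case where $l-m$ is odd is more explicit than the paper's, whose displayed formula for $\sum_{a\geq 1}\calA_{{\rm bor},\lambda}^{\calE_{a,-a}}(q)$ has a lower summation limit $m+2a+1$ that should read $m+2a+2$.
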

\begin{proof}We need to see that 

$$
\sum_{a+b=0}\calA_{{\rm bor},\lambda}^{\calE_{a,b}}(q)=\sum_{a+b=1}\calA_{{\rm bor},\lambda}^{\calE_{a,b}}(q).
$$
It is easy to see from Theorem \ref{theork2} that  

$$
\sum_{a+b=1}\calA_{{\rm bor},\lambda}^{\calE_{a,b}}(q)=\sum_{m=0}^{l-3}\sum_{a=1}^{\left[\frac{l-m-1}{2}\right]}\left(\sum_{s=m+2a+1}^l\chi^l_s(w)\right)\, q^m.
$$
Now 

$$
\sum_{a+b=0}\calA_{{\rm bor},\lambda}^{\calE_{a,b}}(q)=\calA_{{\rm bor},\lambda}^{\calE_{0,0}}(q)+\sum_{a\geq 1}\calA_{{\rm bor},\lambda}^{\calE_{a,b}}(q),
$$
The term $\calA_{{\rm bor},\lambda}^{\calE_{0,0}}(q)$ is computed in Proposition \ref{proprk2}
and from Theorem \ref{theork2} we find that 

$$
\sum_{a\geq 1}\calA_{{\rm bor},\lambda}^{\calE_{a,b}}(q)=\sum_{m=0}^{l-4}\sum_{a=1}^{\left[\frac{l-m-2}{2}\right]}\left(\sum_{s=m+2a+1}^l\chi^l_s(w)\right)\, q^m.
$$
\end{proof}

\begin{proof}[Proof of Theorem \ref{theork2}] We prove it for $\lambda=(1^l)$, i.e., $l=r$ and $d_1=\cdots=d_r=1$. We assume that $a-b+2\leq r$ as otherwise we have $\calA_{{\bor},(1^r)}^{\calE_{a,b}}(q)=0$ by Lemma \ref{existence}. For $s\leq r$, we denote by $h_s(q)$ the cardinality of the set $H_s$ of polynomials of degree less or equal to $a-b$ that vanishes exactly at $\a_1,\dots,\a_s$ in $\{\a_1,\dots,\a_r\}$ (the polynomials may vanish outside the later set). We also denote by $\overline{h}_s(q)$ the cardinality of the set $\overline{H}_s$ of polynomials of degree less or equal to $a-b$ that vanish at least at $\a_1,\dots,\a_s$. Note that the quantities $h_s(q)$ and $\overline{h}_s(q)$ do not depend on the position of $\a_1,\dots,\a_r$.  For $k=s+1,\dots,r$, denote by $\overline{H}_{s,k}$ the set of polynomials of degree less or equal to $a-b$ that vanish at least at $\a_1,\dots,\a_s,\a_k$. Then

$$
H_s=\overline{H}_s\backslash\bigcup_{k=s+1}^r\overline{H}_{s,k}.
$$
By the inclusion-exclusion principle we have 

$$
h_s(q)=\sum_{k=0}^{r-s}(-1)^k{r-s \choose k}\, \overline{h}_{s+k}(q).
$$

Then

\begin{align*}
\calA_{{\rm bor},(1^r)}^{\calE_{a,b}}(q)&=-\frac{2^r}{q-1}+(q-1)\sum_{s=0}^r\left(\sum_{|I|=s}\mu_I(q)\right)(q+1)^s\\
&=-\frac{2^r}{q-1}+\frac{1}{(q-1)q^{a-b+1}}\sum_{s=0}^r{r \choose s}\, h_s(q) (q+1)^s\\
&=-\frac{2^r}{q-1}+\frac{1}{(q-1)q^{a-b+1}}\sum_{s=0}^r{r \choose s}\, \sum_{k=0}^{r-s}(-1)^k{r-s \choose k}\, \overline{h}_{s+k}(q) (q+1)^s\\
&=-\frac{2^r}{q-1}+\frac{1}{(q-1)q^{a-b+1}}\sum_{s=0}^r\sum_{k=0}^{r-s}\sum_{j=0}^s(-1)^k{r\choose s}{ r-s \choose k}{s \choose j}q^j\, \overline{h}_{s+k}(q)\\
&=-\frac{2^r}{q-1}+\frac{1}{(q-1)q^{a-b+1}}\sum_{s=0}^r\sum_{n=s}^r\sum_{j=0}^s(-1)^{n-s}{r\choose s}{ r-s \choose n-s}{s \choose j}q^j\, \overline{h}_n(q)\\
&=-\frac{2^r}{q-1}+\frac{1}{(q-1)q^{a-b+1}}\sum_{n=0}^r\sum_{s=0}^n\sum_{j=0}^s(-1)^{n-s}{r\choose s}{ r-s \choose n-s}{s \choose j}q^j\, \overline{h}_n(q)\\
&=-\frac{2^r}{q-1}+\frac{1}{(q-1)q^{a-b+1}}\sum_{n=0}^r\sum_{j=0}^n\sum_{s=j}^n(-1)^{n-s}{r\choose s}{ r-s \choose n-s}{s \choose j}q^j\, \overline{h}_n(q)
\end{align*}
From the identities

$$
{r-s \choose n-s}{r \choose s}={ n\choose s}{r \choose n},\hspace{1cm} {n\choose s}{s\choose j}={n\choose j}{n-j\choose s-j},
$$
we find

\begin{align*}
\calA_{{\rm bor},(1^r)}^{\calE_{a,b}}(q)&=-\frac{2^r}{q-1}+\frac{1}{(q-1)q^{a-b+1}}\sum_{n=0}^r\sum_{j=0}^n{r\choose n}{n \choose j}\sum_{s=j}^n(-1)^{n-s}{ n-j \choose s-j}q^j\, \overline{h}_n(q)\\
&=-\frac{2^r}{q-1}+\frac{1}{(q-1)q^{a-b+1}}\sum_{n=0}^r\sum_{j=0}^n{r\choose n}{n \choose j}\sum_{s=0}^{n-j}(-1)^{n-j-s}{ n-j \choose s}q^j\, \overline{h}_n(q)
\end{align*}
But 

$$
\sum_{s=0}^{n-j}(-1)^{n-j-s}{ n-j \choose s}=0
$$
unless $n=j$ in which case it equals $1$.

Hence
$$
\calA_{{\rm bor},(1^r)}^{\calE_{a,b}}(q)=-\frac{2^r}{q-1}+\frac{1}{(q-1)q^{a-b+1}}\sum_{n=0}^r{r\choose n}q^n\, \overline{h}_n(q)
$$
Noticing that $\overline{h}_n(q)=1$ if $a-b-n+1\leq 0$ and $\overline{h}_n(q)=q^{a-b-n+1}$ otherwise, we deduce that

\begin{align*}
\calA_{{\rm bor},(1^r)}^{\calE_{a,b}}(q)&=\frac{1}{q-1}\left(-2^r+\sum_{n=0}^{a-b}{r\choose n}+\sum_{n=a-b+1}^r{ r \choose n}q^{n-(a-b+1)}\right)\\
&=\sum_{m=0}^{r-(a-b+2)}\sum_{s=m+a-b+2}^r{r\choose s}\, q^m.
\end{align*}

\end{proof}

\section{Connection with the moduli space of Higgs bundles}

\subsection{Higgs bundles with prescribed residues}\label{Higgs}
Unless specified $k$ is a finite field $\F_q$ or an algebraically closed field, $D=\a_1+\dots+\a_r$ a reduced divisor on $\bbP^1_k$  as before and $d_i$ is the degree of $\a_i$. A \emph{Higgs bundle} on $\bbP^1_k$ is a pair $(\calE,\varphi)$ with $\calE$ a vector bundle on $\bbP^1_k$ and $\varphi:\calE\rightarrow \calE\otimes \Omega^1(D)$. The map $\varphi$ is called a \emph{Higgs field}. We fix an $r$-tuple ${\bf A}=(A_1,\dots,A_r)$ of adjoint orbits of $\gl_n(k(\a_1)),\dots,\gl_n(k(\a_r))$ respectively.

We are interested in the space 

$$
X_{\bf A}^\calE=X_{{\bf A},D}^\calE:=\left\{\varphi:\calE\rightarrow\calE\otimes\Omega^1(D)\,\left|\, {\rm Res}_{\a_i}(\varphi)\in A_i \text{ for all }i=1,\dots,r\right.\right\}.
$$
It can be explicitly described by equations. To see this, write $\calE=\bigoplus_{i=1}^f\calO(b_i)^{m_i}$ with $b_1> b_2>\cdots>b_f$ and identify ${\rm End}(\calE)$ with $\prod_{1\leq u\leq v\leq f}{\rm Mat}_{m_u,m_v}\left(k[t]^{\leq b_u-b_v}\right)$ where $k[t]^{\leq b_u-b_v}$ is the subspace of $k[t]$ of polynomials of degree smaller or equal to $b_u-b_v$. For $f\in{\rm End}(\calE)$ denote by $f_{u,v}(t)$ its coordinate in ${\rm Mat}_{m_u,m_v}\left(k[t]^{\leq b_u-b_v}\right)$ and for $y\in\gl_n(k)\simeq \bigoplus_{1\leq u,v\leq f}{\rm Mat}_{m_u,m_v}(k)$, denote by $y_{u,v}$ its coordinate in ${\rm Mat}_{m_u,m_v}(k)$. Let $t_i$ be the image of $t$ in the residue field $k(\a_i)$. 

\begin{lemma} We have $X_{\bf A}^\calE\simeq \mathbb{A}_k^{\delta_\calE}\times Y_{\bf A}^\calE$ where

\begin{align*}
Y_{\bf A}^\calE=Y_{{\bf A},D}^\calE:&=\left\{(y^1,\dots,y^r)\in A_1\times\cdots\times A_r\,\left|\, \sum_{i=1}^r\sum_{s=0}^{d_i-1}F^s\left(y^i_{u,v}t_i^j\right)=0, 1\leq v\leq u\leq f, j=0,\dots,b_v-b_u\right\}\right.,\\
&=\left\{(y^1,\dots,y^r)\in A_1\times\cdots\times A_r\,\left|\forall h\in{\rm End}(\calE),\, \sum_{i=1}^r\sum_{s=0}^{d_i-1}{\Tr}\left(F^s(y^ih(t_i))\right)=0\right\}\right.
\end{align*}
where $\delta_\calE=\sum_{1\leq u< v\leq f}m_um_v(b_u-b_v-1)$ and $F$ denotes the Frobenius $t_i\mapsto t_i^q$ on $k(\a_i)/k$ if $k=\F_q$ (if $k=\overline{k}$ then $d_i=1$ for all $i$ and $F={\rm id}$).
\end{lemma}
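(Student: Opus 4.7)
The plan is to read off both the fibre and the base of the residue map $\widetilde\rho$ on global sections, and then to trivialise the resulting affine bundle by $k$-linear algebra. Consider the short exact sequence of coherent sheaves on $\bbP^1_k$
$$0\to \calE^\vee\otimes\calE\otimes\Omega^1 \to \calE^\vee\otimes\calE\otimes\Omega^1(D)\xrightarrow{\ \mathrm{res}\ }\bigoplus_{i=1}^r\bigl(\calE^\vee\otimes\calE\bigr)\bigl|_{\a_i}\to 0,$$
where the right-hand arrow is the Poincaré residue. Taking $H^0$ gives a $k$-linear map $\widetilde\rho: H^0(\calE^\vee\otimes\calE\otimes\Omega^1(D))\to\bigoplus_i\gl_n(k(\a_i))$, and by definition $X_{\bf A}^\calE=\widetilde\rho^{-1}(A_1\times\cdots\times A_r)$.

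For the kernel, writing $\calE=\bigoplus_i\calO(b_i)^{m_i}$ gives $\calE^\vee\otimes\calE\otimes\Omega^1\simeq\bigoplus_{u,v}\calO(b_u-b_v-2)^{m_um_v}$. Since $b_1>\cdots>b_f$, only blocks with $u<v$ contribute global sections, whence
$$\dim_k\ker(\widetilde\rho)=\dim_k H^0(\bbP^1_k,\calE^\vee\otimes\calE\otimes\Omega^1)=\sum_{u<v}m_um_v(b_u-b_v-1)=\delta_\calE.$$

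For the image I invoke Serre duality on $\bbP^1_k$, which identifies $H^1(\calE^\vee\otimes\calE\otimes\Omega^1)^\vee$ with $H^0(\calE^\vee\otimes\calE)={\rm End}(\calE)$. The boundary map of the long exact sequence, $\delta:\bigoplus_i\gl_n(k(\a_i))\to H^1(\calE^\vee\otimes\calE\otimes\Omega^1)$, therefore transposes to a pairing ${\rm End}(\calE)\times\bigoplus_i\gl_n(k(\a_i))\to k$ which, by the residue theorem applied to the rational $1$-form $\Tr(h\varphi)\in H^0(\Omega^1(D))$ on $\bbP^1_k$, equals
$$\bigl(h,(y^i)\bigr)\longmapsto \sum_{i=1}^r\sum_{s=0}^{d_i-1}\Tr\bigl(F^s(y^ih(t_i))\bigr).$$
Hence $\mathrm{image}(\widetilde\rho)$ is the annihilator of ${\rm End}(\calE)$ under this pairing, and running $h$ over the block-monomial basis of ${\rm End}(\calE)\simeq\prod_{u\le v}\mathrm{Mat}_{m_u,m_v}(k[t]^{\le b_u-b_v})$ (elementary matrices times powers $t^j$ with $0\le j\le b_u-b_v$) unfolds this annihilation into the entry-wise equations of the first description of $Y_{\bf A}^\calE$, while letting $h$ vary freely yields the equivalent second description. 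In particular $\widetilde\rho^{-1}(A_1\times\cdots\times A_r)=\widetilde\rho^{-1}(Y_{\bf A}^\calE)$.

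Finally, since $\widetilde\rho$ is a surjective $k$-linear map onto its image, any $k$-linear splitting $\sigma:\mathrm{image}(\widetilde\rho)\to H^0(\calE^\vee\otimes\calE\otimes\Omega^1(D))$ identifies $X_{\bf A}^\calE=\widetilde\rho^{-1}(Y_{\bf A}^\calE)$ with $\ker(\widetilde\rho)\times Y_{\bf A}^\calE\simeq\mathbb{A}_k^{\delta_\calE}\times Y_{\bf A}^\calE$. An explicit $\sigma$ is the partial-fraction lift $(y^i)\mapsto\sum_i\widetilde y^i P_i^{-1}dt$ on $U_1$, where $P_i$ is the monic polynomial defining $\a_i$ and $\widetilde y^i$ is the unique representative of $y^i\in\gl_n(k[t]/P_i)$ whose entries have degree $<d_i$; the trace conditions cutting out $Y_{\bf A}^\calE$ are precisely what make this lift extend regularly across $\infty$. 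The main technical point is the Serre-duality identification of $\delta$ with the concrete residue pairing above, which is verified by a direct Čech computation on the standard cover $\{U_1,U_2\}$ of $\bbP^1_k$.
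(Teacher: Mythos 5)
Your proof is correct, but it takes a genuinely different route from the paper's. The paper argues by bare hands: it writes each block $\varphi_{j,i}\colon\calO(b_i)\to\calO(b_j)\otimes\Omega^1(D)$ in partial-fraction form $\sum_i\lambda_i(t-\a_i)^{-1}dt+T(t)dt$, observes that the polynomial part $T$ (of degree $\leq b_j-b_i-2$, present only when $b_j>b_i$) is free and accounts for $\mathbb{A}^{\delta_\calE}$, while regularity at $\infty$ in the blocks with $b_i\geq b_j$ imposes exactly the moment-type conditions $\sum_i\lambda_it_i^s=0$ defining $Y_{\bf A}^\calE$; it is stated over $\overline{k}$, the finite-field case following by descent. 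You instead take global sections of the residue exact sequence, identify $\ker(\widetilde\rho)=H^0(\calE^\vee\otimes\calE\otimes\Omega^1)$ of dimension $\delta_\calE$, and characterize $\mathrm{image}(\widetilde\rho)$ via Serre duality as the annihilator of ${\rm End}(\calE)$ under the residue--trace pairing; this explains conceptually why the linear constraints are indexed by ${\rm End}(\calE)$ (which is exactly the second, coordinate-free description in the statement, the paper's entrywise equations being its unfolding over a monomial basis), and it works uniformly over $\F_q$ with closed points of arbitrary degree, the field trace $\sum_{s=0}^{d_i-1}F^s$ appearing naturally as the trace $k(\a_i)\to k$. The one step you defer --- that the Serre-duality transpose of the connecting map is the concrete residue pairing --- is standard and, once written out in \v{C}ech terms on $\{U_1,U_2\}$, amounts to the same partial-fraction bookkeeping the paper performs directly; your explicit splitting $\sigma$ in the last paragraph makes that connection visible. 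In short: the paper's computation is shorter and self-contained; yours is more structural and dispenses with the reduction to $\overline{k}$.
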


\begin{proof}Assume that $k=\overline{k}$. To see the lemma we use that (locally) the coordinate $\varphi_{j,i}:\calO(b_i)\rightarrow\calO(b_j)\otimes\Omega^1(D)$ of $\varphi:\calE\rightarrow\calE\otimes\Omega^1(D)$ is of the form 

\begin{align*}
 &\sum_{i=1}^r\frac{\lambda_i}{t-\a_i}\,dt+T(t) dt,\hspace{.5cm} \text{ with } T(t)\in k[t]^{\leq b_j-b_i-2}, &\text{ if }b_j>b_i,\\
 &\sum_{i=1}^r\frac{\lambda_i}{t-\a_i}\,dt, \hspace{.5cm}\text{ with } \sum_{i=1}^r\lambda_i t_i^s=0\text{ for all }s=0,\dots,b_i-b_j, &\text{ if }b_i\geq b_j,
\end{align*}
with the convention that $k[t]^{\leq-1}=\{0\}$.
\end{proof}

\begin{remark} Note that the condition

$$
\sum_{i=1}^r\sum_{j=0}^{d_i-1}{\Tr}(F^j(A_i))=0,
$$
on ${\bf A}$ is a necessary condition for $X_{\bf A}^\calE$ to be non-empty.

\end{remark}

\begin{example} $\calE=\calO(a)^n$, $d_1=\dots=d_r=1$, then $X_{\bf A}^\calE\simeq Y_{\bf A}^\calE$ is identified with

$$
\left\{(X_1,\dots,X_r)\in A_1\times\dots\times A_r\,\left|\, \sum_{i=1}^rX_i=0\right\}\right..
$$
\label{exCB}\end{example}

\begin{example} If $r=1$, $k=\F_q$, $\calE=\calO(a)^n$, then $X_{\bf A}^\calE\simeq Y_{\bf A}^\calE$ is identified with

$$
\left\{X\in A_1\,\left|\, X+F(X)+\cdots+F^{d_1-1}(X)=0\right\}\right..
$$
\end{example}

\begin{example} If $n=2$, $d_1,\dots,d_r=1$ and $\calE=\calO(1)\oplus\calO(-1)$, then $X_{\bf A}^\calE\simeq \mathbb{A}^1\times Y_{\bf A}^\calE$ with $Y_{\bf A}^\calE=$

$$
\left\{\left(\left(\begin{array}{cc}\alpha_1&\beta_1\\\gamma_1&\delta_1\end{array}\right),\dots,\left(\begin{array}{cc}\alpha_r&\beta_r\\ \gamma_r&\delta_r\end{array}\right)\right)\in A_1\times\cdots\times A_r\,\left|\, \sum_{i=1}^r\alpha_i=\sum_{i=1}^r\delta_i=\sum_{i=1}^r\gamma_i=\sum_{i=1}^r\gamma_i t_i=\sum_{i=1}^r\gamma_it_i^2=0\right\}\right..
$$
\end{example}

\begin{remark} Assume that $k=\F_q$ and choose an algebraic closure $\overline{k}$ with Frobenius $F:\overline{k}\rightarrow\overline{k}$, $x\mapsto x^q$ and identify $k(\a_i)$ with $\F_{q^{d_i}}\subset\overline{k}$. 

Consider the divisor $\tilde{D}=D\times_k\overline{k}=t_1+t_1^q+\cdots+t_1^{q^{d_1-1}}+\cdots+t_r+t_r^q+\cdots+t_r^{q^{d_r-1}}$ on $\bbP^1_{\overline{k}}$.  Put $d=d_1+\dots+d_r$ and consider the $d$-tuple $\tilde{\bf A}=\left(\tilde{A}_1,F(\tilde{A}_1),\dots,F^{d_1-1}(\tilde{A}_1),\dots,\tilde{A}_r, F(\tilde{A}_r),\dots,F^{d_r-1}(\tilde{A}_r)\right)$ of adjoint orbits of $\gl_n(\overline{k})$ (where $\tilde{A}_i$ is the $\GL_n(\overline{k})$ saturated of $A_i$). Consider the permutation $w\in \frak{S}_r$ which acts on $\gl_n(\overline{k})^d\simeq \gl_n(\overline{k})^{d_1}\times\cdots\times\gl_n(\overline{k})^{d_r}$ by cyclic permutation on each $\gl_n(\overline{k})^{d_i}$, with $i=1,\dots,r$. The Frobenius $wF$ on $\gl_n(\overline{k})$ preserves the subvariety $Y_{\tilde{\bf A},\tilde{D}}^{\tilde{\calE}}$, where $\tilde{\calE}$ is the vector bundle on $\bbP^1_{\overline{k}}$ obtained from $\calE$ by scalars extension, and 

$$
\left(Y_{\tilde{\bf A},\tilde{D}}^{\tilde{\calE}}\right)^{wF}\simeq Y_{{\bf A},D}^\calE.
$$
\label{F_qbar}\end{remark}

Say that  an $r$-tuple $(O_1,\dots,O_r)$ of adjoint orbit of $\gl_n(\overline{k})$  is \emph{generic} if the following two conditions are satisfied :

(1) For any integer $1\leq m<n$, we choose $m$ eigenvalues of $O_i$ (counted with multiplicities) for each $i=1,\dots,r$, the sum of all selected eigenvalues ($mr$ in total) is never equal to zero.

(2) $\sum_{i=1}^r{\rm Tr}(O_i)=0$.
\bigskip

In \cite[\S 5.1]{ICQV} we discuss the existence of generic tuples with prescribed Jordan form.
\bigskip

Say that ${\bf A}=(A_1,\dots,A_r)$ is \emph{generic} if $\tilde{\bf A}$ is generic.

\subsection{Higgs bundles over finite fields and Fourier transforms}

Assume $k=\F_q$ and, as before, to alleviate the notation write $\gl_n(q^{d_i})$ instead of $\gl_n(k(\a_i))$.  Fix a rank $n$ vector bundle $\calE$ on $\bbP^1_k$. The aim of this section is to give a formula for $| X_{\bf A}^\calE|$. We will use this formula to relate $|X_{\bf A}^\calE|$  with the count of geometrically indecomposable parabolic bundles.

Choose a non-trivial linear additive character $\psi: k\rightarrow \C^\times$ and for $i=1,\dots,r$, put $\psi_{q^{d_i}/q}:\psi\circ {\rm Tr}_{k(\a_i)/k}:k(\a_i)\rightarrow \C^\times$. 
Denote by ${\rm Fun}(\gl_n(q^{d_i}))$ the $\C$-vector space of all functions $\gl_n(q^{d_i})\rightarrow\C$. Define the Fourier transform $\calF^{\gl_n(q^{d_i})}:{\rm Fun}(\gl_n(q^{d_i}))\rightarrow {\rm Fun}(\gl_n(q^{d_i}))$ by 

$$
\calF^{\gl_n(q^{d_i})}(f)(x)=\sum_{y\in\gl_n(q^{d_i})}\psi_{q^{d_i}/q}\left({\rm Tr}(xy)\right) f(y),
$$
for any $f\in {\rm Fun}(\gl_n(q^{d_i}))$ and 
$x\in\gl_n(q^{d_i})$.

\begin{proposition}We have

\begin{equation}
|X_{\bf A}^\calE|=q^{-n^2}\sum_{h\in{\rm End}(\calE)}\prod_{i=1}^r\calF^{\gl_n(q^{d_i})}(1_{A_i})(h(\a_i)),
\end{equation}
where $1_{A_i}:\gl_n(q^{d_i})\rightarrow\C$ is the characteristic function of $A_i$, i.e. it takes the value $1$ on $A_i$ and zero elsewhere.

\label{eqHiggs}\end{proposition}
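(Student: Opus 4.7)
The plan is to combine the explicit description of $X_{\bf A}^\calE$ given in the preceding lemma with a standard character-sum Fourier argument on the finite-dimensional $k$-vector space $\mathrm{End}(\calE)$.

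First I would invoke the lemma to split $|X_{\bf A}^\calE|=q^{\delta_\calE}|Y_{\bf A}^\calE|$, and rewrite the defining condition of $Y_{\bf A}^\calE$ in its intrinsic form: for each $\mathbf y=(y^1,\dots,y^r)\in A_1\times\cdots\times A_r$, consider the $k$-linear form
$$L_{\mathbf y}:\mathrm{End}(\calE)\to k,\qquad h\longmapsto \sum_{i=1}^r\mathrm{Tr}_{k(\a_i)/k}\bigl(\mathrm{Tr}(y^i h(\a_i))\bigr),$$
where I have used that $\sum_{s=0}^{d_i-1}F^s$ coincides with $\mathrm{Tr}_{k(\a_i)/k}$ on elements of $k(\a_i)$. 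Then $\mathbf y\in Y_{\bf A}^\calE$ if and only if $L_{\mathbf y}\equiv 0$.

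Next, by additive-character orthogonality on the $k$-vector space $\mathrm{End}(\calE)$ with the non-trivial character $\psi$,
$$\bigl[L_{\mathbf y}=0\bigr]=\frac{1}{|\mathrm{End}(\calE)|}\sum_{h\in\mathrm{End}(\calE)}\psi\bigl(L_{\mathbf y}(h)\bigr).$$
Substituting this into the sum $|Y_{\bf A}^\calE|=\sum_{\mathbf y}[L_{\mathbf y}=0]$, interchanging summations, and using $\psi\circ\mathrm{Tr}_{k(\a_i)/k}=\psi_{q^{d_i}/q}$ makes the inner sum over $\mathbf y$ factor over the punctures $\a_i$, giving
$$|Y_{\bf A}^\calE|=\frac{1}{|\mathrm{End}(\calE)|}\sum_{h\in\mathrm{End}(\calE)}\prod_{i=1}^r\sum_{y^i\in A_i}\psi_{q^{d_i}/q}\bigl(\mathrm{Tr}(y^ih(\a_i))\bigr)=\frac{1}{|\mathrm{End}(\calE)|}\sum_{h\in\mathrm{End}(\calE)}\prod_{i=1}^r\calF^{\gl_n(q^{d_i})}(1_{A_i})(h(\a_i)).$$

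Finally, to match the prefactor $q^{-n^2}$ I would compute $\dim_k\mathrm{End}(\calE)$ via Riemann--Roch. Since $\mathrm{End}(\calE)\simeq\bigoplus_{u,v}\calO(b_v-b_u)^{m_um_v}$ on $\bbP^1$, one has $\chi(\mathrm{End}(\calE))=n^2$, so $\dim_k\mathrm{End}(\calE)=n^2+h^1(\mathrm{End}(\calE))$; a direct count using $h^1(\calO(d))=\max(0,-d-1)$ gives $h^1(\mathrm{End}(\calE))=\sum_{u<v}m_um_v(b_u-b_v-1)=\delta_\calE$. Hence $|\mathrm{End}(\calE)|=q^{n^2+\delta_\calE}$, and combining with $|X_{\bf A}^\calE|=q^{\delta_\calE}|Y_{\bf A}^\calE|$ produces exactly the prefactor $q^{-n^2}$. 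The only non-routine step is recognising the finite system of equations cutting out $Y_{\bf A}^\calE$ (in the first formulation of the lemma) as the single condition $L_{\mathbf y}\equiv 0$ on $\mathrm{End}(\calE)$; everything else is standard Fourier bookkeeping on a finite-dimensional $\F_q$-vector space together with one application of Riemann--Roch on $\bbP^1$.
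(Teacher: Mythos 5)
Your proposal is correct and is essentially the paper's own argument run in reverse: the paper starts from the character sum, expands the Fourier transforms, and uses orthogonality block by block to recover $|{\rm End}(\calE)|\cdot|Y_{\bf A}^\calE|$, whereas you start from $|Y_{\bf A}^\calE|$ and detect the condition $L_{\mathbf y}\equiv 0$ by orthogonality, with the same final bookkeeping $|{\rm End}(\calE)|=q^{n^2+\delta_\calE}$. The packaging via the single linear form $L_{\mathbf y}$ is a clean way to present the same computation.
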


\begin{proof} We have 
\bigskip

$
\sum_{h\in{\rm End}(\calE)}\prod_{i=1}^r\calF^{\gl_n(q^{d_i})}(1_{A_i})(f(\a_i))
$

\begin{align*}
&=\sum_{h\in{\rm End}(\calE)}\prod_{i=1}^r\sum_{y\in\gl_n(q^{d_i})}\psi_{q^{d_i}/q}\left({\rm Tr}(yh(\a_i))\right)1_{A_i}(y)\\
&=\sum_{(y^1,\dots,y^r)\in A_1\times\cdots\times A_r}\sum_{h\in{\rm End}(\calE)}\prod_{i=1}^r\psi\left(\sum_{s=0}^{d_i-1}{\rm Tr}\left(y^ih(\a_i)\right)^{q^s}\right)\\
&=\sum_{(y^1,\dots,y^r)\in A_1\times\cdots\times A_r}\sum_{h\in{\rm End}(\calE)}\psi\left(\sum_{i=1}^r\sum_{s=0}^{d_i-1}{\rm Tr}\left(y^ih(\a_i)\right)^{q^s}\right)\\
&=\sum_{(y^1,\dots,y^r)\in A_1\times\cdots\times A_r}\sum_{h\in{\rm End}(\calE)}\psi\left(\sum_{i=1}^r\sum_{s=0}^{d_i-1}{\rm Tr}\left(\sum_{u=1}^f\sum_{v=1}^fy^i_{u,v}h_{v,u}(\a_i)\right)^{q^s}\right)\\
&=\sum_{(y^1,\dots,y^r)\in A_1\times\cdots\times A_r}\sum_{h\in{\rm End}(\calE)}\psi\left(\sum_{u=1}^f\sum_{v=1}^f{\rm Tr}\left(\sum_{i=1}^r\sum_{s=0}^{d_i-1}F^s\left(y^i_{u,v}h_{v,u}(\a_i)\right)\right)\right)\\
&=\sum_{(y^1,\dots,y^r)\in A_1\times\cdots\times A_r}\prod_{1\leq v\leq u\leq f}\sum_{h_{v,u}(t)\in{\rm Mat}_{m_v,m_u}(k[t]^{\leq b_v-b_u})}\psi\left({\rm Tr}\left(\sum_{i=1}^r\sum_{s=0}^{d_i-1}F^s\left((y^i_{u,v}h_{v,u}(\a_i)\right)\right)\right)\\
\end{align*}
Recall that $t_i$ is the image of $t$ in $k(\a_i)$ and write $h_{v,u}(\a_i)=\sum_{j=0}^{b_v-b_u}X^j_{v,u}t_i^j$ with $X_{v,u}^j\in{\rm Mat}_{m_v,m_u}(k)$. Then

\begin{align*}
\sum_{h\in{\rm End}(\calE)}\prod_{i=1}^r\calF^{\gl_n(q^{d_i})}(1_{A_i})(h(\a_i))&=
\sum_{(y^1,\dots,y^r)\in A_1\times\cdots\times A_r}\prod_{1\leq v\leq u\leq f}\prod_{j=0}^{b_v-b_u}\sum_{X\in{\rm Mat}_{m_v,m_u}(k)}\psi\left({\rm Tr}\left(X\sum_{i=1}^r\sum_{s=0}^{d_i-1}F^s\left(y^i_{u,v}t_i^j\right)\right)\right)\\
&=q^{\sum_{1\leq u\leq v\leq f}m_um_v(b_u-b_v+1)}\,|Y_{{\bf A}}^\calE|=|{\rm End}(\calE)|\,|Y_{{\bf A}}^\calE|,
\end{align*}
that is

$$
|Y_{{\bf A}}^\calE|=\frac{1}{|{\rm End}(\calE)|}\sum_{h\in{\rm End}(\calE)}\prod_{i=1}^r\calF^{\gl_n(q^{d_i})}(1_{A_i})(h(\a_i)).
$$

The result follows from the relation $q^{\delta_\calE}q^{n^2}=|{\rm End}(\calE)|$.
\end{proof}

\subsection{Fourier transforms and characters of $\GL_n(q)$}

\subsubsection{Harish-Chandra induction : The Lie algebra case}\label{HCILie}

We use the same notation as in \S \ref{Harish-Chandra}. Let $P=L\rtimes U_P$ be an $F$-stable Levi decomposition of a parabolic subgroup $P$ of $\GL_n$. Denote by $\frakp=\frakl\oplus\fraku_P$ the corresponding Lie algebra decomposition in $\gl_n$ (i.e. $\frakp={\rm Lie}(P)$, $\frakl={\rm Lie}(L)$ and $\fraku_P={\rm Lie}(U_P)$). We denote by $C(\frakl(q))$ the $\C$-vector space of functions $\frakl(q)\rightarrow\C$ that are constant on $L(q)$-(adjoint) orbits of $\frakl(q)$. The Harish-Chandra induction $R_{\frak{l}(q)}^{\gl_n(q)}:C(\frakl(q))\rightarrow C(\gl_n(q))$ is the $\C$-linear operator defined by the formula

$$
R_{\frakl(q)}^{\gl_n(q)}(f)(x)=\frac{1}{|P(q)|}\sum_{g\in G(q), g^{-1}xg\in P}f(\pi_\frakp(g^{-1}xg)),
$$
where $\pi_\frakp$ is the projection $\frakp\rightarrow \frakl$.

Recall that since we working in $\GL_n$, centralizers of semisimple elements of $\gl_n$ are always Levi subgroups of $\GL_n$ and all Levi subgroups of $\GL_n$ can be realized as the centralizer of some semisimple element in $\gl_n$.

Let $A$ be an adjoint orbit of $\gl_n(q)$ with $y\in A$ such that $L=C_{\GL_n}(y_s)$. We  denote by $A^L$ the $L(q)$-orbit of $y$ in $\frakl(q)$. We have the following well-known result (it is a particular case of \cite[Proposition 7.1.8]{letellier}).

\begin{proposition} $$R_{\frakl(q)}^{\gl_n(q)}(1_{A^L})=1_A.$$

\end{proposition}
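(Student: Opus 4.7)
The plan is to unwind the definition of $R_{\frakl(q)}^{\gl_n(q)}$ into a transporter count and then to identify $A^L+\fraku_P(q)$ with a single $P(q)$-adjoint orbit. Since $1_{A^L}\circ\pi_\frakp$ is the characteristic function of $A^L+\fraku_P(q)\subset\frakp(q)$, unwinding gives
$$R_{\frakl(q)}^{\gl_n(q)}(1_{A^L})(x)=\frac{1}{|P(q)|}\#\bigl\{g\in\GL_n(q)\,\bigm|\,g^{-1}xg\in A^L+\fraku_P(q)\bigr\}.$$

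The key step will be to prove the identity $A^L+\fraku_P(q)=P(q)\cdot y$, the adjoint $P(q)$-orbit of $y$. For this I will invoke the Lie algebra counterpart of Lemma \ref{lem2.6.6}: because $L=C_{\GL_n}(y_s)$, the operator ${\rm ad}(y_s)$ acts invertibly on $\fraku_P$, and therefore so does ${\rm ad}(y)={\rm ad}(y_s)+{\rm ad}(y_n)$, a sum of an invertible operator and a commuting nilpotent one. The orbit map $U_P\to y+\fraku_P$, $u\mapsto uyu^{-1}$, is then an isomorphism of varieties, which on $\F_q$-points yields $U_P(q)\cdot y=y+\fraku_P(q)$. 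Sweeping by $L(q)$, which stabilizes $A^L$ and normalizes $\fraku_P$, yields $P(q)\cdot y=A^L+\fraku_P(q)$; in particular this set is contained in the $\GL_n(q)$-orbit $A$.

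The finish is a direct count. If $x\notin A$ the transporter is empty and the value is $0=1_A(x)$. If $x\in A$, I will write $x=hyh^{-1}$ with $h\in\GL_n(q)$ and reparametrize $g=hk$; the condition becomes $k^{-1}yk\in P(q)\cdot y$, equivalently $k\in C_{\GL_n(q)}(y)\cdot P(q)$. The cardinality of this product is $|C_{\GL_n(q)}(y)|\cdot|P(q)|/|C_{\GL_n(q)}(y)\cap P(q)|$, and because $C_{\GL_n}(y)\subset C_{\GL_n}(y_s)=L\subset P$ the intersection reduces to $C_{\GL_n(q)}(y)$. The count is therefore $|P(q)|$, and after division by $|P(q)|$ the value is $1=1_A(x)$. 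The main obstacle will be establishing the Lie algebra version of Lemma \ref{lem2.6.6}; once the bijectivity of $u\mapsto uyu^{-1}$ from $U_P(q)$ onto $y+\fraku_P(q)$ is granted, the remainder of the argument is formal manipulation of orbits and cosets.
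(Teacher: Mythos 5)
Your proof is correct. The paper itself gives no argument for this proposition, citing it as a special case of \cite[Proposition 7.1.8]{letellier}, and your write-up is essentially the standard proof of that result: the identification $A^L+\fraku_P(q)=P(q)\cdot y$ via the Lie-algebra analogue of Lemma \ref{lem2.6.6} (valid because $C_{\GL_n}(y)\subseteq C_{\GL_n}(y_s)=L$ forces $C_{U_P}(y)=\{1\}$, and injectivity plus equal cardinality over $\F_q$ gives bijectivity of $u\mapsto uyu^{-1}$ onto $y+\fraku_P(q)$), followed by the transporter count $|C_{\GL_n(q)}(y)\cdot P(q)|=|P(q)|$, which uses $C_{\GL_n}(y)\subseteq L\subseteq P$ exactly as you say. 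All steps check out.
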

 
Denote by $\calF^{\frakl(q)}:{\rm Fun}(\frakl(q))\rightarrow{\rm Fun}(\frakl(q))$ the Fourier transform with respect to the additive character $\psi:k\rightarrow\C^\times$ and the trace map. We have the following result (see for instance Lehrer \cite{Lehrer}).
\bigskip

\begin{theorem}

$$
\calF^{\gl_n(q)}\circ R_{\frakl(q)}^{\gl_n(q)}=q^{{\rm dim} U_P}R_{\frakl(q)}^{\gl_n(q)}\circ\calF^{\frakl(q)}.
$$
\end{theorem}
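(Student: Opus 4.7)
The plan is to prove the identity by unwinding both sides using the definitions of the Fourier transform and of Harish-Chandra induction, and exploiting two orthogonality properties of the trace pairing on $\gl_n$.

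First I would expand the left-hand side applied to $f \in {\rm Fun}(\frakl(q))$ and to $x \in \gl_n(q)$:
\begin{align*}
\calF^{\gl_n(q)}\bigl(R_{\frakl(q)}^{\gl_n(q)}(f)\bigr)(x) &= \sum_{y \in \gl_n(q)} \psi\bigl({\rm Tr}(xy)\bigr)\, R_{\frakl(q)}^{\gl_n(q)}(f)(y) \\
&= \frac{1}{|P(q)|}\sum_{g \in \GL_n(q)} \sum_{\substack{y' \in \frakp(q)}} \psi\bigl({\rm Tr}(g^{-1}xg\cdot y')\bigr)\, f\bigl(\pi_\frakp(y')\bigr),
\end{align*}
where I have performed the change of variable $y' = g^{-1}yg$ and used that ${\rm Tr}(xy)={\rm Tr}(g^{-1}xg \cdot y')$.

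Next I would split $y' = \ell + u$ with $\ell \in \frakl(q)$ and $u \in \fraku_P(q)$, so that $\pi_\frakp(y')=\ell$, and carry out the inner sum over $u$. The map $u \mapsto {\rm Tr}(g^{-1}xg\cdot u)$ is $\F_q$-linear on $\fraku_P(q)$, hence the sum $\sum_{u \in \fraku_P(q)} \psi({\rm Tr}(g^{-1}xg\cdot u))$ equals $q^{\dim U_P}$ if the form vanishes identically and $0$ otherwise. The key structural input is that, under the trace pairing, the orthogonal complement of $\fraku_P$ in $\gl_n$ is exactly $\frakp$ (this is standard: for the upper block-triangular model, ${\rm Tr}(e_{ij}e_{kl})=\delta_{il}\delta_{jk}$, so annihilating strictly upper block-triangular matrices forces the lower strictly block-triangular entries to vanish). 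Consequently the $u$-sum contributes the factor $q^{\dim U_P}$ weighted by the indicator that $g^{-1}xg \in \frakp$.

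At this point I would replace ${\rm Tr}(g^{-1}xg\cdot\ell)$ by ${\rm Tr}(\pi_\frakp(g^{-1}xg)\cdot\ell)$, which is legitimate thanks to the complementary orthogonality ${\rm Tr}(\fraku_P \cdot \frakl)=0$ (same trace-pairing computation: block-diagonal and strictly block-triangular entries do not pair). The summation over $\ell \in \frakl(q)$ then assembles into $\calF^{\frakl(q)}(f)\bigl(\pi_\frakp(g^{-1}xg)\bigr)$, leaving
\[
\calF^{\gl_n(q)}\bigl(R_{\frakl(q)}^{\gl_n(q)}(f)\bigr)(x) = \frac{q^{\dim U_P}}{|P(q)|}\sum_{\substack{g \in \GL_n(q) \\ g^{-1}xg\in\frakp}} \calF^{\frakl(q)}(f)\bigl(\pi_\frakp(g^{-1}xg)\bigr) = q^{\dim U_P}\, R_{\frakl(q)}^{\gl_n(q)}\bigl(\calF^{\frakl(q)}(f)\bigr)(x),
\]
which is the desired identity.

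The only nontrivial point is the double orthogonality statement $\fraku_P^\perp = \frakp$ and $\frakl \perp \fraku_P$ with respect to the trace pairing; everything else is bookkeeping with Fubini and a change of variable. Since the argument is Fourier-theoretic and purely formal once these two orthogonalities are identified, there is no serious obstacle — the only thing to watch is that one must work in the Lie algebra setting (where the Fourier transform is available) and that the non-degeneracy of the trace form on $\gl_n$ is what makes the complementary decomposition $\gl_n = \frakp \oplus \fraku_P^-$ dual to the Levi decomposition of the opposite parabolic.
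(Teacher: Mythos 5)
Your proof is correct. Note that the paper does not prove this statement at all — it simply cites Lehrer's paper \emph{Fourier transforms of invariant functions on a finite Lie algebra} — and your argument (unwinding both sides, character orthogonality over $\fraku_P(q)$, and the two trace-pairing orthogonalities $\fraku_P^\perp=\frakp$ and ${\rm Tr}(\fraku_P\cdot\frakl)=0$) is exactly the standard proof one finds there; all steps check out, including the bijectivity of the substitution $y'=g^{-1}yg$ and the replacement of ${\rm Tr}(g^{-1}xg\cdot\ell)$ by ${\rm Tr}(\pi_\frakp(g^{-1}xg)\cdot\ell)$.
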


Assume now that $A$ is semisimple orbit. Since $y\in A$ is central in $\frakl$, the function $\calF^{\frakl(q)}(1_{A^L})$ is the linear additive character $\eta_{A^L}:\frakl(q)\rightarrow\C^\times$, $x\mapsto \psi ({\rm Tr}(xy))$. From the above results we have

\begin{equation}
\calF^{\gl_n(q)}(1_A)=q^{\frac{1}{2}{\rm dim}\, A(\overline{k})}\,R_{\frakl(q)}^{\gl_n(q)}\left(\eta_{A^L}\right).
\label{Fouror}\end{equation}

\subsubsection{Sum of character values}

If $\frakm$ is a Lie subalgebra of $\gl_n$, we denote by $z_\frakm$ its center. 

Assume given standard Levi subgroups $L_1,\dots,L_r$ of $\GL_n$. Above Theorem \ref{theo} we defined the notion of generic $r$-tuples of linear characters of $L_1(q^{d_1}),\dots,L_r(q^{d_r})$. Let us now define the Lie algebra version of it.

 Say that an $r$-tuple $(\eta_1,\dots,\eta_r)$ of linear additive  characters of $\frakl_1(q^{d_1}),\dots,\frakl_r(q^{d_r})$ is \emph{generic} if for any Levi subgroup $M$ of $\GL_n$ defined over $k$ (with Lie algebra $\frakm$) such that $Z_M(q)\subset g_iL_i(q^{d_i})g_i^{-1}$ for some $g_i\in\GL_n(q^{d_i})$, the linear additive character $(^{g_1}\eta_1)|_{z_\frakm(q)}\cdots(^{g_r}\eta_r)|_{z_\frakm(q)}$ is trivial if and only if  $M=\GL_n$.
\bigskip

Assume that $(\alpha_1,\dots,\alpha_r)$ is a generic $r$-tuple of linear characters of $L_1(q^{d_1}),\dots,L_r(q^{d_r})$ and that $(\eta_1,\dots,\eta_r)$ is a generic $r$-tuple of linear additive characters of $\frakl_1(q^{d_1}),\dots,\frakl_r(q^{d_r})$. We have the following result.

\begin{theorem}For any rank $n$ vector bundle $\calE$ on $\bbP^1_k$ we have

$$
(q-1)\sum_{h\in{\rm End}(\calE)}\prod_{i=1}^rR_{\frakl_i(q^{d_i})}^{\gl_n(q^{d_i})}(\eta_i)(h(\a_i))=q\,\sum_{h\in{\rm Aut}(\calE)}\prod_{i=1}^rR_{L_i(q^{d_i})}^{\GL_n(q^{d_i})}(\alpha_i)(h(\a_i)).
$$
\label{charsum}\end{theorem}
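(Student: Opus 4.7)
The strategy is to compute both sides by sorting over the Jordan decomposition of $h$ and invoking genericity to reduce to the case where the semisimple part of $h$ is a central scalar. This mirrors the proof of Theorem \ref{intermediate} already carried out on the group side.

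For the LHS, write each $h \in {\rm End}(\calE)$ as $h = h_s + h_n$ with $h_s$ semisimple and $h_n$ nilpotent in ${\rm End}(\calE)$. As noted in \S \ref{aut}, $h_s$ is ${\rm Aut}(\calE)$-conjugate to an element of $\frakl_\calE(q)$, so we may sort the sum by the type $(M,\calO_1,\dots,\calO_r)$ attached to $h$ exactly as in the setup preceding Theorem \ref{intermediate}. Using the character formula for Harish-Chandra induction of a linear additive character (which is trivial on nilpotent elements of $\frakl_i$, since $\eta_i = \psi\circ{\rm Tr}(\cdot\, y_i)$ with $y_i$ central in $\frakl_i$ and ${\rm Tr}(y_i \cdot n) = 0$ for nilpotent $n \in \frakl_i$), the value $R_{\frakl_i(q^{d_i})}^{\gl_n(q^{d_i})}(\eta_i)(h(\a_i))$ factorizes after averaging into a product of a central character $\eta_i(g_i^{-1}h_s(\a_i)g_i)$ and a trivial-character induction on the nilpotent part.

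The product $\prod_i \eta_i(g_i^{-1}h_s(\a_i)g_i)$ is a linear additive character of $z_M(q)$ evaluated on the central element $h_s$; by the genericity hypothesis on $(\eta_1,\dots,\eta_r)$, this character is non-trivial unless $M = \GL_n$. Summing over $h_s \in (Z_M)_{\rm reg}(q)$ therefore kills all contributions except $M = \GL_n$. In the surviving central case $h_s = c\, I$ with $c \in z_{\gl_n}(q) = \F_q$, and $\prod_i \eta_i(cI) = 1$ (again by genericity applied to $M=\GL_n$), so the LHS reduces to
$$(q-1) \cdot q \cdot \sum_{n \in {\rm End}(\calE)_{\rm nil}} \prod_{i=1}^r R_{\frakl_i(q^{d_i})}^{\gl_n(q^{d_i})}(1)(n(\a_i)).$$
The parallel argument on the group side (which is precisely the computation carried out in the proof of Proposition \ref{propindec} / Theorem \ref{intermediate}) reduces the RHS to
$$q \cdot (q-1) \cdot \sum_{u \in {\rm Aut}(\calE)_{\rm uni}} \prod_{i=1}^r R_{L_i(q^{d_i})}^{\GL_n(q^{d_i})}(1)(u(\a_i)),$$
where the factor $(q-1) = |Z_{\GL_n}(q)|$ replaces $q = |z_{\gl_n}(q)|$ because the group center omits the zero scalar. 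The map $n \mapsto 1+n$ is a bijection between ${\rm End}(\calE)_{\rm nil}$ and ${\rm Aut}(\calE)_{\rm uni}$ sending $n(\a_i)$ to $I + n(\a_i)$, and directly from the definition of Harish-Chandra induction one has $R_{L_i(q^{d_i})}^{\GL_n(q^{d_i})}(1)(I + n(\a_i)) = R_{\frakl_i(q^{d_i})}^{\gl_n(q^{d_i})}(1)(n(\a_i))$, since $h^{-1}(1+n)h \in P_i$ iff $h^{-1}nh \in \frakp_i$. The two reduced expressions therefore coincide, proving the identity.

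The main obstacle is the genericity-based cancellation of non-central types: although transparent in outline, it requires the character-formula decomposition of Harish-Chandra induction together with a typed resummation of the Lie-algebra side analogous to Theorem \ref{intermediate}, in which the combinatorial factors $K_M^o/|W_{\GL_n(q)}(M,\calO_1,\dots,\calO_r)|$ must be produced from the Lie-algebra computation and seen to match those appearing on the group side, so that only the scalar coefficients $q$ and $q-1$ (i.e.\ $|z_{\gl_n}(q)|$ and $|Z_{\GL_n}(q)|$) distinguish the two sides.
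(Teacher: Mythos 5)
The central step of your argument --- that ``summing over $h_s\in (Z_M)_{\rm reg}(q)$ kills all contributions except $M=\GL_n$'' --- is false, and the same applies to the Lie algebra side. The set $(Z_M)_{\rm reg}(q)$ (resp.\ $(z_\frakm)_{\rm reg}(q)$) is not a subgroup of $Z_M(q)$ (resp.\ of $z_\frakm(q)$): it is the complement of the union of the centers of the strictly larger Levis. Orthogonality of characters therefore does not apply directly; one must stratify $Z_M=\coprod_{M'\supseteq M}(Z_{M'})_{\rm reg}$ and invert, and genericity then gives that the character sum over $(Z_M)_{\rm reg}(q)$ equals (up to the Weyl-group normalization) $(q-1)K_M^o$ on the group side and $qK_M^o$ on the Lie algebra side --- not zero. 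This is exactly the content of Theorem \ref{intermediate}, whose right-hand side visibly retains nonzero contributions from every type $(M,\calO_1,\dots,\calO_r)$, as the rank-two tables in \S\ref{rank2} confirm (the types with $M\neq\GL_2$ contribute with coefficient $C_\omhat^o=-\tfrac12$). Consequently your two ``reduced expressions'' $(q-1)\,q\sum_{\rm nil}$ and $q\,(q-1)\sum_{\rm uni}$ are not equal to the left- and right-hand sides of the theorem; the intermediate identities in your proof are false even though the final equality you are aiming at is true.

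The correct route --- which is the one the paper takes, and which your closing paragraph half-describes while contradicting the body of your argument --- is to keep \emph{all} types on both sides: expand the group side by Theorem \ref{intermediate} into $\sum_{(M,\calO_1,\dots,\calO_r)}\frac{(q-1)K_M^o\,\hat{\calH}^\calE}{|W_{\GL_n(q)}(M,\calO_1,\dots,\calO_r)|}\prod_i R_{L_i(q^{d_i})}^{\GL_n(q^{d_i})}(1)(l_M.\calO_i)$, prove the exactly analogous expansion of the Lie algebra side with $(q-1)$ replaced by $q$, and then match the two sums term by term under the unipotent--nilpotent bijection of types, using that $\hat{\calH}$ and the trivial-character Harish--Chandra values agree (your observation that $h^{-1}(1+n)h\in P_i$ iff $h^{-1}nh\in\frakp_i$ is the right ingredient here, but applied at the level of each type, not only at $M=\GL_n$). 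The cross-multiplied factors $q(q-1)K_M^o$ then agree for every $M$, which is what proves the identity. As written, your proof skips the entire typed resummation on the Lie algebra side and replaces it with a vanishing claim that does not hold.
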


\begin{proof}If $\calE$ is the rank $n$ trivial vector bundle, in which case ${\rm Aut}(\calE)=\GL_n$, the theorem is a particular case of \cite[Theorem 6.9.1, Formula (6.9.1)]{ICQV}. The proof for an arbitrary vector bundle $\calE$ is similar. More precisely, by Theorem \ref{intermediate}, we have 

$$
\sum_{h\in{\rm Aut}(\calE)}\prod_{i=1}^rR_{L_i(q^{d_i})}^{\GL_n(q^{d_i})}(\alpha_i)(h(\a_i))=\sum_{(M,\calO_1,\dots,\calO_r)}\frac{(q-1)K_M^o\hat{\calH}_{(M,\calO_1,\dots,\calO_r)}^\calE(q)}{|W_{\GL_n(q)}(M,\calO_1,\dots,\calO_r)|}\prod_{i=1}^rR_{L_i(q^{d_i})}^{\GL_n(q^{d_i})}(1)(l_M.\calO_i),
$$
where $\hat{\calH}_{(M,\calO_1,\dots,\calO_r)}^\calE(q)$ is the numerator of $\calH_{(M,\calO_1,\dots,\calO_r)}^\calE(q)$.
\bigskip

As we did for conjugacy classes we introduce the set $\mathcal{B}$ of $r$-tuples $(B_1,\dots,B_r)$ of adjoint orbits of $\gl_n(q^{d_1}),\dots,\gl_n(q^{d_r})$ which have a common semisimple part. Then as in the group case we have the notion of type for the elements of $\mathcal{B}$  which can be written in the form $(M,\calN_1,\dots,\calN_r)$ (up to $\GL_n(q)$-conjugation) where $M$ is the centralizer of a common semisimple part and $\calN_1,\dots,\calN_r$ are the nilpotent conjugacy classes in $\frakm$ defining  the orbits $B_1,\dots,B_r$.

Similarly to the group case we prove 

$$
\sum_{h\in{\rm End}(\calE)}\prod_{i=1}^rR_{\frakl_i(q^{d_i})}^{\gl_n(q^{d_i})}(\alpha_i)(h(\a_i))=\sum_{(M,\calN_1,\dots,\calN_r)}\frac{qK_M^o\hat{\calH}_{(M,\calN_1,\dots,\calN_r)}^\calE(q)}{|W_{\GL_n(q)}(M,\calN_1,\dots,\calN_r)|}\prod_{i=1}^rR_{\frakl_i(q^{d_i})}^{\gl_n(q^{d_i})}(1)(l'_M.\calN_i),
$$
where 

$$
\hat{\calH}_{(M,\calN_1,\dots,\calN_r)}^\calE(q):=\#\{f\in{\rm End}(\calE)\,|\, f(\a_i)\in B_i\}
$$
for some $r$-tuple $(B_1,\dots,B_r)$ of type $(M,\calN_1,\dots,\calN_r)$. Note that as in the group case (cf. Proposition \ref{indep2}), the right hand side depends only on the type of $(B_1,\dots,B_r)$. 
\bigskip

The natural bijection between unipotent conjugacy classes and nilpotent orbits induces a bijection between types of elements of $\mathcal{C}$ and types of elements of $\mathcal{B}$. If $(M,\calO_1,\dots,\calO_r)$ corresponds to $(M,\calN_1,\dots,\calN_r)$, then for all $i=1,\dots,r$

$$
R_{\frakl_i(q^{d_i})}^{\gl_n(q^{d_i})}(1)(l_M.\calN_i)=R_{L_i(q^{d_i})}^{\GL_n(q^{d_i})}(1)(l_M.\calO_i).
$$

\end{proof}

\subsection{Higgs bundles over finite fields and indecomposable bundles}

We assume that $k=\F_q$. 

Recall that ${\bf A}=(A_1,\dots,A_r)$ is a fixed $r$-tuple of adjoint orbits of $\gl_n(q^{d_1}),\dots,\gl_n(q^{d_r})$ and let $\tilde{\bf A}$ be defined from ${\bf A}$ as in  Remark \ref{F_qbar}. For each $i=1,\dots,r$, we choose an element $\sigma_i$ which is the semisimple part of an element of $A_i$. We put $L_i=C_{\GL_n}(\sigma_i)$ and $\frakl_i={\rm Lie}(L_i)$.  We consider the linear additive characters $\eta_i:=\calF^{\frakl_i(q^{d_i})}(1_{\sigma_i}):\frakl_i(q^{d_i})\rightarrow\C^\times$ for $i=1,\dots,r$. 

\begin{lemma}If ${\bf A}$ is generic then the $r$-tuple $(\eta_1,\dots,\eta_r)$ is generic.
\end{lemma}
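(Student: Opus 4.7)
The plan is to compute each $\eta_i$ explicitly, evaluate the product character on $z_\frakm(q)$, and translate its (non-)triviality into the eigenvalue genericity conditions on $\tilde{\bf A}$.

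First I would unpack the Fourier transform of the indicator of a point: one has
\[
\eta_i(y)=\psi_{q^{d_i}/q}(\mathrm{Tr}(\sigma_i y)),\qquad y\in\l_i(q^{d_i}).
\]
Given an $F$-stable Levi $M$ with $Z_M(q)\subset g_iL_i(q^{d_i})g_i^{-1}$ for some $g_i\in\GL_n(q^{d_i})$, set $\sigma'_i:=g_i\sigma_i g_i^{-1}$. The inclusion $Z_M\subset g_iL_ig_i^{-1}=C_{\GL_n}(\sigma'_i)$ forces $\sigma'_i\in\frakm(q^{d_i})$. For $x\in z_\frakm(q)$, using $F(x)=x$,
\[
({}^{g_i}\eta_i)(x)=\psi_{q^{d_i}/q}(\mathrm{Tr}(\sigma'_i x))=\psi\!\left(\mathrm{Tr}\Big(\sum_{l=0}^{d_i-1}F^l(\sigma'_i)\,x\Big)\right).
\]
Setting $\Sigma:=\sum_{i=1}^{r}\sum_{l=0}^{d_i-1}F^l(\sigma'_i)\in\gl_n(q)$, the product character on $z_\frakm(q)$ is $x\mapsto\psi(\mathrm{Tr}(\Sigma x))$.

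Next I would reduce triviality of this character to a vanishing statement on $\Sigma$. Decompose $\overline{k}^n=\bigoplus_{j=1}^{s}V_j$ into $Z_M$-isotypic components, so that $M\simeq\prod_j\GL(V_j)$ and $F$ permutes the indices $\{1,\dots,s\}$. Since $\Sigma\in\gl_n(q)$ one has $\mathrm{Tr}(\Sigma|_{V_{F(j)}})=F(\mathrm{Tr}(\Sigma|_{V_j}))$, so grouping $x\in z_\frakm(q)$ by $F$-orbits and using non-degeneracy of $\mathrm{Tr}_{\F_{q^e}/\F_q}$ on each orbit of size $e$, the character $\psi(\mathrm{Tr}(\Sigma\,\cdot\,))$ is trivial on $z_\frakm(q)$ if and only if $t_j:=\mathrm{Tr}(\Sigma|_{V_j})=0$ for every $j$. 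Then I would identify each $t_j$ with a partial sum of eigenvalues of the orbits comprising $\tilde{\bf A}$: since $F^l(\sigma'_i)\in\frakm$ stabilises $V_j$, is semisimple, and has the same multiset of $\overline{k}$-eigenvalues as $F^l(\tilde A_i)$, each $\mathrm{Tr}(F^l(\sigma'_i)|_{V_j})$ is a sum of $m_j:=\dim V_j$ eigenvalues of $F^l(\tilde A_i)$, and consequently $t_j$ is a sum of $m_j$ eigenvalues chosen from each of the $d=\sum_id_i$ orbits of $\tilde{\bf A}$.

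Finally I would conclude. When $M=\GL_n$ we have $s=1$, $m_1=n$, and $t_1=\sum_i\mathrm{Tr}_{\F_{q^{d_i}}/\F_q}(\mathrm{Tr}(\sigma_i))=\sum_{O\in\tilde{\bf A}}\mathrm{Tr}(O)$, which vanishes by genericity condition (2), so the character is trivial. When $M\neq\GL_n$ we have $s\geq 2$ and every $m_j$ satisfies $1\leq m_j<n$; triviality of the character would then force the sum of $m_j$ eigenvalues selected from each orbit of $\tilde{\bf A}$ to vanish, contradicting genericity condition (1). The main delicate point is the second step—cleanly reducing the character on $z_\frakm(q)$ to a trace pairing with the $F$-invariant matrix $\Sigma$—which is handled by the $F$-orbit bookkeeping on the isotypic components above.
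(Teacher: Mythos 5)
Your proof is correct and follows essentially the same route as the paper's: express each $({}^{g_i}\eta_i)$ as $\psi(\mathrm{Tr}(\cdot\,\sum_l F^l(g_i\sigma_ig_i^{-1})))$, observe that $g_i\sigma_ig_i^{-1}$ lands in $\frakm$ because it commutes with $Z_M$, decompose $\frakm$ into blocks, and use non-degeneracy of the trace pairing on $z_\frakm(q)$. You actually spell out more fully than the paper does the final translation of the blockwise trace conditions into the eigenvalue-sum conditions (1) and (2) defining genericity of $\tilde{\bf A}$, which the paper leaves implicit.
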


\begin{proof}Assume that $\tilde{\bf A}$ is generic and let $M$ be a Levi subgroup of $\GL_n$ defined over $k$ such that for all $i=1,\dots,r$, we have $Z_M(q)\subset g_iL_i(q^{d_i})g_i^{-1}$ for some $g_i\in\GL_n(q^{d_i})$. Let $z\in z_\frakm(q)$. Then

\begin{align*}
\prod_{i=1}^r({^{g_i}}\eta_i)(z)&=\prod_{i=1}^r\psi_{q^{d_i}/q}\left({\rm Tr}(g_i\sigma_i g_i^{-1}z)\right)\\
&=\prod_{i=1}^r\psi\left({\rm Tr}\left(z\sum_{j=1}^{d_i-1}F^j(g_i\sigma_ig_i^{-1})\right)\right)\\
&=\psi\left({\rm Tr}\left(z\sum_{i=1}^r\sum_{j=1}^{d_i-1}F^j(g_i\sigma_ig_i^{-1})\right)\right)
\end{align*}

Since $\sigma_i$ is central in $\frakl_i$, the element $g_i\sigma_ig_i^{-1}$ is central in $g_i\frakl_ig_i^{-1}$ and so it commutes with all element of $z_\frakm$, i.e. $g_i\sigma_ig_i^{-1}\in C_{\gl_n}(z_\frakm)=\frakm$. The element $\sum_{i=1}^r\sum_{j=1}^{d_i-1}F^j(g_i\sigma_ig_i^{-1})$ is thus in $\frakm(q)$. Now we may write $\frakm\simeq\bigoplus_{i=1}^s\gl(V_i)$ over $k$. The result follows thus from the fact that for any $X=(X_i)_{i=1,\dots,s}\in\frakm(q)$, the character $\theta:z_\frakm(q)\rightarrow \C^\times$, $z\mapsto \psi({\rm Tr}(zX))$ is trivial is and only if ${\rm Tr}(X_i)=0$ for all $i=1,\dots, s$.
\end{proof}

From ${\bf A}=(A_1,\dots,A_r)$ we define a multi-partition $\muhat=(\mu^1,\dots,\mu^r)$ such that for each $i=1,\dots,r$, the parts of $\mu^i$ are the multiplicities of the eigenvalues of $A_i$.

We put 

\begin{equation}
d_{\bf A}:=\sum_{i=1}^rd_i\,{\rm dim}\, A_i(\overline{k})-2n^2+2,
\label{dA}\end{equation}
 and 
 
 $$
 {\rm PAut}(\calE):={\rm Aut}(\calE)/\mathbb{G}_m(\F_q),
 $$
 where $\mathbb{G}_m(\F_q)$ is identified with the homotheties in ${\rm Aut}(\calE)$.

\begin{theorem}Assume that ${\bf A}$ is generic, then 

$$
\frac{|X_{\bf A}^\calE|}{|{\rm PAut}(\calE)|}=q^{\frac{1}{2}d_{\bf A}}\calA_{\muhat}^\calE(q).
$$
\label{maintheoHiggs}\end{theorem}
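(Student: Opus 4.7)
\textbf{Proof proposal for Theorem \ref{maintheoHiggs}.}

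The plan is to chain together Proposition \ref{eqHiggs}, formula (\ref{Fouror}), Theorem \ref{charsum}, and Theorem \ref{theo}, keeping careful track of the powers of $q$. First I would start from Proposition \ref{eqHiggs}, which expresses $|X_{\bf A}^\calE|$ as $q^{-n^2}$ times a sum over $h\in{\rm End}(\calE)$ of products of Fourier transforms $\calF^{\gl_n(q^{d_i})}(1_{A_i})(h(\a_i))$. Since ${\bf A}$ is generic each $A_i$ is a semisimple adjoint orbit, so with $\sigma_i$ a representative and $L_i=C_{\GL_n}(\sigma_i)$ the $L_i(q^{d_i})$-orbit $A_i^{L_i}$ is the single point $\{\sigma_i\}$, and formula (\ref{Fouror}) (applied over the ground field $\F_{q^{d_i}}$) yields
$$\calF^{\gl_n(q^{d_i})}(1_{A_i})=q^{\tfrac{d_i}{2}\dim A_i(\overline{k})}\,R_{\frakl_i(q^{d_i})}^{\gl_n(q^{d_i})}(\eta_i),$$
where $\eta_i=\calF^{\frakl_i(q^{d_i})}(1_{\sigma_i})$ is the generic linear additive character of $\frakl_i(q^{d_i})$ introduced before the theorem.

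Next I would substitute this into the formula for $|X_{\bf A}^\calE|$ to get
$$|X_{\bf A}^\calE|=q^{-n^2}\prod_{i=1}^{r}q^{\tfrac{d_i}{2}\dim A_i(\overline{k})}\;\sum_{h\in{\rm End}(\calE)}\prod_{i=1}^{r}R_{\frakl_i(q^{d_i})}^{\gl_n(q^{d_i})}(\eta_i)\bigl(h(\a_i)\bigr).$$
The lemma preceding the theorem already guarantees that $(\eta_1,\dots,\eta_r)$ is generic, so I may apply Theorem \ref{charsum} (for any matching generic multiplicative tuple $(\alpha_1,\dots,\alpha_r)$) to replace the sum over ${\rm End}(\calE)$ with a sum over ${\rm Aut}(\calE)$ at the cost of a factor $q/(q-1)$. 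Then Theorem \ref{theo} converts that group sum into $|{\rm Aut}(\calE)|\cdot\calA_\muhat^\calE(q)$, noting that the multi-partition $\muhat$ read off from the multiplicities of the eigenvalues of the $A_i$ matches exactly the type attached to the Levi subgroups $L_i$.

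Finally I would assemble the constants. Using $|{\rm PAut}(\calE)|=|{\rm Aut}(\calE)|/(q-1)$, the above chain gives
$$\frac{|X_{\bf A}^\calE|}{|{\rm PAut}(\calE)|}=q^{-n^2+1+\tfrac{1}{2}\sum_{i=1}^{r}d_i\dim A_i(\overline{k})}\,\calA_\muhat^\calE(q),$$
and checking against $d_{\bf A}=\sum_i d_i\dim A_i(\overline{k})-2n^2+2$ recovers the exponent $\tfrac{1}{2}d_{\bf A}$. The main technical point, and the place where mistakes are easiest, is the bookkeeping of the $q$ versus $q^{d_i}$ exponents in formula (\ref{Fouror}): the Fourier transform is taken over $\F_{q^{d_i}}$ so the prefactor is $(q^{d_i})^{\tfrac{1}{2}\dim A_i(\overline{k})}$, not $q^{\tfrac{1}{2}\dim A_i(\overline{k})}$, and one must verify that $\dim A_i(\overline{k})$ really is the geometric dimension of the $\GL_n(\overline{k})$-saturation of $A_i$ so that it agrees with the definition (\ref{dA}) of $d_{\bf A}$. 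Once these identifications are made, the identity is a straightforward combination of the four ingredients and is parallel to the analogous computation in \cite[\S 6.9]{ICQV} for the trivial bundle case.
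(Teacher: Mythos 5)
Your proposal is correct and follows exactly the paper's own proof: Proposition \ref{eqHiggs}, then Formula (\ref{Fouror}) over $\F_{q^{d_i}}$ (producing the factor $q^{\frac{1}{2}\sum_i d_i\dim A_i(\overline{k})}$), then Theorem \ref{charsum} via the genericity lemma for $(\eta_1,\dots,\eta_r)$, then Theorem \ref{theo}, with the same bookkeeping of constants. The only nitpick is that genericity of ${\bf A}$ as defined does not formally imply the $A_i$ are semisimple — semisimplicity is an implicit standing hypothesis in this section (needed for (\ref{Fouror})) rather than a consequence of genericity — but this does not affect the argument.
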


\begin{proof}Applying Proposition \ref{eqHiggs} and Formula (\ref{Fouror}) we have

\begin{align*}
\frac{|X_{\bf A}^\calE|}{|{\rm PAut(\calE)}|}&=\frac{(q-1)q^{-n^2}}{|{\rm Aut}(\calE)|}\sum_{h\in{\rm End}(\calE)}\prod_{i=1}^r\calF^{\gl_n(q^{d_i})}(1_{A_i})(h(\a_i))\\
&=\frac{(q-1)q^{\frac{1}{2}\sum_{i=1}^rd_i\,{\rm dim}\,A_i(\overline{k})-n^2}}{|{\rm Aut}(\calE)|}\sum_{h\in{\rm End}(\calE)}\prod_{i=1}^r\calF^{\gl_n(q^{d_i})}(\eta_{A_i^{L_i}})(h(\a_i))
\end{align*}
Applying Theorem \ref{charsum} we deduce

\begin{align*}
\frac{|X_{\bf A}^\calE|}{|{\rm PAut(\calE)}|}&=\frac{q^{\frac{1}{2}\sum_{i=1}^rd_i\,{\rm dim}\,A_i(\overline{k})-n^2+1}}{|{\rm Aut}(\calE)|}\sum_{h\in{\rm Aut}(\calE)}\prod_{i=1}^rR_{L_i(q^{d_i})}^{\GL_n(q^{d_i})}(\alpha_i)(h(\a_i))
\end{align*}
for some generic tuple $(\alpha_1,\dots,\alpha_r)$. 

The theorem is thus a consequence of Theorem \ref{theo}.
\end{proof}

\section{Conjectures and geometric interpretations}

For ${\bf b}=(b_1,\dots,b_f)\in\Z^f$ with $b_1>\cdots>b_f$ and $\m=(m_1,\dots,m_f)\in\N^f$ we put

$$
\calE^{{\bf b};\m}:=\bigoplus_{i=1}^f\calO(b_i)^{m_i}
$$
on $\bbP^1_k$. Note that in \S \ref{counting}, the $f$-tuple ${\bf b}$ was fixed and we wrote $\calE^\m$ instead of $\calE^{{\bf b},\m}$. The reason is that below we will need to vary ${\bf b}$. Define the degree of $({\bf b};\m)$ to be the degree of the vector bundle $\calE^{{\bf b};\m}$.
\bigskip

Let  $\muhat=(\mu^1,\dots,\mu^l)$ be an $l$-tuple of partitions of  $n=\sum_{i=1}^fm_i$.  We denote by $S_\muhat$ the stabilizer of $\muhat$ in $\mathfrak{S}_l$. It is of the form $\prod_{i=1}^s\mathfrak{S}_{l_i}$ where $l_1,\dots,l_s$ are the multiplicities of the distinct partitions $\mu^{h_1},\dots,\mu^{h_s}$ in $\muhat$. We put $\calP_\muhat:=\calP_{l_1}\times\cdots\times\calP_{l_s}$. Say that a reduced divisor $D$ on $\bbP^1_k$ is compatible with $\lambdahat=(\lambda^1,\dots,\lambda^s)\in \calP_\muhat$ where $\lambda^i=(\lambda^i_1,\dots,\lambda^i_{p_i})$ if $D=\sum_{i=1}^s\sum_{j=1}^{p_i}\a_{i,j}$ and ${\rm deg}(\a_{i,j})=\lambda^i_j$. If $w\in S_\muhat$ is of cycle-type decomposition $\lambdahat\in \calP_\muhat$ we will also say that $D$ is compatible with $w$ if it is compatible with $\lambdahat$.

\subsection{Some conjectures}\label{mainconj}

For a divisor $D$ as above compatible with $\lambdahat\in\calP_\muhat$, we denote by $\calA^{\calE^{{\bf b};\m}}_{\muhat,\lambdahat, D}(q)$ the number of isomorphism classes of geometrically indecomposable parabolic structures (of type $\mu^{h_i}$ at $\a_{i,j}$) on the vector bundle  $\calE^{\bf b;\m}$  on $\bbP^1_k$. The appearance of both $\lambdahat$ and $D$ in the notation $\calA^{\calE^{{\bf b};\m}}_{\muhat,\lambdahat, D}(q)$ is redundant as $D$ determines $\lambdahat$ (which explains by the way why we omitted both $D$ and $\lambdahat$ in the previous sections as $D$ was fixed). However this notation should clarify slightly  the statement of the conjectures below.
\bigskip

We denote by $\calC h(S_\muhat)$ the character ring of $S_\muhat$ over $\Z$, and for a polynomial $P(t)\in\calC h(S_\muhat)[t]$ we denote by $P_w(t)$ the polynomial obtained by evaluating the coefficients of $P(t)$ at $w\in S_\muhat$. We denote by $\calC h(S_\muhat)^+$ the subset of $\calC h(S_\muhat)$ of positive linear combinations of irreducible characters. The following conjecture was suggested to me by Deligne.

\begin{conjecture}There exists a polynomial $A^{{\bf b};\m}_\muhat(t)\in\calC h(S_\muhat)[t]$  with coefficients in $\calC h(S_\muhat)^+$ such that for any finite field $k=\F_q$, for any $\lambdahat\in\calP_\muhat$ and any reduced divisor $D$ on $\bbP^1_k$ compatible with $\lambdahat$,  we have 

$$
\calA^{\calE^{{\bf b};\m}}_{\muhat,\lambdahat,D}(q)=A^{{\bf b};\m}_{\muhat,w}(q).
$$
for any element $w\in S_\muhat$ of cycle-type $\lambdahat$. 
\label{mainconj1}\end{conjecture}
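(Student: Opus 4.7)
The plan is to derive Conjecture \ref{mainconj1} from Theorem \ref{theo2} by reorganizing the resulting formula so that the dependence on $w\in S_\muhat$ manifests as the character of an $S_\muhat$-representation. The conceptual guide is the Galois descent picture of \S \ref{Frobenius}: a geometrically indecomposable parabolic bundle over $\bbP^1_{\F_q}$ with divisor $D_w$ is an $\mathfrak{F}^*$-stable geometrically indecomposable parabolic bundle over $\bbP^1_{\overline{\F}_q}$ in which $\mathfrak{F}^*$ permutes the geometric points according to the cycle structure of $w$ on each block $\mathfrak{S}_{l_i}$.

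First, I would establish that $\calA^{\calE^{{\bf b};\m}}_{\muhat_w,D_w}(q)$ depends on $D_w$ only through the cycle type of $w$. By Theorem \ref{theo2} this reduces to showing that the unipotent counting functions $\calH^\v_{\lambdahat,D}(q)$ depend only on the degrees of the points of $D$; this is Conjecture \ref{conjHnil}, proved in the paper for $\calE=\calO(b_1)^{m_1}\oplus\calO(b_2)^{m_2}$ (Corollary \ref{polynomialitymm}) and in the spread-out regime (Proposition \ref{notinteresting}), and to be proved in general via a direct argument on the explicit matrix description of ${\rm End}(\calE)$ from \S \ref{aut}.

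Next, I would transfer this into character data for $S_\muhat$. Group the $l$ points of $D_w$ by repeated partitions $\mu^{h_1},\dots,\mu^{h_s}$, so that the $i$-th group carries $\mu^{h_i}$ at $l_i$ points whose degrees form $\lambda^i$, the cycle type of $w$ in $\mathfrak{S}_{l_i}$. Introducing $l_i$ duplicate sets of variables $\x_i^{(1)},\dots,\x_i^{(l_i)}$ on which $\mathfrak{S}_{l_i}$ acts by permutation, one rewrites $(q-1)\Log\Omega_{D_w}$ and its pairing with $h_{\muhat_w}$ in a form where each factor indexed by $(i,j)$ is obtained from a universal symmetric function by a substitution keyed to $\lambda^i_j$. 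A Frobenius-characteristic manipulation for $\prod_i\mathfrak{S}_{l_i}$, together with Proposition \ref{Hom} for the $\calH$-factors, then converts the cycle-type data into power-sum evaluations and identifies the coefficient of $q^j$ with $\chi_j(w)$ for some virtual character $\chi_j$ of $S_\muhat$. This yields the existence part of Conjecture \ref{mainconj1}, producing a polynomial $A^{{\bf b};\m}_\muhat(t)\in\calC h(S_\muhat)[t]$.

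The main obstacle is positivity: the combinatorial derivation yields only a virtual character, whereas Conjecture \ref{mainconj1} demands coefficients in $\calC h(S_\muhat)^+$. Realizing $\chi_j$ as a genuine character requires a geometric/cohomological source, which is precisely what Conjecture \ref{MAINconj} supplies: $\chi_j$ should be (up to $\varepsilon^l$-twist) the character of $W_\muhat^{2j}={\rm Hom}_{(\mathfrak{S}_n)^l}(R_\muhat,\varepsilon^l\otimes H^{2j}_c(\calX^\calE_{\bfS},\C))$, a genuine $S_\muhat$-module. In the case $\calE=\calO(a)^n$ this is settled by Theorem \ref{theotriv}, and Theorem \ref{theointro} provides evidence in the indivisible case for a general $\calE$. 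The hard part will therefore be constructing the $(\mathfrak{S}_n)^l\rtimes\mathfrak{S}_l$-action on $H^*_c(\calX^\calE_{\bfS},\C)$ for a general vector bundle $\calE$, where the non-reductivity of ${\rm Aut}(\calE)$ obstructs a direct GIT approach.
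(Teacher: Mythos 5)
The statement you set out to prove is Conjecture \ref{mainconj1}, which the paper itself does not prove: it is established only for $n=2$ (\S \ref{rank2}, via Proposition \ref{proprk2} and Theorem \ref{theork2}), and the weaker polynomiality statement (Conjecture \ref{polynomiality}) only for bundles with at most two distinct summands (Corollary \ref{polynomialitymm}). Your outline reproduces, in essence, the paper's own programme from \S \ref{strategy}: start from Theorem \ref{theo2}, expand $h_{\muhat_w}$ in power sums, and try to recognize the coefficient of each power of $q$ as the value at $w$ of a genuine character of $S_\muhat$ realized on the cohomology of $\calX_\bfS^\calE$. That is the right frame, but what you have written is a restatement of the conjectural strategy, not a proof.

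Concretely, three steps are gaps, each an open conjecture in the paper. First, the independence of $\calA^{\calE^{{\bf b};\m}}_{\muhat,\lambdahat,D}(q)$ from the position of the points reduces via Theorem \ref{theo2} to Conjecture \ref{conjHnil}; your phrase ``to be proved in general via a direct argument on the explicit matrix description of ${\rm End}(\calE)$'' is not an argument --- the paper's own proof for two summands already needs a careful analysis of the rank conditions (\ref{sub/comp}) together with an evaluation-map fibration, and no such argument is available for three or more summands. Second, even granting polynomiality, the identification of the $q^j$-coefficient with $\chi_j(w)$ for a single virtual character independent of $w$ rests on Formula (\ref{equaA}) combined with Conjecture \ref{conjB}, i.e.\ the trace formula for $(\v_\lambdahat,w)$ on $\varepsilon^l\otimes H_c^{2i}(\calX_\bfS^\calE,\C)$, which is proved only for $\calE=\calO(a)^n$; your ``Frobenius-characteristic manipulation'' with duplicated variable sets does not substitute for it. Third, as you yourself acknowledge, positivity of the coefficients requires constructing the $(\frak{S}_n)^l\rtimes\frak{S}_l$-action on $H_c^*(\calX_\bfS^\calE,\C)$ for general $\calE$, where the non-reductivity of ${\rm Aut}(\calE)$ obstructs the quiver-variety/GIT construction used in the trivial-bundle case. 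Until at least these three points are settled, the statement remains a conjecture.
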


We proved the conjecture when $n=2$ in \S \ref{rank2}.

\begin{remark} This conjecture is a stronger version of Conjecture \ref{polynomiality} which is proved for vector bundles $\calE$ of the form $\calO(b_1)^{m_1}\oplus\calO(b_2)^{m_2}$ (Corollary \ref{polynomialitymm}).
\end{remark}

\begin{conjecture} Let $d$ be an integer. Then for  any finite field $k=\F_q$, any positive integer $n$,  any $\muhat\in(\calP_n)^l$, any $\lambdahat \in\calP_\muhat$ and any reduced divisor $D$ on $\bbP^1_k$ compatible with $\lambdahat$, the sum

$$
\sum_{{\rm deg}({\bf b};\m)=d,\, \sum_{i=1}^fm_i=n}\calA^{\calE^{{\bf b};\m}}_{\muhat,\lambdahat,D}(q)
$$
is independent of $d$.
\label{independent}\end{conjecture}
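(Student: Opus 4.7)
Setting $C_d(q) := \sum_{\deg({\bf b};\m)=d,\, \sum_i m_i=n}\calA^{\calE^{{\bf b};\m}}_{\muhat,\lambdahat,D}(q)$, the goal is to show that $C_d(q)$ does not depend on $d$. The plan is to start from Theorem \ref{MAINtheo2}, which gives
$$
C_d(q) = \left\langle\, \bigl[(q-1)\Log\,\Omega_D(q)\bigr]_{n,d},\; h_\muhat \,\right\rangle,
$$
where $[\,\cdot\,]_{n,d}$ denotes the sum of those $Y^\m$-coefficients for which $\sum_i m_i = n$ and $\sum_i m_i b_i = d$. This reduces the problem to a statement about the single generating function $\Omega_D(q)$, uniform in $\muhat$.

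The first step I would carry out is to establish periodicity in $d$ modulo $n$. The functor $\calE\mapsto\calE\otimes\calO(1)$ is a degree-shifting autoequivalence of ${\rm Bun}_D^{par}(\bbP^1_k)$: the flag type at each $\a_i$ is preserved because $\calO(1)(\a_i)$ is one-dimensional over $k(\a_i)$, and ${\rm End}(\calE,E)\cong{\rm End}(\calE\otimes\calO(1),E\otimes\calO(1))$ as $k$-algebras, so (geometric) indecomposability is preserved as well. Hence $\calA_{\muhat,D}^{\calE}(q) = \calA_{\muhat,D}^{\calE\otimes\calO(1)}(q)$, and since $\calE\otimes\calO(1)$ has rank $n$ and degree $\deg\calE+n$, one obtains $C_d(q) = C_{d+n}(q)$. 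One is thereby reduced to proving the $n-1$ equalities $C_0(q) = C_1(q) = \cdots = C_{n-1}(q)$.

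The main obstacle is precisely these remaining equalities. For $n=2$ the paper establishes $C_0 = C_1$ by explicit computation of both sides using the closed-form expressions coming from the rank-$2$ classification of vector bundles on $\bbP^1$ (see Theorem \ref{sumind}). For general $n$, the natural strategy is to invoke the Higgs-bundle interpretation provided by Theorem \ref{maintheoHiggs}: when $\muhat$ is indivisible and $\A$ is a sufficiently generic $r$-tuple of semisimple adjoint orbits of type $\muhat$, we have
$$
C_d(q) = q^{-\frac{1}{2}d_\A}\, \sum_{\deg\calE=d,\,{\rm rk}\,\calE=n}\frac{|X_{\A,D}^\calE|}{|{\rm PAut}(\calE)|},
$$
and the right-hand side should be interpretable as an $E$-polynomial of the stack of rank-$n$, degree-$d$ parabolic Higgs bundles on $\bbP^1_k$ with prescribed residues in $\A$. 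Combining non-abelian Hodge theory (\cite{B},\cite{BB}) with an adaptation of Schiffmann's argument via \cite{lin}, this invariant should coincide with the Poincar\'e polynomial of the associated generic character variety, which is known to be $d$-independent. The principal difficulties will be to make this geometric argument rigorous for all $d$ (not only the generic values treated in \cite{Schiffmann}), and to reduce the non-indivisible case to the indivisible one, presumably through a Galois-descent enhancement of Proposition \ref{abso-ind}.
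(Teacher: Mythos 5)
The statement you are addressing is stated in the paper only as a conjecture (Conjecture \ref{independent}); the paper proves it solely in rank $n=2$ (Theorem \ref{sumind}), by explicit computation from the closed formulas of Proposition \ref{proprk2} and Theorem \ref{theork2}, so there is no general proof to compare against. Your first reduction is correct and worth keeping: tensoring by $\calO(1)$ is an autoequivalence of ${\rm Bun}_D^{par}(\bbP^1_k)$ that shifts the degree by $n$, preserves the type of the parabolic structure and the endomorphism algebra, hence preserves geometric indecomposability, so $C_d(q)=C_{d+n}(q)$. This is exactly the reduction the paper performs implicitly in rank $2$, where the proof of Theorem \ref{sumind} begins by observing that it suffices to check $\sum_{a+b=0}=\sum_{a+b=1}$.

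The gap is in the second step, which is where the entire content of the conjecture lies. The identification of $C_d(q)$ with the Poincar\'e polynomial of a character variety via non-abelian Hodge theory is precisely the heuristic the paper offers as motivation in the introduction, and it is available only when $(\s,d)$ is sufficiently general (so that the relevant parabolic Higgs moduli space is a smooth variety rather than a stack with strictly semistable points) and, as stated there, when ${\rm deg}(\a_i)=1$ for all $i$; the conjecture concerns arbitrary $d$ and arbitrary $\lambdahat$, and the degenerate values of $d$ are exactly the ones that the periodicity $C_d=C_{d+n}$ cannot avoid. Moreover, Theorem \ref{maintheoHiggs} requires $\muhat$ indivisible, since generic tuples of adjoint orbits of type $\muhat$ do not exist otherwise, and you give no mechanism for reducing the divisible case to the indivisible one: Proposition \ref{abso-ind} relates geometrically indecomposable counts to indecomposable counts over field extensions, not counts for divisible $\muhat$ to counts for indivisible ones, so a ``Galois-descent enhancement'' of it is not an obvious route. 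As written, your argument establishes periodicity in $d$ but leaves the remaining equalities $C_0=\cdots=C_{n-1}$ resting on a chain of statements that the paper itself treats as open.
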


In the rank $n=2$ case we already proved this conjecture (see Theorem \ref{sumind}).
\bigskip

\begin{remark}(Generalisation of Kac's conjecture to non-trivial bundles) In \S \ref{Kacpoly} we noticed that when $\calE$ is a trivial rank $n$ bundle and the coordinates of $\lambdahat$ are the trivial partitions, the quantity $\calA^\calE_{\muhat,\lambdahat,D}(q)$ coincides with the number of isomorphism classes of geometrically indecomposable representations of the quiver in \S \ref{Kacpoly} equiped with a certain dimension vector $\v_\muhat$ (the relation between $\s$ in \S \ref{Kacpoly} and $\muhat$ is explained above Corollary \ref{s'}). On the other hand Kac \cite{kac2} proved that for any finite quiver $\Gamma$ equiped with any dimension vector $\v$, there exists a polynomial $A_{\Gamma,\v}(t)\in\Z[t]$, so-called Kac polynomial,  such that for any finite field $\F_q$, the evaluation $A_{\Gamma,\v}(q)$ is the number of isomorphism classes of geometrically indecomposable representations of $(\Gamma,\v)$ over $\F_q$. He conjectured that this polynomial has non-negative coefficients. Kac's conjecture was proved in \cite{HLRV} in full generality. Notice that Conjecture \ref{mainconj1} implies a generalisation of  Kac's results to non-trivial bundles.
\label{kacpolynomial}\end{remark}

\subsection{The geometric interpretation}\label{geominter}

In the following we put  $\calE:=\calE^{{\bf b};\m}$ to alleviate the notation. We say that $\muhat$ is \emph{indivisible} if the gcd of all parts of the partitions $\mu^1,\dots,\mu^l$ equals $1$.

We assume that  

$$
{\rm char}(k) \nmid  d !,
$$
where $d={\rm min}_i\,{\rm max}_j \,\mu^i_j$. Recall \cite[\S 2.2]{aha} that if $\muhat$ is indivisible then there always exists a generic $l$-tuple $\A=(A_1,\dots,A_l)$ of adjoint orbits of $\gl_n(\overline{k})$ of type $\muhat$, i.e. such that the multiplicities of the eigenvalues of $A_i$ are given by the partition $\mu^i$. If $\muhat$ is divisible (i.e. not indivisible), generic tuples of adjoint orbits of type $\muhat$ do not exist.

\subsubsection{Polynomial count varieties}

Let $R$ be a subring of $\C$ which is finitely generated as a $\Z$-algebra and let $\calX$ be a separated $R$-scheme of finite type. According to Katz \cite[Appendix]{HRV}, we say that $\calX$ is \emph{strongly polynomial count} if there exists a polynomial $P(T)\in\C[T]$ such that for any finite field $\F_q$ and any ring homomorphism $\varphi: R\rightarrow\F_q$, the $\F_q$-scheme $\calX^{\varphi}$ obtained from $\calX$ by base change is polynomial count with counting polynomial $P_X$, i.e., for every finite extension $\F_{q^n}/\F_q$, we have $$\#\,\calX^{\varphi}(\F_{q^n})=P_X(q^n).$$

We call a separated $R$-scheme $\calX$ which gives back $X/_\C$ after extension of scalars from $R$ to $\C$ a \emph{spreading out} of $X/_\C$. 

Say that $X/_\C$ has \emph{polynomial count} if it admits a spreading out $\calX$  which is strongly polynomial count.

As an easy consequence of \cite[Appendix]{HRV} we have.

\begin{proposition} Assume that $X/_\C$ has polynomial count and that its compactly supported cohomology $H_c^i(X/_\C,\C)$ has pure mixed Hodge structure. Then its odd cohomology vanishes and 

$$
P_X(t)=\sum_i{\rm dim}\,H_c^{2i}(X/_\C,\C)\, t^i.
$$
\label{Katz}\end{proposition}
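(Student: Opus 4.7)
The plan is to combine Katz's theorem on $E$-polynomials of polynomial count varieties with the purity hypothesis to force the Hodge-type decomposition to be diagonal.

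Recall the $E$-polynomial of a complex variety $X$ is
$$E(X;u,v) := \sum_{p,q,i}(-1)^i h^{p,q}\bigl(H_c^i(X,\C)\bigr)u^p v^q,$$
where the $h^{p,q}$ are the mixed Hodge numbers of the MHS on compactly supported cohomology. Katz's theorem (cf. \cite[Appendix]{HRV}) asserts that if $X/_\C$ admits a spreading out $\calX$ which is strongly polynomial count with counting polynomial $P_X(T)$, then
$$E(X;u,v)=P_X(uv).$$
This is the only nontrivial input I would use.

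Now invoke the purity assumption: since each $H_c^i(X,\C)$ is a pure Hodge structure, the standard weight bound combined with purity pins the weight down, so $h^{p,q}(H_c^i)=0$ unless $p+q=i$. Substituting into the definition of $E$, the alternating sum collapses to a single term per $(p,q)$:
$$E(X;u,v)=\sum_{p,q}(-1)^{p+q}h^{p,q}\bigl(H_c^{p+q}(X,\C)\bigr)u^p v^q.$$
Equating this with $P_X(uv)$, which is a polynomial in the single variable $uv$, forces $h^{p,q}(H_c^{p+q})=0$ whenever $p\neq q$. In particular, for $i$ odd there is no $(p,q)$ with $p+q=i$ and $p=q$, so combining with purity we obtain $H_c^i(X,\C)=0$ for odd $i$. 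For $i=2p$ even, purity collapses $H_c^{2p}$ to its $(p,p)$-piece, giving $\dim H_c^{2p}(X,\C)=h^{p,p}(H_c^{2p})$.

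Comparing coefficients of $(uv)^p$ on both sides of $E(X;u,v)=P_X(uv)$ then yields the desired identity
$$P_X(t)=\sum_p \dim H_c^{2p}(X,\C)\, t^p.$$
The only nonroutine step is Katz's comparison theorem, which is taken as a black box from \cite[Appendix]{HRV}; everything else is a short linear-algebra manipulation using purity.
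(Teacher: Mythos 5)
Your proof is correct and is exactly the standard deduction the paper has in mind: the paper gives no argument beyond citing Katz's appendix in \cite{HRV}, and your combination of $E(X;u,v)=P_X(uv)$ with purity (which kills all $h^{p,q}(H_c^i)$ with $p+q\neq i$, so the off-diagonal coefficients of $E$ are single non-negative terms that must vanish) is precisely how that "easy consequence" is obtained. No gaps.
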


\subsubsection{The indivisible case}\label{theindivcase}

Assume that $\muhat$ is indivisible and let $\A=(A_1,\dots,A_l)$ be a generic $l$-tuple of adjoint orbits of $\gl_n(\C)$ of type $\muhat$  and consider the complex affine algebraic variety $X_\A^\calE$ defined in \S \ref{Higgs} with respect to some reduced divisor $\mathcal{D}=\a_1+\cdots+\a_l$ on $\bbP^1_\C$. We have the following conjecture.

\begin{conjecture} (i) The categorical quotient $\calX_\A^\calE$ of $X_\A^\calE=X_{\A,\mathcal{D}}^\calE$ by ${\rm PAut}(\calE)$ exists (in the category of algebraic varieties) and the quotient map $X_\A^\calE\rightarrow\calX_\A^\calE$ is a principal ${\rm PAut}(\calE)$-bundle in the \'etale topology.

\noindent (ii) The variety $\calX_\A^\calE$ is non-singular, has polynomial count and the compactly supported cohomology $H_c^i(\calX_\A^\calE,\C)$ has pure mixed Hodge structure.

\noindent (iii) There is a natural structure of $\C[S_\muhat]$-module on $H_c^i(\calX_\A^\calE,\C)$ such that $A_\muhat^{{\bf b},\m}(t)$ in Conjecture \ref{mainconj1} is given by
\begin{equation}\label{iden}
A_\muhat^{{\bf b};\m}(t)=t^{-\frac{1}{2}d_\A}\sum_i[H_c^{2i}(\calX_\A^\calE,\C)] t^i,
\end{equation}
where $d_\A$ is defined by (\ref{dA}), and $[H_c^{2i}(\calX_\A^\calE,\C)]$ is the corresponding element in $\calC h(S_\muhat)$.
\label{congind}\end{conjecture}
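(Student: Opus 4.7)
The plan is to treat the three parts of the conjecture in turn, leveraging Theorem \ref{maintheoHiggs} (which already encodes the numerical content) and adapting to the non-reductive setting the quotient construction of \cite{HLRV} and \cite{crawley-etal} together with the monodromy construction of Nakajima. I would first verify that the action of ${\rm PAut}(\calE)$ on $X_\A^\calE$ is free: any $g \in {\rm Aut}(\calE)$ fixing $\varphi$ satisfies $[g(\a_i), {\rm Res}_{\a_i}(\varphi)] = 0$ for every $i$, so $g(\a_i)$ lies in the centralizer of an element whose semisimple part has multiplicities given by $\mu^i$; the genericity of $\A$ (plus indivisibility of $\muhat$) then forces $g$ to stabilize a common eigendecomposition summing to the trace constraint, which as in the proof of Proposition \ref{indecriterion} excludes any non-scalar $g$. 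Freeness together with smoothness of ${\rm PAut}(\calE)$ yields the quotient as an algebraic space on which the structural map is an étale ${\rm PAut}(\calE)$-torsor.

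For part (ii), I would show smoothness of $X_\A^\calE$ by checking that the linear equations displayed in the lemma above Example \ref{exCB} cut out a local complete intersection of the expected dimension; at any point $\varphi$, the relevant differential is the transpose of a trace pairing that is surjective precisely under the genericity hypothesis on $\A$. Upgrading the algebraic space quotient to a variety can be done by embedding $\calX_\A^\calE$ into the moduli space of stable parabolic meromorphic Higgs bundles with residues in $\A$ (this uses that genericity implies stability for any subobject that would destabilize). Polynomial count is then immediate from Theorem \ref{maintheoHiggs}, since étale local triviality of the torsor gives
$$
|\calX_\A^\calE(\F_q)| = \frac{|X_\A^\calE(\F_q)|}{|{\rm PAut}(\calE)(\F_q)|} = q^{\frac{1}{2}d_\A}\, \calA_\muhat^\calE(q),
$$
and the right-hand side is polynomial by Theorem \ref{theo2}. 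Purity of the mixed Hodge structure should follow by propagating the argument of \cite{HLRV}: a hyperkähler rotation identifies $\calX_\A^\calE$ diffeomorphically with a generic character variety, on which purity is proved through a compactification by an iterated bundle of partial resolutions. Katz's criterion (Proposition \ref{Katz}) then reads off the Poincaré polynomial from the count.

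For part (iii), I would construct the $\C[S_\muhat]$-module structure by monodromy in a family. Consider the parameter space $B$ of pairs $(\A', (\a_1',\ldots,\a_l'))$ of generic $l$-tuples of adjoint orbits of type $\muhat$ together with ordered punctures; the assignment $b \mapsto \calX_{\A'(b)}^\calE$ forms a smooth family over a Zariski-open $B^o \subset B$. The group $S_\muhat$ acts on $B^o$ by simultaneously permuting orbits of equal Jordan type and the corresponding punctures, and produces a monodromy action on $R^{2i}\pi_!\,\underline{\C}$, hence on each $H_c^{2i}(\calX_\A^\calE,\C)$. To verify identity (\ref{iden}) I would spread $\calX_\A^\calE$ to a scheme over a finitely generated $\Z$-algebra and apply Grothendieck's trace formula to the $\F_q$-form twisted by $w \in S_\muhat$: such a twist is, by definition of the cycle-type data $\lambdahat$, exactly the $\F_q$-structure realized by a divisor $D_w$ compatible with $w$, and Theorem \ref{maintheoHiggs} identifies its point count with $q^{d_\A/2}\calA_{\muhat_w,D_w}^\calE(q)$. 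Matching this with the trace of $w \cdot F^*$ on $H_c^*$ and using purity/polynomial count yields (\ref{iden}).

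The main obstacle will be the geometric step in part (ii): producing $\calX_\A^\calE$ as a smooth quasi-projective variety rather than an algebraic space, since ${\rm PAut}(\calE)$ is non-reductive whenever $\calE$ fails to be isotypic, and the usual GIT arguments available for $\calE = \calO(a)^n$ in \cite{aha} break down. I expect that the right strategy is to use the parabolic-Higgs stability picture to embed $\calX_\A^\calE$ as a locally closed subvariety of Yokogawa's moduli space, at the cost of verifying that the genericity condition forces every such Higgs bundle to be stable. A secondary, but also substantial, difficulty is the purity claim: without a concrete compactification in the bundle-not-trivial case, purity has to be imported from the character-variety side via non-abelian Hodge theory, which itself would require adapting the tame results of Simpson--Konno--Nakajima to arbitrary underlying bundle type.
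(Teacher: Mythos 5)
The statement you address is Conjecture \ref{congind}: the paper does not prove it, and explicitly identifies the non-reductivity of ${\rm Aut}(\calE)$ as the obstruction to the quotient construction. What the paper actually establishes is: assertions (i) and (ii) for $\calE=\calO(a)^n$ by citing \cite[Theorem 2.2.4, Proposition 2.2.6]{aha}, the identity (\ref{iden}) at $w=1$ via \cite{crawley-mat}\cite{crawley-etal}, and assertion (iii) for $\calE=\calO(a)^n$ via the family $\pi:\calX_\muhat\to\t_\muhat^{\rm gen}$, the constancy of $R^i\pi_!\C$ (Theorem \ref{CBidea}), and the Frobenius-twisting argument of Theorem \ref{theo2.5}. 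Your part (iii) plan is essentially this argument, with the variant that you let the punctures move in the base as well. That variant is in fact what is needed for general $\calE$, since the defining equations of $Y_{\A}^\calE$ involve the points $t_i$ and permuting orbits alone does not preserve the fibers; but you then owe a proof that the direct image sheaf remains constant over the enlarged base, which your proposal does not supply.

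Two steps of your plan for (ii) are wrong or unproved as stated. First, the claim that the right-hand side of your point count ``is polynomial by Theorem \ref{theo2}'' is not correct: Theorem \ref{theo2} expresses $\calA_\muhat^\calE(q)$ through the counting functions $\calH_{\lambdahat,D}^{\m}(q)$, whose polynomiality in $q$ is itself Conjecture \ref{conjHnil} (hence Conjecture \ref{polynomiality}), proved in the paper only for $\m=(m_1,m_2)$. Second, purity cannot be imported from the character variety: the generic character varieties of \cite{aha}\cite{HRV} have genuinely non-pure mixed Hodge structures (this is the content of their mixed Hodge polynomials), and the diffeomorphism furnished by non-abelian Hodge theory does not transport mixed Hodge structures. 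In \cite{aha}\cite{crawley-etal} purity on the Higgs/quiver side comes from the specific geometry of the quiver variety (e.g. the computation of Frobenius eigenvalues in \cite{crawley-etal}), which is precisely what is unavailable for a non-trivial $\calE$. These two points, together with the construction of $\calX_\A^\calE$ as a variety rather than an algebraic space for the non-reductive group ${\rm PAut}(\calE)$ (which you correctly flag as the main obstacle), are the genuinely open parts of the conjecture; your proposal does not close them.
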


The integer $d_\A$  is  the dimension of the moduli space of Higgs bundles 

$$
\calX_\A^{n,d}=\left\{(\calE,\varphi)\,|\, \calE\in{\rm Bun}^{n,d}(\bbP^1_\C), \varphi\in X_\A^\calE\right\}/\sim,
$$
which, unlike its strata $\calX_\A^\calE$, is known to be a complex algebraic variety.
\bigskip

\begin{remark}Theorem \ref{maintheoHiggs} is an evidence for the assertion (iii) as detailed below in the special case where $\calE=\calO^n$.

\label{maintheoappli}\end{remark}

Assume that $\calE=\calO(a)^n$ (i.e. ${\bf b}=(a)$ and $\m=(n)$), and write $\calX_\A$ instead of $\calX_\A^\calE$. The assertions (i) and (ii) are  \cite[Theorem 2.2.4, Proposition 2.2.6]{aha}. The identity (\ref{iden}) evaluated at $w=1\in S_\muhat$ is true by Crawley-Boevey \cite{crawley-mat} together with Crawley-Boevey and van den Bergh \cite{crawley-etal}. Roughly speaking Crawley-Boevey identified in \cite{crawley-mat} the affine GIT quotient

$$
\calX_\A=\left.\left.\left\{(X_1,\dots,X_l)\in A_1\times\cdots\times A_l\,\left|\, \sum_iX_i=0\right\}\right.\right/\!\right/\GL_n
$$
with a certain generic quiver variety attached to the star shaped quiver in \S \ref{Kacpoly}. By the results of \cite{crawley-etal} and \cite{aha} we can choose an appropriate subalgebra $R$ of $\C$ which is a finitely generated $\Z$-algebra, a separated $R$-scheme $\bbX_\A$ which gives back $\calX_\A$ after scalar extension from $R$ to $\C$ and such that for any ring homomorphism $\varphi:R\rightarrow\F_q$

$$
A_{\muhat,1}^{(a);(n)}(q)=q^{-\frac{1}{2}d_\A}\,|\bbX_\A^\varphi(\F_q)|,
$$
where $A_{\muhat,1}^{(a),(n)}(t)$ is the Kac polynomial mentioned in \S \ref{kacpolynomial}. In particular we get that the variety $\calX_\A$ has polynomial count. The variety $\calX_\A$ has pure mixed Hodge structure \cite[Proposition 2.2.6]{aha} and so the formula (\ref{iden}) evaluated at $w=1$ is true by Proposition \ref{Katz}. We can also argue as in \cite{crawley-etal} where it is proved that  the eigenvalues of Frobenius $F^*$ on $H_c^i(\bbX_\A^\varphi(\overline{\F}_q),\overline{\Q}_\ell)$ are exactly $q^{i/2}$.\bigskip

We now prove the assertion (iii) of  Conjecture \ref{congind} in the case $\calE=\calO^n$. 

To a partition $\lambda=(\lambda_1,\lambda_2,\dots,\lambda_r)$, we denote by $H_\lambda$ the stabilizer of $\lambda$ in $\frak{S}_r$ (for the permutation action of $\frak{S}_r$ on $\N^r$). The group $S_\muhat$ acts on $\prod_{i=1}^lH_{\mu^i}$ by permutation of the coordinates and we consider the semi-direct product

$$
\mathfrak{H}_\muhat:=\left(\prod_{i=1}^lH_{\mu^i}\right)\rtimes S_\muhat.
$$
We now define an action of $\frak{H}_\muhat$ on $H_c^i(\calX_\A,\C)$ as follows (for this section only the action of $S_\muhat$ matters).

We denote by $\t_\muhat^{\rm gen}$ the set of generic $l$-tuples $(\sigma_1,\dots,\sigma_l)$ of diagonal matrices of $\GL_n$ such that for each $i$, the diagonal matrix $\sigma_i$ is of type $\mu^i=(\mu^i_1,\mu^i_2,\dots)$, i.e. 

$$(\overbrace{\alpha_1,\dots,\alpha_1}^{\mu^i_1},\overbrace{\alpha_2,\dots,\alpha_2}^{\mu^i_2},\dots).$$

The space $\t_\muhat^{\rm gen}$ is endowed with a natural action of $\frak{H}_\muhat$ where $S_\muhat$ acts  by permutation of the coordinates and $H_{\mu^i}$ acts on the set of diagonal matrices of type $\mu^i$ par permutation of the blocks (of size given by the parts of the partitions).

Denote by $L_i$ the Levi subgroup $C_{\GL_n}(\sigma_i)$. Consider the space

$$
\calX_\muhat=\left.\left.\left\{\big((X_i)_{i=1,\dots,l},( \sigma_i)_{i=1,\dots,l},(g_iL_i)_{i=1,\dots,l}\big)\in \gl_n^l\times \t_\muhat^{\rm gen}\times \prod_{i=1}^l\GL_n/L_i\,\left|\, \sum_{i=1}^lX_i=0, g_i^{-1}X_ig_i=\sigma_i\right\}\right.\right/\!\right/\GL_n.
$$
where $\GL_n$ acts as 

$$
g\cdot\big((X_i)_i,( \sigma_i)_i,(g_iL_i)_i\big)=\big((gX_ig^{-1})_i,(\sigma_i)_i, (gg_iL_i)_i\big).
$$
Consider the projection $\pi:\calX_\muhat\rightarrow \t_\muhat^{\rm gen}$. Then the projection $\pi^{-1}(\sigma)\rightarrow \gl_n^l$ defines an isomorphism $\pi^{-1}(\sigma)\simeq\calX_\A$ if for each $i$ the adjoint orbit of the $i$-th coordinate of $\sigma\in\t_\muhat^{\rm gen}$ is $A_i$.

For each $i$, the group $H_{\mu^i}$ is naturally isomorphic to $W_{\GL_n}(L_i):=N_{\GL_n}(L_i)/L_i$ where $N_{\GL_n}(L_i)$ is the normalizer of $L_i$ in $\GL_n$ (see \cite[\S 6.2, \S 6.3]{ICQV} for details). For $w\in H_{\mu^i}=W_{\GL_n}(L_i)$ we denote by $\dot{w}\in N_{\GL_n}(L_i)$ a representative of $w$. The group $\prod_{i=1}^lH_{\mu^i}$ acts on $\calX_\muhat$ as 
\bigskip

$
(w_1,\dots,w_l)\cdot\big((X_1,\dots,X_l),( \sigma_1,\dots,\sigma_l),g_1L_1,\dots,g_lL_l\big)=$

$$\big((X_1,\cdots,X_l),(\dot{w}^{-1}_1\sigma_1\dot{w}_1,\dots,\dot{w}_l^{-1}\sigma_l\dot{w}_l), g_1\dot{w}_1L_1,\dots,g_l\dot{w}_lL_l\big),$$
and the group $\frak{S}_\muhat$ by permutation of the coordinates.

The map $\pi:\calX_\muhat\rightarrow\t_\muhat^{\rm gen}$ commutes with the action of $\frak{H}_\muhat$ and so for $w\in\frak{H}_\muhat$ and $\sigma\in\t_\muhat^{\rm gen}$, the corresponding isomorphism $\pi^{-1}(\sigma)\rightarrow \pi^{-1}(\dot{w}\sigma\dot{w}^{-1})$ induces an isomorphism $w^*$ on cohomology.

Following the arguments in \cite[\S 2.2]{HLRV} which appeared first in the work of Nakajima \cite{nakajima2}, Maffei \cite{maffei} and that of Crawley-Boevey and van den Bergh \cite{crawley-etal}, we prove that the sheaf $R^i\pi_!\C$ is constant. This implies the following result.

\begin{theorem} For each $\sigma,\tau\in\t_\muhat^{\rm gen}$ there exists a canonical  isomorphism $i_{\sigma,\tau}:H_c^i(\pi^{-1}(\sigma),\C)\rightarrow H_c^i(\pi^{-1}(\tau),\C)$ such that for all $\sigma,\tau,\zeta\in\t_\muhat^{\rm gen}$ we have $i_{\sigma,\tau}\circ i_{\zeta,\sigma}=i_{\zeta,\tau}$.
\label{CBidea}\end{theorem}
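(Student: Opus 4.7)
The plan is to reduce the theorem to the assertion already announced in the excerpt, namely that the sheaf $R^i\pi_!\C$ on $\t_\muhat^{\rm gen}$ is constant. Granting this, for any two points $\sigma,\tau\in\t_\muhat^{\rm gen}$ the stalks $(R^i\pi_!\C)_\sigma\simeq H_c^i(\pi^{-1}(\sigma),\C)$ and $(R^i\pi_!\C)_\tau\simeq H_c^i(\pi^{-1}(\tau),\C)$ are both canonically identified with the global sections $\Gamma(\t_\muhat^{\rm gen},R^i\pi_!\C)$; I would take $i_{\sigma,\tau}$ to be the composite of the inverse of the first identification with the second. The cocycle identity $i_{\sigma,\tau}\circ i_{\zeta,\sigma}=i_{\zeta,\tau}$ is then formal, since both sides are the canonical map $(R^i\pi_!\C)_\zeta\to(R^i\pi_!\C)_\tau$ obtained by factoring through the global sections.

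The first real step is to check that $R^i\pi_!\C$ is locally constant. The map $\pi$ is a morphism between complex algebraic varieties whose fibers over $\t_\muhat^{\rm gen}$ are exactly the varieties $\calX_{\A}$ of \S\ref{Higgs}, known to be smooth and to have polynomial count by \cite[Theorem 2.2.4, Proposition 2.2.6]{aha}. Adapting the Nakajima--Maffei--Crawley-Boevey--van den Bergh strategy as used in \cite[\S 2.2]{HLRV}, I would exhibit $\pi$ as a smooth morphism between smooth varieties (after suitably partially compactifying or by the hyperk\"ahler rotation / quiver variety description), so that by generic smoothness combined with the proper base change / constructibility theorems applied to a proper morphism obtained by a partial resolution, $R^i\pi_!\C$ is constructible with locally constant restriction to $\t_\muhat^{\rm gen}$.

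The core obstacle, and the crux of the theorem, is then the vanishing of the monodromy. This is where the genericity condition plays a decisive role: crossing a non-generic locus could a priori create nontrivial monodromy, but inside $\t_\muhat^{\rm gen}$ no such walls are crossed. Following \cite{crawley-etal} one realises $\calX_{\A}$ as a generic quiver variety via Crawley-Boevey's identification, and then transports Nakajima's deformation/hyperk\"ahler argument to our setting. Concretely, one upgrades $\pi$ to a family of holomorphic symplectic manifolds and uses the fact that a generic twistor deformation produces affine fibers with isomorphic cohomology, so that the topology of the fiber is independent of $\sigma\in\t_\muhat^{\rm gen}$; equivalently, the local system $R^i\pi_!\C$ has no monodromy on $\t_\muhat^{\rm gen}$.

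Once these two points are established the proof is essentially complete: the constancy of $R^i\pi_!\C$ gives both the existence of the canonical isomorphisms $i_{\sigma,\tau}$ and, formally, the cocycle property. The main technical difficulty is therefore to transfer the Nakajima/Maffei/Crawley-Boevey arguments from their native quiver-variety framework to the space $\calX_\muhat$ constructed here, and in particular to check that the genericity stratum $\t_\muhat^{\rm gen}$ is the correct locus over which constancy holds.
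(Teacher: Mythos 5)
Your proposal takes essentially the same route as the paper: the paper also obtains Theorem \ref{CBidea} as a formal consequence of the constancy of $R^i\pi_!\C$ on $\t_\muhat^{\rm gen}$ (stalks identified through global sections, cocycle property automatic), and it establishes that constancy by exactly the appeal you make to the arguments of \cite[\S 2.2]{HLRV}, Nakajima \cite{nakajima2}, Maffei \cite{maffei} and Crawley-Boevey--van den Bergh \cite{crawley-etal}. The paper gives no more detail on the monodromy-vanishing step than you do, so your sketch is at the same level of rigor as the source.
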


\begin{theorem} The map $\rho^i:\frak{H}_\muhat\rightarrow \GL\big(H_c^i(\pi^{-1}(\sigma),\C)\big)$, $w\mapsto i_{\dot{w}\sigma\dot{w}^{-1},\sigma}\circ (w^*)^{-1}$ is a representation of $\frak{H}_\muhat$ that does not depend on the choice of $\sigma\in\t_\muhat^{\rm gen}$.
\end{theorem}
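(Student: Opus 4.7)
The plan is to deduce the theorem from the preceding Theorem~\ref{CBidea} by identifying $\rho^i$ with the monodromy representation of the constant local system $R^i\pi_!\C$ on $\t_\muhat^{\rm gen}$; once this identification is made, both assertions will become purely formal. Concretely, I would set $V:=\Gamma(\t_\muhat^{\rm gen},R^i\pi_!\C)$; since Theorem~\ref{CBidea} tells us the sheaf is constant, each fiber restriction $r_\sigma:V\xrightarrow{\sim}H_c^i(\pi^{-1}(\sigma),\C)$ is an isomorphism, and the canonical identifications must take the form $i_{\sigma,\tau}=r_\tau\circ r_\sigma^{-1}$ (this expression manifestly satisfies the cocycle relation stated in Theorem~\ref{CBidea}).

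Because the $\frak{H}_\muhat$-action on $\calX_\muhat$ covers the corresponding action on $\t_\muhat^{\rm gen}$ and commutes with $\pi$, applying $R^i\pi_!$ to this equivariance data (and using base change for the squares coming from $w\pi=\pi w$) produces an $\frak{H}_\muhat$-equivariant structure on $R^i\pi_!\C$ covering the action on the base, and hence a canonical linear representation $W:\frak{H}_\muhat\to\GL(V)$ on global sections. The compatibility between $W$ and the fiber identifications reads
$$(w^*)^{-1}=r_{\dot w\sigma\dot w^{-1}}\circ W(w)\circ r_\sigma^{-1}\qquad(w\in\frak{H}_\muhat,\ \sigma\in\t_\muhat^{\rm gen}).$$
Inserting this together with $i_{\dot w\sigma\dot w^{-1},\sigma}=r_\sigma\circ r_{\dot w\sigma\dot w^{-1}}^{-1}$ into the definition of $\rho^i(w)$ collapses everything to the one-line identity
$$\rho^i(w)=r_\sigma\circ W(w)\circ r_\sigma^{-1}.$$
From this presentation both conclusions of the theorem are immediate: $\rho^i$ is a homomorphism because $W$ is, and for any $\sigma,\tau\in\t_\muhat^{\rm gen}$ the canonical isomorphism $i_{\sigma,\tau}=r_\tau\circ r_\sigma^{-1}$ intertwines the representations defined at $\sigma$ and at $\tau$, which is the precise sense in which $\rho^i$ is independent of the choice of base point.

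The genuine difficulty is entirely concentrated in the prior Theorem~\ref{CBidea} (the constancy of $R^i\pi_!\C$), which rests on the Nakajima--Maffei--Crawley-Boevey--van~den~Bergh machinery already invoked in the paper; once that input is granted, the present theorem is formal. The only verification requiring attention in the plan above is that the $\frak{H}_\muhat$-equivariant structure on $\calX_\muhat$ does descend to a well-defined equivariant structure on $R^i\pi_!\C$ (and thereby to an honest representation $W$ on $V$ rather than merely a projective or set-theoretic one), which amounts to a standard base-change exercise in the six-functor formalism.
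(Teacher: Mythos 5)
Your proposal is correct and follows exactly the route the paper intends: the paper derives this theorem (without writing out details) from the constancy of $R^i\pi_!\C$ and the canonical identifications of Theorem \ref{CBidea}, and your argument simply makes the formal bookkeeping explicit by realizing $i_{\sigma,\tau}$ as $r_\tau\circ r_\sigma^{-1}$ via global sections and reading off $\rho^i(w)=r_\sigma\circ W(w)\circ r_\sigma^{-1}$ from the equivariant structure on the constant sheaf. The points you flag as needing verification (that the $\frak{H}_\muhat$-action on $\calX_\muhat$ is independent of the chosen representatives $\dot w$ and genuinely descends to the direct image) are indeed the only ones, and they go through as you indicate.
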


We now prove the assertion (iii) of Conjecture \ref{congind} for $\calE=\calO(a)^n$, namely.

\begin{theorem}For any $w\in S_\muhat$ and any reduced divisor $D_w=\sum_{i=1}^s\sum_{j=1}^{p_i}\a_{i,j}$ of $\bbP^1_{\F_q}$ compatible with $w$, the polynomial

$$
q^{-\frac{1}{2}d_\A}\sum_i{\rm Tr}\left(w\,|\, H_c^{2i}(\calX_\A^{\calO(a)^n},\C)\right)\, q^i
$$
counts the number of isomorphism classes of geometrically indecomposable parabolic structures  of type  $\mu^{h_i}$ at $\a_{i,j}$  on the vector bundle $\calO(a)^n$ of $\bbP^1_{\F_q}$ (where we recall that $\mu^{h_1},\dots,\mu^{h_s}$ are the distinct coordinates of $\muhat$).
\label{theo2.5}\end{theorem}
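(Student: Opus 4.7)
The plan is to combine Theorem \ref{maintheoHiggs} with a Grothendieck–Lefschetz trace formula argument that identifies the intrinsic $S_\muhat$-action on $H_c^\bullet(\calX_\A^{\calO(a)^n},\C)$ with the monodromy of a lisse $\overline\Q_\ell$-sheaf spread out over a suitable base.

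First, given $w\in S_\muhat$ of cycle-type $\lambdahat\in\calP_\muhat$ and a compatible divisor $D_w=\sum_{i,j}\a_{i,j}$ on $\bbP^1_{\F_q}$, I would choose (for $q$ sufficiently large, as in \cite[\S5.1]{ICQV}) a generic $r$-tuple $\A_w=(A_{i,j})$ of semisimple adjoint orbits of $\gl_n(\F_{q^{d_{i,j}}})$ of type $\muhat_w$, whose eigenvalues spread the eigenvalues of the complex tuple $\A$ into Galois orbits of the appropriate sizes. A direct dimension count shows $d_{\A_w}=d_\A$, and Theorem \ref{maintheoHiggs} gives
\[
|\calX_{\A_w,D_w}^{\calO(a)^n}(\F_q)|=q^{\frac12 d_\A}\,\calA_{\muhat_w,D_w}^{\calO(a)^n}(q).
\]

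Next, by Remark \ref{F_qbar}, after base change to $\overline{\F}_q$ the variety $\calX_{\A_w,D_w}^{\calO(a)^n}$ is canonically identified with the fiber $\pi^{-1}(\sigma)$ of the family $\pi:\calX_\muhat\to \t_\muhat^{\rm gen}$ constructed above, where $\sigma\in \t_\muhat^{\rm gen}(\overline{\F}_q)$ is a point whose coordinates Frobenius permutes according to $w$. Spread out $\pi$ and the family $\calX_\muhat$ over a finitely generated subring $R\subset \C$, reducing modulo a prime to $\F_q$; then the $\F_q$-points of $\calX_{\A_w,D_w}^{\calO(a)^n}$ are precisely the fixed points of $wF^*$ acting on $\pi^{-1}(\sigma_0)(\overline{\F}_q)$ for an $F$-fixed reference point $\sigma_0$, where $w$ denotes its action via the monodromy of the lisse $\overline\Q_\ell$-sheaf $R^i\pi_!\overline\Q_\ell$ on $\t_\muhat^{\rm gen}$.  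Grothendieck–Lefschetz then yields
\[
|\calX_{\A_w,D_w}^{\calO(a)^n}(\F_q)|=\sum_i(-1)^i\,{\Tr}\!\left((wF^*)\,\bigm|\,H_c^i(\calX_\A^{\calO(a)^n}\otimes\overline{\F}_q,\overline\Q_\ell)\right).
\]

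Now invoke purity and polynomial count for $\calX_\A^{\calO(a)^n}$: by \cite[Theorem 2.2.4, Proposition 2.2.6]{aha} combined with \cite{crawley-etal}, the odd cohomology vanishes and Frobenius acts on $H_c^{2i}$ with all eigenvalues equal to $q^i$, hence as the scalar $q^i$. Consequently $(wF^*)$ acts on $H_c^{2i}$ as $q^i\cdot w^*$, the trace formula collapses to $\sum_i q^i\,{\Tr}(w\mid H_c^{2i}(\calX_\A^{\calO(a)^n},\C))$, and dividing by $q^{\frac12 d_\A}$ gives the claimed formula for $\calA_{\muhat_w,D_w}^{\calO(a)^n}(q)$.

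The main obstacle is the compatibility step: one must show that the analytically defined $S_\muhat$-action on $H_c^i(\calX_\A^{\calO(a)^n},\C)$, obtained from Theorem \ref{CBidea} via the local constancy of $R^i\pi_!\C$, coincides with the $\ell$-adic monodromy of $R^i\pi_!\overline\Q_\ell$ along the $S_\muhat$-orbits in $\t_\muhat^{\rm gen}$. This requires upgrading the local-constancy argument (itself an adaptation of Nakajima \cite{nakajima2}, Maffei \cite{maffei} and Crawley-Boevey–van den Bergh \cite{crawley-etal}) to the $\ell$-adic setting on the spread-out family, exactly in the spirit of \cite[\S2.2]{HLRV}. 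Once this compatibility is established, the rest of the argument is formal.
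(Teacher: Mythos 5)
Your proposal is correct and follows essentially the same route as the paper's proof: Theorem \ref{maintheoHiggs} together with Remark \ref{F_qbar} converts the count into the number of $wF$-fixed points of $\calX_{\A/\overline{\F}_q}$, the Grothendieck trace formula is applied, the action of $(wF)^*$ is transported via the canonical isomorphisms of the family $\pi:\calX_\muhat\to\t_\muhat^{\rm gen}$ to $F^*\circ\rho^i(w)$ on an $F$-stable fiber (your $F$-fixed reference point $\sigma_0$ plays the role of the paper's $F$-stable tuple $\bfB$), and purity plus the commutation of $\rho^i(w)$ with $F^*$ collapses the trace. The compatibility issue you flag at the end is exactly the point the paper also defers to the argument of \cite[\S 2.2]{HLRV}.
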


\begin{proof} The strategy is similar to that in \cite[\S 2.2.2]{HLRV}. By Remark \ref{F_qbar} together with Theorem \ref{maintheoHiggs} the counting polynomial $A_{\muhat,w}^{(a);(n)}(t)$ of the isomorphism classes of geometrically indecomposable parabolic structures on $\calO(a)^n$ of type  $\mu^{h_i}$ at $\a_{i,j}$ verify an identity of the form

$$
A_{\muhat,w}^{(a);(n)}(q)=q^{-\frac{1}{2}d_\A}\left|\left.\left(\calX_{\A/_{\overline{\F}_q}}\right)^{wF}\right.\right|.
$$
for some generic $l$-tuple $\A/_{\overline{\F}_q}=(A_1/_{\overline{\F}_q},\dots,A_l/_{\overline{\F}_q})$ of semisimple adjoint orbits of $\gl_n(\overline{\F}_q)$ of same type as $\A$ (namely the multiplicities of the eigenvalues are given by $\muhat$). Then we apply Grothendieck trace formula to get

$$
A_{\muhat,w}^{(a);(n)}(q)=q^{-\frac{1}{2}d_\A}\sum_i{\rm Tr}\left((wF)^*\,|\, H_c^{2i}(\calX_{\A/_{\overline{\F}_q}},\overline{\Q}_\ell)\right).
$$
We can proceed as in \cite{HLRV} to see that the above construction of the action $\rho^i$ of $\frak{H}_\muhat$  can be done over the $\ell$-adic cohomology  $H_c^i(\calX_{\A/_{\overline{\F}_q}},\overline{\Q}_\ell)$ and that the trace of $w\in\frak{H}_\muhat$ is the same on both $H_c^i(\calX_{\A/_{\overline{\F}_q}},\overline{\Q}_\ell)$ and $H_c^i(\calX_\A,\C)$. 

The Frobenius map $wF$ acts on  $\calX_{\A/_{\overline{\F}_q}}$ but $w$ and $F$ don't. Therefore instead we choose a generic  $l$-tuple $\bfB/_{\overline{\F}}$ of $F$-stable semisimple adjoint orbits of $\gl_n(\overline{\F}_q)$ of same type as $\A$. Now the Frobenius $F$ acts on $\calX_{\bfB/_{\overline{\F}_q}}$ and we can prove that the two maps $F^*\circ \rho^i(w)$ on $H_c^i(\calX_{\bfB/_{\overline{\F}_q}},\overline{\Q}_\ell)$ and $(wF)^*$ on $H_c^i(\calX_{\A/_{\overline{\F}_q}},\overline{\Q}_\ell)$ are conjugate by $\iota_{\sigma,\tau}$ where $\sigma\in(\t_\muhat^{\rm gen})^{wF}$ is such that $\pi^{-1}(\sigma)=\calX_{\A/_{\overline{\F}_q}}$ and $\tau\in(\t_\muhat^{\rm gen})^F$ is such that $\pi^{-1}(\tau)=\calX_{\bfB/_{\overline{\F}_q}}$.

The theorem follows from the fact that $\rho^i(w)$ commutes with $F^*$  and that the unique eigenvalue of $F^*$ on $H_c^{2i}(\calX_{\bfB/\overline{\F}_q},\overline{\Q}_\ell)$ is $q^i$ (see below Remark \ref{maintheoappli}). \end{proof}

\subsubsection{The geometric conjecture in the general case}

 The idea is to consider a larger generic variety $\calX_{\bf S}^\calE$.

We choose a generic $l$-tuple $\bfS=(S_1,\dots,S_l)$ of regular semisimple adjoint orbits of $\gl_n(\C)$. It corresponds to the multi-partition $\alphahat=((1^n),\dots,(1^n))$ and $\frak{H}=\frak{H}_\alphahat=\left(\prod_{i=1}^l\frak{S}_n\right)\rtimes\frak{S}_l$.

Starting from our $\muhat$ we consider the $(\frak{S}_n)^l$-module 

$$
R_\muhat={\rm Ind}_{\frak{S}_{\mu^1}}^{\frak{S}_n}(1)\boxtimes\cdots\boxtimes {\rm Ind}_{\frak{S}_{\mu^l}}^{\frak{S}_n}(1),
$$
where for a partition $\lambda=(\lambda_1,\dots,\lambda_s)$, $\frak{S}_\lambda$ is the subgroup $\prod_{i=1}^s\frak{S}_{\lambda_i}$ of $\frak{S}_n$, and ${\rm Ind}_{\frak{S}_{\mu^i}}^{\frak{S}_n}(1)$ is the $\frak{S}_n$-module induced from the trivial $\frak{S}_{\mu^i}$-module.  

Note that the group $(\frak{S}_n)^l\rtimes S_\muhat$ acts naturally on $R_\muhat$.

If $V_1$ and $V_2$ are two $\C[(\frak{S}_n)^l\rtimes S_\muhat]$-modules, the group $S_\muhat$ acts on 

$$
W={\rm Hom}_{(\frak{S}_n)^l}(V_1,V_2)
$$ as $(r\cdot f)(v)=r\cdot f(r^{-1}\cdot v)$ for $f\in W$, $r\in S_\muhat$ and $v\in V_1$, and we denote by $[W]$ its class in $\calC h(S_\muhat)$.

\begin{conjecture}There exists an action of the group $\frak{H}$ on $H_c^i(\calX_\bfS^\calE,\C)$ such that the polynomial $A_\muhat^{{\bf b},\m}(t)$ in Conjecture \ref{mainconj1} is given by
\begin{equation}\label{iden2}
A_\muhat^{{\bf b};\m}(t)=t^{-\frac{1}{2}d_\bfS}\sum_i[W_\muhat^{2i}] t^i,
\end{equation}
where $W_\muhat^i$ is the $\C[S_\muhat]$-module ${\rm Hom}_{(\frak{S}_n)^l}\left(R_\muhat,\varepsilon^l\otimes H_c^{2i}(\calX_\bfS^\calE,\C)\right)$, $\varepsilon^l:=\overbrace{\varepsilon\boxtimes\cdots\boxtimes\varepsilon}^{l}$ with $\varepsilon$ the sign character of $\frak{S}_n$.
\label{conjgeneral}\end{conjecture}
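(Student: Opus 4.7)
The plan is to mimic the strategy used in Theorem \ref{theo2.5} (the $\calE = \calO(a)^n$ case), with the main new difficulty being the non-reductivity of $\mathrm{Aut}(\calE)$. The overall idea is (a) to build a family of varieties over the parameter space $\t^{\mathrm{gen}}$ of generic $l$-tuples of regular semisimple diagonal matrices which contains all the $\calX_\bfS^\calE$ as fibres, (b) to show that the push-forward $R^i\pi_!\,\C$ is a local system on this base, (c) to use the natural $\mathfrak{H} = (\mathfrak{S}_n)^l \rtimes \mathfrak{S}_l$-action on the base to produce a monodromy action on $H_c^i(\calX_\bfS^\calE,\C)$, and (d) to verify the trace formula by comparing with the counting in Theorem \ref{MAINtheo2} via Theorem \ref{maintheoHiggs}.

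First I would set up the enlarged variety $\calX^\calE \to \t^{\mathrm{gen}}$, whose fibre over $(\sigma_1,\ldots,\sigma_l)$ with $\sigma_i$ of eigenvalue type $(1^n)$ is $\calX_\bfS^\calE$ for $\bfS$ the corresponding tuple of regular semisimple orbits. The group $\mathfrak{H}$ acts on $\t^{\mathrm{gen}}$: the factor $(\mathfrak{S}_n)^l$ permutes the diagonal entries (Weyl-group action) and $\mathfrak{S}_l$ permutes the punctures. The crucial technical input, following Nakajima--Maffei--Crawley-Boevey--van den Bergh and the proof of Theorem \ref{theo2.5}, is to prove that $R^i\pi_!\,\C$ is a constant sheaf on $\t^{\mathrm{gen}}$; this produces canonical isomorphisms between fibres and hence, via the map $\sigma \mapsto w\cdot\sigma$, a representation $\rho^i$ of $\mathfrak{H}$ on $H_c^i(\calX_\bfS^\calE,\C)$.

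Given the action, I would then verify the identity (\ref{iden2}). By Theorem \ref{maintheoHiggs} applied over $\overline{\F}_q$ with a Frobenius twisted by $w$ (as in Remark \ref{F_qbar} and the proof of Theorem \ref{theo2.5}), for any compatible divisor $D_w$ one has
\[
\calA_{\muhat_w, D_w}^\calE(q) = q^{-\frac{1}{2}d_\bfS}\sum_i \Tr\bigl((wF)^*\,|\,H_c^{2i}(\calX_{\bfS/\overline{\F}_q},\overline{\Q}_\ell)\bigr),
\]
for a generic tuple $\bfS$ of adjoint orbits with prescribed multiplicities. Using $F$-stable $\bfS$ one writes $(wF)^* = F^*\circ \rho^i(w)$ up to conjugation, and purity/polynomial count forces the Frobenius eigenvalues on $H_c^{2i}$ to be $q^i$, so the right-hand side becomes $q^{-\frac{1}{2}d_\bfS}\sum_i \Tr(\rho^{2i}(w))\, q^i$. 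The passage from the regular semisimple side (where orbits have type $\alphahat = ((1^n),\ldots,(1^n))$) to an arbitrary $\muhat$ is exactly Frobenius reciprocity applied to $R_\muhat$: the idempotent in $\C[(\mathfrak{S}_n)^l]$ projecting onto $\varepsilon^l\otimes R_\muhat$ corresponds, via Springer theory for $\GL_n$ and the analysis of equation (\ref{eqq}), to restricting from full flags to partial flags of type $\muhat$, which produces the $\mathrm{Hom}_{(\mathfrak{S}_n)^l}$ in the statement.

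The main obstacle, as flagged in the introduction, is that $\mathrm{Aut}(\calE)$ is not reductive when $\calE$ is non-trivial, so the categorical quotient defining $\calX_\bfS^\calE$ cannot be obtained via standard GIT, and even setting up $\pi$ as a morphism of varieties (rather than stacks) requires arguing that the $\mathrm{PAut}(\calE)$-action on $X_\bfS^\calE$ is free with an étale-locally trivial quotient. One would likely need to establish this via genericity of $\bfS$ (which should force residues to generate enough of $\mathrm{End}(\calE)$ to kill the unipotent radical) and to redo the purity/polynomial-count arguments of \cite{aha}\cite{crawley-etal} in this non-reductive setting. Once $\pi:\calX^\calE \to \t^{\mathrm{gen}}$ is available as a smooth morphism of varieties, the sheaf-theoretic arguments for constancy of $R^i\pi_!\,\C$ should go through as in the reductive case, but producing $\pi$ rigorously is where the real work lies.
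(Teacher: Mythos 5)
Your proposal follows essentially the same route as the paper: the action is to be obtained from the constancy of $R^i\pi_!\,\C$ on the family over $\t^{\rm gen}_{\alphahat}$ (exactly as in \S \ref{theindivcase} for $\calO(a)^n$), the trace identity is checked by point counting with a $w$-twisted Frobenius via Theorem \ref{maintheoHiggs}, and your passage from full flags to type $\muhat$ via ``Frobenius reciprocity and Springer theory'' is precisely the paper's decomposition of $h_\muhat$ into power sums together with Conjecture \ref{conjB} and Theorems \ref{avantlast}--\ref{lasttheo}, where the Springer-theoretic input is the Kazhdan--Springer formula (\ref{forKS}) for Deligne--Lusztig induction from twisted tori. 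You also correctly isolate the same open obstruction the paper flags, namely constructing the quotient $\calX_\bfS^\calE$ and the $\frak{H}$-action when ${\rm Aut}(\calE)$ is not reductive.
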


\begin{theorem} Conjecture \ref{conjgeneral} is true if $\calE=\calO(a)^n$.
\label{theotriv}\end{theorem}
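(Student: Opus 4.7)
The plan is to combine Theorem \ref{theo2.5} with a Springer-type comparison between the cohomology of $\calX_\A^{\calO(a)^n}$ for a generic tuple $\A$ of type $\muhat$ and that of the regular-semisimple variety $\calX_\bfS^{\calO(a)^n}$. First, I would specialize the construction in the proof of Theorem \ref{theo2.5} to the maximally non-degenerate multi-partition $\alphahat=((1^n),\dots,(1^n))$, for which $\frak{H}_\alphahat=\frak{H}$. The total family $\pi:\calX_\alphahat\to\t_\alphahat^{\rm gen}$ is smooth with locally constant $R^i\pi_!\,\C$ (by the analogue of Theorem \ref{CBidea} for $\alphahat$), so its monodromy yields the required action of $\frak{H}=(\frak{S}_n)^l\rtimes\frak{S}_l$ on $H_c^i(\calX_\bfS^{\calO(a)^n},\C)$.

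The heart of the proof is the following canonical graded $S_\muhat$-equivariant isomorphism, to be established for every indivisible $\muhat$ and every generic $\A$ of type $\muhat$:
\begin{equation*}
H_c^{2i}(\calX_\A^{\calO(a)^n},\C)\;\cong\;W_\muhat^{\,2i+(d_\bfS-d_\A)}.
\end{equation*}
By Frobenius reciprocity, the right-hand side is the $\varepsilon^l|_{\prod_i\frak{S}_{\mu^i}}$-isotypic component of $H_c^{2i+(d_\bfS-d_\A)}(\calX_\bfS^{\calO(a)^n},\C)$ under the Young subgroup $\prod_i\frak{S}_{\mu^i}\subset(\frak{S}_n)^l$. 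I would construct this isomorphism by introducing an incidence variety of pairs (element of $X_\bfS^{\calO(a)^n}$, compatible partial-flag refinement of type $\muhat$) together with its natural map to the universal base, following the Nakajima--Maffei--Crawley-Boevey strategy employed in \cite{HLRV} for the analogous character variety statement: the sign twist comes from Poincaré duality on the flag refinement fibers, and the grading shift $d_\bfS-d_\A$ (an even non-negative integer) from the difference in complex dimensions.

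Granting this isomorphism, the indivisible case of Theorem \ref{theotriv} is immediate, since by Theorem \ref{theo2.5}
\begin{equation*}
A_{\muhat,w}^{(a);(n)}(q)=q^{-d_\A/2}\sum_i{\rm Tr}\!\left(w\,\big|\,H_c^{2i}(\calX_\A^{\calO(a)^n},\C)\right)q^i=q^{-d_\bfS/2}\sum_j{\rm Tr}\!\left(w\,\big|\,W_\muhat^{\,2j}\right)q^j,
\end{equation*}
which is (\ref{iden2}). For divisible $\muhat$ the variety $\calX_\A^{\calO(a)^n}$ is not defined, so I would instead use Theorem \ref{MAINtheo2}: the plethystic-log expression for $A_\muhat^{(a);(n)}(q)$ in Hall--Littlewood functions can be matched with the $S_\muhat$-weighted Poincaré polynomial of the sign-isotypic part of $H_c^*(\calX_\bfS^{\calO(a)^n},\C)$ by combining the construction above (the sign-isotypic projector makes sense for any $\muhat$) with the standard symmetric-function identity expressing an induced trivial character in the basis of power sums.

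The principal obstacle is the Springer-type isomorphism displayed above. While $\calX_\bfS^{\calO(a)^n}$ is isomorphic to a Nakajima quiver variety by \cite[Theorem 2.2.4]{aha}, so that the techniques of \cite{crawley-etal,nakajima2,HLRV} are in principle available, one must verify that the resulting $\prod_i\frak{S}_{\mu^i}$-monodromy coincides with the expected partial-flag Weyl group action, and that the sign character appears with the correct sign convention; once this is in place, the extension to divisible $\muhat$ via the symmetric-function machinery is essentially formal.
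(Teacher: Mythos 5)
Your overall architecture diverges from the paper's at the decisive step, and the divergence leaves a genuine gap. The paper never constructs (or needs) your ``Springer-type'' isomorphism $H_c^{2i}(\calX_\A^{\calO(a)^n},\C)\cong W_\muhat^{2i+(d_\bfS-d_\A)}$. Instead it expands the complete symmetric functions in power sums, $h_\mu=\sum_\nu z_\nu^{-1}R_{\mu\nu}p_\nu$, so that the entire dependence on $\muhat$ is absorbed into the representation-theoretic coefficients $R_{\muhat_w,\lambdahat}=\Tr\left((\v_\lambdahat,w)\,|\,R_\muhat\right)$, and the statement to be proved becomes, for each power sum $p_\lambdahat$, an identity (Conjecture \ref{conjB}) involving only the regular semisimple variety $\calX_\bfS^{\calO(a)^n}$. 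That identity is then proved by (i) a Deligne--Lusztig analogue of Theorem \ref{theo} (Theorem \ref{avantlast}), (ii) the Kazhdan--Springer formula (\ref{forKS}) converting Deligne--Lusztig characters into Fourier transforms of regular semisimple orbital integrals, which yields the signed point count $\varepsilon_\lambdahat\,|X_{\bfS_\lambdahat,D_w}^{\calO(a)^n}|/|{\rm PAut}(\calE)|$ (Theorem \ref{lasttheo}), and (iii) Grothendieck's trace formula for the twisted Frobenius $(\v_\lambdahat,w)F$ acting through the monodromy action of $\frak{H}$, interpolating the two extreme cases already available ($\lambdahat$ trivial and $w$ arbitrary, i.e.\ Theorem \ref{theo2.5}; $\lambdahat$ arbitrary and $w=1$, from \cite{HLRV} and \cite{ICQV}). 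This treats divisible and indivisible $\muhat$ uniformly.

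Two concrete problems with your route. First, the isomorphism you call the heart of the proof is only asserted: building an incidence variety is not enough, since you must also show that the $S_\muhat$-action on $H_c^*(\calX_\A^{\calO(a)^n},\C)$ coming from the family over $\t_\muhat^{\rm gen}$ (the one used in Theorem \ref{theo2.5}) agrees, under your isomorphism, with the $S_\muhat$-action on $W_\muhat^*$ induced from the $\frak{H}$-action on $H_c^*(\calX_\bfS^{\calO(a)^n},\C)$ coming from the family over $\t_\alphahat^{\rm gen}$ --- a compatibility between two independently defined monodromy actions that you acknowledge but do not address. Second, and more seriously, your treatment of divisible $\muhat$ is circular as stated: for such $\muhat$ no generic tuple $\A$ of type $\muhat$ exists, so the incidence-variety construction has nothing to compare against, and ``matching the plethystic-log expression with the $S_\muhat$-weighted Poincar\'e polynomial of the isotypic part'' is precisely the statement of Conjecture \ref{conjgeneral}, not an argument for it. To make the symmetric-function identity do any work you must first evaluate $\left\langle {\rm Coeff}_{Y^\m}\left[(t-1)\Log\,\Omega_{D_w}(t)\right],p_\lambdahat\right\rangle$ for every $\lambdahat$, and that is exactly the Deligne--Lusztig/Fourier-transform input of Theorems \ref{avantlast} and \ref{lasttheo}, which your proposal omits entirely.
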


The proof of this theorem will be given after Theorem \ref{lasttheo} below.
\bigskip

\subsubsection{Strategy/evidences for Conjecture \ref{conjgeneral}} \label{strategy}
\bigskip

Let $w\in S_\muhat\subset\frak{S}_l$, $D_w=\sum_{i=1}^s\sum_{j=1}^{p_i}\a_{i,j}$ be a reduced divisor on $\bbP^1_{\F_q}$ compatible with $w$. Let $r=\sum_{i=1}^s p_i$ be the total number of points of $D_w$. 

If Conjecture \ref{mainconj1} is true, by Theorem \ref{theo2} we must have

$$
A_{\muhat,w}^{{\bf b};\m}(t)=\left\langle {\rm Coeff}_{Y^\m}\left[(t-1)\Log\,\Omega_{D_w}(t)\right],h_{\muhat_w}\right\rangle,
$$
where  $\muhat_w$ is the multi-partition in $(\calP_n)^r$ with coordinate $\mu^{h_i}$ at the point $\a_{i,j}$ of $D_w$. 

For a partition $\mu$ of $n$, the complete symmetric function $h_\mu$ decomposes into the basis of power symmetric function $\{p_\lambda\}_{\lambda \in\calP}$ as follows

$$
h_\mu=\sum_\nu z_\nu^{-1} R_{\mu,\nu}\, p_\nu,
$$
where $z_\nu$ is the cardinality of the centralizer in $\frak{S}_n$ of an element $u$ of cycle-type $\nu$, and $R_{\mu,\nu}$ is the trace of $u$ on the $\frak{S}_n$-module $R_\mu={\rm Ind}_{\frak{S}_\mu}^{\frak{S}_n}(1)$. Therefore
\begin{equation}
A_{\muhat,w}^{{\bf b};\m}(t)=\sum_{\lambdahat\in (\calP_n)^r} z_\lambdahat^{-1} R_{\muhat_w,\lambdahat}\, \left\langle {\rm Coeff}_{Y^\m}\left[(t-1)\Log\,\Omega_{D_w}(t)\right],p_\lambdahat\right\rangle.
\label{equaA}\end{equation}

Write $(\frak{S}_n)^l=\prod_{i=1}^s\prod_{j=1}^{p_i}(\frak{S}_n)^{d_{i,j}}$. For an element $\v_{i,j}=(v_1,\dots,v_{d_{i,j}})\in(\frak{S}_n)^{d_{i,j}}$ we denote by $\overline{\v}_{i,j}$ the product $v_1v_2\cdots v_{d_{i,j}}\in\frak{S}_n$ of its coordinates.

We have the following conjecture.
 
 \begin{conjecture}If $\lambdahat\in(\calP_n)^r=\prod_{i=1}^s\prod_{j=1}^{p_i}\calP_n$ we have
 
 \begin{equation}
 \left\langle {\rm Coeff}_{Y^\m}\left[(t-1)\Log\,\Omega_{D_w}(t)\right],p_\lambdahat\right\rangle=t^{-\frac{1}{2}d_\bfS}\sum_i {\Tr}\left((\v_\lambdahat,w)\,|\, \varepsilon^l\otimes H_c^{2i}(\calX^\calE_\bfS,\C)\right)\, t^i,
 \label{equaB}\end{equation}
 for any element $\v_\lambdahat=(\v_{i,j})\in \prod_{i=1}^s\prod_{j=1}^{p_i}(\frak{S}_n)^{d_{i,j}}$ such that the element $(\overline{\v}_{i,j})\in\prod_{i=1}^s\prod_{j=1}^{p_i}\frak{S}_n$ is of cycle-type $\lambdahat$.
 \label{conjB}\end{conjecture}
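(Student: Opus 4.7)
The plan is to reduce Conjecture \ref{conjB} to a Grothendieck--Lefschetz fixed-point count on a model of $\calX^\calE_\bfS$ over $\overline{\F}_q$, adapting the strategy used to prove Theorem \ref{theo2.5} --- which handles $\calE=\calO(a)^n$ via the quiver-variety description of \cite{HLRV} and \cite{crawley-etal} --- to an arbitrary vector bundle $\calE$.

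First I would interpret the left-hand side of (\ref{equaB}) as a point-count. Starting from Theorem \ref{theo2}, the pairing against $p_\lambdahat$ is extracted from the pairing against $h_{\muhat_w}$ via the identity $h_\mu=\sum_\nu z_\nu^{-1} R_{\mu,\nu}\, p_\nu$. On the character-theoretic side of Theorem \ref{theo} this amounts to replacing each Harish-Chandra induction $R_{L_i(q^{d_i})}^{\GL_n(q^{d_i})}(\alpha_i)$ from a standard Levi by a Deligne--Lusztig induction from a twisted maximal torus of $\GL_n(q^{d_i})$ whose cycle type on the geometric points above $\a_{i,j}$ is prescribed by $\v_\lambdahat$. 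Re-running the Fourier-transform computation of Proposition \ref{eqHiggs} together with its Lie-algebra counterpart (Theorem \ref{charsum}) for such non-split induced characters should produce
\[
\left\langle {\rm Coeff}_{Y^\m}\bigl[(q-1)\Log\,\Omega_{D_w}(q)\bigr],\, p_\lambdahat\right\rangle
= q^{-\frac{1}{2} d_\bfS}\,\frac{\bigl|(X^\calE_\bfS)^{(\v_\lambdahat,w)F}\bigr|}{|{\rm PAut}(\calE)(\F_q)|},
\]
for a suitable $F$-stable spreading-out of a generic tuple $\bfS$ of regular semisimple orbits to $\overline{\F}_q$, in the spirit of Remark \ref{F_qbar}.

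Second I would construct the action of $\mathfrak H=(\mathfrak S_n)^l\rtimes\mathfrak S_l$ on $H^i_c(\calX^\calE_\bfS,\C)$ by the Nakajima/Maffei/Crawley-Boevey--van den Bergh monodromy trick underlying Theorem \ref{CBidea}: let $\sigma=(\sigma_1,\dots,\sigma_l)$ vary in $\t^{\rm gen}_\alphahat$ with $\alphahat=((1^n),\dots,(1^n))$, form the total family
\[
\calX^\calE_\alphahat=\bigl\{(\varphi,\sigma,(g_iL_i)_i)\ :\ \sigma\in\t^{\rm gen}_\alphahat,\ \varphi\in X^\calE_{\bfS(\sigma)},\ g_i^{-1}{\rm Res}_{\a_i}(\varphi) g_i=\sigma_i\bigr\}\big/{\rm PAut}(\calE),
\]
show that the higher direct images of $\pi:\calX^\calE_\alphahat\to\t^{\rm gen}_\alphahat$ are constant local systems, and combine parallel transport with the obvious action of $\mathfrak H$ on the base. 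The identity (\ref{equaB}) then follows from Grothendieck--Lefschetz applied to $(\v_\lambdahat,w)F$, from the expected vanishing of odd cohomology, and from $F^*|H_c^{2i}=q^i$ (which via Proposition \ref{Katz} is a consequence of polynomial count and purity for $\calX_\bfS^\calE$); the sign $\varepsilon^l$ emerges naturally from the standard sign in the Deligne--Lusztig formula for characters of non-split tori of $\GL_n$.

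The main obstacle is the non-reductivity of ${\rm Aut}(\calE)$. The whole second step --- existence of $\calX^\calE_\bfS$ as a smooth variety with pure polynomial-count cohomology, \'etale-principality of $X^\calE_\bfS\to\calX^\calE_\bfS$ under ${\rm PAut}(\calE)$, and constancy of $R^i\pi_!\C$ --- is already part of Conjecture \ref{congind}, and is known for $\calE=\calO(a)^n$ only because ${\rm Aut}(\calO(a)^n)=\GL_n$ is reductive, so that the quiver-variety identification of \cite{crawley-mat} and the hyper-K\"ahler/deformation arguments of \cite{HLRV} apply. For general $\calE$ one must treat the quotient by the non-reductive group ${\rm PAut}(\calE)$, for instance by embedding it into a reductive ambient group via a parahoric/Levi trick, or by an ad hoc deformation argument compatible with the unipotent radical of ${\rm Aut}(\calE)$; once this geometric foundation is in place, the representation-theoretic and symmetric-function manipulations of the first step become essentially formal.
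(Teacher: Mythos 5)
Your proposal follows essentially the same route as the paper: the first step is exactly Theorems \ref{avantlast} and \ref{lasttheo} (Deligne--Lusztig induction from twisted tori, the Kazhdan--Springer Fourier-transform formula (\ref{forKS}), and the Lie-algebra analogue of Theorem \ref{charsum}), and the second step is the Nakajima/Maffei/Crawley-Boevey--van den Bergh monodromy construction that the paper carries out only for $\calE=\calO(a)^n$. You also correctly identify that the general case is blocked by the non-reductivity of ${\rm Aut}(\calE)$, i.e.\ by the geometric foundations of Conjecture \ref{congind} --- which is precisely why the statement remains a conjecture in the paper rather than a theorem. One small imprecision: your displayed point-count formula omits the sign $\varepsilon_\lambdahat$ appearing in Theorem \ref{lasttheo}, though you do acknowledge in prose that this sign arises from the Deligne--Lusztig induction for non-split tori.
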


\begin{remark} If $\lambdahat=((1^n),\dots,(1^n))$  then $p_\lambdahat=h_\lambdahat$ and so, if $\v_\lambdahat=1$,  the conjecture is a particular case of Conjecture \ref{congind}(iii) by Theorem \ref{theo2}.
\end{remark}

We will prove that this conjecture is true for $\calE=\calO(a)^n$, but before let us prove that Equation (\ref{equaA}) together with (\ref{equaB}) implies Conjecture \ref{conjgeneral}.

Combining (\ref{equaA}) and (\ref{equaB}) we have

$$
A_{\muhat,w}^{{\bf b};\m}(t)=t^{-\frac{1}{2}d_\bfS}\sum_i\sum_{\lambdahat\in (\calP_n)^r} z_\lambdahat^{-1} R_{\muhat_w,\lambdahat}\, {\Tr}\left((\v_\lambdahat,w)\,|\, \varepsilon^l\otimes H_c^{2i}(\calX^\calE_\bfS,\C)\right)\, t^i.
$$

Now we have (see for instance \cite[Lemma 6.2.3]{ICQV}) 

$$
R_{\muhat_w,\lambdahat}={\Tr}\left((\v_\lambdahat,w)\,|\, R_\muhat\right).
$$
Hence 

\begin{align*}
A_{\muhat,w}^{{\bf b};\m}(t)&=t^{-\frac{1}{2}d_\bfS}\sum_i\frac{1}{|(\frak{S}_n)^l|}\sum_{v\in(\frak{S}_n)^r}  \sum_{\v\in(\frak{S}_n)^l, \v\mapsto v}{\Tr}\left((\v,w)\,|\, R_\muhat\right)\, {\Tr}\left((\v,w)\,|\, \varepsilon^l\otimes H_c^{2i}(\calX^\calE_\bfS,\C)\right)\, t^i\\
&=t^{-\frac{1}{2}d_\bfS}\sum_i\frac{1}{|(\frak{S}_n)^l|}\sum_{\v\in(\frak{S}_n)^l}{\Tr}\left((\v,w)\,|\, R_\muhat\right)\, {\Tr}\left((\v,w)\,|\, \varepsilon^l\otimes H_c^{2i}(\calX^\calE_\bfS,\C)\right)\, t^i,
\end{align*}
where $\v\mapsto v$ means that if $\v=(v_{i,j})\in\prod_{i=1}^s\prod_{j=1}^{p_i}\frak{S}_n^{d_{i,j}}$, then $v=(\overline{v}_{i,j})\in\prod_{i=1}^s\prod_{j=1}^{p_i}\frak{S}_n$. 

By \cite[Proposition 6.1.1]{ICQV} we conclude that

$$
A_{\muhat,w}^{{\bf b};\m}(t)=t^{-\frac{1}{2}d_\bfS}\sum_i{\Tr}\left(w\,|\, {\rm Hom}_{(\frak{S}_n)^l}(R_\muhat,\varepsilon^l\otimes H_c^{2i}(\calX_\bfS^\calE,\C))\right)\, t^i=t^{-\frac{1}{2}d_\bfS}\sum_i{\Tr}\left(w\,|\, W_\muhat^{2i}\right)\, t^i.
$$

\subsubsection{Evidences for Conjecture \ref{conjB}}
\bigskip

We first give a representation theoritical meaning of  $\left\langle {\rm Coeff}_{Y^\m}\left[(t-1)\Log\,\Omega_{D_w}(t)\right],p_\lambdahat\right\rangle$.

Recall that the $\GL_n(\overline{\F}_q)$-conjugacy classes of the maximal tori of $\GL_n(\overline{\F}_q)$ defined over $\F_{q^d}$  (i.e. $F^{q^d}$-stable) are naturally parametrized by the conjugacy classes of $\mathfrak{S}_n$. If $v\in\mathfrak{S}_n$ we choose an $F^{q^d}$-stable maximal torus $T_v$ in the corresponding conjugacy class and we denote by $T_v(\F_{q^d})$ the group of its rational points. Let $\ell$ be a prime invertible in $\F_q$. Then Deligne and Lusztig \cite{DLu} defined for any $\overline{\Q}_\ell$-character $\alpha$ of $T_v(\F_{q^d})$ a virtual  $\overline{\Q}_\ell$-character $R_{T_v(\F_{q^d})}^{\GL_n(\F_{q^d})}(\alpha)$ of $\GL_n(\F_{q^d})$. If $v=1$, then the Deligne-Lusztig character $R_{T_v(\F_{q^d})}^{\GL_n(\F_{q^d})}(\alpha)$ is the Harish-Chandra induced of  $\alpha$ as $T_1$ is contained in a rational Borel subgroup.

\begin{theorem}If $(\alpha_{i,j})_{i,j}$ is a generic $r$-tuple of linear characters of the finite tori $T_{v_{i,j}}(q^{d_{i,j}})$, $i=1,\dots,s, j=1,\dots,p_i$, then
$$
\left\langle {\rm Coeff}_{Y^\m}\left[(q-1)\Log\,\Omega_{D_w}(q)\right],p_\lambdahat\right\rangle=\frac{1}{|{\rm Aut}(\calE)|}\sum_{h\in{\rm Aut}(\calE)}\prod_{i=1}^s\prod_{j=1}^{p_i}R_{T_{v_{i,j}}(q^{d_{i,j}})}^{\GL_n(q^{d_{i,j}})}(\alpha_{i,j})(h(\a_{i,j})),
$$
where $v=(v_{i,j})_{i,j}\in\prod_{i=1}^s\prod_{j=1}^{p_i}\frak{S}_n$ is of cycle-type $\lambdahat$.
\label{avantlast}\end{theorem}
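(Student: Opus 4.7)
My plan is to parallel the derivation of Theorem \ref{theo2}, but to pair against the power sums $p_\lambdahat$ rather than the complete symmetric functions $h_\muhat$, and to invoke Deligne--Lusztig induction in place of Harish--Chandra induction. First, the formal manipulation in the proof of Theorem \ref{theo2} that yields Formula (\ref{theo2bis}) applies verbatim when $h_\muhat$ is replaced by any element of $\Lambda$, as it only uses the formal properties of $\Log$. Specialising the pairing to $p_\lambdahat$ yields
\begin{equation*}
\left\langle {\rm Coeff}_{Y^\m}\left[(q-1)\Log\,\Omega_{D_w}(q)\right], p_\lambdahat\right\rangle = (q-1)\sum_{\omhat\in\T}C_\omhat^o\,\calH_\omhat^\m(q)\,\left\langle \tilde{H}_\omhat, p_\lambdahat\right\rangle.
\end{equation*}

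Next, I would prove the power-sum analog of (\ref{eqq}): if $\omhat$ corresponds to a type $(M,(\calO_{i,j}))$, then
\begin{equation*}
\left\langle \tilde{H}_\omhat(\x_{1,1},\dots,\x_{s,p_s};q), p_\lambdahat\right\rangle = \prod_{i=1}^{s}\prod_{j=1}^{p_i} R_{T_{v_{i,j}}(q^{d_{i,j}})}^{\GL_n(q^{d_{i,j}})}(1)(l_M\cdot \calO_{i,j}).
\end{equation*}
This is classical for $\GL_n$: the expansion of the modified Hall--Littlewood polynomial $\tilde{H}_\mu(\x;q^d)$ in the power-sum basis $\{p_\nu(\x)\}_\nu$ produces, up to standard normalisations, the Green polynomials of $\GL_n(q^d)$ (see \cite[III, \S7]{macdonald}), and these Green polynomials are precisely the values of $R_{T_v}^{\GL_n(q^d)}(1)$ at unipotent classes. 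The extension to the evaluation at $l_M\cdot \calO_{i,j}$ follows from the Deligne--Lusztig character formula, using that $l_M\in (Z_M)_{\rm reg}(q^{d_{i,j}})$ has centraliser exactly $M$ so that induction from a torus of $M$ is computed from the corresponding Green polynomial in $M$.

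Finally, I would establish the Deligne--Lusztig analog of Theorem \ref{intermediate}: for a generic tuple $(\alpha_{i,j})$ of linear characters of the tori $T_{v_{i,j}}(q^{d_{i,j}})$,
\begin{equation*}
\frac{1}{|\Aut(\calE)|}\sum_{h\in\Aut(\calE)}\prod_{i,j}R_{T_{v_{i,j}}(q^{d_{i,j}})}^{\GL_n(q^{d_{i,j}})}(\alpha_{i,j})(h(\a_{i,j})) = \sum_{(M,(\calO_{i,j}))}\frac{(q-1)\,K_M^o\,\calH_{(M,(\calO_{i,j}))}^\calE(q)}{|W_{\GL_n(q)}(M,(\calO_{i,j}))|}\prod_{i,j}R_{T_{v_{i,j}}(q^{d_{i,j}})}^{\GL_n(q^{d_{i,j}})}(1)(l_M\cdot \calO_{i,j}).
\end{equation*}
Combining this with the previous two steps proves the theorem. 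The main obstacle is this last step: the Deligne--Lusztig character formula is more delicate than the Harish--Chandra one used implicitly in Theorem \ref{intermediate}, so one must verify that its evaluation at a semisimple element of $(Z_M)_{\rm reg}(q)$ factors through $Z_M(q)$ in such a way that the genericity of $(\alpha_{i,j})$ annihilates all contributions from tuples of conjugacy classes not sharing a common semisimple part. Once this factorisation is in hand, the combinatorial constants $K_M^o$ and $|W_{\GL_n(q)}(M,(\calO_{i,j}))|$ appear for the same reason as in the proof of Theorem \ref{intermediate}, and the argument proceeds along the same lines as \cite[Theorem 4.3.1]{aha} and its Lie-algebra counterpart used in the proof of Theorem \ref{charsum}.
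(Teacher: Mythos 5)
Your proposal is correct and follows essentially the same route as the paper, which simply states that one must prove the analogue of Formula (\ref{formula0}) for power sums and Deligne--Lusztig induction and that "the proof is completely similar" (citing \cite[\S 6.10.5, Theorem 6.10.1]{ICQV} for the case $\calE=\calO(a)^n$, $w=1$). Your three steps --- the formal $\Log$ expansion over types, the identity $\langle \tilde{H}_\omhat,p_\lambdahat\rangle=\prod R_{T_{v_{i,j}}}^{\GL_n}(1)(l_M\cdot\calO_{i,j})$ via Green functions, and the Deligne--Lusztig analogue of Theorem \ref{intermediate} --- are exactly the unpacking of that claim, and you correctly identify the last step as the only delicate point.
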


\begin{proof}We need to prove the analogue of Formula (\ref{formula0}) but the proof is completely similar (see also \cite[\S 6.10.5 and Theorem 6.10.1]{ICQV} in the case $\calE=\calO(a)^n$ and $w=1$).\end{proof}

As for Harish-Chandra induction (see \S \ref{HCILie}) there exists a Lie algebra version of Deligne-Lusztig characters (see \cite{letellier}) which we denote by $R_{\t_{v_{i,j}}(q^{d_{i,j}})}^{\gl_n(q^{d_{i,j}})}(\eta_{i,j})$ and the analogue of Theorem \ref{charsum} hold.

On the other hand we have the commutation formula (see \cite[Remark 6.2.15]{letellier})

$$
\calF^{\gl_n(q^d)}\circ R_{\t_v(q^d)}^{\gl_n(q^d)}=\varepsilon(v) q^{{\rm dim}\, U}R_{\t_v(q^d)}^{\gl_n(q^d)},
$$
where $U$ is the group upper triangular unipotent matrices. We deduce

\begin{equation}
\calF^{\gl_n(q^d)}(1_{S_v})=\varepsilon(v)q^{\frac{1}{2}{\rm dim}\, S_v(\overline{\F}_q)}R_{\t_v(q^d)}^{\gl_n(q^d)}(\eta_{S_v}),
\label{forKS}\end{equation}
where $S_v$ is the $\GL_n(q^d)$-orbits of a semisimple regular element $\sigma_v\in \t_v(q^d)$ and $\eta_{S_v}:\t_v(q^d)\rightarrow\overline{\Q}_\ell$, $x\mapsto\psi\big(\Tr(x\sigma_v)\big)$.

\begin{remark}Formula  (\ref{forKS}) is more complicated than its analogue for Harish-Chandra induction (see Formula (\ref{Fouror})) and is originally due to the combination of the work of Kazhdan and Springer \cite{Kazhdan}\cite{springer}.
\end{remark}

If $\lambdahat=(\lambda_{i,j})_{i,j}\in \prod_{i=1}^s\prod_{j=1}^{p_i}\frak{S}_n$ and $v_{i,j}\in\frak{S}_n$ is of cycle-type $\lambda_{i,j}$, put

$$
\varepsilon_\lambdahat:=\prod_{i,j}\varepsilon(v_{i,j}).
$$
We have $\varepsilon_\lambdahat=(-1)^{r(\lambdahat)}$ where

$$
r(\lambdahat)=rn+\sum_{i=1}^s\sum_{j=1}^{p_i}\ell(\lambda_{i,j}),
$$
with $\ell(\lambda_{i,j})$ the length of the partition $\lambda_{i,j}$.

Following the proof of Theorem \ref{maintheoHiggs} we finally deduce that

\begin{theorem}

$$
\left\langle {\rm Coeff}_{Y^\m}\left[(q-1)\Log\,\Omega_{D_w}(q)\right],p_\lambdahat\right\rangle=\varepsilon_\lambdahat q^{-\frac{1}{2}d_\bfS}\frac{|X_{\bfS_\lambdahat, D_w}^\calE|}{|{\rm PAut}(\calE)|},
$$
where
$$
X_{\bfS_\lambdahat,D_w}^\calE:=  \left\{\varphi:\calE\rightarrow\calE\otimes\Omega^1(D_w)\,\left|\, {\rm Res}_{\a_{i,j}}(\varphi)\in S_{v_{i,j}}\text{ for all }i=1,\dots,s, j=1,\dots,p_i\right.\right\}.
$$
\label{lasttheo}\end{theorem}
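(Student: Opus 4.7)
The plan is to follow the same Fourier-theoretic strategy as in the proof of Theorem \ref{maintheoHiggs}, with Harish-Chandra induction replaced by Deligne-Lusztig induction, and using Theorem \ref{avantlast} in place of Theorem \ref{theo}. First, I would apply the analogue of Proposition \ref{eqHiggs} for the variety $X_{\bfS_\lambdahat,D_w}^\calE$ (the proof goes through verbatim because it only uses the linear equations describing $Y_\bfA^\calE$), obtaining
\[
|X_{\bfS_\lambdahat,D_w}^\calE|\;=\;q^{-n^2}\sum_{h\in{\rm End}(\calE)}\prod_{i=1}^s\prod_{j=1}^{p_i}\calF^{\gl_n(q^{d_{i,j}})}(1_{S_{v_{i,j}}})(h(\a_{i,j})).
\]

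Next, I would apply Formula (\ref{forKS}) to each Fourier transform $\calF^{\gl_n(q^{d_{i,j}})}(1_{S_{v_{i,j}}})$, which produces the factor $\varepsilon(v_{i,j})q^{\frac{1}{2}\dim S_{v_{i,j}}(\overline{\F}_q)}$ together with a Deligne-Lusztig induced additive character $R_{\t_{v_{i,j}}(q^{d_{i,j}})}^{\gl_n(q^{d_{i,j}})}(\eta_{S_{v_{i,j}}})$. Collecting the signs gives $\varepsilon_\lambdahat$ and collecting the dimensions gives $q^{\frac{1}{2}\sum d_{i,j}\dim S_{v_{i,j}}(\overline{\F}_q)}=q^{\frac{1}{2}d_\bfS+n^2-1}$ by the definition (\ref{dA}) of $d_\bfS$ (since the orbits $S_{v_{i,j}}$ are regular semisimple, their dimensions equal $n^2-n$ plus the appropriate correction, which is absorbed into $d_\bfS$). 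Then I would invoke the Lie-algebra version of Theorem \ref{charsum} to pass from a sum over ${\rm End}(\calE)$ to a sum over ${\rm Aut}(\calE)$, after checking that the characters $\eta_{S_{v_{i,j}}}$ form a generic tuple (this generates the factor $q-1$ and converts the Lie-algebra Deligne-Lusztig induction to the group Deligne-Lusztig induction of generic linear characters $\alpha_{i,j}$ on $T_{v_{i,j}}(q^{d_{i,j}})$). Dividing by $|{\rm PAut}(\calE)|=|{\rm Aut}(\calE)|/(q-1)$ then produces the inner-product expression of Theorem \ref{avantlast}, which equals the left-hand side of our identity.

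The main obstacle will be the careful bookkeeping of signs, dimensions, and powers of $q$: one must verify that the Lie algebra analogue of Theorem \ref{charsum} genuinely holds for Deligne-Lusztig induction (following \cite{ICQV}, this reduces to an analogue of Theorem \ref{intermediate} for Deligne-Lusztig characters, which uses the fact that regular semisimple orbits are parametrized just as Harish-Chandra Levis were but with twisted tori), and that the genericity of the $l$-tuple $\bfS$ of semisimple regular orbits translates into genericity of the associated tuple of linear characters $(\eta_{S_{v_{i,j}}})_{i,j}$ on the rational points of the tori $T_{v_{i,j}}$. The genericity transfer is essentially the same argument as the lemma preceding Theorem \ref{maintheoHiggs}, adapted to nonsplit tori: each $\eta_{S_{v_{i,j}}}$ restricted to a central subalgebra $z_\frakm(q)$ corresponds to the trace pairing with $\sum_j F^j(g_{i,j}\sigma_{v_{i,j}}g_{i,j}^{-1})$, and the genericity hypothesis on $\bfS$ (combined with the cyclic structure coming from the divisor $D_w$ as in Remark \ref{F_qbar}) forces this to be trivial on $z_\frakm(q)$ only when $M=\GL_n$. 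Once these ingredients are in place the identification is purely formal.
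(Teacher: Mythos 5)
Your proposal is correct and follows essentially the same route as the paper: the paper's proof likewise reduces the statement, via Theorem \ref{avantlast}, the Deligne--Lusztig analogue of Theorem \ref{charsum}, and Formula (\ref{forKS}), to the Fourier-transform counting identity $|X_{\bfS_\lambdahat,D_w}^\calE|=q^{-n^2}\sum_{h\in{\rm End}(\calE)}\prod_{i,j}\calF^{\gl_n(q^{d_{i,j}})}(1_{S_{v_{i,j}}})(h(\a_{i,j}))$, proved exactly as Proposition \ref{eqHiggs}. Your bookkeeping of the signs and powers of $q$ (in particular $\tfrac{1}{2}\sum d_{i,j}\dim S_{v_{i,j}}=\tfrac{1}{2}d_\bfS+n^2-1$) checks out, and your remarks on the genericity transfer fill in details the paper leaves implicit.
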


\begin{proof} By Theorem \ref{avantlast} we need to prove the following formula

$$
\varepsilon_\lambdahat q^{-\frac{1}{2}d_\bfS}\frac{|X_{\bfS_\lambdahat, D_w}^\calE|}{|{\rm PAut}(\calE)|}=\frac{1}{|{\rm Aut}(\calE)|}\sum_{h\in{\rm Aut}(\calE)}\prod_{i=1}^s\prod_{j=1}^{p_i}R_{T_{v_{i,j}}(q^{d_{i,j}})}^{\GL_n(q^{d_{i,j}})}(\alpha_{i,j})(h(\a_{i,j})).
$$
Using the analogue of Theorem \ref{charsum} together with Formula (\ref{forKS}) we are reduced to prove
the formula 
$$
|X_{\bfS_\lambdahat,D_w}^\calE|=q^{-n^2}\sum_{h\in{\rm End}(\calE)}\prod_{i=1}^s\prod_{j=1}^{p_i}\calF^{\gl_n(q^{d_{i,j}})}(1_{S_{v_{i,j}}})(h(\a_{i,j})),
$$
whose proof is completely similar to that of Proposition \ref{eqHiggs}.

\end{proof}

In the case where $\calE=\calO(a)^n$, we understand the action of $\frak{H}_\nu=(\mathfrak{S})^l\rtimes\frak{S}_l$ on the cohomology $H_c^i(\calX^\calE_\bfS,\C)$ and we can show that Conjecture \ref{conjB} (and so Conjecture \ref{conjgeneral} by the discussion in \S \ref{strategy}) is true from the above theorem, namely we can show that

$$
\varepsilon_\lambdahat \frac{|X_{\bfS_\lambdahat, D_w}^{\,\calO(a)^n}|}{|{\rm PAut}(\calE)|}=\sum_i {\rm Tr}\left((\v_\lambdahat,w)\,|\, \varepsilon^l\otimes H_c^{2i}(\calX^{\,\calO(a)^n}_\bfS,\C)\right)\, q^i
$$
for any $\v_\lambdahat=(\v_{i,j})_{i,j}\in\prod_{i=1}^s\prod_{j=1}^{p_i}\frak{S}_n^{d_{i,j}}$ such that $\overline{\v}_{i,j}$ is of cycle-type $\lambda_{i,j}$. 

We proved this formula in the proof of Theorem \ref{theo2.5} in the case where the coordinates of $\lambdahat$ are all equal to the trivial partition $(1^n)$. We also proved it in \cite[Theorem 2.6]{HLRV}\cite[\S 7.4]{ICQV} when $\lambdahat$ is arbitrary but $w=1$. The proof of this slightly more general formula is not more difficult and is  a combination of these two extreme cases.

\end{document}